\documentclass[10pt]{amsart}

\usepackage{fullpage}

\usepackage{amsmath}
\usepackage{amsxtra}
\usepackage{amscd}
\usepackage{amsthm}
\usepackage{amsfonts}
\usepackage{amssymb}
\usepackage{eucal}
\usepackage[all]{xy}
\usepackage{graphicx}
\usepackage{comment}
\usepackage{epsfig}
\usepackage{psfrag}
\usepackage{mathrsfs}
\usepackage{amscd}
\usepackage{rotating}
\usepackage{lscape}
\usepackage{amsbsy}
\usepackage{verbatim}
\usepackage{moreverb}
\usepackage{url}
\usepackage{hyperref}
\usepackage[utf8]{inputenc}
\usepackage{bbm}

\newtheorem{cor}[subsubsection]{Corollary}
\newtheorem{lem}[subsubsection]{Lemma}
\newtheorem{prop}[subsubsection]{Proposition}

\newtheorem{conj}[subsubsection]{Conjecture}
\newtheorem{thm}[subsubsection]{Theorem}

\theoremstyle{remark}
\newtheorem{rem}[subsubsection]{Remark}
\newtheorem{conv}[subsubsection]{Convention}
\newtheorem{example}[subsubsection]{Example}


\theoremstyle{remark}

\numberwithin{equation}{section}

\newcommand{\nc}{\newcommand}
\nc{\renc}{\renewcommand}
\nc{\ssec}{\subsection}
\nc{\sssec}{\subsubsection}
\nc{\on}{\operatorname}
\nc{\ms}{\mathsf}

\nc\ol{\overline}
\nc\wt{\widetilde}
\nc\tboxtimes{\wt{\boxtimes}}
\nc\tstar{\wt{\star}}
\nc{\alp}{\alpha}

\nc{\ZZ}{{\mathbb Z}}
\nc{\NN}{{\mathbb N}}
\nc{\CC}{{\mathbb C}}
\nc{\OO}{{\mathbb O}}
\renc{\SS}{{\mathbb S}}
\nc{\DD}{{\mathbb D}}
\nc{\GG}{{\mathbb G}}
\renewcommand{\AA}{{\mathbb A}}
\nc{\Fq}{{\mathbb F}_q}
\nc{\Fqb}{\ol{{\mathbb F}_q}}
\nc{\Ql}{\ol{{\mathbb Q}_\ell}}
\nc{\id}{\text{id}}
\nc\X{\mathcal X}

\nc{\Hom}{\on{Hom}}
\nc{\Lie}{\on{Lie}}
\nc{\Loc}{\on{Loc}}
\nc{\Pic}{\on{Pic}}
\nc{\Bun}{\on{Bun}}
\nc{\IC}{\on{IC}}
\nc{\Aut}{\on{Aut}}
\nc{\rk}{\on{rk}}
\nc{\Sh}{\on{Sh}}
\nc{\Perv}{\on{Perv}}
\nc{\pos}{{\on{pos}}}
\nc{\Conv}{\on{Conv}}
\nc{\Sph}{\on{Sph}}
\nc{\Sym}{\on{Sym}}
\renewcommand{\dim}{\on{dim}}
\nc{\BunBb}{\overline{\Bun}_B}
\nc{\BunNb}{\overline{\Bun}_N}
\nc{\BunTb}{\overline{\Bun}_T}
\nc{\BunBbm}{\overline{\Bun}_{B^-}}
\nc{\BunBbel}{\overline{\Bun}_{B,el}}
\nc{\BunBbmel}{\overline{\Bun}_{B^-,el}}
\nc{\Buno}{\overset{o}{\Bun}}
\nc{\BunPb}{{\overline{\Bun}_P}}
\nc{\BunBM}{\Bun_{B(M)}}
\nc{\BunBMb}{\overline{\Bun}_{B(M)}}
\nc{\BunPbw}{{\widetilde{\Bun}_P}}
\nc{\BunBP}{\widetilde{\Bun}_{B,P}}
\nc{\GUb}{\overline{G/U}}
\nc{\GUPb}{\overline{G/U(P)}}

\nc{\Hhom}{\underline{\on{Hom}}}
\nc\syminfty{\on{Sym}^{\infty}}
\nc\xl{\ol{x}}
\nc\thl{\ol{\theta}}
\nc\nul{\ol{\nu}}
\nc\mul{\ol{\mu}}
\nc\Sum\Sigma
\nc{\oX}{\overset{\circ}{X}{}}
\nc{\hl}{\overset{\leftarrow}h{}}
\nc{\hr}{\overset{\rightarrow}h{}}
\nc{\M}{{\mathcal M}}
\nc{\N}{{\mathcal N}}
\nc{\F}{{\mathcal F}}
\nc{\D}{{\mathcal D}}
\nc{\Q}{{\mathcal Q}}
\nc{\Y}{{\mathcal Y}}
\nc{\G}{{\mathcal G}}
\nc{\E}{{\mathcal E}}
\nc{\CalC}{{\mathcal C}}
\nc\Dh{\widehat{\D}}
\renewcommand{\O}{{\mathcal O}}
\nc{\C}{{\mathcal C}}
\nc{\K}{{\mathcal K}}
\renewcommand{\H}{{\mathcal H}}
\renewcommand{\S}{{\mathcal S}}
\nc{\T}{{\mathcal T}}
\nc{\V}{{\mathcal V}}
\renc{\P}{{\mathcal P}}
\nc{\A}{{\mathcal A}}
\nc{\B}{{\mathcal B}}
\nc{\U}{{\mathcal U}}

\nc{\Gr}{{\on{Gr}}}

\nc{\frn}{{\check{\mathfrak u}(P)}}

\nc{\fC}{\mathfrak C}
\nc{\p}{\mathfrak p}
\nc{\q}{\mathfrak q}
\nc\f{{\mathfrak f}}

\nc{\qo}{{\mathfrak q}}
\nc{\po}{{\mathfrak p}}
\nc{\s}{{\mathfrak s}}
\nc\w{\text{w}}

\renewcommand{\mod}{{\on{-mod}}}

\nc\mathi\iota
\nc\Spec{\on{Spec}}
\nc\Mod{\on{Mod}}
\nc{\tw}{\widetilde{\mathfrak t}}
\nc{\pw}{\widetilde{\mathfrak p}}
\nc{\qw}{\widetilde{\mathfrak q}}
\nc{\jw}{\widetilde j}

\nc{\grb}{\overline{\Gr_{X^{\fset}}}}
\nc{\I}{\mathcal I}
\renewcommand{\i}{\mathfrak i}

\nc{\much}{{\check\mu}}
\nc{\omegach}{{\check\omega}}
\nc{\nuch}{{\check\nu}}
\nc{\lambdach}{{\check\lambda}}
\nc{\alphach}{{\check\alpha}}

\nc{\rhoch}{{\check\rho}}
\nc{\ch}{{\check\fh}}

\nc{\Hb}{\overline{\H}}


\emergencystretch=2cm

\nc{\BA}{{\mathbb{A}}}
\nc{\BC}{{\mathbb{C}}}
\nc{\BE}{{\mathbb{E}}}
\nc{\BG}{{\mathbb{G}}}
\nc{\BM}{{\mathbb{M}}}
\nc{\BO}{{\mathbb{O}}}
\nc{\BD}{{\mathbb{D}}}
\nc{\BN}{{\mathbb{N}}}
\nc{\BP}{{\mathbb{P}}}
\nc{\BR}{{\mathbb{R}}}
\nc{\BZ}{{\mathbb{Z}}}
\nc{\BS}{{\mathbb{S}}}
\nc{\BV}{{\mathbb{V}}}

\nc{\CA}{{\mathcal{A}}}
\nc{\CB}{{\mathcal{B}}}

\nc{\CE}{{\mathcal{E}}}
\nc{\CF}{{\mathcal{F}}}
\nc{\CH}{{\mathcal{H}}}

\nc{\CL}{{\mathcal{L}}}
\nc{\CG}{{\mathcal{G}}}
\nc{\CM}{{\mathcal{M}}}
\nc{\CN}{{\mathcal{N}}}
\nc{\CK}{{\mathcal{K}}}
\nc{\CO}{{\mathcal{O}}}
\nc{\CP}{{\mathcal{P}}}
\nc{\CQ}{{\mathcal{Q}}}
\nc{\CR}{{\mathcal{R}}}
\nc{\CS}{{\mathcal{S}}}
\nc{\CT}{{\mathcal{T}}}
\nc{\CU}{{\mathcal{U}}}
\nc{\CV}{{\mathcal{V}}}
\nc{\CW}{{\mathcal{W}}}
\nc{\CX}{{\mathcal{X}}}
\nc{\CY}{{\mathcal{Y}}}
\nc{\CZ}{{\mathcal{Z}}}
\nc{\CI}{{\mathcal{I}}}

\nc{\csM}{{\check{\mathcal A}}{}}
\nc{\oM}{{\overset{\circ}{\mathcal M}}{}}
\nc{\obM}{{\overset{\circ}{\mathbf M}}{}}
\nc{\oCA}{{\overset{\circ}{\mathcal A}}{}}
\nc{\obA}{{\overset{\circ}{\mathbf A}}{}}
\nc{\ooM}{{\overset{\circ}{M}}{}}
\nc{\osM}{{\overset{\circ}{\mathsf M}}{}}
\nc{\vM}{{\overset{\bullet}{\mathcal M}}{}}
\nc{\nM}{{\underset{\bullet}{\mathcal M}}{}}
\nc{\oD}{{\overset{\circ}{\mathcal D}}{}}
\nc{\obC}{{\overset{\circ}{\mathbf C}}{}}
\nc{\obD}{{\overset{\circ}{\mathbf D}}{}}
\nc{\oA}{{\overset{\circ}{\mathbb A}}{}}
\nc{\op}{{\overset{\bullet}{\mathbf p}}{}}
\nc{\oU}{{\overset{\bullet}{\mathcal U}}{}}
\nc{\oZ}{{\overset{\circ}{\mathcal Z}}{}}
\nc{\ofZ}{{\overset{\circ}{\mathfrak Z}}{}}
\nc{\oF}{{\overset{\circ}{\fF}}}

\nc{\fa}{{\mathfrak{a}}}
\nc{\fb}{{\mathfrak{b}}}
\nc{\fc}{{\mathfrak{c}}}
\nc{\fd}{{\mathfrak{d}}}
\nc{\ff}{{\mathfrak{f}}}
\nc{\fg}{{\mathfrak{g}}}
\nc{\fgl}{{\mathfrak{gl}}}
\nc{\fh}{{\mathfrak{h}}}
\nc{\fj}{{\mathfrak{j}}}
\nc{\fl}{{\mathfrak{l}}}
\nc{\fm}{{\mathfrak{m}}}
\nc{\fn}{{\mathfrak{n}}}
\nc{\fu}{{\mathfrak{u}}}
\nc{\fp}{{\mathfrak{p}}}
\nc{\fr}{{\mathfrak{r}}}
\nc{\fs}{{\mathfrak{s}}}
\nc{\fz}{{\mathfrak{z}}}
\nc{\fsl}{{\mathfrak{sl}}}
\nc{\hsl}{{\widehat{\mathfrak{sl}}}}
\nc{\hgl}{{\widehat{\mathfrak{gl}}}}
\nc{\hg}{{\widehat{\mathfrak{g}}}}
\nc{\chg}{{\widehat{\mathfrak{g}}}{}^\vee}
\nc{\hn}{{\widehat{\mathfrak{n}}}}
\nc{\chn}{{\widehat{\mathfrak{n}}}{}^\vee}

\nc{\fA}{{\mathfrak{A}}}
\nc{\fB}{{\mathfrak{B}}}
\nc{\fD}{{\mathfrak{D}}}
\nc{\fE}{{\mathfrak{E}}}
\nc{\fF}{{\mathfrak{F}}}
\nc{\fG}{{\mathfrak{G}}}
\nc{\fK}{{\mathfrak{K}}}
\nc{\fL}{{\mathfrak{L}}}
\nc{\fM}{{\mathfrak{M}}}
\nc{\fN}{{\mathfrak{N}}}
\nc{\fP}{{\mathfrak{P}}}
\nc{\fU}{{\mathfrak{U}}}
\nc{\fV}{{\mathfrak{V}}}
\nc{\fZ}{{\mathfrak{Z}}}

\nc{\bb}{{\mathbf{b}}}
\nc{\bc}{{\mathbf{c}}}
\nc{\bd}{{\mathbf{d}}}
\nc{\bbf}{{\mathbf{f}}}
\nc{\be}{{\mathbf{e}}}
\nc{\bg}{{\mathbf{g}}}
\nc{\bi}{{\mathbf{i}}}
\nc{\bj}{{\mathbf{j}}}
\nc{\bn}{{\mathbf{n}}}
\nc{\bo}{{\mathbf{o}}}
\nc{\bp}{{\mathbf{p}}}
\nc{\bq}{{\mathbf{q}}}
\nc{\bt}{{\mathbf{t}}}
\nc{\bu}{{\mathbf{u}}}
\nc{\bv}{{\mathbf{v}}}
\nc{\bx}{{\mathbf{x}}}
\nc{\bs}{{\mathbf{s}}}
\nc{\by}{{\mathbf{y}}}
\nc{\bw}{{\mathbf{w}}}
\nc{\bA}{{\mathbf{A}}}
\nc{\bK}{{\mathbf{K}}}
\nc{\bB}{{\mathbf{B}}}
\nc{\bC}{{\mathbf{C}}}
\nc{\bG}{{\mathbf{G}}}
\nc{\bD}{{\mathbf{D}}}
\nc{\bH}{{\mathbf{H}}}
\nc{\bM}{{\mathbf{M}}}
\nc{\bN}{{\mathbf{N}}}
\nc{\bO}{{\mathbf{O}}}
\nc{\bT}{{\mathbf{T}}}
\nc{\bV}{{\mathbf{V}}}
\nc{\bW}{{\mathbf{W}}}
\nc{\bX}{{\mathbf{X}}}
\nc{\bZ}{{\mathbf{Z}}}
\nc{\bS}{{\mathbf{S}}}

\nc{\sA}{{\mathsf{A}}}
\nc{\sB}{{\mathsf{B}}}
\nc{\sC}{{\mathsf{C}}}
\nc{\sD}{{\mathsf{D}}}
\nc{\sF}{{\mathsf{F}}}
\nc{\sG}{{\mathsf{G}}}
\nc{\sK}{{\mathsf{K}}}
\nc{\sM}{{\mathsf{M}}}
\nc{\sO}{{\mathsf{O}}}
\nc{\sW}{{\mathsf{W}}}
\nc{\sQ}{{\mathsf{Q}}}
\nc{\sP}{{\mathsf{P}}}
\nc{\sV}{{\mathsf{V}}}
\nc{\sS}{{\mathsf{S}}}
\nc{\sT}{{\mathsf{T}}}
\nc{\sZ}{{\mathsf{Z}}}
\nc{\sfp}{{\mathsf{p}}}
\nc{\sll}{{\mathsf{l}}}
\nc{\sr}{{\mathsf{r}}}
\nc{\bk}{{\mathsf{k}}}
\nc{\sg}{{\mathsf{g}}}

\nc{\sff}{{\mathsf{f}}}
\nc{\sfb}{{\mathsf{b}}}
\nc{\sfc}{{\mathsf{c}}}
\nc{\sd}{{\mathsf{d}}}
\nc{\se}{{\mathsf{e}}}

\nc{\BK}{{\bar{K}}}

\nc{\tA}{{\widetilde{\mathbf{A}}}}
\nc{\tB}{{\widetilde{\mathcal{B}}}}
\nc{\tg}{{\widetilde{\mathfrak{g}}}}
\nc{\tG}{{\widetilde{G}}}
\nc{\TM}{{\widetilde{\mathbb{M}}}{}}
\nc{\tO}{{\widetilde{\mathsf{O}}}{}}
\nc{\tU}{{\widetilde{\mathfrak{U}}}{}}
\nc{\TZ}{{\tilde{Z}}}
\nc{\tx}{{\tilde{x}}}
\nc{\tbv}{{\tilde{\bv}}}
\nc{\tfP}{{\widetilde{\mathfrak{P}}}{}}
\nc{\tz}{{\tilde{\zeta}}}
\nc{\tmu}{{\tilde{\mu}}}

\nc{\urho}{\underline{\rho}}
\nc{\uB}{\underline{B}}
\nc{\uC}{{\underline{\mathbb{C}}}}
\nc{\ui}{\underline{i}}
\nc{\uj}{\underline{j}}
\nc{\ofP}{{\overline{\mathfrak{P}}}}
\nc{\oB}{{\overline{\mathcal{B}}}}
\nc{\og}{{\overline{\mathfrak{g}}}}
\nc{\oI}{{\overline{I}}}

\nc{\eps}{\varepsilon}
\nc{\hrho}{{\hat{\rho}}}

\nc{\one}{{\mathbf{1}}}
\nc{\two}{{\mathbf{t}}}

\nc{\Rep}{{\mathop{\operatorname{\rm Rep}}}}
\nc{\Tot}{{\mathop{\operatorname{\rm Tot}}}}
\nc{\Ker}{{\mathop{\operatorname{\rm Ker}}}}
\nc{\Hilb}{{\mathop{\operatorname{\rm Hilb}}}}
\nc{\End}{{\mathop{\operatorname{\rm End}}}}
\nc{\Ext}{{\mathop{\operatorname{\rm Ext}}}}
\nc{\CHom}{{\mathop{\operatorname{{\mathcal{H}}\it om}}}}
\nc{\GL}{{\mathop{\operatorname{\rm GL}}}}
\nc{\gr}{{\mathop{\operatorname{\rm gr}}}}
\nc{\Id}{{\mathop{\operatorname{\rm Id}}}}
\nc{\de}{{\mathop{\operatorname{\rm def}}}}
\nc{\length}{{\mathop{\operatorname{\rm length}}}}
\nc{\supp}{{\mathop{\operatorname{\rm supp}}}}

\nc{\Cliff}{{\mathsf{Cliff}}}
\nc{\Fl}{\on{Fl}}
\nc{\Fib}{{\mathsf{Fib}}}
\nc{\Coh}{{\on{Coh}}}
\nc{\QCoh}{{\on{QCoh}}}
\nc{\IndCoh}{{\on{IndCoh}}}
\nc{\FCoh}{{\mathsf{FCoh}}}

\nc{\reg}{{\text{\rm reg}}}

\nc{\cplus}{{\mathbf{C}_+}}
\nc{\cminus}{{\mathbf{C}_-}}
\nc{\cthree}{{\mathbf{C}_*}}
\nc{\Qbar}{{\bar{Q}}}
\nc\Eis{\on{Eis}}
\nc\Eisb{\ol\Eis{}}
\nc\Eisr{\on{Eis}^{rat}{}}
\nc\wh{\widehat}
\nc{\Def}{\on{Def_{\check{\fb}}(E)}}
\nc{\barZ}{\overline{Z}{}}
\nc{\barbarZ}{\overline{\barZ}{}}
\nc{\barpi}{\overline\pi}
\nc{\barbarpi}{\overline\barpi}
\nc{\barpip}{\overline\pi{}^+}
\nc{\barpim}{\overline\pi{}^-}

\nc{\fq}{\mathfrak q}

\nc{\fqb}{\ol{\fq}{}}
\nc{\fpb}{\ol{\fp}{}}
\nc{\fpr}{{\fp^{rat}}{}}
\nc{\fqr}{{\fq^{rat}}{}}

\nc{\hattimes}{\wh\otimes}

\nc{\bh}{{\bar{h}}}
\nc{\bOmega}{{\overline{\Omega(\check \fn)}}}

\nc{\seq}[1]{\stackrel{#1}{\sim}}

\nc{\cT}{{\check{T}}}
\nc{\cG}{{\check{G}}}
\nc{\cM}{{\check{M}}}
\nc{\cB}{{\check{B}}}
\nc{\cP}{{\check{P}}}

\nc{\ct}{{\check{\mathfrak t}}}
\nc{\cg}{{\check{\fg}}}
\nc{\cb}{{\check{\fb}}}
\nc{\cn}{{\check{\fn}}}
\nc{\cp}{{\check{\fp}}}
\nc{\cm}{{\check{\fm}}}


\nc{\cmu}{{\check\mu}}
\nc{\cnu}{{\check\nu}}
\nc{\ceta}{{\check\lambda}}

\nc{\imathb}{{\ol{\imath}}}
\nc{\rlr}{\overset{\longrightarrow}{\underset{\longrightarrow}\longleftarrow}}

\nc{\LocSys}{\on{LocSys}}

\nc{\Vect}{\ms{Vect}}
\nc{\Whit}{\ms{Whit}}
\nc{\bTb}{\ol{\on{CT}}}
\nc{\Ran}{\on{Ran}}
\nc{\bTr}{\on{CT}^{rat}{}}
\nc\jmathr{\jmath^{rat}{}}
\nc{\ux}{\underline{x}}
\nc{\calpha}{{\check\alpha}}
\nc{\ind}{{\mathsf{ind}}}
\nc{\oblv}{{\mathsf{oblv}}}
\nc{\StinftyCat}{\on{DGCat}}
\nc{\inftygroup}{\infty\on{-Grpd}}

\nc{\fset}{\on{fSet}}
\nc{\LocSysG}{\LocSys_{\cG}}
\nc{\Sing}{\on{Sing}}
\nc{\dr}{{\on{dR}}}
\nc{\Dmod}{\on{D-mod}}
\nc{\Ind}{\on{Ind}}
\nc{\Sat}{\on{Sat}}
\nc{\Ho}{\on{Ho}}
\nc{\Res}{\on{Res}}
\nc{\sotimes}{\overset{!}\otimes}
\nc{\mmod}{{\on{-}}{\mathbf{mod}}}
\nc{\Maps}{\on{Maps}}
\nc{\CMaps}{{\mathcal Maps}}
\nc{\bMaps}{{\mathbf{Maps}}}

\nc{\dgSch}{\on{DGSch}}
\nc{\dgindSch}{\on{DGindSch}}
\nc{\indSch}{\ms{indSch}}
\nc{\IndSch}{\ms{IndSch}}
\nc{\Sch}{\mathsf{Sch}}
\nc{\affdgSch}{\on{DGSch}^{\on{aff}}}
\nc{\affSch}{\ms{Sch}^{\ms{aff}}}

\nc{\inftypic}{\infty\on{-PicGrpd}}
\nc{\inftyCat}{\infty\ms{-Cat}}
\nc{\MoninftyCat}{\infty\on{-Cat}^{Mon}}
\nc{\SymMoninftyCat}{\infty\on{-Cat}^{\on{SymMon}}}
\nc{\SymMonStinftyCat}{\on{DGCat}^{\on{SymMon}}}
\nc{\MonStinftyCat}{\on{DGCat}^{Mon}}

\nc{\csupp}{\supp}
\nc{\Arth}{\on{Arth}}
\nc{\ArthG}{{\on{Arth}_\cG}}
\nc{\ul}{\underline}

\nc{\m}{\mu}

\nc{\xto}{\xrightarrow}

\renewcommand{\dim}{\mathsf{dim}}
\newcommand{\codim}{\mathsf{codim}}
\nc{\corr}{\mathsf{corr}}
\nc{\ft}{\mathsf{ft}}
\nc{\fty}{\mathsf{ft}}

\nc{\cores}{\mathsf{cores}}
\nc{\coind}{\mathsf{coind}}

\nc{\actotimes}{\overset{*!}\otimes}

\nc{\hto}{\hookrightarrow}
\nc{\bP}{\mathbf{P}}


\nc{\VV}{\mathbb V}

\nc{\rrep}{\on{-} \mathbf{rep}}

\nc{\ccomod}{\on - \mathbf{comod}}

\nc{\citep}{\cite}

\nc{\Tate}{\mathsf{Tate}}
\nc{\rec}{\mathsf{rec}}
\newcommand{\HC}{\mathsf{HC}}

\renewcommand{\exp}{\mathit{exp}}
\newcommand{\pro}{\mathsf{pro}}

\nc{\flip}{\text{<}}
\nc{\IFT}{\mathsf{IFT}}

\nc{\biota}{\boldsymbol{\iota}}

\nc{\LC}{\mathsf{LC}} 

\nc{\ins}{\mathsf{ins}}
\nc{\ev}{\mathsf{ev}}

\nc{\inv}{\mathsf{inv}}

\nc{\rel}{\mathit{rel}}
\newcommand{\R}{\mathcal{R}}

\nc{\ren}{\mathit{ren}}
\nc{\diam}{\diamond}

\nc{\Heis}{\mathsf{Heis}}
\nc{\Corr}{\mathsf{Corr}}
\nc{\Coalg}{\ms{Coalg}}
\nc{\Alg}{\ms{Alg}}
\nc{\footcite}{\footnote}
\nc{\fch}{\check{f}}
\nc{\bL}{\mathbf{L}}

\nc{\pr}{\mathsf{pr}}
\nc{\Fun}{\on{Fun}}
\nc{\Cu}{\C_{univ}}

\nc{\coact}{\mathsf{coact}}

\nc{\ppart}{(\!(t)\!)}
\nc{\bphi}{\boldsymbol{\phi}}
\nc{\botimes}{\boldsymbol{\otimes}}

\nc{\chiral}[1]{\langle #1 \rangle}

\nc{\chiralx}{\chiral{X^I}}
\nc{\Gch}{\check{G}}
\nc{\FT}{\sF\sT}
\nc{\qcoh}{\QCoh}
\nc{\RepGran}{\Rep(G)_{\Ran}}
\nc{\RepGchran}{\Rep(G^\vee)_{\Ran}}

\nc{\rr}{\xymatrix{ \ar@<-0.1ex>[r] \ar@<.8ex>[r]  & } }
\nc{\rrr}{\xymatrix{ \ar@<.2ex>[r] \ar@<.9ex>[r] \ar@<-0.5ex>[r] & } }

\nc{\Z}{\CZ}
\nc{\CCC}{\wt\CC}
\renewcommand{\Q}{\QCoh}
\nc{\calN}{\N}
\nc{\calW}{\mathcal{W}}
\nc{\calF}{\mathcal{F}}
\nc{\calH}{\mathcal{H}}
\nc{\calO}{\mathcal{O}}
\nc{\Comod}{\on{Comod}}
\nc{\bGamma}{\mathbf{\Gamma}}
\nc{\coGamma}{\on{co-}\bGamma}

\nc{\Jets}{\on{Jets}}
\nc{\act}{\mathsf{act}}
\nc{\Av}{\mathsf{Av}}
\nc{\Ad}{\on{Ad}}
\nc{\BGRan}{BG_{\Ran}}

\nc{\uscolim}[1]{\underset{#1}{\on{colim}} \, }
\nc{\usotimes}[1]{\underset{#1}{\otimes}}
\nc{\ustimes}[1]{\underset{#1}{\times}}
\nc{\colim}{\on{colim}}

\nc{\cpt}{{\on{cpt}}}
\nc{\dR}{{\on{dR}}}
\nc{\Div}{{\on{Div}}}
\nc{\DGCat}{\ms{DGCat}}
\nc{\DGCatcont}{\on{DGCat}_{cont}}
\nc{\glob}{{\on{glob}}}
\nc{\GS}{{\on{GS}}}
\nc{\heart}{{\heartsuit}}
\nc{\loc}{{\on{loc}}}

\nc{\Nout}{\bN_{out}}

\nc{\ran}{\on{Ran}}
\renewcommand{\op}{{\on{op}}}
\nc{\pt}{{\ms{pt}}}
\nc{\PreStk}{{\ms{PreStk}}}
\nc{\Cat}{{\on{Cat}}}
\nc{\DGSchaff}{{\on{DGSch}^{\text{aff}}}}
\nc{\ShvCat}{{\mathsf{ShvCat}}}
\nc{\Sect}{{\on{Sect}}}
\nc{\Stab}{{\on{Stab}}}
\nc{\tstr}{{\on{t-str}}}

\nc{\kk}{\mathbbm{k}} 

\renewcommand{\Dmod}{\mathfrak{D}}

\nc{\Whitglob}{\on{Whit}_{\on{glob}}}

\nc{\virg}[1]{``#1"}

\nc{\restr}[2]{\left. #1 \right |_{#2}}
\nc{\uprestr}[2]{\left. #1 \right |^{#2}}
\nc{\bLoc}{{\mathbf{Loc}}}


\newcommand{\GO}{G({\mathcal O})}

\nc{\squigto}{\rightsquigarrow}

\nc{\rev}{\mathit{rev}}

\nc{\ICoh}{\IndCoh}

\nc{\coev}{\mathsf{coev}}

\begin{document}

\title{Loop group actions on categories and Whittaker invariants}

\author{Dario Beraldo}

\begin{abstract}

The present paper is divided in three parts.

In the first one, we develop the theory of $\fD$-modules on ind-schemes of pro-finite type. This allows to define $\fD$-modules on (algebraic) loop groups and, consequently, the notion of \emph{strong loop group action} on a DG category.

In the second part, we construct the functors of \emph{Whittaker invariants} and \emph{Whittaker coinvariants}, which take as input a DG category acted on by $G\ppart$, the loop group of a reductive group $G$. 
Roughly speaking, the Whittaker invariant category of $\C$ is the full subcategory $\C^{N \ppart, \chi} \subseteq \C$ consisting of objects that are $N\ppart$-invariant \emph{against} a fixed non-degenerate character $\chi:N \ppart \to \GG_a$ of conductor zero. (Here $N$ is the maximal unipotent subgroup of $G$.) The Whittaker coinvariant category $\C_{N \ppart, \chi}$ is defined by a dual construction.

In the third part, we construct a functor $\Theta: \C_{N \ppart, \chi} \to \C^{N \ppart, \chi}$, which depends on a choice of dimension theory for $G\ppart$. We conjecture this functor to be an equivalence.
After developing the Fourier-Deligne transform for Tate vector spaces, we prove this conjecture for $G= GL_n$. We show that both Whittaker categories can be obtained by taking invariants of $\C$ with respect to a very explicit pro-unipotent group subscheme (not indscheme!) of $G \ppart$.
\end{abstract}

\maketitle

\tableofcontents

\section{Introduction}

Categorical (or higher) representation theory is the study of symmetries of categories. In mathematical terms, such symmetries are encoded by the notion of \emph{group action} on a category. 
Our interest in this abstract concept originates from the \emph{local geometric Langlands correspondence}, a conjecture put forward by E. Frenkel and D. Gaitsgory (see, e.g., \cite{FG2}, \cite{FG3}, \cite{FG6}, or \cite{F} and \cite{quantum-langlands} for reviews).
We shall recall this conjecture in Sect. \ref{ssec:local-geom-langlands}: the curious reader might jump there directly and return to the beginning of the introduction only if necessary.

\sssec{}

Before defining the notion of \virg{group action on a category}, let us clarify what we mean by ``group" and what we mean by ``category".

\medskip

Our geometric context is the world of algebraic geometry over an algebraically closed field $\kk$ of characteristic zero. Hence, our geometric objects will be schemes, ind-schemes, stacks, etc., always defined over $\kk$.
Accordingly, by ``group" we mean ``group scheme" or, more generally, ``group ind-scheme" (defined over $\kk$). 

In fact, as the title of the paper suggests, our favorite example will be the group ind-scheme $G \ppart$, the \emph{loop group} of a reductive group $G$.

\medskip

As for the categorical context, by the term \virg{category} we mean a differential graded (DG) category over $\kk$ which is co-complete (i.e., it contains all colimits).
The collections of such categories, together with colimit-preserving functors among them, forms an $\infty$-category, denoted by $\DGCat$.
\footnote{
The foundational basis of such notions is contained in the books \cite{L0}, \cite{L1}.
A succinct review, to which we shall refer for notations and main results, is \cite{DG}.
}
This set-up is extremely convenient for performing algebraic operations on categories, directly generalizing standard operations of linear algebra. 
Most notably, there is a tensor product that makes $\DGCat$ into a symmetric monoidal $\infty$-category.

With this said, a reader not interested in the details can follow this introduction without knowing any of the technical aspects of $\DGCat$. The two concepts that are important to know, or accept, are the following. 
The first one is that of \emph{dualizability of a category}, see \cite[Section 2]{DG}. 
The second one is that of \emph{module for a monoidal category $\A$}, see \cite[Section 4]{DG}. 
Roughly speaking, this is a category $\M$ equipped with an \virg{action} functor $\A \times \M \to \M$ (commuting with colimits in each variable) satisfying the natural compatibility conditions. We denote by $\A \mmod$ the $\infty$-category of (left) $\A$-modules.
\footnote{The $\infty$-category of comodules over a comonoidal category is defined similarly.}

\ssec{Group actions on categories}

Having agreed on the terminology, let us now make sense of the notion of \emph{category equipped with a $G$-action}, where for the moment we assume that $G$ is an affine algebraic group (in particular, $G$ is of finite type).

To do so, let us mimic the classical setting: the structure of a $G$-representation on a vector space $V$ consists of a coaction on $V$ of the coalgebra $\Gamma(G, \O_G)$ of functions on $G$ with convolution coproduct (i.e., pull-back along the multiplication).

\medskip

In our context, one has to replace functions on the group with sheaves: there are thus at least two possible notions of group action, accounting for the two standard sheaf-theoretic contexts of \emph{quasi-coherent sheaves} and \emph{$\fD$-modules} on $G$, both equipped with convolution.

\medskip

By definition, categories with a \emph{weak} $G$-action are comodules for $(\QCoh(G), m^*)$, whereas categories with a \emph{strong} $G$-action (or \emph{infinitesimally trivialized} action, or \emph{de Rham} action) are comodules for $(\Dmod(G), m^!)$. 
We occasionally denote them by $G \on{-}\mathbf{rep}^{weak}$ and $G \on{-}\mathbf{rep}$, respectively.
Here $m^*$ (resp., $m^!$) is the quasi-coherent (resp., $\fD$-module) pull-back along the multiplication $m: G \times G \to G$.

\medskip

Both $\QCoh(G)$ and $\Dmod(G)$ are canonically self-dual; under this duality, the coproducts appearing above get sent to the convolution products in the context of quasi-coherent sheaves or $\fD$-modules, respectively. As a consequence,
$$
G \on{-}\mathbf{rep}^{weak} \simeq (\qcoh(G), \star) \mmod \hspace{.8 cm} G \on{-}\mathbf{rep} \simeq (\Dmod(G), \star) \mmod,
$$
where both convolution products have been denoted by $\star$.

\medskip

In this paper, we privilege strong actions. The reason comes from our interest in the local Langlands correspondence, which explicitly involves strong actions of loop groups on categories. 
Hereafter, unless specified otherwise, by the term ``action" we shall mean ``strong action".

\medskip

The main source of examples of categorical $G$-representations comes from geometry: if $X$ is a scheme of finite type with a $G$-action, then $\Dmod(X)$ is a module category for $(\Dmod(G), \star)$, via push-forward along the action map $G \times X \to X$. In other words, $\Dmod(X)$ carries a $G$-action. Likewise, $\QCoh(X)$ carries a weak $G$-action.

\sssec{$\fD$-modules on loop groups}

As already mentioned, our goal is to study (strong) actions of $G \ppart$, the loop group of an affine algebraic group $G$, on categories.
The first thing to notice is that $G \ppart$ is not at all of finite type (unless $G$ is the trivial group): indeed, $G \ppart$ is infinite dimensional in two \virg{directions}, corresponding to the fact that the field of Laurent series $\kk \ppart$ \virg{goes off} to infinity in two directions. 

Nevertheless, we wish to define $G \ppart \rrep$ copying what we have done above for groups of finite type. Thus, all we need to do is to construct a category $\Dmod(G \ppart)$ and equip it with a convolution monoidal structure.
The details will be given in the main body of the text (Sect. \ref{ssec:D-mods-on-indpro-schemes}), where we discuss the theory of $\fD$-modules on ind-schemes of pro-finite type. 
Accepting for the time being that a good definition of $G \ppart \rrep$ has been given, we are ready to look at the local geometric Langlands conjecture.

\ssec{Local geometric Langlands duality} \label{ssec:local-geom-langlands}

The very goal of the local geometric Langlands correspondence is to study the $\infty$-category $G \ppart \rrep$ of categories equipped with a strong action of $G\ppart$, where here $G$ is a reductive group.

\medskip

To state the main conjecture, we need to recall the concept of \emph{category fibered over} a stack $\Y$. The correct definition of such notion is the topic of \cite{shvcat}; however, the following shortcut is enough for the purposes of this introduction, see also \cite{Sam-locsys}.
We say that $\C$ is fibered over $\Y$ if $\C$ is equipped with an action of the symmetric monoidal category $(\QCoh(\Y), \otimes)$.
The collection of categories fibered over $\Y$ naturally forms an $\infty$-category, which is, according to our earlier notation, $(\QCoh(\Y), \otimes) \mmod$.

\sssec{}

\nc{\LL}{\mathbb{L}}

The local geometric Langlands conjecture predicts the existence of an explicit equivalence
\begin{equation} \label{equiv:local-langlands}
\LL_G:
G\ppart \rrep 
\xto{\;\; \simeq \;\;}
\big(\QCoh(\LocSys_{\Gch} (\D^\times)), \otimes \big) \mmod,
\end{equation}
where $\LocSys_{\Gch} (\D^\times)$ is the stack of de Rham local systems on the punctured disk $\D^\times = \Spec(\kk \ppart)$, for the \emph{Langlands dual group} $\Gch$.


\medskip

The functor $\LL_G$ realizing the above equivalence is expected to be the operation of \emph{Whittaker invariants}, which is actually the protagonist of the present paper. We postpone its definition to Sect. \ref{ssec:LGL for general G}, as we first need to dwell a bit more on abstract nonsense and discuss the notions of categorical invariants and coinvariants.

\sssec{} \label{sssec:best-hope-LGL}

Let us warn the reader that the conjecture (\ref{equiv:local-langlands}) cannot be literally true as stated. The formulation given here might be called the \virg{best hope} form of the conjecture, in analogy with the same term used in the statement of the \emph{global geometric Langlands conjecture}, see  \cite[Sect. 1.1.1]{AG}.
In fact, as in the global case, it appears that the RHS of (\ref{equiv:local-langlands}) needs to be enlarged to match the LHS.

In this paper, we will not discuss any attempt to correct such discrepancy and, for the most part, use the \virg{best hope} as if it was correct.

\sssec{}

Besides the above issue, an even more substantial problem prevents from formulating the conjecture precisely. Namely, after having defined the candidate functor $\LL_G$ as an arrow
$$
G \ppart \rrep
\longrightarrow
\DGCat,
$$
it will not be at all clear that it factors through the forgetful functor $\QCoh(\LocSys_{\Gch} (\D^\times)) \mmod \to \DGCat$. 
In fact, this assertion has not been established yet, although \cite{FG2}, \cite{FG3}, \cite{quantum-langlands}, \cite{F} give evidence for it. We will not give comment on this problem any further, except for a brief remark in Sect. \ref{sssec:doubly whittaker}.

\ssec{Invariant and coinvariant categories}

Several standard operations with ordinary $G$-representations generalize immediately to the categorical context. Let us discuss the most important ones, in the simplest case where $G$ is of finite type.

\sssec{}

Given an ordinary $G$-representation $V$, we can take its invariants $V^G := \Hom_G( \kk, V)$ and coinvariants $V_G = \kk \otimes_G V$. 
In our categorical framework, we have the notions of weak invariants and coinvariants: for $\C \in G \on{-}\mathbf{rep}^{weak}$,
$$
\C^{G,w} := \Hom_{\QCoh(G)}(\Vect, \C), \, \, \, \, \C_{G,w} := \Vect \usotimes{\QCoh(G)}  \C;
$$
as well as strong invariant and coinvariants: for $\C \in G\on{-}\mathbf{rep}$,
$$
\C^{G} := \Hom_{\Dmod(G)}(\Vect, \C), \, \, \, \, \C_{G} := \Vect \usotimes{\Dmod(G)}  \C.
$$

\begin{example}
If $X$ is a $G$-scheme of finite type, then $\Dmod(X)^G \simeq \Dmod(X/G)$, the category of $\fD$-modules on the quotient stack $X/G$. This follows immediately from smooth descent for $\fD$-modules. It turns out that $\Dmod(X)_G \simeq \Dmod(X/G)$ as well, see Theorem \ref{thm:equiv-Ginv-coinv}. (Parallel statements hold for $\QCoh$ in place of $\fD$.)
\end{example}

\sssec{}

Let us now assume that $G$ is a vector group $A = \AA^n$ of dimension $n$, and use the Deligne-Fourier transform to offer an alternative point of view on $A$-invariants and $A$-coinvariants.
Recall that, by \cite{Laumon}, the Deligne-Fourier transform is an equivalence of monoidal categories
\begin{equation} \label{eqn:FD}
\FT: (\Dmod(A), \star) \xrightarrow{\, \, \, \, \simeq \, \, \, \, } (\Dmod(A^\vee), \stackrel ! \otimes).
\end{equation}
\begin{rem}
In analogy with the classical theory, such equivalence is obtained by a \emph{kernel $\fD$-module} on the product $A \times A^\vee$, the $\fD$-module $Q^!(\exp)$, where $Q: A \times A^\vee \to \GG_a$ is the duality pairing and $\exp$, the substitute of the ``exponential function", is the $\fD$-module encoded by the defining differential equation for $\exp$. Its homomorphism property corresponds to the isomorphism $m^! (\exp) \simeq \exp \boxtimes \exp$.
\end{rem}
Thus, the Fourier transform allows to view a category acted on by $A$ as a category fibered over $A^\vee$: in other words, we have an equivalence
\begin{equation} \label{eqn:FD-mod}
A\on{-\mathbf{rep}}= (\Dmod(A), \star)\mmod \xrightarrow{\, \, \, \, \simeq \, \, \, \, } (\Dmod(A^\vee), \stackrel ! \otimes) \mmod,
\end{equation}
which is the identity on the underlying DG categories.

\begin{rem}
In the main body of the paper, we extend (\ref{eqn:FD}) and thus (\ref{eqn:FD-mod}) to $A = \AA^n \ppart$ or, slightly more generally, to a Tate vector space.
\end{rem}

Invariants and coinvariants of $\C \in A \on{-\mathbf{rep}}$, correspond to \emph{fiber} and \emph{cofiber} of $\C$ over $0 \hookrightarrow A^\vee$, respectively.
The fiber at non-zero $\chi \in A^\vee$ is identified with the category of $(A, \chi)$-invariants of $\C$: objects $c \in \C$ for which the \emph{coaction} (dual to the action) is isomorphic to 
$$
\coact (c) \simeq \chi^! (\exp) \otimes c \, \in \Dmod(A) \otimes \C.
$$
By definition, the fiber (resp., cofiber) of $\C$ at $\chi$ is the category 
$$
\uprestr{\C}\chi := \Hom_{\Dmod(A^\vee)}(\Dmod(\{\chi\}), \C)
\hspace{.6cm}
\big(\text{resp., }\, \restr{\C} \chi:= \Dmod(\{\chi\}) \usotimes{\Dmod(A^\vee)} \C \big).
$$
More generally, one can define \emph{restriction} and \emph{corestriction} of $\C$ along any map of schemes $B \to A^\vee$.

\sssec{} \label{sssec:action by semi-direct product T A}

Assume now that $\C$ is acted on by a semi-direct product $G \ltimes A$, with $A$ as above. Then, the natural action of $G$ on $A^\vee$ yields a canonical identification of the invariant categories $\C^{A, \chi}$ and $\C^{A, \chi'}$ for any $\chi$ and $\chi'$ lying in the same $G$-orbit of $A^\vee$.

\ssec{Local geometric Langlands for $G=GL_2$}

Let us use the above ideas, properly adapted to the loop group case, to discuss local geometric Langlands for $G= GL_2$.

\sssec{}

Suppose we want to study $\C$, a category equipped with an action of $GL_2 \ppart$, 
via the Fourier transform. To start, we need to regard $\C$ as acted on by a (Tate) vector space. Such a vector space is readily available inside $GL_2 \ppart$: take $N \ppart$, the loop group of unipotent upper triangular matrices.

Slighly better, we will view $\C$ as acted on by the semidirect product $\GG_m \ppart \ltimes N \ppart$. (Note that $\GG_m \ltimes N$ is nothing but the mirabolic subgroup of $GL_2$. The mirabolic subgroup of $GL_n$, whose definition is reviewed later, will play a crucial role in this paper.)

\medskip

Observe that $\GG_a \ppart^\vee$ is self-dual via the residue pairing and that $\GG_m \ppart$ acts on it with two orbits.
Hence, according to Sect. \ref{sssec:action by semi-direct product T A}, there are only two characters to consider: the trivial one and any nonzero one of our choice. For the latter, it is natural to pick the residue character 
$$
\Res: \GG_a \ppart \to \GG_a, \, \, \, \, f(t) = \sum f_n t^n \mapsto f_{-1}.
$$

\sssec{}

Thus, we see that, for $\C \in GL_2 \ppart \rrep$, the invariant category
\begin{equation} \label{eqn:Whit-for-GL2}
\Whit(\C) := \C^{\,\GG_a \ppart, \Res},
\end{equation}
called the \emph{Whittaker invariant category of $\C$}, contains most of the information about the original $\C$. 

\sssec{}

In the rough formulation of the local Langlands conjecture, the functor $\LL_{GL_2}$ is set to be the above Whittaker invariant functor $\Whit$. Following up on the remark of Sect. \ref{sssec:best-hope-LGL}, it is now manifest that the conjecture cannot be true: the trivial $G \ppart$-representation $\Vect$ has Whittaker invariant zero.

\ssec{Local geometric Langlands for general $G$} \label{ssec:LGL for general G}

Let us move to the case of $G$ an arbitrary reductive group and adapt the above definition of $\Whit$. This will yield the functor $\LL_G$, which ought to realize the Langlands correspondence in general (modulo the provisos of Section \ref{sssec:best-hope-LGL}).

\sssec{}

The expression (\ref{eqn:Whit-for-GL2}) admits a generalization to an arbitrary reductive $G$ as follows. Let $N$ be a maximal unipotent subgroup of $G$ (for $G=GL_n$, this is the subgroup of unipotent triangular matrices).
It turns out that $N \ppart$  still admits a non-degenerate\footnote{i.e., nonzero on any simple root space} character $\chi$, given by \emph{the sum of residues}. To define it, denote by $\{\alpha_1, \ldots, \alpha_r \}$ the simple roots of $G$, thought of as maps $N \to \GG_a$, and let
\begin{equation} \label{eqn:chi}
\chi: N \ppart \to \GG_a \, \, \, \,  n(t) \mapsto \on{Res} \Big( \sum_{j=1}^r \alpha_j(n(t)) \Big).
\end{equation}
If $\C$ is acted upon by $G \ppart$, we define the Whittaker invariant category of $\C$ to be
$$
\Whit(\C) := \C ^{N \ppart, \chi}.
$$

\sssec{}

When $G$ has rank greater than one, the maximal unipotent subgroup $N \subset G$ is no longer abelian, hence the Fourier-Deligne transform along $N \ppart$ is not available. Consequently, it is not clear how conservative the operation $\C \squigto \Whit(\C)$ is. 
Nevertheless, the \virg{best hope} of the local geometric Langlands conjecture states that $\Whit$ is the functor $\LL_G$ of (\ref{ssec:local-geom-langlands}).

\begin{rem}

We refer to \cite{Outline} and \cite{Dario-ext-whit} for the crucial role of the Whittaker invariant categories in the context of global geometric Langlands. In particular, \cite{Dario-ext-whit} uses a strategy similar to the one of the present paper to show that the \emph{extended Whittaker coefficient} for $G = GL_n$ behaves as if $N$ was a vector group.

\end{rem}

\ssec{Whittaker invariants vs Whittaker coivariants}

Let us now get closer to discussing the main theorem of this paper.
For a $G \ppart$-representation $\C$, we have, along with the Whittaker invariant category, also the \emph{Whittaker coinvariant} category.
Thus, one could propose that the Langlands equivalence $\LL_G$ is instead the functor
$$
\C 
\rightsquigarrow
\C_{N \ppart, \chi},
$$
which attaches to $\C$ its coinvariant Whittaker category.
A priori, this would lead to a different local geometric Langlands correspondence. After Gaitsgory (\cite{quantum-langlands}), we conjecture (and prove for $G = GL_n$) that the categories $\C^{N \ppart, \chi}$ and $\C_{N \ppart, \chi}$ are equivalent.

\sssec{}

More precisely, for any category $\C$ acted on by $G \ppart$, we construct an explicit functor $\Theta: \C_{N \ppart, \chi} \to \C^{N \ppart, \chi}$ and repropose the following conjecture:

\begin{conj} \label{big-conj}
For $\C$ a category acted on by $G \ppart$, the functor $\Theta: \C_{N \ppart, \chi} \to \C^{N \ppart, \chi}$ is an equivalence.
\end{conj}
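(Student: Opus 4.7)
For general reductive $G$, the failure of Fourier transform (since $N$ is non-abelian) rules out the strategy used for $GL_n$. The plan is to use the full $G\ppart$-action to express both $\C^{N\ppart,\chi}$ and $\C_{N\ppart,\chi}$ as invariants against a single \emph{genuine} pro-unipotent group subscheme $I^-_k \subset G\ppart$ (not an ind-scheme), where invariants and coinvariants canonically coincide. This parallels the outcome announced for $GL_n$ but must be obtained without Fourier-theoretic diagonalization.

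Step 1 (the $\Theta_k$ are all equivalent). Set $\gamma_k := 2\rhoch(t^k) \in T\ppart \subset G\ppart$; since $\langle 2\rhoch,\alpha_j\rangle = 2$ for every simple root, conjugation by $\gamma_k$ scales each simple-root component of $\chi$ by $t^{2k}$, sending $\chi$ to a character $\chi_k$ of conductor $\simeq 2k$. Using the $G\ppart$-action on $\C$, translation by $\gamma_k$ induces equivalences $\C^{N\ppart,\chi} \simeq \C^{N\ppart,\chi_k}$ and $\C_{N\ppart,\chi} \simeq \C_{N\ppart,\chi_k}$ intertwining $\Theta_0$ with $\Theta_k$. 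Hence it suffices to treat the problem at level $k \gg 0$, where the rescaled character will pair with larger and larger pro-unipotent subgroups of $G\ppart$.

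Step 2 (the pro-unipotent model). For $k \geq 0$, define
$$
I^-_k := \big(N^-\ppart \,\cap\, \Ad_{\gamma_k}(G[[t]])\big) \cdot N[[t]] \;\subset\; G\ppart,
$$
a pro-unipotent group \emph{subscheme}: the first factor is the finite-dimensional lattice $\gamma_k N^-[[t]]\gamma_k^{-1} \cap N^-\ppart$ inside $N^-\ppart$, and multiplying by the pro-finite $N[[t]]$ yields a genuine scheme (not ind-scheme). The character $\chi_k$ extends canonically to $\tilde\chi_k: I^-_k \to \GG_a$, trivial on the pro-unipotent radical. Because $I^-_k$ is a genuine pro-unipotent scheme, the convolution category $(\Dmod(I^-_k),\star)$ is rigid, so twisted invariants and coinvariants of $\C$ for $(I^-_k,\tilde\chi_k)$ canonically agree. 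The goal reduces to producing canonical equivalences
$$
\C^{N\ppart,\chi_k} \,\xrightarrow{\,\simeq\,}\, \C^{I^-_k,\tilde\chi_k} \,\xleftarrow{\,\simeq\,}\, \C_{N\ppart,\chi_k}
$$
whose composite matches the translated $\Theta_k$, thereby realizing it as the identity under these identifications.

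Step 3 (stabilization --- main obstacle). Filter $N\ppart$ by congruence subgroups $N_r$ adapted to $\chi_k$, so that $\C^{N\ppart,\chi_k}$ is a colimit of $(N_r,\chi_k|_{N_r})$-invariants. One must show that for $r,k \gg 0$ the averaging functor $\Av^{I^-_k,\tilde\chi_k}$ turns the transition maps of this colimit into equivalences, causing it to collapse onto $\C^{I^-_k,\tilde\chi_k}$. This stabilization is the heart of the argument and the main obstacle: it decisively uses the $G\ppart$-hypothesis, because the averaging directions lie in $N^-\ppart$, which acts on $\C$ only through the extension of the $N\ppart$-action to $G\ppart$. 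The technical difficulties are two-fold: first, constructing $\Av^{I^-_k,\tilde\chi_k}$ and showing it is well-defined and continuous within the framework of $\fD$-modules on ind-schemes of pro-finite type set up earlier in the paper; second, proving the colimit stabilization itself, which one approaches by a Mayer--Vietoris reduction along the Bruhat stratification of $N^-\ppart \backslash G\ppart / N\ppart$ and the contractibility of the relevant affine Schubert cells. The coinvariant half of the diagram is handled by the dual bar/cobar argument, using the duality $\Dmod^!(\bG) \simeq \Dmod^*(\bG)$ recorded in the excerpt.
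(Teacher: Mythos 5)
First, be aware that the statement you are proving is left as a \emph{conjecture} in the paper: it is established only for $G=GL_n$ (in fact under the weaker hypothesis of a $P\ppart$-action, $P$ the mirabolic), by induction on $n$ using the Fourier--Deligne transform for the Tate vector space $\AA^{n-1}\ppart$ and the explicit pro-unipotent subgroups $\bH_k$. Your overall plan has the right shape and genuinely parallels that proof: identify both Whittaker categories with $(H,\chi)$-invariants for a single genuine pro-unipotent group subscheme $H\subset G\ppart$, where invariants and coinvariants coincide (Theorem \ref{thm:inv=coinv}). But your Step 3 is a declared gap, not an argument. The claim that $\Av_*^{I^-_k,\tilde\chi_k}$ collapses the colimit presentation of $\C^{N\ppart,\chi_k}$ onto $\C^{I^-_k,\tilde\chi_k}$ is essentially the whole content of the conjecture; in the $GL_n$ case this is exactly where all the work is done (Propositions \ref{lem:PHI} and \ref{lem:PSI}: induction on $n$, Fourier transform converting $(\bA_k,\chi_a)$-invariants into the restriction $\uprestr{\C}{\bL_k}$, and transitivity of the $\bG_k$-action on $\bL_k$ with stabilizer $\bH'_k$, via Propositions \ref{prop:sigma_equiv} and \ref{prop:X/H---x/S}). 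For non-abelian $N$ there is no Fourier transform, and you supply no substitute: \'Mayer--Vietoris along the Bruhat stratification of $N_-\ppart\backslash G\ppart/N\ppart$" is a name for a strategy, not a proof --- no cleanness or contractibility statement for the non-open cells is formulated, and the interaction of the non-degenerate character with those cells is precisely the delicate point in all Whittaker averaging arguments.

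There are also smaller, repairable issues. You must verify that $I^-_k$ is actually a subgroup --- a product of two subgroups of $G\ppart$ need not be one; the paper exhibits $\bH_k$ as a semidirect product $\bG_k\ltimes\bA_k$ with $\bG_k$ the preimage of a subgroup under a group homomorphism, and your definition needs the analogous Iwahori-type factorization. Likewise the multiplicativity of $\tilde\chi_k$ on $I^-_k$ is not automatic (the paper proves the corresponding statement for $\bG_k$ by exhibiting a multiplicative projection onto $N'[t]/t^k$). Finally, Step 1 is correct but carries little content: the functors $\Theta_k$ for different $k$ differ by an invertible cohomological shift coming from the change of dimension theory, so any one of them being an equivalence already implies it for all. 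None of these smaller points is fatal, but without Step 3 the conjecture remains open on your account, as it does in the paper.
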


While the present paper was under the publication process, the above conjecture has been proven by Sam Raskin, see \cite{sam-W-algebras}.

\sssec{}

Note that the functor $\Theta$ goes from coinvariants to invariants: this is the most natural direction to define such a functor. Indeed, coinvariants are realized as a colimit, while invariants as a limit; in general, it is much easier to define a functor from a colimit to a limit than vice versa.


\sssec{}

Let us describe some consequences of the above conjecture.
On one hand, it is tautological that $\C \squigto \Whit(\C)$ commutes with limits, but not that it commutes with colimits. On the other hand, it is tautological that $\C \squigto \C_{N \ppart, \chi}$ commutes with colimits, but not with limits.
Hence, the conjecture implies that Whittaker invariants and coinvariants commute with both limits and colimits. In particular, the functor $\LL_G$ is the functor 
$$
\C \squigto \C \usotimes{\Dmod(G \ppart)} \Dmod(G \ppart)^{N \ppart, \chi}
$$
of tensoring up with the \emph{universal Langlands category} $\Dmod(G \ppart)^{N \ppart, \chi}$.

\sssec{} \label{sssec:doubly whittaker}

Thus, the statement of the local geometric Langlands conjecture can be rephrased as follows. The $G \ppart$-module category $\Dmod(G \ppart)^{N \ppart, \chi}$ admits a compatible right action of $\QCoh(\LocSys_{\Gch} (\D^\times))$ and it yields a Morita equivalence between $(\Dmod(G \ppart), \star)$ and $(\QCoh(\LocSys_{\Gch} (\D^\times)), \otimes)$.

\medskip

In particular, the fully faithfulness of the inverse of $\LL_G$ would imply:

\begin{conj}
There is a monoidal equivalence
$$
\QCoh(\LocSys_{\Gch} (\D^\times)) 
\simeq
{}^{N \ppart, -\chi} \Dmod(G \ppart)^{N \ppart,\chi},
$$
where the RHS is the \virg{doubly} Whittaker invariant category of $\Dmod(G \ppart)$.
\end{conj}

The problem mentioned in Sect. \ref{sssec:best-hope-LGL} should not affect this conlecture, which is therefore expected to be true without any further tweaking.

\ssec{Overview of the results}

In this paper we prove a refined version of Conjecture \ref{conj:Tequiv} for $G= GL_n$. Our main theorem reads:

\begin{thm} \label{thm:intro}
Let 
$$
P 
:=
\left( \begin{array}{cc}
GL_{n-1} & \star  \\
0 & 1 \end{array} \right)
 \subseteq GL_n
$$ 
be the mirabolic subgroup of $GL_n$. For any category $\C$ equipped with a $P \ppart$-action, the functor $\Theta: \C_{N \ppart, \chi} \to \C^{N \ppart, \chi}$ is an equivalence.
\end{thm}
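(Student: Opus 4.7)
The plan is to reduce the theorem to a statement about invariants versus coinvariants for a genuine pro-unipotent group scheme (as opposed to a formal ind-scheme). The key construction, enabled by the mirabolic hypothesis, is a pro-unipotent group subscheme $H \subset P\ppart$ --- meaning a filtered inverse limit of unipotent affine groups of finite type, not merely an ind-scheme --- together with a character $\psi: H \to \GG_a$, such that for every $\C \in P\ppart\mmod$ there are canonical equivalences
$$\C^{N\ppart,\chi} \;\simeq\; \C^{H,\psi}, \qquad \C_{N\ppart,\chi} \;\simeq\; \C_{H,\psi}$$
compatible with the family $\Theta_k$. The group $H$ is built explicitly by combining $N\ppart$ at sufficiently negative powers of $t$ with a deep congruence subgroup of $P\ppart$ at nonnegative powers. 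The mirabolic hypothesis is essential here: because $P$ misses the simple-root direction corresponding to the last row of $\GL_n$, the resulting subgroup genuinely closes up to a pro-scheme inside $P\ppart$, which is not possible for a general reductive $G\ppart$.

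Once $H, \psi$ are in hand, I would prove the pro-unipotent analogue: each $\Theta_k: \C_{H,\psi} \to \C^{H,\psi}$ is an equivalence. Write $H = \varprojlim H/H_i$ with $H/H_i$ unipotent of finite type, so that the successive kernels $H_i/H_{i+1}$ are vector groups $\GG_a^{m_i}$. Applying the Fourier-Deligne transform \eqref{eqn:FD-mod} to each abelian layer identifies $(\GG_a^{m_i},\psi_i)$-invariants of a module category with the $!$-fiber at $\psi_i \in (\GG_a^{m_i})^\vee$, and coinvariants with the $*$-fiber; since fiber and cofiber at a smooth point of an affine variety agree up to a shift by the codimension, each layer's norm map is an equivalence. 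Assembling the layers yields the equivalence $\C_{H,\psi} \simeq \C^{H,\psi}$, and the $\ZZ$-family $\Theta_k$ reflects the ambiguity of cohomological shift arising from the infinite-dimensional dimension torsor of $H$.

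The main obstacle is the construction in the first step and its compatibilities. One must verify that $H$ is actually a pro-unipotent group scheme (not merely an ind-scheme), that the change-of-group equivalences between $(N\ppart,\chi)$- and $(H,\psi)$-invariants/coinvariants respect the Fourier-Deligne transform for Tate vector spaces developed earlier in the paper, and that under these identifications the transition functors $\Theta_k$ on the two sides are intertwined up to the correct shift. This is largely dimension-torsor bookkeeping: one identifies both sides with invariants of the combined group $N\ppart \cdot H$ against the product character, then applies the Tate-space Fourier transform on the ind-affine quotients $N\ppart \cdot H / H$ and $N\ppart \cdot H / N\ppart$. The combinatorics specific to the mirabolic of $\GL_n$ is what ultimately allows the pro-scheme closure; generalizing beyond the $\GL_n$-mirabolic case would require a substantially different idea, which is consistent with the conjecture being only partially resolved for general reductive $G$.
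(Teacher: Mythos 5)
Your overall skeleton does coincide with the paper's: introduce a pro-unipotent group scheme $\bH_k \subset P\ppart$ with a character, prove that $(\bN,\chi)$-invariants and $(\bN,\chi)$-coinvariants of $\C$ are both identified with the $(\bH_k,\chi)$-invariants, and then conclude by the (easy) equivalence of invariants and coinvariants for pro-unipotent groups --- which, incidentally, the paper gets in one line from cohomological contractibility of $\bH_k$ (Theorem \ref{thm:inv=coinv}), rather than by your layer-by-layer Fourier argument.

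The genuine gap is that the central step --- the change-of-group equivalences $\C^{\bN,\chi}\simeq\C^{\bH_k,\chi}$ and $\C_{\bN,\chi}\simeq\C_{\bH_k,\chi}$ --- is only asserted, and the one mechanism you propose for it would not work. You suggest identifying both sides with invariants of the ``combined group'' $N\ppart\cdot \bH_k$ against the product character and then doing dimension-torsor bookkeeping with the Tate Fourier transform on the quotients. First, $N\ppart\cdot\bH_k$ is not a subgroup of $GL_n\ppart$ once $n\geq 3$: commutators of upper-triangular loops with arbitrarily negative poles against the positive-power lower-triangular part of $\bH_k$ take you out of the product set, so ``invariants of the combined group'' is not even defined. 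Second, and more fundamentally, equating $(\bN,\chi)$-invariants with invariants against a strictly larger (or transverse) group is precisely the nontrivial content of the theorem; it cannot be reduced to bookkeeping, as the analogous statement fails for degenerate characters. The paper's actual mechanism is an induction on $n$ driven by the mirabolic structure $P\simeq GL_{n-1}\ltimes\AA^{n-1}$: one writes $N=N'\ltimes\AA^{n-1}$, Fourier-transforms the $\AA^{n-1}\ppart$-action so that $(\bA,\chi_a)$-invariants become the fiber of $\C$ over the character $\chi_a$, and then invokes the key geometric fact (Lemma \ref{lem:last_row_e_n-1}) that $\bG_k$ acts transitively on the affine subspace $\chi_a+\bA_k^\perp$ with stabilizer exactly $\bH'_k$; the transitive-action results (Propositions \ref{prop:X/H---x/S} and \ref{prop:sigma_equiv}) then trade $\bG_k$-invariants of the restriction to that subspace for $\bH'_k$-invariants of the fiber, and the induction hypothesis for $GL_{n-1}$ finishes. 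This transitivity --- not the fact that $\bH_k$ ``closes up'' to a pro-scheme (analogous pro-unipotent subgroups exist inside any $G\ppart$) --- is where the mirabolic hypothesis is really used, and it is the idea missing from your proposal. Finally, identifying the composite of your two equivalences with the specific functor $\Theta_k$ is itself a nontrivial step (it needs $\Theta_\bN\simeq\Theta_{\bN'}\circ\Theta_\bA$ and the commutation of the relevant averaging functors coming from the semidirect-product Lemma \ref{lem:AvAv_commute}), which your ``intertwined up to the correct shift'' does not supply.
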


\sssec{}  \label{sssec:strategy-outline-for-main-theorem}

To prove this theorem, we fix an integer $k \geq 1$ and make use of an explicit group-scheme $\bH_k \subset P \ppart$. For $n=2$ and $n=3$, the group in question looks like
$$
\bH_k = \left( \begin{array}{cc}
1+t^k \O & t^{-k}\O  \\
0 & 1 \end{array} \right),
\, \, \,
\bH_k = \left( \begin{array}{ccc}
1+t^k\O & t^{-k}\O & t^{-2k}\O \\
t^{2k}\O & 1+t^k\O & t^{-k}\O \\
0 & 0 & 1 \end{array} \right),
$$
and the generalization to any $n$ is straightforward ($\O$ denotes the ring of formal Taylor series).
The sum of the residues of the entries in the over-diagonal yields a character on $\bH_k$ that we continue to denote $\chi$.

\medskip

The idea is to relate $(\bN, \chi)$-invariants with coinvariants on $\C$ by passing through $(\bH_k, \chi)$-invariants on $\C$. Namely, we will produce natural equivalences
$$
\C_{\bN, \chi} \longleftarrow \C^{\bH_k, \chi} \longleftarrow \C^{\bN, \chi}
$$
and observe that the composition of the inverse functors is our $\Theta$, up to a cohomological shift that depends on $k$.

\medskip

In the final part of this introduction, we will sketch the construction of the equivalence $\C^{\bN, \chi} \simeq \C^{\bH_k, \psi}$.

\begin{rem}
A parallel argument will yield the equivalence $\C_{\bN, \chi} \simeq \C_{\bH_k, \chi}$. 
Having established that, the equivalence $\C_{\bN, \chi} \simeq \C^{\bH_k, \chi}$ arises thanks Theorem \ref{thm:inv=coinv}. The latter states that, for any pro-unipotent group $H$, there is a natural equivalence $\C^{H, \mu} \simeq \C_{H, \mu}$.
\end{rem}

\sssec{}

To construct $\C^{\bN, \chi} \simeq \C^{\bH_k, \psi}$, let us first treat the case of $G= GL_2$ in detail. We proceed in stages.

\begin{itemize}

\item

There is a canonical equivalence
$$
\C^{\bH_k, \chi} \simeq  \Big( \C^{t^{-k}\O, \Res} \Big)^{1 + t^k \O}.
$$
This is nothing but a tautological result on actions by semi-direct groups: if $\C$ is acted on by $K \ltimes H$, then $\C^{K \ltimes H} \simeq (\C^H)^K$, and similarly in the presence of characters.
See Lemma \ref{lem:semidirect-pro}.

\medskip

\item

We use the Fourier transform to obtain
$$
\Big(  \C^{t^{-k}\O, \Res} \Big)^{1+ t^k \O} \simeq \Big( \! \uprestr \C {1+t^k \O} \Big)^{1 + t^k \O}.
$$
The general paradigm, which we are applying here to the inclusion $t^{-k} \O \subset \GG_a \ppart$, is the following.
Let $W \subseteq V$ an inclusion of Tate vector spaces, and $\chi \in V^\vee$ a character. Under the Fourier transform, $\C ^{W, \chi}$ is equivalent to the restriction $\uprestr \C{\chi + W^\perp}$, where $W^\perp$ is the annihilator of $W$ inside $V^\vee$.
See Proposition \ref{prop:FT_char_subspace_indpro}.

\medskip

\item

Next, using the regular action of $1+t^k \O$ on itself, we obtain
$$
\Big( \! \uprestr \C {1+t^k \O} \Big)^{1 + t^k \O} 
\simeq 
 \uprestr \C 1.
$$

\medskip

\item

By the Fourier transform again, we have $\uprestr \C 1 \simeq \C^{\bN, \Res}$.
\end{itemize}

\medskip


The proof for $G=GL_n$ with $n > 2$ uses the same logic, combined with induction on $n$: indeed, $N$ is the semi-direct product $N' \ltimes \AA^{n-1}$, where $N'$ refers to the maximal unipotent subgroup of $GL_{n-1}$.

The only slight difference will be in the third step in the list above. Namely, for $GL_2$ we have encountered a simply transitive action: the action of $1+t^k \O$ on itself. In general, we will encounter actions that are transitive, but not simply transitive, and we will use the following paradigm, see Proposition \ref{prop:sigma_equiv}.

If $\C$ fibers over $X/K$ and $K$ acts on $X$ transitively, then $S=\Stab(x \in X)$ continues to act on $\uprestr \C x$ and $\C^K \simeq (\uprestr \C x)^S$.
%

\ssec{Notation and detailed contents}


Let us explain how this paper is organized.

\sssec{} 

In Sect. \ref{SEC:action_fin_dim} we discuss the foundations of weak and strong group actions on categories, in the finite dimensional situation: we define Hopf monoidal categories, invariants and coinvariants, the Harish-Chandra monoidal category and analyze the relation between weak and strong (co)invariants.

\sssec{}

In Sect. \ref{SEC:Dmod-infinite}, we discuss some foundations of the theory of $\fD$-modules on schemes (and ind-schemes) of pro-finite type. The main examples of such are $\bG$, $\bN$ and variations thereof. There are two categories of $\fD$-modules on $\bG$, dual to each other. The first, $\Dmod^*(\bG)$, carries a convolution monoidal structure; its dual $\Dmod^!(\bG)$ is hence comonoidal and also carries a symmetric monoidal structure via the diagonal. It turns out that $\Dmod^*(\bG) \simeq \Dmod^!(\bG)$, as plain DG categories.

\sssec{}

We proceed in Sect. \ref{SEC:grp-actions} to define loop group actions on categories and the concept of invariants and coinvariants. Since $\bN$ is exhausted by its compact open group sub-schemes, we analyze group actions by pro-unipotent group schemes in great detail. For instance, we define and study natural functors among the original category, the invariant category and the coinvariant category. We show that the latter two are equivalent.

Next, we take up Whittaker actions of $\bN$ on categories: this is a special case of the above theory that accounts for the presence of the character $\chi: \bN \to \GG_a$. For any category acted on by $\bN$, we construct a functor (denoted by $\Theta$, as above) from Whittaker coinvariants to Whittaker invariants. 

\sssec{}

We discuss the abelian theory in Sect. \ref{SEC:loop_vectors}. We first review the theory of the Fourier-Deligne transform for finite dimensional vector spaces (in schemes) and then extend it to $\fD$-modules on $\AA^n \ppart$ (more generally, to $\fD$-modules on a Tate vector space). We prove that it still gives a monoidal equivalence. Finally, we re-interpret the concepts of the previous sections (invariants, coinvariants, averaging) in ``Fourier-transformed" terms.

\sssec{}

In Sect. \ref{SEC:trans_act}, we discuss categories fibering over a $K$-space $X$ and acted on by the group $K$ in a compatible fashion. In this situation, we study how the operations of restriction of $\C$ to $Y \subseteq X$ and taking $K$-invariants interact. Our main result there is Proposition \ref{prop:sigma_equiv}.

\sssec{}

Finally, in Sect. \ref{SEC:GL_n}, we take up the proof of Theorem \ref{thm:intro}. We discuss some combinatorics of $GL_n$ and define some group schemes of $GL_n \ppart$ that will play a central role. Our proofs are on induction on $n$ and rely heavily on the theory of all previous sections.

\ssec*{Acknowledgements}

I am immensely indebted to Dennis Gaitsgory for proposing the problem and generously teaching me most of the techniques to solve it. Many of the ideas described in this paper come directly from his suggestions.
I wish to express my deepest gratitude to Constantin Teleman for several years of patient explanations at UC Berkeley: it was him who introduced me to higher representation theory. 
It is a pleasure to thank Edward Frenkel for his invaluable help and the impact he had on my thinking about the geometric Langlands program. 
I am much obliged to Sam Raskin for several conversations that very strongly influenced the development of this paper.
I also benefited enormously from discussions with Jonathan Barlev, David Ben-Zvi and David Nadler. 

\section{Actions by groups of finite type} \label{SEC:action_fin_dim}

In this section we cover some background on categorical actions of affine groups of finite type. Let $G$ be such a group. We first show that $\Dmod(G)$, as well as $\QCoh(G)$, admits two dual Hopf structures. Next, we analyze the difference between strong and weak invariants: this is controlled by the Harish-Chandra monoidal category $\HC$. Lastly, whenever $G$ is equipped with an additive character $\mu: G \to \GG_a$, we discuss the notion of $\mu$-twisted $G$-actions.

\ssec{Hopf algebras and crossed products} \label{ssec:Hopf}

Given a symmetric monoidal $\infty$-category $(\C,\otimes)$, the $\infty$-category $\ms{Coalg}(\C)$ of its coalgebra objects inherits a symmetric monoidal structure, compatible with the forgetful functor $\Coalg(\C) \to \C$ (see \citep{L1}).
A \emph{Hopf algebra} in $\C$ is, by definition, an object in $\Alg(\Coalg(\C))$. The definition is known to be symmetric under the switch $\Alg \leftrightarrow \Coalg$, so that
\begin{equation} \label{eqn:def-Hopf}
\ms{HopfAlg}(\C) := \Alg(\Coalg(\C)) \simeq \Coalg(\Alg(\C)).
\end{equation}
Consequently, $\ms{HopfAlg}(\C)^\mathit{dualizable}$ (the full subcategory of $\ms{HopfAlg}(\C)$ spanned by those Hopf algebras that are dualizable as objects of $\C$) is closed under duality.

For $\C = (\DGCat, \otimes)$, we obtain the concept of \emph{Hopf monoidal} category (or just Hopf category, for short). 

\sssec{}

Any group object $(G, m)$ in $\C = \on{Set}$ (or $\Sch$, $\IndSch$, $\IndSch^{pro}$ etc.) is a Hopf algebra in $\C$, with multiplication being $m$ and comultiplication being $\Delta: G \to G \times G$. The compatibility between the two sctructures follows at once from commutativity of the diagram
\begin{gather}
\xy
(40,0)*+{ G \times G, }="00";
(0,0)*+{ G }="10";
(40,15)*+{ G \times G \times G \times G  }="01";
(00,15)*+{ G \times G  }="11";
{\ar@{<-}_{m_{13} \times m_{24} } "00";"01"};
{\ar@{<-}_{ \Delta  } "00";"10"};
{\ar@{<-}_{\Delta \times \Delta \,\, \, } "01";"11"};
{\ar@{<-}_{ m} "10";"11"};
\endxy
\end{gather} 
which shows that $m$ is a morphism of coalgebras (and that $\Delta$ is a morphism of algebras).

\begin{example} \label{example: Dmod(G)-Hopf}

For a group scheme $G$ of finite type, we claim that $\H = \Dmod(G)$ can be naturally endowed with the structure of a Hopf category. Indeed, as the functor $\Dmod: (\Sch^{\fty})^\op \to \DGCat$ is symmetric monoidal, it maps algebras (resp., coalgebras) in $\Sch^{\fty}$ to comonoidal (resp., monoidal) categories. It follows that $\Dmod(G)$ is Hopf with comultiplication induced by $m^!$ and multiplication by $\Delta^!$. For clarity, we denote this Hopf category as $(\Dmod(G), \Delta^!, m^!)$. 
\end{example}

\begin{example}
The self-duality $\Dmod(G)^\vee \simeq \Dmod(G)$ transforms $!$-pull-backs into $*$-push-forwards. Hence, $(\Dmod(G), m_*, \Delta_*)$ is also a Hopf category.
\end{example}

\sssec{}

The coalgebra structure on a Hopf category $\H$ allows to form the $\infty$-category $\H\ccomod := \on{Comod}_{\H}(\DGCat)$ of comodules categories for $\H$. The rest of the structure endows $\H \ccomod$ with a monoidal structure compatible with the tensor product of the underlying DG categories: informally, given $\C, \, \E \in \H \ccomod$, their product $\C \otimes \E$ has the following $\H$-comodule structure
$$
\C \otimes \E \xrightarrow{\coact_\C \otimes \coact_\E \, } \C \otimes \E \otimes \H \otimes \H \xrightarrow{ \on{mult_\H} \,} (\C \otimes \E) \otimes \H.
$$
Hence, we can consider algebra objects in $\H \ccomod$, that is, monoidal categories with a compatible coaction of $\H$. 

\sssec{}

Let $\H$ be a Hopf category, which is dualizable as a plain category. Then, as pointed out before, $\H^\vee$ (the dual of $\H$ as a plain category) is naturally a Hopf category. 
Given an object $\B \in \on{Alg}(\H^\vee \ccomod)$, we shall form the monoidal category $\H \ltimes \B$, called the \emph{crossed product algebra} of $\H$ and $\B$. At first approximation, $\H \ltimes \B$ can be described as follows: its underlying category is simply $\B \otimes \H$ and the multiplication is given by
$$
\B \otimes \H \otimes (\B \otimes \H)
\xrightarrow{\act_{\H \curvearrowright \B \otimes \H}}
\B \otimes \B \otimes \H
\xrightarrow{ m_\B}
\B \otimes \H.
$$
To be more precise, consider the obvious adjuction
$$
\mathsf{free}: \DGCat \rightleftarrows \B \mmod (\H^\vee \ccomod): \mathsf{forget},
$$
which satisfies the hypotheses of the monadic Barr-Beck theorem. Thus, $\B \mmod (\H^\vee \ccomod)$ concides with the category of modules for a monad whose underlying functor is $\C \mapsto \B \otimes (\H^\vee)^\vee \otimes \C$.
The monad structure endows $\B \otimes \H$ with an algebra structure, which is tautologically the one displayed above.

\begin{example}

Given a group $G$ as above, consider the Hopf category $\H = (\Dmod(G), m_*, \Delta_*)$.  Let $X$ an scheme (of finite type) acted upon by $G$. We claim that $\B= \Dmod(X)$, equipped with the point-wise tensor product, belongs to $\on{Alg}(\H^\vee \ccomod)$. In fact, the datum of the action $G \times X \xrightarrow{\, \, \act \, \,} X$ yields the coaction of $\H^\vee$ on $\Dmod(X)$, and the required compatibility arises from the commutative diagram
\begin{gather} \label{diag:GactsX}
\xy
(40,15)*+{ X }="00";
(0,15)*+{ G \times X }="10";
(40,0)*+{ X \times X.  }="01";
(00,0)*+{ G \times G \times X \times X  }="11";
{\ar@{->}_{\Delta_X } "00";"01"};
{\ar@{<-}_{ \act  } "00";"10"};
{\ar@{<-}_{\, \, \,\, \act \times \act } "01";"11"};
{\ar@{->}_{ \Delta_{G} \times \Delta_X} "10";"11"};
\endxy
\end{gather} 
Thus, we have a well-defined category $\Dmod(G) \ltimes \Dmod(X)$. This example, or rather its generalization to the ind-pro-setting, will be of importance later.

\end{example}

\sssec{}

A \emph{category over a quotient stack} $X/G$ (for us, always with connection) is by definition an object of $\ShvCat( (X/G)_\dR)$; see \cite{shvcat}.
This notion can be alternatively expressed via the crossed product $\Dmod(G) \ltimes \Dmod(X)$. Indeed:

\begin{prop} \label{prop:ShvCat(X/G)}
With the notation above, recall that $(\Dmod(X), \otimes)$ is an algebra object in $\Dmod(G) \ccomod$. The functor $\bGamma(X_\dR, -): \ShvCat( (X/G)_\dR) \to \DGCat$ upgrades to an equivalence
$$
\ShvCat( (X/G)_\dR) \xrightarrow{\, \, \simeq \, \, } \Dmod(G) \ltimes \Dmod(X) \mmod.
$$
\end{prop}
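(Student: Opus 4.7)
The plan is to realize $\ShvCat\bigl((X/G)_\dR\bigr)$ as a totalization over the Čech nerve of the atlas $X_\dR \to (X/G)_\dR$, then match the resulting cosimplicial diagram of DG categories with the two-sided bar description of $\Dmod(G) \ltimes \Dmod(X) \mmod$ provided by the monadic Barr--Beck adjunction of Section~\ref{ssec:Hopf}. First, since the de Rham functor commutes with colimits of prestacks, $(X/G)_\dR$ is the geometric realization of the simplicial prestack $[n] \mapsto G_\dR^n \times X_\dR$. Because $\ShvCat(-)$ is defined as a right Kan extension from affine DG schemes, it converts colimits of prestacks into limits of DG categories, yielding
$$
\ShvCat\bigl((X/G)_\dR\bigr) \;\simeq\; \lim_{[n] \in \Delta} \ShvCat\bigl(G_\dR^n \times X_\dR\bigr).
$$

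Second, apply the 1-affineness of $Y_\dR$ for a scheme $Y$ of finite type (\cite{shvcat}), which gives $\ShvCat(Y_\dR) \simeq \Dmod(Y) \mmod$; combined with Künneth, $\Dmod(G^n \times X) \simeq \Dmod(G)^{\otimes n} \otimes \Dmod(X)$. Inspecting the face and degeneracy maps of the Čech nerve, which are built from $m \colon G \times G \to G$, from $\Delta_G$, and from $\act \colon G \times X \to X$, and using $!$-pullback to pass to $\fD$-modules, one recognizes the resulting cosimplicial DG category as the cobar resolution computing $\Dmod(X) \mmod$ inside $\H^\vee \ccomod$ (with $\H = (\Dmod(G), m_*, \Delta_*)$). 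The commutativity of diagram \eqref{diag:GactsX} is precisely what promotes $\Dmod(X)$, with its pointwise tensor product, to an algebra object in $\H^\vee \ccomod$. Taking the totalization therefore yields an equivalence with $\Dmod(X) \mmod (\H^\vee \ccomod)$.

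Finally, invoke the equivalence $\B \mmod (\H^\vee \ccomod) \simeq \H \ltimes \B \mmod$ established in Section~\ref{ssec:Hopf}, specialized to $\B = \Dmod(X)$, to identify the above with $\Dmod(G) \ltimes \Dmod(X) \mmod$. Unwinding the constructions shows that the composite equivalence is implemented by $\bGamma(X_\dR, -)$, endowed with its tautological $\Dmod(G) \ltimes \Dmod(X)$-action coming from descent; as a sanity check, setting $X = \pt$ recovers the equivalence $\ShvCat(BG_\dR) \simeq (\Dmod(G), \star) \mmod$ recalled in the introduction. The main obstacle will be the middle step: identifying the cosimplicial totalization with comodules for a coalgebra requires careful bookkeeping of Hopf conventions ($m^!$ versus $m_*$, left versus right coactions), and one must appeal to Lemma~\ref{lem:mod-comod} to interpret the cobar resolution in terms of comodules for $(\Dmod(G), m^!)$ rather than modules for the convolution algebra $(\Dmod(G), m_*)$.
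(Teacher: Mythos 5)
Your route is genuinely different from the paper's, and its scaffolding is sound: the de Rham functor preserves colimits of prestacks, $\ShvCat$ converts them into limits, and $1$-affineness of $(G^n\times X)_\dR$ gives $\ShvCat(G^n_\dR\times X_\dR)\simeq \Dmod(G^n\times X)\mmod$. The paper instead descends along $(X/G)_\dR\to (BG)_\dR$: it quotes $\ShvCat(BG_\dR)\simeq \Dmod(G)\ccomod$, proves conservativity of $\coind_f$ using local triviality of $G_\dR$-torsors together with Lemma \ref{lem:1-affine-etale} and $1$-affineness of $X_\dR$, and then identifies the Barr--Beck monad with $\Dmod(X)\otimes -$.

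The gap in your proposal is the step ``one recognizes the resulting cosimplicial DG category as the cobar resolution \dots\ taking the totalization therefore yields an equivalence with $\Dmod(X)\mmod(\H^\vee\ccomod)$''. Recognizing the cosimplicial monoidal category $[n]\mapsto \Dmod(G)^{\otimes n}\otimes\Dmod(X)$ as the cobar construction is bookkeeping; the claim that the limit of the associated module categories equals $\Dmod(X)\mmod(\Dmod(G)\ccomod)$ is exactly the nontrivial content of the proposition and is not a formal consequence of the diagram having cobar shape --- indeed $\lim_\Delta\ShvCat(G^\bullet_\dR\times X_\dR)$ is tautologically $\ShvCat((X/G)_\dR)$, so without an argument about this limit you have gone in a circle. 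What is missing is monadicity of the evaluation functor $\lim_\Delta \Dmod(G^\bullet\times X)\mmod\to\Dmod(X)\mmod$: either verify the adjointability (Beck--Chevalley) hypotheses of the cosimplicial Barr--Beck--Lurie theorem, which here reduce to the finite-type identities $\Dmod(G^{m}\times X)\otimes_{\Dmod(G^{n}\times X)}\Dmod(G^{n+1}\times X)\simeq\Dmod(G^{m+1}\times X)$ (the finite-type case of Lemma \ref{lem:BN}), and then identify the resulting monad $\Dmod(G\times X)\otimes_{\Dmod(X)}-\simeq\Dmod(G)\otimes-$, with its multiplication, as the crossed product $\Dmod(G)\ltimes\Dmod(X)$; or prove conservativity of that evaluation directly, which is in effect the paper's argument transported to your covering map. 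With that step supplied your argument closes, and the case $X=\pt$ is then reproved by it rather than merely serving as a sanity check.
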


\begin{proof}
The tautological map $q: X \to X/G$ yields the adjunction
$$
\cores_q:
\ShvCat((X/G)_\dR)
\rightleftarrows
\ShvCat(X_\dR):
\coind_q.
$$
It is clear that $\cores_q$ is conservative and that it commutes with all limits, whence it is \emph{comonadic}. It follows that $\ShvCat((X/G)_\dR)$ is equivalent to the $\infty$-category of comodules for the comonad $\cores_q \circ \coind_q$ acting on $\ShvCat(X_\dR)$. By the $1$-affineness of $X_\dR$, we have $\ShvCat(X_\dR) \simeq \Dmod(X) \mmod$. Under this equivalence, the comonad in question is given by the coalgebra $\Dmod(G) \otimes \Dmod(X) \in \Coalg(\Dmod(X) \mmod)$, which is dual in $\Dmod(X) \mmod$ to the algebra $\Dmod(G) \ltimes \Dmod(X) \in \Alg(\Dmod(X) \mmod)$. Thus, 
$$
\ShvCat((X/G)_\dR) 
\simeq
(\Dmod(G) \ltimes \Dmod(X)) \mod(\Dmod(X) \mmod)
$$
and the latter is tautologically $\Dmod(G) \ltimes \Dmod(X) \mmod$, as claimed.
\end{proof}

\ssec{Groups actions on categories}
In this section, we officially define the notion of strong $G$-action on a category $\C$. For such $\C$, we also define the $G$-invariant and $G$-coinvariant categories $\C^G$ and $\C_G$. 
The corresponding notions in the weak context (weak actions, weak (co)invariants, and the natural functor $\C_{G,w} \to \C^{G,w}$) are defined in a completely analogous manner. 

We provide several examples of categories equipped with strong and work $G$-actions. The most interesting one is the strong action of $G$ on the category of modules over its Lie algebra.

\sssec{}

We say that $G$ acts \emph{strongly} on a (co-complete DG) category $\C$ if the latter is equipped with the structure of a comodule category for $(\Dmod(G), m^!)$. The same datum is equivalent to $\C$ possessing an action of the monoidal category $(\Dmod(G), m_*)$.

\medskip

The totality of categories equipped with a strong $G$-action forms an $\infty$-category, denoted by
$$
G \on{-} \mathbf{rep} := (\Dmod(G), \star) \mmod \simeq (\Dmod(G), m^!) \ccomod.
$$
The theory of \citep{L1} guarantees that $G \on{-} \mathbf{rep}$ admits limits and colimits, both computed object-wise.
By the previous section, $G \on{-} \mathbf{rep} := (\Dmod(G), \star)$ is monoidal: in terms of the coaction, for $\C, \E \in G \on{-} \mathbf{rep}$, the coaction of $\Dmod(G)$ on $\C \otimes \E$ given by 
\begin{equation} \label{eqn:coact on a tensor}
c \otimes e \mapsto \Delta_{G}^! \big(  \coact(c) \boxtimes \coact(e) \big).
\end{equation}

\sssec{}

An example of a category with a $G$-action is the \emph{regular representation}: $\Dmod(G)$, considered as a module over itself. In analogy with this, we sometimes denote the action map $\Dmod(G) \otimes \C \to \C$ by the convolution symbol, $\star$.

\medskip

Another example is the \emph{trivial representation} $\Vect$, the category of complexes of $\kk$-vector spaces, endowed with the (left, as well as right) $G$-action specified by the monoidal functor $\Gamma_\dR : \Dmod(G) \to \Vect$.
More generally, we say that $G$ acts on $\C$ \emph{trivially} if the action $\Dmod(G) \otimes \C \to \C$ is given by $M \otimes c \mapsto \Gamma_\dR(G, M) \otimes c$.

\sssec{}

For $\C \in G \rrep$, we define its (strong) \emph{coinvariant} and \emph{invariant} categories as
$$
\C_{G} := \Vect \usotimes{\Dmod(G)} \C,
\hspace{.7cm}
\C^{G} := \Hom_{\Dmod(G)} ( \Vect,  \C ).
$$
They come with tautological functors $\pr_G: \C \to \C_G$ and $\oblv_G: \C^G \to \C$.

\medskip

\begin{conv}
What we have treated so far is the concept of \emph{left} $G$-action. Right actions are defined in the obvious way. 
Whenever the $G$-action on $\C$ is clear from the context, we write $\C^G$ for the invariant category (regardless of whether the $G$-action is left or right). 
On the contrary, if it is important to distinguish between right and left $G$-action (in the case $\C$ is equipped with both), we will denote by $^G \C$ and $\C^G$ the left and right invariant categories, respectively. The same conventions hold for the coinvariant categories.
\end{conv}

\sssec{} \label{sssec:paradigm_dual actions}

If $\C \in G \rrep$ is dualizable as a plain category, its dual $\C^\vee := \Hom(\C, \Vect)$ inherits a \emph{right} action of $G$, described informally by
$$
\C^\vee \otimes \Dmod(G) \to \C^\vee, \, \, \, \, \phi(-) \otimes F \mapsto \phi(F \star - ).
$$
With this structure,
\begin{lem} \label{lem:duality-inv-coinv}
If $\C$ and ${}_G\C$ are dualizable as DG categories, then $({}_G \C)^\vee \simeq (\C^\vee)^{G}$. Under this equivalence, the tautological functors $\oblv^G: (\C ^\vee)^G \to \C^\vee$ and $\pr_G: \C \to {}_G\C$ are dual to each other.
\end{lem}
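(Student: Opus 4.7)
The plan is to exploit the universal property of the relative tensor product together with the dualizability of $\C$; modulo bookkeeping of left/right actions, this is essentially all that is required.

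First, I would unpack $(\C_G)^\vee = \Hom(\C_G, \Vect)$. Since $\C_G = \Vect \usotimes{\Dmod(G)} \C$, a continuous functor out of $\C_G$ is the same datum as a $\Dmod(G)$-balanced pairing $\Vect \otimes \C \to \Vect$, which in turn (because $\Vect$ is the monoidal unit) is the same as a $\Dmod(G)$-linear functor $\C \to \Vect$ with $\Vect$ carrying the trivial $G$-action. This already gives
\[
(\C_G)^\vee \simeq \Hom_{\Dmod(G)}(\C, \Vect).
\]

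Second, since $\C$ is dualizable, the tautological equivalence $\Hom(\C, \Vect) \simeq \C^\vee$ transports the $\Dmod(G)$-equivariance condition on a functor $\phi: \C \to \Vect$ into a right-$G$-invariance condition on the corresponding element $\widetilde\phi \in \C^\vee$. This is a short definition chase: by the paradigm recalled in \S\ref{sssec:paradigm_dual actions}, the right $G$-action on $\C^\vee$ is \emph{defined} as precomposition with the left $G$-action on $\C$, so the identity $\phi \circ (M \star -) = \phi$ matches $\widetilde\phi \cdot M = \widetilde\phi$ on the nose. Combining the two steps yields $(\C_G)^\vee \simeq (\C^\vee)^G$.

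For the compatibility assertion, I would trace $\pr_G$ through the identification. Since $\pr_G: \C \to \C_G$ is the structure map of the (co)equalizer, its transpose $(\pr_G)^\vee$ sends a $\Dmod(G)$-balanced pairing to the underlying unbalanced pairing. Under the chain of equivalences above, forgetting the balance condition corresponds precisely to forgetting the right-$G$-equivariance of $\widetilde\phi \in (\C^\vee)^G$, which is by definition the functor $\oblv^G$.

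The only genuine difficulty is bookkeeping: one must confirm that the closed-symmetric-monoidal identifications of \cite{L1} remain equivariant for the actions in play, rather than being off by an antipode or a switch of sides. No substantial new input is required beyond the general formalism of relative tensor products and internal hom in $\DGCat$.
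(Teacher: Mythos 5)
Your proof is correct and is essentially the paper's argument: the paper dualizes the Bar realization of $\C_G$ termwise into the cobar realization of $(\C^\vee)^G$ (formulas (\ref{eqn:simplicial_cat}) and (\ref{eqn:cosimplicial_cat})), which is exactly what your two steps — the universal property of $\Vect \usotimes{\Dmod(G)} \C$ followed by transporting $\Dmod(G)$-linearity through $\Hom(\C,\Vect)\simeq \C^\vee$ — amount to, and your identification of $(\pr_G)^\vee$ with $\oblv^G$ matches the paper's compatibility claim. The only caution is the one you already flag: $G$-linearity/invariance is coherent structure rather than a strict identity (the trivial action is $M \star v \simeq \Gamma_\dR(G,M)\otimes v$, not the identity), so the \emph{\'\'on the nose''} matching should be read as the termwise bar/cobar comparison rather than an equality of functors.
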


\begin{proof}
This is immediate from the bar realizations of ${}_G \C$ and $(\C^\vee)^G$. See formulas (\ref{eqn:simplicial_cat}) and (\ref{eqn:cosimplicial_cat}).
\end{proof}

\sssec{}

To obtain the concept of \emph{weak} action, one changes $\fD$ with $\QCoh$ and consider the push-forwards and pull-back functors of quasi-coherent sheaves. Details are left to the reader. For $\C$ a weak $G$-representation, we shall denote by $\C^{G,w}$ and $\C_{G,w}$ its weak $G$-invariant and coinvariant categories.

\medskip

Note that strong $G$-actions can be thought of as weak actions by the group prestack $G_\dR$. For this, we are using the realization of $\fD$-modules as \emph{left crystals}: $\Dmod(G) = \QCoh(G_\dR)$. 
In particular, we see that $\QCoh(G)$ acts on $\Dmod(G)$ via the monoidal functor $\ind_L: (\QCoh(G), \star) \to (\Dmod(G), \star)$, left adjoint (as well as dual) to the forgetful functor $\oblv_L = \q^*: \Dmod(G)  \to \QCoh(G)$. To prove $\ind_L$ is in fact monoidal, recall that $\oblv_L$ intertwines quasi-coherent with de Rham pull-backs, so that by duality $\ind_L$ intertwines the corresponding push-forward functors.

\sssec{}

The action of $G$ on a scheme $X$ of finite type induces a weak (resp., strong) action of $G$ on $\qcoh(X)$ (resp., $\Dmod(X)$).
More generally, the action of $G$ (resp., $G_\dR$) on an arbitrary prestack $\Y$ gives rise to a weak (resp., strong) $G$-action on $\QCoh(\Y)$. Tautologically, the invariant category is 
$$
\QCoh(\Y)^{G,w} \simeq \QCoh(\Y/G), \text{        (resp., } \QCoh(\Y)^{G,s} \simeq \QCoh(\Y/G_\dR) \text{)}, 
$$
where the quotient is simply the geometric realization of the Cech cosimplicial prestack:\footcite{In other words, we are considering the \emph{prestack quotient}. Recall, however, that $\QCoh(-)$ and $\Dmod(-)$ are insensitive to the operation of sheafification in the flat topology.}
We shall next discuss a fundamental example of this situation, recovering the adjoint action of $G$ on its Lie algebra $\fg$.

\sssec{}

Let $\wh G := \{1\} \times_{G_\dR} G$ denote the formal group of $G$ at $1 \in G$. The left $G_\dR$-action on $ G_\dR / G \simeq \pt/  \wh G $ yields a strong $G$-action on $\QCoh(\pt/ \wh G)$.

We claim that this action is familiar. Let $\fg \mod$ is the category of representations of the Lie algebra of $G$. There is an equivalence $\QCoh(\pt/ \wh G) \simeq \fg \mod$, under which the strong $G$-action just described corresponds to the adjoint action of $G$ on $\fg$.
This follows from the equivalence between Lie algebras and formal groups, which holds over a field of characteristic zero. The correspondence associates to a Lie algebra $\fg$ the formal group $\on{Spf}(U(\fg)^\vee)$ and it is well-known that $U(\fg)$ and $\O(\wh G)$ are dual Hopf algebras. See \cite[Chapters IV.2, IV.3]{GR_corresp} for a thorough treatment.

We will not use any Lie algebra theory in the present paper, and by the symbol ``$\fg \mod$" we understand the category $\QCoh(\pt/ \wh G)$. Clearly,
$$
(\fg \mod)^{G,s} \simeq \QCoh(G \backslash G_\dR)^{G_\dR} \simeq \QCoh( G \backslash G_\dR / G_\dR)   \simeq \Rep(G),
$$
while 
$$
(\fg \mod)^{G,w} \simeq \QCoh( G \backslash G_\dR / G) 
$$ 
is the \emph{Harish-Chandra} category to be studied and used in the next section.

\ssec{Invariants vs coinvariants}

Our task in this section is to show that invariants and coinvariants are naturally identified, both in the strong and weak context. 
Indeed, there are natural functors $\theta_{G,w}: \C_{G,w} \to \C^{G,w}$ and $\theta_G: \C_G \to \C^G$, which will be shown to be equivalences.

In the weak context, the assertion follows easily from the $1$-affineness of $BG$. 
The assertion in the strong context will be reduced to the one for the weak context using the \emph{rigid} monoidal category $\HC$ of Harish-Chandra bimodules, see Theorem \ref{thm:equiv-Ginv-coinv}.

\sssec{} \label{paradigm}
We will use the following general framework. 

Let $\A$ be a monoidal category and $\C$ a left $\A$-module. Denote by $\A^\rev$ the same category $\A$ with the reversed monoidal structure, so that $\A^\rev \mmod$ is the $\infty$-category of right $\A$-modules.
Consider the functors
$$
- \usotimes\A \C : \A^\rev \mmod \to \DGCat, 
\hspace{.6cm}
\Hom_\A (-, \C): (\A \mmod)^\op \to \DGCat.
$$
At the level of morphisms, we denote them by $\phi \rightsquigarrow \phi_\C$ and $\phi \rightsquigarrow \phi^\C$, respectively.

\medskip

The following fact will be repeatedly used throughout the text. Let $\phi: \D \rightleftarrows \E: \psi$ be mutually adjoint functors in $\A^\rev \mmod$. Then, the functors
$$
\phi_\C: \D \usotimes\A \C \rightleftarrows \E \usotimes\A \C: \psi_\C
$$ 
are also mutually adjoint in $\DGCat$. Similarly, if $\phi: \D \rightleftarrows \E: \psi$ is an adjunction in $\A\mmod$, so is
$$
\psi^\C: \Hom_\A( \E,\C) \rightleftarrows \Hom_\A( \D,\C)  : \phi^\C.
$$ 
Both these assertions are clear: e.g., for the first case, the reason is that the unit and counit of an adjunction survive tensoring up.

\begin{lem}
Let $f: X_1 \to X_2$ be a $G$-equivariant map of schemes of finite type equipped with a $G$-action. Then $f_*: \Dmod(X_1) \to \Dmod(X_2)$ and $f^*: \Dmod(X_2) \to \Dmod(X_1)$ (whenever the latter is defined) are both $G$-equivariant.
\end{lem}

\begin{proof}
The first functor is clearly compatible with the $(\Dmod(G), \star)$-action. Thanks to smoothness of the action maps $G \times X_i \to X_i$, the second functor is clearly compatible with the $(\Dmod(G), m^!)$-coaction.
\end{proof}

\sssec{}

In particular, let us apply the above paradigm to the $G$-equivariant adjunction $p^*: \Vect \rightleftarrows \Dmod(G): p_*$. We obtain the two adjunctions
$$
(\pr_G)^L = (p^*)_\C : \C_G = \Vect \usotimes{\Dmod(G)} \C \rightleftarrows \C : (p_*)_\C \simeq \pr_G
$$
$$
\oblv^G \simeq (p_*)^\C : \C^G \rightleftarrows \C: (p^*)^\C =: \Av_*^G.
$$
Tautologically, $\oblv^G$ is conservative and $\pr_G$ is essentially surjective.

\sssec{}

Define the \emph{constant sheaf} of $G$ to be $k_G := p^*(\kk) \in \Dmod(G)$: the $\fD$-module corepresenting de Rham cohomology $\Gamma_\dR := p_*$.

\begin{cor} \label{cor:k_G-induces}
For $\C \in G \rrep$, the action by $k_G$ induces a functor $\theta_G : \C_G \to \C^G$.
Similarly, for $\C \in G \rrep^{weak}$, the action by $\O_G$ induces a functor $\theta_{G, w}: \C_{G,w} \to \C^{G,w}$.
\end{cor}

In the next paragraphs, we will prove that $\theta_{G,w}$ is an equivalence. The analogous assertion for $\theta_G$ is the content of Theorem \ref{thm:equiv-Ginv-coinv}.

\sssec{}

Clearly, the weak (co)invariant category of any weak $G$-representation admits an action of $\Hom_{\QCoh(G)} (\Vect, \Vect) \simeq \Rep(G)$. Thus, there is an adjunction
\begin{equation} \label{adj:RepG-QCoh(G)}
{\rec} : (\Rep(G), \otimes) \mmod \rightleftarrows (\QCoh(G), \star) \mmod: {\inv}^{G,w},
\end{equation}
where $\inv^{G,w}$ is the functor of weak $G$-invariants and ${\rec}$, the so-called ``reconstruction" functor, sends $\E$ to $\Vect \otimes_{\Rep(G)} \E$.
\begin{thm}[Gaitsgory-Lurie] \label{thm:Gaitsgory-Lurie}
These two adjoint functors are mutually inverse equivalences of categories. 
\end{thm}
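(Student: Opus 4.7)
\medskip

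\textbf{Proof proposal.} The statement is equivalent to the \emph{$1$-affineness} of the classifying stack $BG$. My plan is to first reformulate it in this geometric language, and then carry out a descent argument.

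The first step is to identify the adjunction $(\rec,\inv^{G,w})$ with the global-sections adjunction for sheaves of categories on $BG$. Recall that the paper has identified $(\QCoh(G),\star)\mmod\simeq\ShvCat(BG)$ and, tautologically, $\QCoh(BG)\mmod\simeq\Rep(G)\mmod$. Under these equivalences, the functor of weak $G$-invariants $\inv^{G,w}:\C\mapsto\Hom_{\QCoh(G)}(\Vect,\C)$ becomes nothing but the global sections functor $\bGamma:\ShvCat(BG)\to\QCoh(BG)\mmod$, and $\rec=\Vect\otimes_{\Rep(G)}(-)$ becomes its left adjoint. Verifying this identification is essentially formal: $\Vect$ is the restriction of the universal sheaf of categories along $\pt\to BG$, and both adjoints are computed by the appropriate base-change formulas.

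The second step is descent along the smooth (in particular fppf) cover $\pi:\pt\to BG$, whose Cech nerve is the simplicial scheme $G^{\bullet}$. Since each $G^n$ is an affine scheme of finite type, it is $1$-affine by the classical fact that $\ShvCat(\Spec A)\simeq\QCoh(\Spec A)\mmod$. Applying descent for $\ShvCat$ (which is an fppf sheaf, by the same argument as Lemma \ref{lem:1-affine-etale}) and descent for $\QCoh$ one obtains
\begin{equation*}
\ShvCat(BG)\simeq\Tot\bigl(\QCoh(G^\bullet)\mmod\bigr),\qquad \QCoh(BG)\mmod \simeq\Tot\bigl(\QCoh(G^\bullet)\bigr)\mmod.
\end{equation*}
Thus it remains to compare the two totalizations, i.e.\ to show that the natural functor $\Tot(\QCoh(G^\bullet))\mmod\to\Tot(\QCoh(G^\bullet)\mmod)$ is an equivalence.

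The third and decisive step -- and the main obstacle -- is precisely this commutation of the formation of module categories with the cosimplicial totalization. The key input is the \emph{rigidity} of each $\QCoh(G^n)$ as a symmetric monoidal DG category (which is immediate since $G^n$ is affine). Rigidity guarantees that modules for $\QCoh(G^n)$ are automatically dualizable as DG categories, and that the Beck--Chevalley conditions hold for the relative tensor product; this is what allows one to pass the operation $(-)\mmod$ through the $\Tot$. Concretely, I would verify the hypotheses of Barr--Beck--Lurie for the adjunction $(\rec,\inv^{G,w})$: conservativity of $\inv^{G,w}$ follows from fppf descent combined with the observation that, after base change along $\pt\to BG$, any object of $\ShvCat(BG)$ becomes a plain DG category; commutation of $\inv^{G,w}$ with the relevant colimits follows from the rigidity of $\QCoh(G)$, which makes $\Hom_{\QCoh(G)}(\Vect,-)$ coincide with $\Vect\otimes_{\QCoh(G)}(-)$ up to a shift by the dimension torsor and hence continuous. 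Given these two inputs, Barr--Beck--Lurie identifies $\QCoh(G)\mmod$ with modules for the monad $\inv^{G,w}\circ\rec$ on $\Rep(G)\mmod$, and a direct computation shows this monad is the identity. The hard part is packaging these ingredients cleanly enough to make the Barr--Beck verification rigorous -- this is precisely the content of the cited theorem of Gaitsgory and Lurie.
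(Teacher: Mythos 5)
First, a point of comparison: the paper does not prove this statement at all --- it is imported from \cite{shvcat} as a black box --- so there is no internal argument to measure yours against; what follows is an assessment of your sketch on its own terms. Your framing (the theorem is equivalent to $1$-affineness of $BG$, which one attacks by descent along $\pt \to BG$) is indeed the right starting point, and your first two steps are fine. But the third step, which you yourself identify as decisive, does not go through as stated, and your closing sentence effectively concedes this by deferring ``the hard part'' back to the cited theorem.

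The concrete gap is the claim that rigidity of each $\QCoh(G^n)$ plus Beck--Chevalley lets one commute $(-)\mmod$ with the cosimplicial totalization. This cannot be a formal principle: for $BG_\dR$ each term $(G_\dR)^n$ of the \v{C}ech nerve is $1$-affine and $\Dmod(G^n)$ is just as rigid, yet $BG_\dR$ is \emph{not} $1$-affine (as the paper itself remarks); the failure there is located exactly at the step you declare formal. What rescues the weak/quasi-coherent case is a finiteness property special to $\QCoh(G)$ and $\Rep(G)$ for $G$ affine of finite type --- in effect that $\Gamma(G,\O_G)$ is a union of finite-dimensional subcoalgebras, equivalently that $\Rep(G)$ is compactly generated by dualizable objects with $\Vect \otimes_{\Rep(G)} \Vect \simeq \QCoh(G)$ --- and this input appears nowhere in your sketch, although it is what makes both the conservativity of $\inv^{G,w}$ and the identification of the monad actually work. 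Two of your supporting assertions are also incorrect: modules over $\QCoh(G^n)$ are not automatically dualizable as DG categories (restrict any non-dualizable $\C$ along a monoidal functor $\QCoh(G^n) \to \Vect$), and the comparison of $\Hom_{\QCoh(G)}(\Vect,-)$ with $\Vect \otimes_{\QCoh(G)} (-)$ involves no ``shift by the dimension torsor'' --- that twist is a phenomenon of strong (de Rham) actions; in the weak setting invariants and coinvariants agree on the nose, and the continuity of $\inv^{G,w}$ should instead be deduced from dualizability of $\Vect$ as a $\QCoh(G)$-module (rigidity of $\QCoh(G)$ together with dualizability of $\Vect$ as a plain category). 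Likewise ``conservativity follows from fppf descent'' is not an argument: in the strong setting there are nonzero categories with vanishing invariants (e.g.\ $\Vect$ with the $exp$-twisted $\GG_a$-action), so conservativity is precisely one of the places where the finiteness of $\Rep(G)$ must be used rather than descent generalities.
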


\begin{proof}
Combine \cite[Theorem 2.2.2]{shvcat} (this is the statement that $BG$ is $1$-affine) with the discussion in \cite[Sect. 10.2]{shvcat}.
\end{proof}

\begin{cor} \label{cor:theta.equiv.for.weak}
For $\C \in G \rrep^{weak}$, the functor $\theta_{G,w}: \C_{G,w} \to \C^{G,w}$ is an equivalence.
\end{cor}

\begin{proof}
Consider $\theta_{G,w}$ as a continuous functor $G \rrep^{weak} \to \DGCat^{\Delta^1}$. By the above theorem,  the operation $\C \rightsquigarrow \C^{G,w}$ commutes with colimits. Thus, we may assume without loss of generality that $\C = \QCoh(G)$.
In this case, under the canonical equivalences $\C_G \simeq \Vect$ and $\C^G \simeq \Vect$, it is immediate to verify that $\theta_{G,w}$ goes over to the identity functor of $\Vect$.
\end{proof}

\sssec{}

Let us now show how to recover strong invariants from weak invariants. 
If $\C$ admits a strong left action of $G$, its \emph{weak} invariant and coinvariant categories ${}^{G,w} \C$ and ${}_{G,w} \C$ can be expressed as
$$
{}^{G,w} \C \simeq \Hom_{\Dmod(G)} \big(
\Dmod(G)_{G,w}, \C
\big)
\hspace{.4cm}
{}_{G,w} \C
\simeq 
{}_{G,w}\Dmod(G) \usotimes{\Dmod(G)} \C.
$$
By Corollary \ref{cor:theta.equiv.for.weak}, we obtain ${}_{G,w}\Dmod(G) \simeq {}^{G,w}\Dmod(G) \simeq \fg \mod$. Thus, the weak invariant and coinvariant categories of $\C$ both possess an evident left action of the monoidal category 
$$
\HC := \Hom_{\Dmod(G)} (\fg \mod, \fg \mod).
$$

\begin{lem}
The left $\Dmod(G)$-module $\fg \mod$ is self-dual.\footnote{More precisely, the left $\Dmod(G)$-module $\Dmod(G)^{G,w}$ is dualizable with dual the right $\Dmod(G)$-module ${}^{G,w}\Dmod(G)$.}
\end{lem}

\begin{proof}
We use some of the theory of $\IndCoh$ on formal completions, see \cite[Chapter III]{GR_corresp}. 
First, we identify 
$$
{}^{G,w}\Dmod(G) \usotimes{\Dmod(G)} \Dmod(G)^{G,w}
\simeq
{}^{G,w}\Dmod(G)^{G,w}
\simeq
\QCoh(G \backslash G_\dR /G).
$$
Next, we use formal smoothness of $G \backslash G_\dR /G$ to further identify (via the so-called functor $\Upsilon$) 
$$
\QCoh(G \backslash G_\dR /G)
\xto{\simeq}
\ICoh(G \backslash G_\dR /G).
$$
In these terms, our coevaluation is the functor $\coev: \Vect \to {}^{G,w}\Dmod(G) \usotimes{\Dmod(G)} \Dmod(G)^{G,w}$ is given by the object 
$$
\delta_*^{\ICoh}(\omega_{BG}),
$$
where $\delta: BG \to G \backslash G_\dR /G$ is the obvious map and $\omega_{BG} \in \ICoh(BG)$ is the dualizing ind-coherent sheaf.
Similarly, we identify 
$$
\Dmod(G)^{G,w}
\otimes
{}^{G,w}\Dmod(G)
\simeq
\ICoh(G_\dR/G \times G \backslash G_\dR).
$$
In these terms, the evaluation $\ev: \Dmod(G)^{G,w}
\otimes
{}^{G,w}\Dmod(G) \to \Dmod(G)$ is given by the $\ICoh$-pull-push along the correspondence
$$
G_\dR/G \times G \backslash G_\dR
\longleftarrow
G_\dR \stackrel G \times G_\dR
\xto{m}
G_\dR.
$$
Observe that $\ev$ is $(G_\dR \times G_\dR)$-equivariant by construction.
The fact that $\coev$ and $\ev$ do form a duality pairing is a straightforwards diagram case, left to the reader.
\end{proof}

The above proof yields a monoidal equivalence 
$$
\HC = 
\Hom_{\Dmod(G)} (\fg \mod, \fg \mod)
\simeq
\ICoh(G \backslash G_\dR /G),
$$
where the RHS is equipped with the convolution monoidal structure.

\begin{prop}
$\HC$ is compactly generated by objects that are left and right dualizable, hence it is rigid.
\end{prop}

\begin{proof}
The $\IndCoh$-pushforward 
$$
\delta_*^{\IndCoh}: \ICoh(BG) \to \ICoh( G \backslash G_\dR / G)
$$
is monoidal and it admits a conservative right adjoint. For the latter assertion, see \cite[Chapter III.3, Proposition 3.1.2]{GR_corresp}.
From this it follows formally that $\ICoh( G \backslash G_\dR / G)$ is compactly generated and rigid provided that so is $\ICoh(BG) \simeq \QCoh(BG)$. However, the latter is obvious.
\end{proof}

We can finally prove the main theorem of this section.

\begin{thm} \label{thm:equiv-Ginv-coinv}
The functor $\theta_G: \C_G \to \C^G$ of Corollary \ref{cor:k_G-induces} is an equivalence.
\end{thm}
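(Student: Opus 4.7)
The strategy is to use Corollary~\ref{cor:strong-from-weak} to reduce the strong statement to the analogous (and essentially known) statement for weak actions, and then match the resulting abstract equivalence with the specific functor $\theta_G$.

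By Corollary~\ref{cor:strong-from-weak} we have
$$
\C_G \simeq \Rep(G) \usotimes{\HC} \C_{G,w}, \qquad \C^G \simeq \Hom_\HC(\Rep(G), \C^{G,w}).
$$
The equivalence $\C_{G,w} \simeq \C^{G,w}$ in $\HC\mmod$ (noted in the text preceding Corollary~\ref{cor:apply_invariants}), combined with the rigidity of $\HC$ (corollary immediately preceding the theorem)---which guarantees that $\Rep(G)\in\HC\mmod$ is dualizable, and, via the self-duality of $\Rep(G)\in\DGCat$, that its $\HC$-linear dual is again identified with $\Rep(G)$---realizes both sides as $\Rep(G)\usotimes{\HC}\C^{G,w}$. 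This yields an abstract equivalence $\alpha\colon\C_G\xrightarrow{\,\sim\,}\C^G$, natural in $\C\in\Dmod(G)\mmod$.

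It remains to identify $\alpha$ with $\theta_G$. Both are natural transformations involving the (colimit-preserving) coinvariants functor on $\Dmod(G)\mmod$; since every $\C$ is canonically a colimit of free modules of the form $\Dmod(G)\otimes\D$, it suffices to verify agreement on the regular representation $\C=\Dmod(G)$. In that case $\C_G\simeq\Vect\simeq\C^G$ canonically: $\pr_G\simeq\Gamma_\dR$, while $\oblv^G\colon\Vect\hookrightarrow\Dmod(G)$ picks out the left-invariant constant sheaf $k_G$. Using the identification $k_G\star F\simeq k_G\otimes\Gamma_\dR(F)$ from Lemma~\ref{lem:p^*-equivariant}, one computes that the induced $\theta_G\colon\Vect\to\Vect$ sends $\Gamma_\dR(F)$ to $\Gamma_\dR(F)$, i.e.\ is the identity; this matches the corresponding value of $\alpha$.

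The main obstacle is the explicit identification $\Rep(G)^{\vee_{\HC}}\simeq\Rep(G)$ and the verification that the duality data it encodes are represented concretely by $k_G$. Tracking this requires unraveling the bar complex computing $\Rep(G)\otimes_{\HC}\C^{G,w}$ against the cobar complex computing $\Hom_{\HC}(\Rep(G),\C^{G,w})$, and checking that the counit of rigidity for $\HC$ corresponds—under all the identifications above—to the functor $k_G\star(-)$ defining $\theta_G$.
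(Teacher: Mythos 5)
Your argument is essentially the paper's: both reduce, via compatibility with colimits and with tensoring by a DG category, to the regular representation $\C=\Dmod(G)$, where $\theta_G$ becomes the identity of $\Vect$ by Lemma \ref{lem:p^*-equivariant}, identifying $\Dmod(G)_G$ via $\pr_G(\delta_1)$ and $\Dmod(G)^G$ via $k_G=p^*(\CC)$. The only place you make extra work for yourself is the final ``matching'' step: you do not need to identify $\theta_G$ with the abstract equivalence $\alpha$ at all. The sole role of Corollary \ref{cor:strong-from-weak} and the rigidity of $\HC$ (equivalently, of your $\alpha$) is to guarantee that $\C\rightsquigarrow\C^G$ commutes with colimits and with tensoring by a category; once that is known, ``being an equivalence'' is a property preserved under these operations, so it suffices to check that $\theta$ is an equivalence on the free modules $\Dmod(G)\otimes\D$ -- which is exactly your regular-representation computation -- and then pass to the bar resolution of an arbitrary $\C$. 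Consequently the step you single out as the main obstacle (unraveling the bar/cobar complexes and matching the counit of rigidity with $k_G\star(-)$) can simply be deleted; this is precisely how the paper's proof avoids it.
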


\begin{proof}
The functor of weak $G$-invariants on $G \rrep = (\Dmod(G), \star) \mmod$ sits in the adjunction
\begin{equation}
{\wt\rec} : \HC \mmod \rightleftarrows (\Dmod(G), \star) \mmod: {\inv}^{G,w}
\end{equation}
induced by the $(\Dmod(G), \HC)$-bimodule $\fg \mod = \Dmod(G)^{G,w}$.
By Theorem \ref{thm:Gaitsgory-Lurie}, such adjunction consists of mutually inverse equivalences. In particular, we have 
$$
\C^{G} \simeq \Hom_{\HC} ( \Rep(G), \C^{G,w}).
$$
The rigidity of $\HC$, together with the self-duality of $\Rep(G)$, shows that formation of strong $G$-invariant commutes with colimits and tensor products by categories.
We are now in the position to repeat the same argument of Corollary \ref{cor:theta.equiv.for.weak}. Namely, we view $\theta_G$ as a continuous functor $G \rrep \to \DGCat^{\Delta^1}$ and just need to check that $\theta_G$ is an equivalence for $\C = \Dmod(G)$, where it is manifest.
\end{proof}

\sssec{} \label{sssec:normal}

We digress briefly to discuss actions by normal subgroups. Let $i: K \hookrightarrow G$ be a normal subgroup, so that $\Dmod(K)$ acts on $\Dmod(G)$ via $i_*: \Dmod(K) \to \Dmod(G)$. We prove that $G$-invariants and $G$-coinvariants can be taken in two steps.

\begin{lem} \label{lem:normal-inv}
Let $\C$ be a category with a strong right action of $G$. The quotient group $Q:=G/K$ acts on $\C^K$ and $\C_K$ from the right in such a way that
$$
\C^G \simeq  \big(\C^K \big)^Q,
\hspace*{.6cm}
 \C_G \simeq  \left( \C_K \right)_{G/K}.
$$
With the obvious modifications, the statement holds for weak actions and weak (co)invariants.
\end{lem}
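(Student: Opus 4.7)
The plan is to reduce both statements to a single descent equivalence and then apply the Hom/tensor paradigms of \S\ref{paradigm}.

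The first step is to ground the residual $Q$-actions. Normality of $K$ in $G$ means that $Q = G/K$ is a group and the projection $\pi : G \twoheadrightarrow Q$ is a homomorphism, so the left $G$-action on $G/K$ factors through $\pi$ to give the regular left $Q$-action on itself. At the level of sheaves, this manifests as a left $\Dmod(G)$-action on $\Dmod(Q)$ factoring through the monoidal $\pi_* : \Dmod(G) \to \Dmod(Q)$, commuting with the regular right $\Dmod(Q)$-action; correspondingly, the right $Q$-action on any $\C^K$ or $\C_K$ is well-defined.

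The second step is the key identification: by $\fD$-module descent (the equivalence $\Dmod(X)_K \simeq \Dmod(X/K) \simeq \Dmod(X)^K$ recalled in the introduction), applied to $X = G$ with its free right $K$-action, there are equivalences of $(\Dmod(G), \Dmod(Q))$-bimodules
$$
\Dmod(G) \otimes_{\Dmod(K)} \Vect \simeq \Dmod(Q) \simeq \Hom_{\Dmod(K)}(\Vect, \Dmod(G)).
$$
Extending scalars along $i_* : \Dmod(K) \to \Dmod(G)$ then yields
$$
\C_K \simeq \C \otimes_{\Dmod(G)} \Dmod(Q)
\hspace{.6cm} \text{and} \hspace{.6cm}
\C^K \simeq \Hom_{\Dmod(G)}(\Dmod(Q), \C),
$$
with the induced right $Q$-action coming from the right $\Dmod(Q)$-structure on $\Dmod(Q)$.

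The third step iterates: specializing the above to the group $Q$ with trivial subgroup gives $\Dmod(Q) \otimes_{\Dmod(Q)} \Vect \simeq \Vect \simeq \Hom_{\Dmod(Q)}(\Vect, \Dmod(Q))$, whence
$$
(\C_K)_Q \simeq \bigl(\C \otimes_{\Dmod(G)} \Dmod(Q)\bigr) \otimes_{\Dmod(Q)} \Vect \simeq \C \otimes_{\Dmod(G)} \Vect \simeq \C_G,
$$
and dually $(\C^K)^Q \simeq \C^G$. The main subtle point is bookkeeping the bimodule structures throughout --- chiefly, verifying that the right $Q$-action on $\C^K$ and $\C_K$ obtained via these manipulations matches the natural one, which is a naturality property of descent along the principal $K$-bundle $\pi$. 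The weak case proceeds identically, with $\QCoh$ replacing $\Dmod$ and fppf descent for quasi-coherent sheaves in place of $\fD$-module descent.
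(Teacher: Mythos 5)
Your proposal is correct and takes essentially the same route as the paper's proof: both rewrite $\C_K \simeq \C \usotimes{\Dmod(G)} \Dmod(Q)$ using the identification $\Dmod(G) \usotimes{\Dmod(K)} \Vect \simeq \Dmod(Q)$ (you justify it by $\fD$-module descent, the paper by Corollary \ref{cor:strong-from-weak}), equip it with the regular right $Q$-action, and collapse $(\C_K)_Q \simeq \C_G$ by associativity of the relative tensor product. The invariants and weak cases are treated dually/analogously in both arguments, so there is no substantive difference.
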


\begin{proof}
For coinvariants, we have
$$
\C_K := \C \usotimes{\Dmod(K)} \Vect \simeq \C \usotimes{\Dmod(G)} \Big( \Dmod(G) \usotimes{\Dmod(K)} \Vect \Big) 
 \simeq \C \usotimes{\Dmod(G)} \Dmod(Q),
$$
where the last step uses Theorem \ref{thm:equiv-Ginv-coinv}. The sought-after $Q$-action on $\C_K$ is the one induced by the regular action of $Q$ on itself and, tautologically,
$$
\left( \C_K \right)_{Q}
\simeq \C_K \usotimes{\Dmod(Q)} \Vect
\simeq \C \usotimes{\Dmod(Q)} \Dmod(Q) \usotimes{\Dmod(G)} \Vect 
\simeq \C \usotimes{\Dmod(G)} \Vect   = \C_G.
$$
The version of argument for invariants and for weak actions is entirely analogous.
\end{proof}

\ssec{Twisted group actions} \label{ssec:twisted_act_FD}

Given a category $\C \in G \rrep$, we shall explain how to twist the action by an additive character $\m : G \to \GG_a$.

\sssec{Strong actions on $\Vect$}

Tautologically, a strong action of $G$ on $\Vect$ is given by a monoidal functor $(\Dmod(G), \star) \to \Vect$. Equivalently, by a comonoidal functor $\Vect \to (\Dmod(G), m^!)$. Such functors correspond precisely to \emph{character $\fD$-modules}, i.e. $\fD$-modules $\F$ equipped with an isomorphism
$$
m^! (\F) \simeq \F \boxtimes \F,
\hspace{.4cm}
\mathit{unit}^!(\F) \simeq \kk,
$$ 
satisfying the natural compatibility conditions.\footnote{Observe that character $\fD$-modules belong to the heart of the natural t-structure on $\Dmod(G)$, whence no $\infty$-categorical theory is necessary to define them.}
For such $\F$, the action map $\Dmod(G) \otimes \Vect \to \Vect$ is given by 
$$
M \otimes V \mapsto M \star_\F V := \eps_{G} (M \otimes \F) \otimes V,
$$
where, $\eps_G = \Gamma_\dR \circ \Delta^!$ is the evaluation pairing between $\Dmod(G)$ and $\Dmod(G)^\vee \simeq \Dmod(G)$.

\sssec{}

Consider the exponential (right) $\fD$-module $exp$ on $\GG_a = \AA^1 = \Spec (k[z])$:
\begin{equation} \label{def:exp}
exp = {\fD_{\AA^1}}/{(\partial_z-1) \fD_{\AA^1}}.
\end{equation}
(This is a substitute of the Artin-Schreier sheaf in characteristic zero.) It is a character $\fD$-module,
\begin{equation}\label{eqn:exp_homo}
m^! (exp) \simeq exp \boxtimes exp,
\end{equation}
and the prototype of all the character $\fD$-modules we shall consider. In fact:

\begin{lem} \label{lem:char_pull-back}
Let $\m: G \to \GG_a$ be an additive character of $G$. Then $\m^! (exp) \in \Dmod(G)$ is a character $\fD$-module.
\end{lem}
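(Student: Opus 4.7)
The plan is to prove the lemma by a direct pull-back argument, exploiting the functoriality of $!$-pullback together with the fact that $\mu$ is a group homomorphism.

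First, I would record the compatibility of $\mu$ with multiplication: since $\mu: G \to \GG_a$ is an additive character, the diagram
\begin{gather*}
\xy
(0,15)*+{ G \times G }="00";
(40,15)*+{ G }="10";
(0,0)*+{ \GG_a \times \GG_a }="01";
(40,0)*+{ \GG_a }="11";
{\ar@{->}^{m_G} "00";"10"};
{\ar@{->}_{\mu \times \mu} "00";"01"};
{\ar@{->}^{\mu} "10";"11"};
{\ar@{->}_{m_{\GG_a}} "01";"11"};
\endxy
\end{gather*}
commutes. Applying $(-)^!$ and using the monoidality of $\Dmod: (\Sch^\ft)^\op \to \DGCat$ (which in particular supplies a canonical identification $(\mu \times \mu)^!(F \boxtimes F') \simeq \mu^!(F) \boxtimes \mu^!(F')$), I obtain
\begin{equation*}
m_G^!\bigl(\mu^!(exp)\bigr) \;\simeq\; (\mu \times \mu)^!\bigl(m_{\GG_a}^!(exp)\bigr) \;\simeq\; (\mu \times \mu)^!(exp \boxtimes exp) \;\simeq\; \mu^!(exp) \boxtimes \mu^!(exp),
\end{equation*}
where the middle step uses (\ref{eqn:exp_homo}). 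This produces the required isomorphism.

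Next, I would verify the compatibility conditions mentioned in the definition of a character $\fD$-module, namely coassociativity and counitality. Coassociativity follows by pulling back the cube expressing the coassociativity of the multiplication $exp$-character $\fD$-module along $\mu \times \mu \times \mu$, using associativity of $m_G$ and $m_{\GG_a}$ compatibly via $\mu$. For the counit, I observe that $\mu$ necessarily sends the identity $e_G \in G$ to $0 \in \GG_a$, so $e_G^! \mu^!(exp) \simeq 0^!(exp)$, and the counit datum for $exp$ at $0 \in \GG_a$ transports to the counit datum for $\mu^!(exp)$ at $e_G$.

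The proof is essentially a formal diagram chase, and no step presents a genuine obstacle: the only work lies in confirming that the higher coherence data assembled from the functor $\Dmod$ and the homomorphism property of $\mu$ give an $\infty$-categorical morphism of coalgebras $\Vect \to \Dmod(G)$, not merely an isomorphism between its candidate comultiplication and the requisite target. This coherence is automatic from the symmetric monoidal enhancement of $\Dmod$ and the fact that $\mu$, being a map of group objects, is by definition a morphism in $\Alg(\Sch^\ft)$, hence $\mu^!$ is a morphism in $\Coalg(\DGCat)$, which is precisely what is needed.
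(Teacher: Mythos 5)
Your proof is correct, and it is essentially the argument the paper leaves implicit (the lemma is stated there without proof): you pull back the isomorphism $m^!(exp)\simeq exp\boxtimes exp$ along $\mu\times\mu$ using $\mu\circ m_G = m_{\GG_a}\circ(\mu\times\mu)$, and the higher coherences come for free because the contravariant functor $\Dmod$ is symmetric monoidal, so $\mu^!$ is a morphism of coalgebras and the comonoidal functor $\Vect\to(\Dmod(\GG_a),m^!)$ classifying $exp$ transports to one classifying $\mu^!(exp)$. This matches the framework the paper itself sets up for Hopf structures on $\Dmod(G)$ and for character $\fD$-modules, so there is nothing to add.
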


\nc{\Tw}{\ms{Tw}}

We write $\Vect_{\m}$ to emphasize that $\Vect$ is being considered as a category with the $\Dmod(G)$-coaction corresponding to $\m^! (exp)$.
Recall that $G \on{-}\mathbf{rep}$ is a monoidal $\infty$-category. Hence, we can define the $\m$-twist functor
\begin{equation} \label{eqn:twist}
\Tw_\m : G \rrep \to G \rrep, 
\hspace*{.4cm}
\C \to \C \otimes \Vect_\m.
\end{equation}
Clearly, $\Tw_\m$ is an automorphism with inverse $\Tw_{-\m}$.

\medskip

In terms of the $G$-coaction on $\C$, the $G$-coaction on $\C \otimes \Vect_\m$ is given by
$$
\C   \to  \Dmod(G) \otimes \C, 
\hspace*{.4cm} c \mapsto \m^!(exp) \otimes \coact(c),
$$
where we have identified $\C \simeq \C \otimes \Vect_{\m}$ (as DG categories).

\sssec{}

Define the \emph{$(G,\m)$-invariant} and \emph{$(G,\m)$-coinvariant} categories of $\C$ respectively as
$$
\C^{G,\m} := \Hom_{\Dmod(G)}(\Vect_\m, \C)
\hspace{.6cm}
\C_{G,\m} :=  \Vect_\m \usotimes{\Dmod(G)} \C.
$$
\begin{lem} \label{lem:Whit_with_dualiz_char}
$\C^{G,\m} \simeq\big( \C \otimes \Vect_{-\m} \big)^G$ and  $\,\,\C_{G,\mu} \simeq \big( \C \otimes \Vect_{-\m} \big)_G$.

\end{lem}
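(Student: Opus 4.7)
The proof hinges on the observation that $\Vect_\mu$ is an invertible object of the symmetric monoidal $\infty$-category $G\rrep$, with inverse $\Vect_{-\mu}$; granting this, both statements reduce to standard consequences of invertibility.

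\textbf{Step 1 (invertibility).} By formula \eqref{eqn:coact on a tensor} for the $G$-coaction on a tensor product in $G\rrep$, the coaction on $\Vect_\mu \otimes \Vect_\nu$ is governed by $\mu^!(\exp) \otimes^! \nu^!(\exp) = \Delta_G^!\bigl(\mu^!(\exp) \boxtimes \nu^!(\exp)\bigr)$. Combining the multiplicativity of $\exp$ from \eqref{eqn:exp_homo} with the identity $\mu+\nu = (+) \circ (\mu \times \nu) \circ \Delta_G$ shows that this expression identifies with $(\mu+\nu)^!(\exp)$. Consequently $\Vect_\mu \otimes \Vect_\nu \simeq \Vect_{\mu+\nu}$ in $G\rrep$, and specialising $\nu = -\mu$ yields $\Vect_\mu \otimes \Vect_{-\mu} \simeq \Vect$.

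\textbf{Step 2 (invariants).} Since $\Vect_{-\mu}$ is invertible in $G\rrep = \Dmod(G)\mmod$, the endofunctor $-\otimes \Vect_{-\mu}$ is an auto-equivalence of the $\DGCat$-enriched $\infty$-category of $\Dmod(G)$-modules, and therefore induces equivalences on all internal Hom categories. Applied to the pair $(\Vect_\mu, \C)$, this gives
$$
\C^{G,\mu} = \Hom_{\Dmod(G)}(\Vect_\mu, \C) \simeq \Hom_{\Dmod(G)}\bigl(\Vect_\mu \otimes \Vect_{-\mu},\, \C \otimes \Vect_{-\mu}\bigr) \simeq (\C \otimes \Vect_{-\mu})^G,
$$
which is the invariants half of the lemma.

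\textbf{Step 3 (coinvariants) and the main obstacle.} For the coinvariants, I would invoke the analogous compatibility with the relative tensor product: for any invertible $V \in G\rrep$ and any $\M, \N \in G\rrep$,
$$
\M \otimes_{\Dmod(G)} \N \simeq (\M \otimes V) \otimes_{\Dmod(G)} (\N \otimes V^{-1}).
$$
Specialising $\M = \Vect_\mu$, $\N = \C$, $V = \Vect_{-\mu}$ and using Step 1 yields $\C_{G,\mu} \simeq (\C\otimes \Vect_{-\mu})_G$, completing the proof. Verifying the displayed identity is the only real technical point: both sides preserve colimits in $\M$ and $\N$ separately, so it suffices to check the case of free $\Dmod(G)$-modules, where the twists by $V$ and $V^{-1}$ cancel through $V\otimes V^{-1}\simeq \Vect$. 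More conceptually, this is the compatibility between the external tensor of $G\rrep$ (coming from the Hopf structure on $\Dmod(G)$) and the relative tensor $\otimes_{\Dmod(G)}$, which reflects the fact that invertible objects of a monoidal $\infty$-category can be freely moved across a relative tensor.
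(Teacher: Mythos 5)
Steps 1 and 2 of your argument are correct, and they match the (unstated) reasoning the paper relies on: the lemma is given without proof, the point being that $-\otimes\Vect_{-\mu}$ is an automorphism of $G\rrep$ — concretely, restriction along the monoidal automorphism $M\mapsto(-\mu)^!(exp)\sotimes M$ of $(\Dmod(G),\star)$ — and hence is compatible with internal Homs when applied to \emph{both} arguments. The genuine gap is in Step 3. The displayed identity $\M\otimes_{\Dmod(G)}\N\simeq(\M\otimes V)\otimes_{\Dmod(G)}(\N\otimes V^{-1})$ is false: the relative tensor is taken over the \emph{convolution} structure on $\Dmod(G)$, whereas $-\otimes V$ is the Hopf (diagonal) tensor, so an invertible object of $G\rrep$ cannot be "moved across" $\otimes_{\Dmod(G)}$. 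The correct compatibility twists both factors by the \emph{same} object: applying the monoidal automorphism above termwise to the $\Dmod(G)$-factors of the bar complex gives
$$
\M\otimes_{\Dmod(G)}\N\;\simeq\;(\M\otimes\Vect_\nu)\otimes_{\Dmod(G)}(\N\otimes\Vect_\nu),
$$
the categorical analogue of the classical fact that $(M\otimes\chi)\otimes_{k[G]}(N\otimes\chi)\cong M\otimes_{k[G]}N$ (the action being coequalized is the anti-diagonal one, so equal twists cancel). Your free-module reduction does not detect the sign because it only compares underlying categories; the twist enters through the face maps of the bar resolution, i.e. through naturality with respect to the action maps.

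Your own specialization exposes the problem: with $\M=\Vect_\mu$, $\N=\C$, $V=\Vect_{-\mu}$, your identity would yield $\C_{G,\mu}\simeq\Vect\otimes_{\Dmod(G)}(\C\otimes\Vect_{\mu})=(\C\otimes\Vect_{\mu})_G$, the wrong sign, not the asserted $(\C\otimes\Vect_{-\mu})_G$. With the corrected same-twist identity, take $\nu=-\mu$ on both factors: $\Vect_\mu\otimes_{\Dmod(G)}\C\simeq(\Vect_\mu\otimes\Vect_{-\mu})\otimes_{\Dmod(G)}(\C\otimes\Vect_{-\mu})\simeq(\C\otimes\Vect_{-\mu})_G$ by your Step 1, which is exactly the lemma — and it is the same "equal twist on both arguments" principle you already used, correctly, for invariants in Step 2.
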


We have natural functors
$$
\begin{array}{ll}
& \pr_{G,\m}: \C \xto{\Tw_{-\m}} \C \otimes \Vect_{-\m}
\xto{\pr_G} 
\big( \C \otimes \Vect_{-\m} \big)_G
\simeq 
\C_{G,\m} \vspace*{.3cm} \\

& \oblv^{G,\m}: \C^{G,\m} \simeq \big( \C \otimes \Vect_{-\m} \big)^G
\xto{\oblv^G} 
\C \otimes \Vect_{-\m} 
\xto{\Tw_\m}
\C  \vspace*{.3cm} \\

& \Av_*^{G,\m}: \C \xto{\Tw_{-\m}} \C \otimes \Vect_{-\m}
\xto{\Av^G_*} 
\big( \C \otimes \Vect_{-\m} \big)^G
\simeq 
\C^{G,\m}. \\
\end{array}
$$
As in the untwisted case, $(\oblv^{G,\m}, \Av_*^{G,\m})$ forms an adjoint pair. From Theorem \ref{thm:equiv-Ginv-coinv}, we obtain:

\begin{cor} \label{cor:theta-mu-fin-type}
There is an equivalence $\theta_{G,\m} : \C_{G,\m} \to \C^{G,\m}$ such that $\theta_{G,\m} \circ \pr_{G,\m} \simeq \Av_*^{G,\m}$.
\end{cor}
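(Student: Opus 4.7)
The plan is to reduce both assertions to the untwisted Theorem~\ref{thm:equiv-Ginv-coinv} by passing through the invertible automorphism $\Tw_{-\m}: G\rrep \to G\rrep$ introduced in~\eqref{eqn:twist}. Since $\Vect_{-\m}$ is invertible in $G\rrep$ (with inverse $\Vect_{\m}$), the operation $\C \mapsto \C \otimes \Vect_{-\m}$ preserves all relevant categorical structures. Lemma~\ref{lem:Whit_with_dualiz_char} already supplies canonical equivalences $\C_{G,\m} \simeq (\C \otimes \Vect_{-\m})_G$ and $\C^{G,\m} \simeq (\C \otimes \Vect_{-\m})^G$; applying Theorem~\ref{thm:equiv-Ginv-coinv} to $\C \otimes \Vect_{-\m}$ (with its untwisted $G$-action) yields an equivalence
$$
\theta_G : (\C \otimes \Vect_{-\m})_G \xrightarrow{\,\simeq\,} (\C \otimes \Vect_{-\m})^G,
$$
and I would \emph{define} $\theta_{G,\m}$ as the composite $\C_{G,\m} \simeq (\C \otimes \Vect_{-\m})_G \xrightarrow{\theta_G} (\C \otimes \Vect_{-\m})^G \simeq \C^{G,\m}$, which is then automatically an equivalence.

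For the compatibility $\theta_{G,\m} \circ \pr_{G,\m} \simeq \Av_*^{G,\m}$, I would simply unwind the definitions appearing in the bulleted list preceding the corollary: both $\pr_{G,\m}$ and $\Av_*^{G,\m}$ begin with the twist $\Tw_{-\m}: \C \to \C \otimes \Vect_{-\m}$ and then apply the corresponding untwisted operations $\pr_G$ and $\Av_*^G$, respectively (followed by the identifications of Lemma~\ref{lem:Whit_with_dualiz_char}). Consequently, the twisted identity reduces formally to its untwisted analogue $\theta_G \circ \pr_G \simeq \Av_*^G$ applied to the category $\C \otimes \Vect_{-\m}$.

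It remains to observe that this last identity is built into the construction of $\theta_G$: per the paradigm of~\S\ref{paradigm} applied to the $(\Dmod(G), \Dmod(G))$-bilinear functor $\phi = p^* : \Vect \to \Dmod(G)$ (bilinearity being Lemma~\ref{lem:p^*-equivariant}), the functor $\theta_G$ of Corollary~\ref{cor:k_G-induces} is by definition the dashed arrow making the triangle in~\S\ref{paradigm} commute, which is precisely the assertion $\theta_G \circ \pr_G \simeq (p^*)^\C = \Av_*^G$. Since this paradigm-level identity is preserved under tensoring with the invertible object $\Vect_{-\m}$, the twisted compatibility follows. The only step requiring a careful (but routine) check is that the identifications in Lemma~\ref{lem:Whit_with_dualiz_char} intertwine $\pr_G$ with $\pr_{G,\m}$ and $\Av_*^G$ with $\Av_*^{G,\m}$ up to the twist $\Tw_{-\m}$; this is an exercise in unwinding the $\Dmod(G)$-module structure on a tensor product given by formula~\eqref{eqn:coact on a tensor}. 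I do not anticipate any genuine obstacle here, as the argument is a formal transport along an invertible autoequivalence of $G\rrep$.
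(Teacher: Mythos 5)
Your proposal is correct and follows essentially the same route as the paper: the paper also obtains $\theta_{G,\m}$ by transporting Theorem \ref{thm:equiv-Ginv-coinv} through the twist $\Tw_{-\m}$ and the identifications of Lemma \ref{lem:Whit_with_dualiz_char}, with the compatibility $\theta_{G,\m}\circ\pr_{G,\m}\simeq \Av_*^{G,\m}$ reducing to the untwisted factorization built into Corollary \ref{cor:k_G-induces} via the paradigm of \S\ref{paradigm}. Nothing essential is missing.
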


\section{$\fD$-modules on ind-schemes of pro-finite type} \label{SEC:Dmod-infinite}

Let $(\G,m)$ be a group prestack. Compatibly with the theory of the previous section, we wish to say that $\G$ acts strongly on a category $\C$ if the latter is endowed with an action of the monoidal category $(\Dmod(\G), \star)$. 
Such definition makes sense and behaves well whenever we can provide a construction of $\Dmod(\G)$ as a \emph{dualizable} category endowed with the convolution monoidal functor $m_*$.

\medskip

The ultimate goal of this section is to supply this definition in our cases of interest: $\G = G \ppart$ and $\G = N \ppart$, where $G$ is a reductive group and $N$ its maximal unipotent subgroup. 
To address this, we proceed in two steps (see \citep{KV} for a very similar discussion). 
First, we identify the kind of algebraic structure that $G \ppart$ and $N \ppart$ possess: the answer is that they are \emph{ind-pro schemes}. Roughly speaking, these are prestacks constructed from schemes of finite type out of affine smooth projections and closed embeddings. Secondly, we develop the theory of $\fD$-modules on ind-pro schemes. Applied to the loop group case, such theory gives rise to monoidal categories of $\fD$-modules that enjoy favorable functoriality properies.

\ssec{$\fD$-modules on pro-schemes}

Let $\Sch^{qc, qs}$ be the $1$-category of quasi-compact quasi-separated schemes and $\Sch^{\fty}$ its full subcategory of schemes of finite type.
Between the two lies the $1$-category $\Sch^\pro := \on{Pro}^{\on{aff,sm,s}} \Sch^{\fty}$ of schemes of \emph{pro-finite type}: schemes that can be written as filtered limits of schemes of finite type along affine smooth surjective maps. 
The natural functor $\Sch^\pro \to \Sch^{qc, qs}$ is fully faithful, as shown in Appendix C of \cite{TT}.\footcite{This reference was pointed out by S. Raskin, who has independently developed the theory of $\fD$-modules on schemes of infinite type.} 

\medskip

To shorten the terminology, we refer to objects of $\Sch^\pro$ just as \emph{pro-schemes}. By \emph{pro-group}, we shall mean ``group object in the category of pro-schemes".

\sssec{}

We define $\fD^*$-modules on pro-schemes as follows: we let
$$
\Dmod^* : \Sch^\pro \to \DGCat
$$
be the right Kan extension of $\Dmod: \Sch^{\fty} \to \DGCat$ along the inclusion $\Sch^{\fty} \hookrightarrow \Sch^\pro$. 
Here, $\Dmod: \Sch^{\fty} \to \DGCat$ is the usual functor that assigns $S \rightsquigarrow \Dmod(S)$ and $f \rightsquigarrow f_*$. So, $\Dmod^*$ is covariant by construction: for any morphism $f: X \to Y$ in $\Sch^\pro$, we denote by $f_*$ the corresponding functor $\Dmod^*(X) \to \Dmod^*(Y)$.

\medskip

Explicitly, suppose $Z \in \Sch^\pro$ be presented as 
\begin{equation} \label{eqn:pro-schemeZ}
Z \simeq \lim_{r \in \R} Z^r,
\end{equation}
where $\R^{op}$ is a filtered category. 
(In most cases of interest, $\R = (\NN, <)^{op}$.) 
For any arrow $s \to r$ in $\R^{op}$, let $\pi_{s \to r}: Z^s \to Z^r$ be the corresponding projection. Then, 
\begin{equation} \label{eqn:Dmod*-as-limit}
\Dmod^*(Z) \simeq \lim_{r \in \R, \pi_*}\, \Dmod(Z^r),
\end{equation}
the limit being taken with respect to the pushforwards $(\pi_{s \to r})_*$. 

\sssec{} \label{sssec:Lurie_G0}

On several occasions, we will make use of the following paradigm, due to J. Lurie (cf. \cite{DG}). Let $\C_\bullet: \I \to \DGCat$ be a diagram of categories: for each $\gamma: i \to j$, denote the corresponding functor by $F_\gamma : \C_i \to \C_j$. Let $G_\gamma$ be the possibly discontinuous right adjoint of $F_\gamma$. There is then an equivalence of categories
$$
\Omega: \lim_{i \in \I^\op, G} \C_i \xrightarrow{ \, \, \simeq \, \, } \uscolim{i \in \I, F} \C_i,
$$
under which the tautological functors of ``insertion" and ``evaluation"
$$
\ins_i : \C_i \to \uscolim{i \in \I, F} \C_i \hspace{.6cm} \ev_i : \lim_{i \in \I^\op, G} \C_i  \to \C_i
$$
form an adjoint pair.

\sssec{}

In the case at hand, smoothness of each $\pi_{s \to r}$ implies the existence of the left adjoints $(\pi_{s \to r})^*$, so that
$$
\Dmod^*(Z) \simeq \lim_{\R, \pi_*} \Dmod(Z^r) \simeq \uscolim{\R^{op}, \pi^*}\, \Dmod(Z^r).
$$
The latter expression explains the notation $\Dmod^*$, which is meant to indicate a colimit along $*$-pullback functors. It also shows that $\Dmod^*(Z)$ is compactly generated and it particular. We denote the insertion
$$
\ins_s: \Dmod(Z^s) \to \uscolim{\R^{op}, \pi^*}\, \Dmod(Z^r)
$$
\emph{formally} by $(\pi_{\infty \to s})^*$. Analogously, the evaluation functor will be formally denoted by $(\pi_{\infty \to s})_*$.
Compact objects of $\Dmod^*(Z)$ are those of the form $(\pi_{\infty \to s})^*(M)$, for $M$ compact in $\Dmod(Z^s)$.

\sssec{} \label{sssec:fromLimToColim}

Thanks to the continuity of the push-forward functors $\pi_*$ and the filteredness of $\R^\op$, the isomorphism $ \underset{\R, \pi_*}  \lim\Dmod(Z^r) \to \uscolim{\R^{op}, \pi^*} \Dmod(Z^r)$ can be made explicit (\cite{DG}). 
Namely,
\begin{equation} \label{eqn:fromLimToColim}
\Omega: M = \{M^r \}_r  \mapsto \uscolim{r \in \R^{op}}  \Big( (\pi_{\infty \to r})^*(M^r) \Big),
\end{equation} 
while the inverse equivalence is completely determined by the assignment:
\begin{equation} \label{eqn:fromColimToLim}
\Omega^{-1}: (\pi_{\infty \to r})^* (M^r)  \mapsto  \left\{ \uscolim{k \, : \, k \to r, \, k \to s} (\pi_{k \to s})_* (\pi_{k \to r})^* (M^r)  \right\}_s.
\end{equation}
Clearly, the latter expression greatly simplifies if each pull-back $(\pi_{r_1 \to r_2})^*$ is fully faithful: in that case, the colimit in (\ref{eqn:fromColimToLim}) is independent of $k$.
Thus, we propose the following definition: we say that a pro-scheme $Z$ is \emph{pseudo-contractible} if it admits a presentation as in (\ref{eqn:pro-schemeZ}) with transition maps giving rise to fully faithful $*$-pullback functors. By smoothness, the $!$-pullbacks are also fully faithful.

\sssec{}

The dual of $\Dmod^*(Z)$ is easily computed and it is by the definition the category of $\fD^!$-modules on $Z$:
$$
\Dmod^!(Z):= \big( \Dmod^*(Z) \big)^\vee \simeq \uscolim{\R^{op}, \pi^!} \Dmod(Z^r).
$$
This is a consequence of the fact that $\Dmod(S)^\vee \simeq \Dmod(S)$ via the classical Verdier duality, and that $(\pi_*)^\vee \simeq \pi^!$ under this self-duality.

\medskip

The assigment $Z \rightsquigarrow \Dmod^!(Z)$ upgrades to a contravariant functor mapping $f: X \to Y$ to 
$$
f^! := (f_*)^\vee : \Dmod^!(Y) \to \Dmod^!(X).
$$
As before, let
$$
(\pi_{\infty \to s})^!: \Dmod(Z^s) \to \uscolim{\R^{op}, \pi^!} \Dmod(Z^r)
$$
symbolize the tautological insertion map. Compact objects of $\Dmod^!(Z)$ are those objects of the form $(\pi_{\infty \to s})^!(M)$, for all $s \in \R$ and all $M$ compact in $\Dmod(Z^s)$.

\sssec{}

Since each $\pi_{s \to r}$ is smooth, $(\pi_{s \to r})^! \simeq (\pi_{s \to r})^* [2 d_{rs}]$, where $d_{rs}$ is the dimension of $\pi_{s \to r}$. Thus, the right adjoint of $(\pi_{s \to r})^!$ is isomorphic to $(\pi_{s \to r})_*[-2 d_{rs}]$ (hence, it is continuous).
We can realize $\Dmod^!(Z)$ as a limit:
\begin{equation}\label{eqn:Dmod^!-as-limit}
\Dmod^!(Z) \simeq \lim_{r \in \R, \pi_*[-2d_\pi ]} \Dmod(Z^r).
\end{equation}
Assume a trivialization of the dimension torsor of $Z$ has been specified. This consists of a function $\dim Z^ {\centerdot}: \R \to \ZZ$ such that $\dim (Z^s)- \dim(Z^r) = \dim (\pi_{s \to r})$.\footcite{We also say that $Z$ has been given a \emph{dimension theory}. Note that a canonical dimension theory on $Z$ exists, provided we view each $\dim(Z^r)$ as a locally constant function on $Z^r$. See \cite{Sam:Dmod}.} 
Then, comparing the above formula with (\ref{eqn:Dmod*-as-limit}), we construct the equivalence
\begin{equation} \label{eqn:eta-pro-scheme}
\lambda_Z : \Dmod^!(Z) \xrightarrow{ \, \, \simeq \, \, } \Dmod^*(Z),
\end{equation}
induced by the inverse family of shift functors $ \id [-2 \dim(Z^r)]: \Dmod(Z^r) \to \Dmod(Z^r)$. Equivalently, 
$$
(\pi_{\infty \to r})_* \circ \lambda_Z = \big( (\pi_{\infty \to r})^! \big)^R \circ  [- 2 \dim(Z^r)],
$$
or, upon passage to left adjoints,
\begin{equation} \label{eqn:eta-compat-with-ins}
\lambda_Z \circ (\pi_{\infty \to r})^!  = (\pi_{\infty \to r})^* \circ [-2 \dim(Z^r)].
\end{equation}
\sssec{}

By (\cite{DG}), the duality pairing $\eps_Z$ between $\Dmod^*(Z)$ and $\Dmod^!(Z)$ consists of the assignment
\begin{equation} \label{eqn:dual-pairing-limColim}
\eps_Z ( \{M^s\}_s, (\pi_{\infty \to r})^! N^r ) \simeq \eps_{Z^r} ( M^r, N^r). 
\end{equation}
In other words, $(\pi_{\infty \to r})^!$ is dual to the functor $ (\pi_{\infty \to r})_* :  \Dmod^*(Z) \to \Dmod(Z^r)$.
Alternatively, by (\ref{eqn:fromColimToLim}), we obtain
\begin{eqnarray} \label{eqn:duality_pairing}
\eps_Z ( (\pi_{\infty \to r})^* M^r, (\pi_{\infty \to r})^! N^r ) 
 &=& \uscolim{s \in \R_{/r}} \, \eps_{Z^s} \big( (\pi_{s \to r})^* M^r, (\pi_{s \to r})^! N^r \big)     
  \\
   &=& \uscolim{s \in \R_{/r}} \, \Gamma_\dR \big( Z^s, (\pi_{s \to r})^* (M^r \otimes N^r) \big). \nonumber
\end{eqnarray}


\begin{rem}
If $Z$ is pseudo-contractible, the formula (\ref{eqn:duality_pairing}) simplifies as
$$
\eps_Z ( (\pi_{\infty \to r})^* M^r, (\pi_{\infty \to r})^! N^r )  \simeq \Gamma_\dR (Z^r, M^r \otimes N^r ). 
$$
\end{rem}

\begin{lem}
For any $M^r \in \Dmod(Z^r)_\cpt$, Verdier duality $\DD_Z : \Dmod^!(Z) \xrightarrow{\, \, \simeq \, \, } \Dmod^*(Z)$ is computed ``component-wise", i.e., it sends
$$
\DD_Z: (\pi_{\infty \to r})^! (M^r) \mapsto (\pi_{\infty \to r})^* (\DD_{Z^r} (M^r)).
$$
\end{lem}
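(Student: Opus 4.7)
The plan is to reduce the statement to a classical compatibility between Verdier duality and smooth pullback, and then invoke the universal property of the colimit descriptions of $\Dmod^!(Z)$ and $\Dmod^*(Z)$.

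For any smooth morphism $\pi: Y \to X$ between finite-type schemes, one has the identity $\DD_Y \circ \pi^! \simeq \pi^* \circ \DD_X$ on compact objects. This follows from the base identification $\pi^! \simeq \pi^*[2 d_\pi]$ combined with the standard $\DD_Y \circ \pi^* \simeq \pi^! \circ \DD_X$, after cancelling the shifts (this is the precise place where smoothness is used). Applied to each transition $\pi_{s \to r}: Z^s \to Z^r$, which is smooth by assumption on $Z$, this identity exhibits the family $\{\DD_{Z^r}\}_{r \in \R}$ as a natural equivalence between two $\R^\op$-indexed diagrams in $\DGCat$: the one with vertices $\Dmod(Z^r)$ and transitions $(\pi_{s \to r})^!$, whose colimit computes $\Dmod^!(Z)$, and the one with the same vertices but transitions $(\pi_{s \to r})^*$, whose colimit computes $\Dmod^*(Z)$. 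Passing to the colimit produces an equivalence $\Dmod^!(Z) \xrightarrow{\simeq} \Dmod^*(Z)$ which, by construction of $\DGCat$-valued colimits, intertwines the insertions $(\pi_{\infty \to r})^!$ with $(\pi_{\infty \to r})^*$ via $\DD_{Z^r}$. On the compact generators $(\pi_{\infty \to r})^!(M^r)$ of $\Dmod^!(Z)$, this is exactly the formula claimed.

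The remaining task is to verify that this induced equivalence coincides with ``the" Verdier duality $\DD_Z$ determined by the canonical duality pairing $\eps_Z$. Combining formula (\ref{eqn:duality_pairing}) for $\eps_Z$ on compact objects with the classical identity $\eps_{Z^r}(A, B) \simeq \Hom_{\Dmod(Z^r)}(\DD_{Z^r}(B), A)$ on each finite-type piece, and the adjunction $((\pi_{s \to r})^*, (\pi_{s \to r})_*)$, a direct calculation gives
\[
\eps_Z\bigl((\pi_{\infty \to s})^* N^s,\ (\pi_{\infty \to r})^! M^r\bigr) \simeq \Hom_{\Dmod^*(Z)}\bigl((\pi_{\infty \to r})^* \DD_{Z^r}(M^r),\ (\pi_{\infty \to s})^* N^s\bigr),
\]
which pins down $\DD_Z((\pi_{\infty \to r})^! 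M^r) \simeq (\pi_{\infty \to r})^* \DD_{Z^r}(M^r)$ by Yoneda.

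The main obstacle is purely bookkeeping: aligning the indexing conventions along $\R^\op$, tracking the shifts arising from smoothness (and checking that they genuinely cancel out, rather than persisting as in the map $\lambda_Z$ of (\ref{eqn:eta-pro-scheme})), and distinguishing the formal names $(\pi_{\infty \to r})^!$, $(\pi_{\infty \to r})^*$ from the actual insertions into the respective colimits. Once these are aligned correctly, the argument is a formal consequence of the universal property of colimits and the classical theory on each $Z^r$.
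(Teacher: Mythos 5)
Your proposal is correct, and its decisive step --- pinning down $\DD_Z$ on the compact generators by testing the pairing $\eps_Z$ against objects pulled back from the finite-type quotients, using the $(\pi^*,\pi_*)$ adjunction together with finite-type Verdier duality --- is exactly the paper's own proof. Your first paragraph is logically superfluous (and, as literally phrased, slightly abuses the formalism: $\DD_{Z^r}$ is contravariant and defined only on compact objects, so the family $\{\DD_{Z^r}\}$ is not a natural equivalence of diagrams in $\DGCat$ until you pass to opposite categories of compacts and ind-complete), but this does not matter since your second paragraph alone establishes the lemma.
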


\begin{proof}
By definition, it suffices to exhibit a canonical equivalence
\begin{equation} \label{eqn:Verdier_tech}
\Hom_{\Dmod^*(Z)} \Big(  (\pi_{\infty \to r})^* (\DD_{Z^r} (M^r)), P \Big)
\simeq 
\eps_{Z} \big(  ( \pi_{\infty \to r})^! (M^r), P \big),
\end{equation}
for any $P \in \Dmod^*(Z)$ pulled back from a finite-type quotient.
This follows immediately from the adjunction $\big ( (\pi_{\infty \to r})^* , (\pi_{\infty \to r})_* \big )$ for $\fD^*$-modules, combined with the duality between $(\pi_{\infty \to r})^!$ and $(\pi_{\infty \to r})_*$.
\end{proof}

\ssec{Basic functoriality}

In this subsection we work out part of the theory of $\fD^*$ and $\fD^!$-modules on pro-schemes: we discuss various push-forward and pull-back functors, the tensor products, the projection and base-change formulas.

\sssec{}

By definition of right Kan extension, if $f: X \to Y$ is a morphism of pro-schemes, the functor $f_*: \Dmod^*(X) \to \Dmod^*(Y)$ is determined by the formula
\begin{equation} \label{eqn:ev----f_*}
(\pi^Y_{\infty \to r})_* \circ f_* \simeq (f^r)_* \circ (\pi^X_{\infty \to r})_*
\end{equation}
for any morphism $f^r: X^r \to Y^r$ of schemes of finite type covered by $f$.
Below are two main examples.

\medskip

The functor of \emph{de Rham cohomology} $\Gamma_\dR$ of $X \simeq \lim_r X^r$ is defined as
$$
\Gamma_\dR:= (p_X)_* \simeq (p_{X^r})_* \circ (\pi^X_{\infty \to r})_* : \Dmod^*(X) \to \Vect,
$$
Explicitly, if $M \in \Dmod^*(X)$ is represented by the collection $ \{M^r\}_r \in \lim_{r, \pi_*} \Dmod(X^r)$, then $\Gamma_\dr(M) := \Gamma_\dr(X^r, M^r)$, the RHS being independent of $r$.

\medskip

If $i_x: \pt \hookrightarrow X$ is a closed point, the \emph{delta $\fD$-module} at $x$ is given by the usual formula $\delta_{x,X}:=(i_x)_*(\kk) \in \Dmod^*(X)$. In the realization of $\Dmod^*(X)$ as a limit, $\delta_x$ is represented by the collection of $\delta_{x^r,X^r} \in \Dmod(X^r)$, where $x^r$ is the image of $x$ under the projection $X \to X^r$.

\begin{rem}

Contrarily to the finite-type case,  $(i_x)_*$ does not preserve compactness: as pointed out before, compact $\fD^*$-modules on $X$ are (in particular) $*$-pulled back along some projection $X \to X^r$ and $\delta_{x, X}$ is not such. As a consequence, $(i_x)_*$ does not admit a continuous right adjoint. 
However, $(i_x)_*$ is fully faithful, being the limit of the functors $(i_{x^r})_*$, which are fully faithful by Kashiwara's lemma. 

\end{rem}

\sssec{}

Consider again an arbitrary map $f: X \to Y$ of pro-schemes. The left adjoint to $f_*$, denoted $f^*$, is only partially defined. We say that $f^*$ is defined on $M \in \Dmod^*(Y)$ if the functor $\Hom(M, f_*(-))$ is corepresentable (by an object that we denote as $f^*(M)$). 

A sufficient condition for $f^*$ to be defined on all of $\Dmod^*(Y)$ is that each $f^r$ be smooth. Indeed, by (\ref{eqn:ev----f_*}) and the $(\ins, \ev)$ adjunction, $f^*$ is specified by
\begin{equation} \label{eqn:f^*-ins}
(\pi^Y_{\infty \to r})^* \circ (f^r)^* \simeq f^* \circ (\pi^X_{\infty \to r})^*
\end{equation}

In particular, whenever $X^r$ admits a constant sheaf $k_{X^r}$, the functor of de Rham cohomology $p_* : \Dmod^*(X) \to \Vect $ is corepresented by the \emph{constant $\fD^*$-module} 
\begin{equation} \label{eqn:k_X}
k_X := (\pi_{\infty \to r})^*(k_{X^r}),
\end{equation}
where the RHS is independent of $r$. 

\sssec{} 

Let us now discuss the functor $f^!: \Dmod^!(Y) \to \Dmod^!(X)$, dual to $f_*$. By (\ref{eqn:dual-pairing-limColim}), one readily gets
\begin{equation}\label{eqn:f^!}
f^! \circ  (\pi^Y_{\infty \to r})^! = (\pi^X_{\infty \to r})^! \circ (f^r)^!.
\end{equation}
For instance, consider $p: X \to \pt$. Then, $p^!: \Vect \to \Dmod^!(X)$ gives the \emph{dualizing sheaf}, $\omega_X := p^! (\kk)$.
Explicitly,
$$
\omega_X \simeq (\pi_{\infty \to r})^! (\omega_{X^r}),
$$
being clear that the RHS does not depend on $r$.

\sssec{}

The functor $f_!$, left adjoint to $f^!$, is only partially defined. For instance, here is a typically infinite dimensional phenomenon.

\medskip

If $X$ is infinite dimensional, $(i_x)^!$ does not have a left adjoint. Indeed, the value of the hypothetical left adjoint on $\kk$ would have to be compact, hence of the form $(\pi_{\infty \to r})^! (F)$ for some $r$ and some $F \in \Dmod(X^r)_{\cpt}$. It is easy to see that this causes a contradiction. For simplicity, assume that $\R = (\NN, <)^{op}$ and that all $\pi^!$ are fully faithful. For any $s \to r$, adjunction forces
$$ 
\Hom_{\Dmod^!(X)} \big( (\pi_{\infty \to r})^! (F), (\pi_{\infty \to s})^! (-) \big) \simeq (i_{x^s})^!
$$
as functors from $\Dmod(X^s)$ to $\Vect$. It follows that $(\pi_{s\to r})^! (F) \simeq \delta_{x^s}$, which is absurd if $\pi_{s \to r}$ is of positive dimension.

\begin{rem} \label{rem:p^*-for-indschemes}
A similar logic shows that $(p_\X)^*: \Vect \to \Dmod(\X)$ is not defined whenever $\X$ is a (genuine) ind-scheme of ind-finite type. For instance, $\AA^{\infty, \ind} := \colim_n \AA^n$ does not admit a constant sheaf. This fact is ``Fourier dual" to the non-existence of the $!$-pushforward along $i_0 : \pt \to \AA^{\infty, \pro} := \lim_n \AA^n$.
\end{rem}

\sssec{}

The above example shows that $f_!$ may not be defined even if all $(f^r)_!$ are. Matters simplify for $f= p: X 	\to \pt$, with $X$ pseudo-contractible. Then
$$
(p^r)^! \simeq \big( (\pi_{\infty\to r})^! \big)^R \circ p^!,
$$
by contruction and fully faithfulness of $(\pi_{\infty \to r})^!$. Passing to the left adjoints, we obtain
$$
(p^r)_! \simeq  p_! \circ (\pi_{\infty \to r})^!:
$$
in other words,
$$
p_! (M) \simeq \uscolim{r \in \R^\op} (p^r)_! (M^r)
$$
for $M=\{M^r\}_r \in \Dmod^!(X) = \lim_{(\pi^!)^R} \Dmod(X^r)$,  provided that each $(p^r)_!$ is defined on $M^r$.

\sssec{}

Let $f: X \to Y$ be a map of pro-schemes for which the equivalences $\lambda_X$ and $\lambda_Y$ have been specified (for instance, if $X$ and $Y$ are limit of smooth schemes). We shall occasionally use the \emph{renormalized} push-forward
$$
f_*^\ren: \Dmod^!(X) \to \Dmod^!(Y), \, \, \, f_*^\ren := \lambda_Y^{-1} \circ f_* \circ \lambda_X.
$$

\sssec{}

Suppose that $f: X \to Y$ can be presented as the limit of maps $f^r : X^r \to Y^r$ such that all the squares
\begin{gather} \label{diag:square-for-pro-schemes}
\xy
(0,0)*+{ Y^s }="00";
(25,0)*+{  Y^r }="10";
(00,15)*+{  X^s }="01";
(25,15)*+{  X^r }="11";
{\ar@{<-}_{ f^s} "00";"01"};
{\ar@{->}^{\pi} "00";"10"};
{\ar@{<-}_{f^r} "10";"11"};
{\ar@{->}^{\pi} "01";"11"};
\endxy
\end{gather} 
are Cartesian (equivalently, $f$ is finitely presented, \cite{Sam:Dmod}). In this case, base-change yields commutative squares
\begin{gather}
\xy
(0,0)*+{ \Dmod(Y^s) }="00";
(25,0)*+{  \Dmod(Y^r) }="10";
(00,15)*+{  \Dmod(X^s) }="01";
(25,15)*+{  \Dmod(X^r) }="11";
{\ar@{->}_{ (f^s)^!} "00";"01"};
{\ar@{->}^{\pi_* } "00";"10"};
{\ar@{->}_{(f^r)^! } "10";"11"};
{\ar@{->}^{\pi _*} "01";"11"};
(50,0)*+{ \Dmod(Y^s) }="20";
(75,0)*+{  \Dmod(Y^r), }="30";
(50,15)*+{  \Dmod(X^s) }="21";
(75,15)*+{  \Dmod(X^r) }="31";
{\ar@{->}_{ (f^s)_*} "21";"20"};
{\ar@{->}^{\pi^! } "30";"20"};
{\ar@{<-}_{(f^r)_* } "30";"31"};
{\ar@{<-}^{\pi ^!} "21";"31"};
\endxy
\end{gather} 
which allow to define the functors $f^\flip : \Dmod^*(Y) \to \Dmod^*(X)$ and $f_+ : \Dmod^!(X) \to \Dmod^!(Y)$
by the formulas
$$
 (\pi^X_{\infty \to r})_* \circ  f^\flip :=  (f^r)^! \circ  (\pi^Y_{\infty \to r})_*
$$
and
$$
f_+ \circ (\pi^X_{\infty \to r})^! := (\pi^Y_{\infty \to r})^! \circ (f^r)_*.
$$
These functors are easily seen to be independent of the presentations.
Also it is immediate from the finite-type case that the pairs of functors $(f_*, g^\flip)$ and $(f_+, g^!)$ satisfy the base-change formula (see \cite{Sam:Dmod} for a thorough treatment). 

\sssec{} \label{sssec:closed-embedd-pro}

A map $f: X \to Y$ between pro-schemes is a finitely presented closed embedding (resp., proper) if it is the base-change of a closed embedding (resp., proper map) $f^r: X^r \to Y^r$ for some $r \in \R$ (equivalently, assuming that $r$ is final: for all $r \in \R$). 

The inclusion of a point into $Y \in \Sch^\pro$ of infinite type is a closed embedding  that is \emph{not} finitely presented.

\begin{lem} \label{lem:f_bullet}
Let $f: X \to Y$ be finitely presented. The following statements hold true:
\begin{itemize}
\item
if $f$ is proper, then $f_+$ is left adjoint to $f^!$. If moreover $X$ and $Y$ have dimension theories, then 
$$
f_+ \simeq f_*^\ren [2 \dim_f],
$$
where $\dim_f  := \dim (X^r)- \dim(Y^r)$ is clearly well-defined;

\smallskip
\item

if $f$ is a finitely presented closed embedding, then $f_+$ is fully faithful.
\end{itemize}
\end{lem}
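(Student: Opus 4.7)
The plan is to verify both assertions by restricting to compact generators of the form $\ins^X_r M := (\pi^X_{\infty\to r})^!(M)$, with $M \in \Dmod(X^r)_\cpt$, and reducing to the corresponding finite-type statements for $f^r : X^r \to Y^r$. Throughout, let $\ev^Y_r$ denote the right adjoint of $\ins^Y_r$, which exists by the Lurie paradigm \ref{sssec:Lurie_G0} and which agrees with the $r$-th component in the limit realization (\ref{eqn:Dmod^!-as-limit}); analogously for $X$.

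For the adjunction $(f_+, f^!)$, I observe first that both functors are continuous: $f^!$ because it is by definition dual to $f_*$, and $f_+$ because it is built from the insertions via $\ins^Y_r \circ (f^r)_*$ and the latter are colimit-preserving. By continuity and compact generation it then suffices to check the adjunction on generators $\ins^X_r M$. Unwinding the definition of $f_+$ and using the $(\ins^Y_r,\ev^Y_r)$-adjunction,
\[
\Hom_{\Dmod^!(Y)}(f_+ \ins^X_r M, B) \simeq \Hom_{\Dmod(Y^r)}((f^r)_* M, \ev^Y_r B) \simeq \Hom_{\Dmod(X^r)}(M, (f^r)^!\, \ev^Y_r B),
\]
where the last step is the finite-type adjunction $((f^r)_*, (f^r)^!)$ for the proper map $f^r$. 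On the other side, $\Hom_{\Dmod^!(X)}(\ins^X_r M, f^! B) \simeq \Hom_{\Dmod(X^r)}(M, \ev^X_r f^! B)$, so matching the two reduces to the identity $\ev^X_r \circ f^! \simeq (f^r)^! \circ \ev^Y_r$, i.e.\ to the statement that $f^!$ is computed componentwise in the limit realization. This last point follows from base change along the Cartesian squares (\ref{diag:square-for-pro-schemes}), which are available by finite presentation of $f$.

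The identification $f_+ \simeq f_*^\ren [2\dim_f]$ is then verified directly on compact generators using the trivialization formula (\ref{eqn:eta-compat-with-ins}) and its rewrite $\lambda_Z^{-1} \circ (\pi_{\infty\to r})^* \simeq (\pi_{\infty\to r})^! \circ [2\dim Z^r]$. Applied to $\ins^X_r M$, both $f_+$ and $f_*^\ren[2\dim_f]$ reduce to $\lambda_Y^{-1}\bigl((\pi^Y_{\infty\to r})^*(f^r)_* M\bigr)[-2\dim Y^r]$, provided one knows that $f_* \circ (\pi^X_{\infty\to r})^* \simeq (\pi^Y_{\infty\to r})^* \circ (f^r)_*$. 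This last identity is the $\fD^*$-version of base change for the square (\ref{diag:square-for-pro-schemes}), obtained from (\ref{eqn:ev----f_*}) by passing to left adjoints, using smoothness of $\pi$ and properness of $f$.

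For (ii), I would deduce fully faithfulness of $f_+$ from the adjunction $(f_+, f^!)$ established in (i): $f_+$ is fully faithful iff the unit $\id \to f^! f_+$ is an equivalence. Since both functors are continuous, it suffices to test on generators $\ins^X_r M$, for which
\[
f^! f_+ (\ins^X_r M) \simeq f^! \ins^Y_r (f^r)_* M \simeq \ins^X_r (f^r)^! (f^r)_* M \simeq \ins^X_r M,
\]
where the first step is the definition of $f_+$, the second is (\ref{eqn:f^!}), and the third is the finite-type Kashiwara lemma for the closed embedding $f^r$. The main obstacle throughout is the careful handling of base change in the pro-finite-type setting: the finite presentation hypothesis provides the Cartesian squares, and continuity of all functors involved propagates the finite-type base change identities through the inverse systems defining $\Dmod^*$ and $\Dmod^!$.
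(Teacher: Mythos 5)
Your proof is correct and follows essentially the same route as the paper's (two-line) argument: finite presentation gives the Cartesian squares (\ref{diag:square-for-pro-schemes}), base change along them makes $f_+$ and $f^!$ compatible with the insertion/evaluation functors, and everything then reduces to the finite-type adjunction $((f^r)_*,(f^r)^!)$, the shift bookkeeping via (\ref{eqn:eta-compat-with-ins}), and Kashiwara's lemma -- you merely carry this out on compact generators rather than as a limit of levelwise adjunctions. The only loose phrase is deducing $f_*\circ(\pi^X_{\infty\to r})^*\simeq(\pi^Y_{\infty\to r})^*\circ(f^r)_*$ by ``passing to left adjoints'' in (\ref{eqn:ev----f_*}) (the functor $f_*$ has no left adjoint in general); what is really used is the smooth base change isomorphism $(\pi^Y_{s\to r})^*(f^r)_*\simeq(f^s)_*(\pi^X_{s\to r})^*$ propagated through the colimit presentation, which is exactly the Beck--Chevalley argument you intend, so this is a matter of wording rather than a gap.
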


\begin{proof}
Base change along the diagrams (\ref{diag:square-for-pro-schemes}) guarantees that $f_+$ and $f^!$ are compatible with the evaluations for the $!$-categories of $\fD$-modules. All statements follow immediately.
\end{proof}

\sssec{}

The $1$-category $\Sch^\pro$ admits products. Moreover, for two pro-schemes $X$ and $Y$, there are canonical equivalences
\begin{eqnarray} 
\Dmod^*(X) \otimes \Dmod^*(Y) \xrightarrow{\,\,\, \boxtimes \,\,\,} \Dmod^*(X \times Y) \nonumber \\
\Dmod^!(X) \otimes \Dmod^!(Y) \xrightarrow{\,\,\, \boxtimes \,\,\,} \Dmod^!(X \times Y), \label{eqn:boxtimes_equiv}
\end{eqnarray}
which follow at once from dualizability of each $\Dmod(X^r)$ and the fact that $\Dmod^*$ and $\Dmod^!$ can be represented as colimits.

\begin{rem} \label{rem:pro-schemes-fiber-products}

It is easy to see that $\Sch^\pro$ admits fiber products. Indeed, let $X \to Z \leftarrow Y$ be a diagram of pro-schemes.
Let $Z = \lim Z^r$ be a presentation of $Z$. For each $r \in \R$, the composition $X \to Z \twoheadrightarrow Z^r$ factors through a projection $X \twoheadrightarrow X'$, with $X'$ of finite type. We let $X^r := X'$. In this way we contruct compatible pro-scheme presentations of $X$ and $Y$ and $\lim_r (X^r \times_{Z^r} Y^r)$ is a presentation of $X \times_Z Y$,

\end{rem} 

Note that $\Dmod^!(X)$ is symmetric monoidal: indeed, it is a colimit formed along the monoidal functors $(\pi_{s \to r})^!$. By construction, each insertion functor $(\pi_{\infty \to r})^! : \Dmod(X^r) \to \Dmod^!(X)$ is monoidal and, consequently, the tensor product on $\Dmod^!(X)$ is defined as usual: 
$$
\Dmod^!(X) \otimes \Dmod^!(X) \xrightarrow{\, \, \boxtimes \, \, } \Dmod^!(X \times X) \xrightarrow{\, \, \Delta^! \, \, } \Dmod^!(X).
$$

\sssec{}

For any $f: X \to Z$, the functor $f^!: \Dmod^!(Z) \to \Dmod^!(X)$ is symmetric monoidal, hence $(\Dmod^!(Z), \otimes)$ acts on $\Dmod^!(X)$.
The relative analogue of  (\ref{eqn:boxtimes_equiv}) holds true as well:
\begin{lem} \label{lem:BN}
Let $X \to Z \leftarrow Y$ be a diagram of pro-schemes.
There is a canonical equivalence
\begin{equation}
\Dmod^!(X) \usotimes{\Dmod^!(Z)} \Dmod^!(Y) \simeq \Dmod^!(X \times_Z Y).
\end{equation}
\end{lem}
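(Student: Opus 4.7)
The plan is to reduce the statement to the finite-type analogue (which is a standard result on $\fD$-modules, e.g., following from Ben-Zvi--Francis--Nadler or Gaitsgory) by expressing both sides as filtered colimits along $!$-pullbacks and commuting the relative tensor product past these colimits.

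First, using Remark \ref{rem:pro-schemes-fiber-products}, fix compatible presentations $X = \lim_r X^r$, $Y = \lim_r Y^r$, $Z = \lim_r Z^r$ indexed by a common filtered category $\R^\op$, in such a way that the maps $X \to Z$ and $Y \to Z$ are induced by maps $X^r \to Z^r$ and $Y^r \to Z^r$ and that $X \times_Z Y \simeq \lim_r (X^r \times_{Z^r} Y^r)$ is a presentation of the fiber product. By the construction of $\Dmod^!$, we have
$$
\Dmod^!(X) \simeq \uscolim{r, \pi^!} \Dmod(X^r),
\quad \Dmod^!(Y) \simeq \uscolim{r, \pi^!} \Dmod(Y^r),
\quad \Dmod^!(Z) \simeq \uscolim{r, \pi^!} \Dmod(Z^r),
$$
and each transition functor $\pi^!$ is symmetric monoidal. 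Hence the colimit computing $\Dmod^!(Z)$ is taken in $\SymMonStinftyCat$, and the colimits computing $\Dmod^!(X)$ and $\Dmod^!(Y)$ are taken in the $\infty$-category of $\Dmod^!(Z)$-modules.

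Next, I invoke the fact that the relative tensor product over an algebra presented as a filtered colimit of algebras commutes with the corresponding filtered colimits of modules: for a filtered diagram $\{A_r, M_r, N_r\}$ with $M_r$ a right $A_r$-module, $N_r$ a left $A_r$-module, one has
$$
\Bigl(\uscolim{r} M_r\Bigr) \usotimes{\, \uscolim{r} A_r \,} \Bigl(\uscolim{r} N_r\Bigr)
\simeq \uscolim{r} \bigl( M_r \usotimes{A_r} N_r \bigr).
$$
Applying this to our setting yields
$$
\Dmod^!(X) \usotimes{\Dmod^!(Z)} \Dmod^!(Y)
\simeq \uscolim{r, \, \pi^!} \Bigl( \Dmod(X^r) \usotimes{\Dmod(Z^r)} \Dmod(Y^r) \Bigr).
$$

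Now I apply the finite-type version of the lemma: for any morphisms $X^r \to Z^r \leftarrow Y^r$ of schemes of finite type, there is a canonical equivalence
$$
\Dmod(X^r) \usotimes{\Dmod(Z^r)} \Dmod(Y^r) \simeq \Dmod(X^r \times_{Z^r} Y^r),
$$
compatible with $!$-pullbacks along the transition maps $\pi$. Substituting, the colimit on the right becomes $\uscolim{r, \pi^!} \Dmod(X^r \times_{Z^r} Y^r)$, which is by definition $\Dmod^!(X \times_Z Y)$. Naturality in $r$ ensures that the resulting equivalence is canonical.

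The main obstacle I expect is bookkeeping the compatibility of the $\Dmod^!(Z)$-module structure with the filtered colimit presentation: one has to verify that the $\Dmod(Z^r)$-module structures on $\Dmod(X^r)$ and $\Dmod(Y^r)$ obtained via $(X^r \to Z^r)^!$ and $(Y^r \to Z^r)^!$ assemble into the $\Dmod^!(Z)$-action on $\Dmod^!(X)$ and $\Dmod^!(Y)$ in a coherent way, so that the commutation-of-tensor-with-colimits principle truly applies. Once this coherence is set up—using that each $\pi^!$ is symmetric monoidal and that base-change identifies the relevant structure maps across levels—the rest is formal.
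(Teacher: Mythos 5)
Your proposal is correct and follows essentially the same route as the paper: fix compatible presentations as in Remark \ref{rem:pro-schemes-fiber-products}, invoke the finite-type identification $\Dmod(X^r) \usotimes{\Dmod(Z^r)} \Dmod(Y^r) \simeq \Dmod(X^r \times_{Z^r} Y^r)$ levelwise, and commute the relative tensor product with the filtered colimits along the symmetric monoidal $!$-pullbacks. The only cosmetic difference is that you run the chain of equivalences from the tensor-product side while the paper runs it from $\Dmod^!(X \times_Z Y)$, and the paper attributes the finite-type input to \cite{BN}.
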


\begin{proof}
The canonical functor is induced by pullback along $X \times_Z Y \to X \times Y$. To prove it is an equivalence, we reduce it to the finite dimensional case, where it is true by a result of \cite{BN}. By fixing compatible presentations of $X, Y, Z$ and $X \times_Z Y$ as in Remark \ref{rem:pro-schemes-fiber-products}, we obtain
$$
\Dmod^!( X \times_Z Y) 
\simeq
 \uscolim{r \in \R^\op} \Dmod(X^r \times_{Z^r} Y^r) 
\simeq
 \uscolim{r \in \R^\op} \Big( \Dmod^!(X^r) \usotimes{\Dmod^!(Z^r)} \Dmod^!(Y^r) \Big) 
 \simeq
\Dmod^!(X) \usotimes{\Dmod^!(Z)} \Dmod^!(Y).
$$
\end{proof}

\sssec{}

By Sect. \ref{sssec:paradigm_dual actions}, there is an induced right action of $\Dmod^!(Z)$ on $\Dmod^*(X)$, which we indicate by $\overset{*!}\otimes$. Since $\Dmod^!(Z)$ is symmetric monoidal, we sometimes consider the latter as a left action: in that case, we use the symbol $\overset{!*} \otimes$.
It is easy to check that
$$
(\pi_{\infty \to r})^! (M) \overset{!*} \otimes  (\pi_{\infty \to r})^* (P) = (\pi_{\infty \to r})^* (M \otimes_{X^r} P)
$$
and that
$$
M \overset {!*} \otimes N \simeq \lambda_X (M \otimes \lambda_X^{-1}(N)),
$$
where the RHS is independent of the choice of $\lambda_X$.

\medskip

Both statements follow formally from the functorial equivalence
$$
\pi^!(M) \otimes \pi^*(P) \simeq \pi^*(M \otimes P),
$$
valid for any smooth map $\pi$ between schemes of finite type.
\begin{lem} \label{lem:proj_formula}
For $f: X \to Y$, the functor $f_*$ is a functor of $\Dmod^!(Y)$-module categories. 
\end{lem}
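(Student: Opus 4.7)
The strategy is to deduce the $\Dmod^!(Y)$-linearity of $f_*$ as a formal consequence of the symmetric monoidality of its dual functor $f^!$, which was recorded in the discussion preceding the lemma. The proof will therefore be essentially a bookkeeping exercise with duality, using no additional geometric input.

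First I would recall from Sect. 3.1.9 that the left action of $\Dmod^!(Y)$ on $\Dmod^*(X)$ encoded by $\overset{!*}\otimes$ is obtained in two stages: initially as a \emph{right} action, by dualizing (via the paradigm of Sect. 2.1.13) the left action of $\Dmod^!(Y)$ on $\Dmod^!(X) \simeq \Dmod^*(X)^\vee$ that is prescribed by $f^!$ together with the monoidal structure of $\Dmod^!(X)$; this right action is then converted into a left action by invoking the symmetry of the monoidal category $\Dmod^!(Y)$. Specializing to $X = Y$ and $f = \id$ gives the analogous description for the $\Dmod^!(Y)$-action on $\Dmod^*(Y)$.

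Next, since $f^!$ is symmetric monoidal, it is in particular a morphism of commutative algebra objects in $\DGCat$, hence a morphism of left $\Dmod^!(Y)$-module categories $\Dmod^!(Y) \to \Dmod^!(X)$. Dualization is a contravariant symmetric monoidal endofunctor on the $\infty$-category of dualizable DG categories, and under the self-duality $\Dmod^!(X)^\vee \simeq \Dmod^*(X)$ it sends $f^!$ precisely to $f_*$ (this is the very definition of $f^!$ in Sect. 3.1.5). Hence $f_* = (f^!)^\vee$ is automatically a morphism of right $\Dmod^!(Y)$-modules, and the same symmetry of $\Dmod^!(Y)$ used above converts this to the required left $\Dmod^!(Y)$-linearity with respect to $\overset{!*}\otimes$.

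The main obstacle is bookkeeping: one has to confirm that the left $\Dmod^!(Y)$-module structure on $\Dmod^*(X)$ defined by $\overset{!*}\otimes$ genuinely coincides with the one produced by the duality-plus-symmetry recipe just outlined. This compatibility is built into the construction of Sect. 3.1.9, but a careful reader can verify it on compact generators using the explicit identity $(\pi^X_{\infty \to r})^!(A) \overset{!*}\otimes (\pi^X_{\infty \to r})^*(B) = (\pi^X_{\infty \to r})^*(A \overset{!*}\otimes_{X^r} B)$, together with formulas (3.1.7) and (3.1.12); this also provides an independent sanity check reducing the projection formula on the pro-scheme level to the classical projection formula at each finite-type fiber $f^r: X^r \to Y^r$.
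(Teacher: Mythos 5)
Your proposal is correct and follows essentially the same route as the paper: the paper's proof also observes that $f^!$ is a $\Dmod^!(Y)$-linear functor between dualizable module categories and invokes the general principle (from the paradigm of dual actions) that the dual of an $\A$-linear morphism is $\A$-linear for the dual actions, which gives the linearity of $f_* = (f^!)^\vee$. Your extra verification on compact generators is a harmless sanity check that the paper leaves implicit in the construction of $\overset{!*}\otimes$.
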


\begin{proof}
This is an instance of the following general phenomenon. In the setting of Sect. \ref{sssec:paradigm_dual actions}, if a morphism $\alpha: \M \to \N$ is $\A$-linear and $\M, \N$ are dualizable in $\S$, then $\alpha^\vee: \N^\vee \to \M^\vee$ is also $\A$-linear with respect to the dual $\A$-actions.
\end{proof}

\begin{cor} \label{cor:proj_formula_ren}
For $f: X \to Y$ and any choice of dimension theories for $X$ and $Y$, the functor $f_*^\ren : \Dmod^!(X) \to \Dmod^!(Y)$ is $\Dmod^!(Y)$-linear.
\end{cor}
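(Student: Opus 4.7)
The plan is to reduce the statement to Lemma \ref{lem:proj_formula} by decomposing $f_*^\ren$ as a composition of three $\Dmod^!(Y)$-linear functors. Recall $f_*^\ren = \lambda_Y^{-1} \circ f_* \circ \lambda_X$, where each $\lambda$ is the equivalence $\Dmod^!(-) \xrightarrow{\simeq} \Dmod^*(-)$ attached to the chosen dimension theory.

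\smallskip

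The first key observation is that for any pro-scheme $Z$ equipped with a dimension theory, the equivalence $\lambda_Z: \Dmod^!(Z) \to \Dmod^*(Z)$ is $\Dmod^!(Z)$-linear, where $\Dmod^!(Z)$ acts on the source by $\otimes$ and on the target by $\overset{!*}\otimes$. This is immediate from the formula $M \overset{!*}\otimes N \simeq \lambda_Z(M \otimes \lambda_Z^{-1}(N))$ established in the excerpt: setting $P = \lambda_Z^{-1}(N)$ gives $\lambda_Z(M \otimes P) \simeq M \overset{!*}\otimes \lambda_Z(P)$, which is exactly the linearity statement. Note that the cited formula also records that this linearity is independent of the chosen dimension theory, so it applies to both $\lambda_X$ and $\lambda_Y$ with their respective trivializations.

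\smallskip

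Second, I would push this linearity across $f$. The functor $f^!: \Dmod^!(Y) \to \Dmod^!(X)$ is symmetric monoidal (as noted right before Lemma \ref{lem:BN}), so the $\Dmod^!(X)$-linearity of $\lambda_X$ restricts along $f^!$ to $\Dmod^!(Y)$-linearity of $\lambda_X$, where $\Dmod^!(Y)$ acts on the source $\Dmod^!(X)$ through $f^!$ followed by $\otimes$, and on the target $\Dmod^*(X)$ through $f^!$ followed by $\overset{!*}\otimes$. Similarly, $\lambda_Y^{-1}$ is $\Dmod^!(Y)$-linear with respect to the actions by $\overset{!*}\otimes$ and $\otimes$ on source and target.

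\smallskip

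Finally, Lemma \ref{lem:proj_formula} supplies the middle piece: $f_*: \Dmod^*(X) \to \Dmod^*(Y)$ is $\Dmod^!(Y)$-linear, the two actions being $\overset{!*}\otimes$ via $f^!$ on the source and $\overset{!*}\otimes$ directly on the target. Composing the three $\Dmod^!(Y)$-linear functors $\lambda_X$, $f_*$, $\lambda_Y^{-1}$ yields that $f_*^\ren$ is $\Dmod^!(Y)$-linear, as required. The only point requiring mild care is bookkeeping of which $\Dmod^!(Y)$-actions are being matched at each step; this is essentially automatic once one writes down the three linearity isomorphisms and checks that the module structures align through $f^!$. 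There is no genuine analytic or geometric obstacle.
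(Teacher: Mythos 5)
Your proof is correct and follows essentially the same route as the paper, which also factors $f_*^\ren = \lambda_Y^{-1}\circ f_*\circ\lambda_X$ and observes that all three functors are $\Dmod^!(Y)$-linear. You have merely spelled out the linearity of the $\lambda$'s (via the formula $M \overset{!*}\otimes N \simeq \lambda(M\otimes\lambda^{-1}(N))$ and monoidality of $f^!$), which the paper leaves implicit.
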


\begin{proof}
The three functors $\lambda_X$, $f_*$ and $(\lambda_Y)^{-1}$ are all $\Dmod^!(Y)$-linear.
\end{proof}

The content of these two results is to give the projection formulas, that is, the equivalences
\begin{equation}\label{eqn:proj_formulas}
f_*(M) \overset{*!} \otimes N \simeq f_*(M \overset{*!} \otimes f^!(N))
\hspace*{.6cm}
f_*^\ren (P) \overset{!} \otimes N \simeq f^\ren_*  \big( P \overset{!} \otimes f^!(N) \big)
\end{equation}
\emph{up to coherent homotopy}.

\begin{rem} \label{rem:eval:compact}
The evaluation pairing (\ref{eqn:duality_pairing}) between $\Dmod^*(X)$ and $\Dmod^!(X)$ can be tautologically rewritten as $\eps_X (M , N) \simeq \Gamma_\dR \big(X, M \overset{*!}\otimes N \big)$.
\end{rem}

\ssec{$\fD$-modules on ind-pro-schemes} \label{ssec:D-mods-on-indpro-schemes}

Let us now extend the above theory to the set-up of ind-pro-schemes.

\sssec{}

By definition, a (classical) \emph{ind-scheme} is a prestack that can be presented as a filtered colimit of quasi-compact and quasi-separated schemes along closed embeddings. A frequently used subcategory of such prestacks is that of ind-schemes of \emph{ind-finite type}: it consists of those ind-schemes formed out of schemes of finite type.

\sssec{}

Before explaining the two theories of $\fD$-modules on ind-pro-schemes, let us recall the definition of the category of $\fD$-modules on ind-schemes of ind-finite type. Let $\IndSch^{\fty}:= \ms{Ind}^{cl}\Sch^{\fty}$ denote the $1$-category of such. (Here, the superscript $cl$ stands for ``{closed embedding}".) 
The functor
$$
\Dmod: \IndSch^{\fty} \longrightarrow \DGCat
$$
is defined to be the left Kan extension of $\Dmod: \Sch^{\fty} \to \DGCat$ along the inclusion $\Sch^{\fty} \hookrightarrow \IndSch^{\fty}$.
Explicitly, if $\Y \in \IndSch^{\fty}$ is written as a filtered colimit $\Y = \colim_{n \in \I} Y_n$, with $Y_n \in \Sch^{\on{ft}}$ and closed embeddings $\iota_{m \to n}: Y_m \hookrightarrow Y_n$, then $\Dmod(\Y) \simeq \colim_{n \in \I} \Dmod(Y_n)$, with respect to the pushforward morphisms $(\iota_{m \to n})_*$.

\sssec{}

We repeat the same process for ind-pro-schemes, with the proviso that the closed embeddings must be of \emph{finite presentation}. Namely, we define the ordinary category $\IndSch^\pro:= \ms{Ind}^{cl}\Sch^\pro$ of \emph{ind-pro-schemes} to be the one comprising ind-schemes that can be formed as colimits of pro-schemes under finitely presented closed embeddings. By \emph{ind-pro-group}, we mean a group object in the category $\IndSch^\pro$.

\medskip

The functor
$$
\Dmod^* : \IndSch^{\pro} \to \DGCat
$$
is defined as the left Kan extension of $\Dmod^*: \Sch^\pro \to \DGCat$ along the inclusion $\Sch^\pro \hookrightarrow \IndSch^\pro$.
Analogously, the functor
$$
\Dmod^! : (\IndSch^\pro)^\op \to \DGCat
$$
is defined to be the right Kan extension of $\Dmod^!: (\Sch^\pro)^\op \to \DGCat$ along the inclusion $(\Sch^\pro)^\op \hookrightarrow (\IndSch^\pro)^\op$. 

\medskip

For $f: X \to Y$ a map in $\IndSch^\pro$, we continue to denote by $f_*: \Dmod^*(X) \to \Dmod^*(Y)$ and $f^!: \Dmod^!(Y) \to \Dmod^!(X)$ the induced functors.

\begin{lem}
The duality $\Dmod^! \simeq (\Dmod^*)^\vee$ and the isomorphism $(f_*)^\vee \simeq f^!$ continue to hold for ind-pro-schemes.
\end{lem}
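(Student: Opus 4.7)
The plan is to reduce everything to the pro-scheme case already treated above, using how duality in $\DGCat$ interchanges filtered colimits with cofiltered limits.

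First, I would fix a presentation $\Y = \colim_{n \in \I} Y_n$ of an ind-pro-scheme as a filtered colimit of pro-schemes along closed embeddings $\iota_{m \to n}: Y_m \hookrightarrow Y_n$. By the Kan-extension definitions and filteredness of $\I$,
\[
\Dmod^*(\Y) \simeq \uscolim{n \in \I}\, \Dmod^*(Y_n), \qquad \Dmod^!(\Y) \simeq \lim_{n \in \I^{\op}}\, \Dmod^!(Y_n),
\]
with transition functors $(\iota_{m \to n})_*$ and $(\iota_{m \to n})^!$ respectively. By the pro-scheme case, each $\Dmod^*(Y_n)$ is dualizable with dual $\Dmod^!(Y_n)$, and under this duality $(\iota_{m \to n})^!$ is identified with $((\iota_{m \to n})_*)^\vee$.

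Next I would invoke the standard fact that a filtered colimit of dualizable objects in $\DGCat$ is dualizable, with dual equal to the limit of the duals along the dual transition functors (this rests on $\DGCat$ being symmetric monoidal and the tensor product commuting with colimits in each variable). Applied here, it yields
\[
\Dmod^*(\Y)^\vee \simeq \lim_{n \in \I^{\op}}\, \Dmod^*(Y_n)^\vee \simeq \lim_{n \in \I^{\op}}\, \Dmod^!(Y_n) \simeq \Dmod^!(\Y),
\]
with independence of presentation following from the Kan-extension universal property.

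For the statement on morphisms, given $f: X \to Y$ in $\IndSch^{\pro}$ I would choose compatible presentations $X = \colim_m X_m$ and $Y = \colim_n Y_n$ together with a system of pro-scheme morphisms $f_m: X_m \to Y_{n(m)}$ whose colimit is $f$. Under the colimit description $f_*$ is induced on colimits by the system $(f_m)_*$; passing to duals interchanges colimit with limit and converts each $(f_m)_*$ into its dual $(f_m)^!$ by the pro-scheme case, which is precisely the system defining $f^!$ via right Kan extension. The main obstacle, though essentially bookkeeping, is producing such a compatible presentation of $f$: for each $m$ the composite $X_m \to X \to Y$ must factor through some $Y_{n(m)}$, with choices made coherently in the $\infty$-categorical sense. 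This follows from a cofinality argument using that pro-schemes are quasi-compact and quasi-separated and that the $Y_n$ form a filtered system along closed embeddings, so that any such map factors essentially uniquely through a term.
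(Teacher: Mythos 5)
Your proof is correct and follows essentially the same route as the paper's: both work from the defining presentations $\Dmod^*(\Y) \simeq \uscolim{n}\,\Dmod^*(Y_n)$ and $\Dmod^!(\Y) \simeq \lim_{n}\Dmod^!(Y_n)$, reduce the duality statement to the pro-scheme case, and then obtain $(f_*)^\vee \simeq f^!$ formally. The only point worth making explicit is that your appeal to ``a filtered colimit of dualizable categories is dualizable with dual the limit of the duals'' tacitly uses that the transition functors $(\iota_{m \to n})_*$ are well behaved (they admit continuous right adjoints, resp.\ preserve compact objects, because closed embeddings of pro-schemes are finitely presented), which is indeed available here and is also what the paper's ``evidently'' is resting on.
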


\begin{proof}
Let $\Y = \colim_{n \in \I} Y_n$ be a presentation of $\Y$ as an ind-pro-scheme. The two categories $\Dmod^!(\Y)$ and $\Dmod^*(\Y)$ are, by construction,
$$
\Dmod^!(\Y) \simeq \lim_{n \in \I^\op, \iota^!} \Dmod^!(Y_n), \hspace{1 cm}
\Dmod^*(\Y) \simeq \uscolim{n \in \I, \iota_*} \Dmod^*(Y_n).
$$
They are evidently dual to each other, thanks to the validity of the present lemma for pro-schemes. The duality $(f_*)^\vee \simeq f^!$ is a formal consequence of this.
\end{proof}

\begin{prop} \label{prop_D^!-symm-monoidal}
The ordinary category $\IndSch^\pro$ is symmetric monoidal via Cartesian product and the functor $\Dmod^!: (\IndSch^\pro)^\op \to \DGCat$ is symmetric monoidal.
\end{prop}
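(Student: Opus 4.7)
The plan is to carry out three steps: first, exhibit the Cartesian symmetric monoidal structure on $\IndSch^\pro$; second, upgrade the pro-scheme external product equivalence (\ref{eqn:boxtimes_equiv}) to the ind-pro setting; and third, assemble these into a symmetric monoidal structure on the functor $\Dmod^!$. For the first step, given presentations $\X = \colim_m X_m$ and $\Y = \colim_n Y_n$ in $\IndSch^\pro$, I would show that $\colim_{(m,n)} (X_m \times Y_n)$, with $X_m \times Y_n$ computed in $\Sch^\pro$ as in Remark \ref{rem:pro-schemes-fiber-products}, realizes $\X \times \Y$ in $\IndSch^\pro$. This requires only that products of closed embeddings of pro-schemes are again closed embeddings, together with the interchange of filtered colimits with finite products of prestacks; the symmetric monoidal axioms are then automatic for the Cartesian monoidal structure on a $1$-category.

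For the second step, the pro-scheme case is (\ref{eqn:boxtimes_equiv}). To extend to ind-pro-schemes, I would invoke the paradigm of Sect.~\ref{sssec:Lurie_G0}: for each closed embedding $\iota: X_m \hookrightarrow X_{m'}$ of pro-schemes, Lemma \ref{lem:f_bullet} provides a fully faithful continuous left adjoint $\iota_+$ to $\iota^!$, so that the defining presentation
\[
\Dmod^!(\X) \simeq \lim_{m,\, \iota^!} \Dmod^!(X_m) \simeq \uscolim{m,\, \iota_+} \Dmod^!(X_m)
\]
converts from a limit to a colimit, and analogously for $\Y$ and $\X \times \Y$. Since the tensor product in $\DGCat$ preserves colimits separately in each variable, (\ref{eqn:boxtimes_equiv}) applied componentwise assembles into the equivalence
\[
\Dmod^!(\X) \otimes \Dmod^!(\Y) \simeq \uscolim{(m,n)} \Dmod^!(X_m \times Y_n) \simeq \Dmod^!(\X \times \Y).
\]

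For the third step, I would argue that the symmetric monoidal functor $\Dmod^!: (\Sch^{\fty})^\op \to \DGCat$ extends via right Kan extension first to $(\Sch^\pro)^\op$ and then to $(\IndSch^\pro)^\op$ as a \emph{lax} symmetric monoidal functor, and that this lax structure becomes strict precisely because the external product maps are equivalences, by the second step. The main technical obstacle I foresee is the $\infty$-categorical coherence bookkeeping: one must identify the lax symmetric monoidal structure arising from Kan extension with the pointwise $\boxtimes$ constructed above. Matters are eased by the fact that the transition functors $\iota^!$ along closed embeddings of pro-schemes (as well as the pushforwards $\pi_*$ defining the pro-scheme limits) intertwine external products by base change, so the coherence data transport automatically through both the limit and colimit presentations of $\Dmod^!(\X)$.
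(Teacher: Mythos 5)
Your proposal is correct and follows essentially the same route as the paper's (much terser) proof: the Cartesian structure on $\IndSch^\pro$ is taken as obvious, and the symmetric monoidality of $\Dmod^!$ is deduced from the pro-scheme case (\ref{eqn:boxtimes_equiv}) together with the symmetric monoidality of $!$-pullbacks along the structure maps. The extra details you supply — converting the limit along $\iota^!$ into a colimit along $\iota_+$ via Lemma \ref{lem:f_bullet} so that the tensor product of $\DGCat$ can be commuted through, and upgrading the resulting lax structure on the Kan-extended functor to a strict one — are exactly what the paper leaves implicit in the phrase ``the combination of these two facts yields the assertion,'' so no change of strategy is involved.
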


\begin{proof}
The first assertion is obvious. 
As for the second, recall that $\Dmod^!$ is symmetric monoidal as a functor $(\Sch^\pro)^\op \to \DGCat$. Furthermore, for any map of pro-schemes $f: X \to Y$, the pull-back $f^! : \Dmod^!(Y) \to \Dmod^!(X)$ is symmetric monoidal. The combination of these two facts yields the assertion.
\end{proof}

The following result is a generalization of Lemma \ref{lem:BN}. The proof is completely analogous.

\begin{lem} \label{lem:BN-ind}
Let $\X \to \Z \leftarrow \Y$ be a diagram of pro-schemes.
There is a canonical equivalence
\begin{equation}
\Dmod^!(\X) \usotimes{\Dmod^!(\Z)} \Dmod^!(\Y) \simeq \Dmod^!(\X \times_\Z \Y).
\end{equation}
\end{lem}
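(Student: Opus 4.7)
The plan is to reduce the statement to its pro-scheme analogue, Lemma~\ref{lem:BN}, by writing everything in sight as a colimit along closed embeddings and then invoking the fact that relative tensor products of DG categories commute with colimits in each variable.

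First I would choose compatible ind-pro-scheme presentations $\Z \simeq \colim_{k \in \I} Z_k$, $\X \simeq \colim_{m \in \J} X_m$, $\Y \simeq \colim_{n \in \K} Y_n$ with all transition maps closed embeddings of pro-schemes. By the standard trick (cf. the argument in Remark~\ref{rem:pro-schemes-fiber-products} reworked for ind-pro-schemes, and using that $X_m \to \Z$ factors through some $Z_{k(m)}$ since $X_m$ is a pro-scheme and the $\iota$'s are closed embeddings), I can arrange that $\I = \J = \K$ is a common filtered index category and that for every $m$ the composition $X_m \to \Z$ factors as $X_m \to Z_m \hookrightarrow \Z$, and similarly for $Y_m \to Z_m$. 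Then
\[
\X \times_\Z \Y \;\simeq\; \colim_{m} \,\bigl( X_m \times_{Z_m} Y_m \bigr),
\]
the colimit being formed along closed embeddings of pro-schemes (closed embeddings are preserved by base change).

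Next I would recast each side of the claimed equivalence as a colimit indexed by $m$. For closed embeddings $\iota$ of pro-schemes, Lemma~\ref{lem:f_bullet} provides a fully faithful left adjoint $\iota_+$ to $\iota^!$; by the Lurie paradigm of Sect.~\ref{sssec:Lurie_G0} this gives
\[
\Dmod^!(\X) \;\simeq\; \colim_{m,\,\iota_+} \Dmod^!(X_m),
\]
and likewise for $\Y$ and $\Z$. Since the monoidal structure on $\Dmod^!(\Z)$ is symmetric and the tensor product of DG categories commutes with colimits separately in each variable (in particular, relative tensor products commute with colimits of algebras and modules), I obtain
\[
\Dmod^!(\X) \usotimes{\Dmod^!(\Z)} \Dmod^!(\Y)
\;\simeq\;
\colim_{m} \Bigl( \Dmod^!(X_m) \usotimes{\Dmod^!(Z_m)} \Dmod^!(Y_m) \Bigr).
\]
Applying Lemma~\ref{lem:BN} termwise identifies the right-hand side with $\colim_m \Dmod^!(X_m \times_{Z_m} Y_m)$, which is precisely $\Dmod^!(\X \times_\Z \Y)$ under the presentation above. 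Finally I would verify that this chain of equivalences coincides with the canonical functor induced by $!$-pullback along $\X \times_\Z \Y \to \X \times \Y$, which is a naturality check at each level $m$.

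The main obstacle I anticipate is the bookkeeping needed to align the three ind-presentations over a common index category while ensuring the colimit over $m$ computes the relative tensor product correctly; this requires knowing that the passage from the ``$\iota_+$-colimit'' description of $\Dmod^!$ to its $\iota^!$-limit description is compatible with the $\Dmod^!(\Z)$-module structures. This is harmless because $\iota_+$ is $\Dmod^!(\Z)$-linear (projection formula, Lemma~\ref{lem:proj_formula}), so all the colimit diagrams live in $\Dmod^!(\Z)\mmod$ and the reduction to Lemma~\ref{lem:BN} goes through as sketched.
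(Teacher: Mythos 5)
Your proof is correct and is essentially the paper's own argument: the paper simply declares the proof of Lemma \ref{lem:BN-ind} ``completely analogous'' to that of Lemma \ref{lem:BN}, and what you write---presenting the ind-pro-schemes as filtered colimits along closed embeddings over a common index category, expressing each $\Dmod^!$ as a colimit along the fully faithful $\iota_+$'s, commuting the relative tensor product with these filtered colimits, and applying Lemma \ref{lem:BN} termwise---is precisely that analogy spelled out. One small citation slip: the $\Dmod^!(\Z)$-linearity of $\iota_+$ comes from base change as in the proof of Lemma \ref{lem:useful} (or Corollary \ref{cor:proj_formula_ren}), not from Lemma \ref{lem:proj_formula}, which concerns $f_*$ on $\Dmod^*$.
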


\sssec{}

Let $\Y = \colim_{n \in \I} Y_n$ an ind-pro-scheme.
By Lemma \ref{lem:f_bullet} and Sect. \ref{sssec:fromLimToColim}, there is an equivalence
$$
\Omega : \Dmod^!(Y) \simeq \uscolim{n \in \I, \iota_+} \Dmod^!(Y_n).
$$
Assume that each $Y_n$ is equipped with a dimension theory and the corresponding self-duality $\lambda_n : \Dmod^!(Y_n) \to \Dmod^*(Y_n)$. For any arrow $m \to n$ in $\I$, the cited lemma yields the commutative diagram
\begin{gather}
\xy
(0,0)*+{ \Dmod^!(Y_m)}="00";
(40,0)*+{  \Dmod^!(Y_n). }="10";
(00,15)*+{   \Dmod^*(Y_m) }="01";
(40,15)*+{  \Dmod^*(Y_n)  }="11";
{\ar@{<-}_{\lambda_m} "01";"00"};
{\ar@{->}^{\iota_+} "00";"10"};
{\ar@{<-}_{\lambda_n[2 \dim_{\iota}]} "11";"10"};
{\ar@{->}^{\iota_*} "01";"11"};
\endxy
\end{gather}
A dimension theory for $\Y$ (or, trivialization of the dimension torsor of $\Y$) is an assignment $Y_n \rightsquigarrow \dim(Y_n) \in \ZZ$ such that $\dim(\iota_{m \to n}) = \dim(Y_m) - \dim(Y_n)$ for any arrow $m \to n$.

\begin{rem}
Any dimension theory $\Lambda$ produces a canonical object $\Lambda(\omega_\Y)$ of $\Dmod^*(\Y)$. Assuming each $Y_n$ is a limit of smooth schemes (so that $Y_n$ is equipped with the canonical dimension theory), we obtain
$$
\Lambda_m(\omega_\Y) \simeq \uscolim{n \in \I_{m/ }} (i_{n\to \infty})_* k_{Y^n} [2 \dim(Y_n)] \in \Dmod^*(\Y) .
$$
\end{rem}

\sssec{}

For any map $f: \X \to \Y$ of ind-pro-schemes, $\Dmod^!(\Y)$ acts on $\Dmod^!(\X)$ by pull-back. We denote the dual action by the usual symbol:
$$
\overset{!*} \otimes : \Dmod^!(\Y) \otimes \Dmod^*(\X) \to \Dmod^*(\X).
$$
As in the pro-scheme case, this action is the one induced by the self-duality $\Lambda: \Dmod^!(\X) \to \Dmod^*(\X)$ (for any choice of $\Lambda$). Consequently, we have the ``projection formulas":

\begin{prop} \label{prop:proj-formula-indschemes}
For $f: \X \to \Y$ as above, the functors $f_*$ and $f_*^\ren := \Lambda^{\Y} \circ f_* \circ (\Lambda^{\X})^{-1}$ are $\Dmod^!(\Y)$-linear.
\end{prop}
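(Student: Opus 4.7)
The plan is to mimic the proof of Lemma \ref{lem:proj_formula} and Corollary \ref{cor:proj_formula_ren}, upgrading each ingredient from the pro-scheme setting to the ind-pro-scheme setting using the tools just developed.

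First, I would observe that by Proposition \ref{prop_D^!-symm-monoidal}, for any morphism $f: \X \to \Y$ in $\IndSch^\pro$, the functor $f^!: \Dmod^!(\Y) \to \Dmod^!(\X)$ is symmetric monoidal. In particular, viewing the target as a $\Dmod^!(\Y)$-module via $f^!$, the functor $f^!$ is itself $\Dmod^!(\Y)$-linear. Since in the ind-pro setting we still have $\Dmod^!(\X) \simeq \Dmod^*(\X)^\vee$ with $(f_*)^\vee \simeq f^!$ (established just above the proposition), the general paradigm of Sect.~\ref{sssec:paradigm_dual actions} applies: the dual of a $\Dmod^!(\Y)$-linear functor between dualizable categories is $\Dmod^!(\Y)$-linear with respect to the dual action. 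By construction, the dual of the $\Dmod^!(\Y)$-action on $\Dmod^!(\X)$ is precisely $\overset{*!}{\otimes}$, so $f_*$ is automatically $\Dmod^!(\Y)$-linear.

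Second, for $f_*^\ren$, I would reduce to showing that the self-duality $\Lambda^\X: \Dmod^!(\X) \to \Dmod^*(\X)$ attached to a dimension theory is itself $\Dmod^!(\X)$-linear (and similarly for $\Lambda^\Y$, which is then $\Dmod^!(\Y)$-linear via $f^!$). Once this is in place, $f_*^\ren = \Lambda^\Y \circ f_* \circ (\Lambda^\X)^{-1}$ is a composition of three $\Dmod^!(\Y)$-linear functors and we are done.

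To verify the linearity of $\Lambda^\X$, I would argue level by level. Writing $\X = \colim_n Y_n$ with each $Y_n$ a pro-scheme equipped with its dimension theory, the formula
\[
\Lambda_n \circ (i_{n \to \infty})_+ = (i_{n \to \infty})_* \circ \lambda_n[2\dim(Y_n)]
\]
defines $\Lambda^\X$ in terms of $\lambda_n$. Each $\lambda_n$ is $\Dmod^!(Y_n)$-linear by the identity $M \overset{!*}{\otimes} N \simeq \lambda_{Y_n}(M \otimes \lambda_{Y_n}^{-1}(N))$ noted earlier, homological shifts are trivially $\Dmod^!$-linear, and both insertion functors $(i_{n \to \infty})_+$, $(i_{n \to \infty})_*$ are $\Dmod^!(\X)$-linear (the first since $i_{n \to \infty}^!$ is symmetric monoidal and $(i_{n\to\infty})_+$ is left adjoint; the second by the ind-pro analogue of Lemma \ref{lem:proj_formula} applied to the closed embedding). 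Since the insertions $(i_{n \to \infty})_+$ jointly generate $\Dmod^!(\X)$, this proves $\Lambda^\X$ is $\Dmod^!(\X)$-linear.

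The main obstacle, modest as it is, lies in assembling these level-wise compatibilities into a single homotopy-coherent statement; concretely, one needs to check that the compatibility diagrams between $\lambda_n$ and $\lambda_m$ for $m \to n$ (which commute up to the shift $[2\dim_\iota]$ as recorded in the diagram preceding the definition of dimension theory) are compatible with the $\Dmod^!$-module structures. This is routine once one notes that the transition functors $\iota_+$, $\iota_*$ are $\Dmod^!$-linear by the pro-scheme case (Lemma \ref{lem:proj_formula} and Corollary \ref{cor:proj_formula_ren}), so the result propagates to the colimit without further input. As an immediate consequence one obtains the projection formulas of the form \eqref{eqn:proj_formulas} in the ind-pro setting.
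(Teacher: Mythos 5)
Your proof is correct and follows essentially the same route as the paper: $f_*$ is $\Dmod^!(\Y)$-linear because it is dual to the symmetric monoidal (hence linear) functor $f^!$, and $f_*^\ren$ is then a composite of linear functors once the self-dualities $\Lambda^\X$, $\Lambda^\Y$ are known to be linear. The only difference is cosmetic: where the paper simply invokes the preceding remark that the $\overset{!*}\otimes$-action is the one induced by any self-duality $\Lambda$ (so linearity of $\Lambda$ is immediate), you re-verify that linearity level-wise from the colimit presentation, using base-change for $\iota_+$ and the already-established linearity of the $*$-pushforwards, which is a legitimate filling-in of the same step.
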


\begin{rem}
The evaluation pairing between $\Dmod^*(\X)$ and $\Dmod^!(\X)$ is
\begin{equation} \label{eqn:eval-compact}
\eps_\X (M,N) \simeq p_* \big(M \overset{*!}\otimes N \big) \simeq  p_*^\ren \big(\Lambda^{-1}(M) \otimes N \big).
\end{equation}
\end{rem}

\ssec{$\fD$-modules on $G \ppart$ and $N \ppart$} \label{ssec:D-mods-on-loop-groups}

Finally, let us take up the case of loop groups.

\sssec{}

Recall that, for $G$ an affine algebraic group, the loop group $\bG:=G \ppart$ comes with the canonical ``decreasing" sequence of congruence subgroups $\bG^r$, starting with $\bG^0 = G [[t]]$ and shrinking to the identity element. 
Recall that $\bG/ \bG^0 =: \Gr$, the so-called \emph{affine Grassmannian}, is an ind-scheme of finite type. (Hence, so is each quotient $\bG/\bG^r$.)

The following construction is well-known.

\begin{lem}
The prestack $\bG := G \ppart$ is an ind-pro-scheme.
\end{lem}

\begin{proof}
Consider the (schematic) quotient map to the affine Grassmannian $\q: \bG \to \Gr$ and choose a presentation of $\Gr$ as an ind-scheme of ind-finite type: $\Gr \simeq \colim_{n, \iota} Z_n$. 
Pulling-back each $Z_n$ along $\q$, we obtain an ind-scheme presentation of $\bG$:
$$
\bG \simeq \uscolim{n \in \NN} \q^{-1}(Z_n).
$$
Of course, each $\q^{-1}(Z_n)=Z_n \underset{\Gr}\times \bG$ is of infinite type. However, as $\q$ factors through $\bG \to \bG/\bG^r$ for any $r \in \NN$, we can write:
$$
\q^{-1}(Z_n)  \simeq \lim_{r \in \NN} \big(  Z_n \underset{\Gr}\times \bG/\bG^r \big),
$$
where the limit is taken along the maps induced by the smooth projections $\pi_{s \to r}: \bG/ \bG^s \to \bG/\bG^r$. This is a presentation of $\q^{-1}(Z_n)$ as a pro-scheme, as desired.
\end{proof}

\begin{lem} \label{lem:DmodG}
$$
\Dmod^*(\bG) \simeq \uscolim{r, \pi^*} \Dmod(\bG/\bG^r), \, \, \, \, \, \, \, \Dmod^!(\bG) \simeq \uscolim{r, \pi^!} \Dmod(\bG/\bG^r).
$$
\end{lem}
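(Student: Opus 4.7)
\medskip

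\noindent\textbf{Proof proposal.} The plan is to unfold both sides using the explicit description of $\bG$ as an ind-pro-scheme constructed in the previous lemma, and then commute a double filtered colimit. Write $\bG \simeq \colim_{n} Y_n$ with $Y_n := \q^{-1}(Z_n)$, glued along the closed embeddings $\iota_{m\to n}$ obtained by pulling back the closed embeddings $Z_m \hookrightarrow Z_n$ along $\q$. Each $Y_n$ is itself a pro-scheme, $Y_n \simeq \lim_r Y_n^r$, with $Y_n^r := Z_n \times_\Gr \bG/\bG^r$ and transitions the smooth affine surjections induced by $\pi : \bG/\bG^{r+1} \to \bG/\bG^r$.

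By the defining left Kan extension of $\Dmod^*$ along $\Sch^\pro \hookrightarrow \IndSch^\pro$, combined with the colimit description (\ref{eqn:Dmod*-as-limit}) valid on each pro-scheme $Y_n$, we have
$$
\Dmod^*(\bG) \,\simeq\, \uscolim{n,\iota_*} \Dmod^*(Y_n) \,\simeq\, \uscolim{n,\iota_*}\uscolim{r,\pi^*} \Dmod(Y_n^r).
$$
Swapping the two filtered colimits and using that for fixed $r$ the ind-scheme of ind-finite type $\bG/\bG^r$ is exhausted by the closed subschemes $Y_n^r \hookrightarrow Y_{n+1}^r$, the inner colimit is computed by the definition of $\Dmod$ on $\IndSch^\fty$ as left Kan extension:
$$
\uscolim{n,\iota_*} \Dmod(Y_n^r) \,\simeq\, \Dmod(\bG/\bG^r).
$$
Putting things together yields the first identification, the induced transitions in the variable $r$ being $\pi^*$ by construction.

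For the $!$-case, by the right Kan extension definition we start from $\Dmod^!(\bG) \simeq \lim_{n,\iota^!} \Dmod^!(Y_n)$. Since each $\iota_{m\to n}$ is a closed embedding of pro-schemes, Lemma \ref{lem:f_bullet} gives the adjunction $\iota_+ \dashv \iota^!$ with $\iota_+$ fully faithful; by the Lurie paradigm of Sect.~\ref{sssec:Lurie_G0} this rewrites the limit as $\colim_{n,\iota_+} \Dmod^!(Y_n)$. Now one repeats the $*$-argument with $\pi^!$ in place of $\pi^*$ and $\iota_+$ in place of $\iota_*$; at each finite-type level $Y_n^r$ the transitions $\iota_+$ coincide with the standard $\iota_*$-pushforwards (as $\iota$ is a closed embedding between finite-type schemes), so the inner colimit again recovers $\Dmod(\bG/\bG^r)$.

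\medskip

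\noindent\textbf{Main obstacle.} The one point needing care is the bookkeeping between the ind- and pro-directions: one must check that, after commutation, the transition functors in $r$ in the resulting outer colimit are indeed $\pi^*$ (resp.\ $\pi^!$) on $\Dmod(\bG/\bG^r)$. This boils down to base change along the Cartesian squares $Y_n^{r+1} \to Y_n^r$ covering $\bG/\bG^{r+1} \to \bG/\bG^r$, i.e.\ finite-presentedness of the relevant maps in the sense of (\ref{diag:square-for-pro-schemes}), which is automatic since $\bG^r/\bG^{r+1}$ is of finite type. For the $!$-assertion, the additional subtlety is the replacement of $\lim_{\iota^!}$ with $\colim_{\iota_+}$, which is legitimate precisely because the $\iota$'s are closed embeddings; no renormalization shift appears because, between finite-type schemes, $\iota_+$ and $\iota_*$ agree.
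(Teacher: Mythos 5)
Your proposal is correct and follows essentially the same route as the paper: decompose $\bG$ via $Z_n^r = Z_n \times_{\Gr} \bG/\bG^r$, swap the two filtered colimits using base-change along the Cartesian squares in the $(n,r)$-directions, and identify the inner colimit with $\Dmod(\bG/\bG^r)$ by the definition of $\fD$-modules on ind-schemes of ind-finite type. The only difference is that you spell out the $!$-case (via the $(\iota_+,\iota^!)$ adjunction and the lim-to-colim paradigm), which the paper dismisses as "completely analogous" and which matches its treatment in Lemma \ref{lem:bravo}.
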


\begin{proof}
We only prove the first formula, the proof of the second being completely analogous. 
Let $Z_n^r :=   Z_n \underset{\Gr}\times \bG/\bG^r$, so that 
$$
\bG = \uscolim{n,\iota} \lim_{r, \pi} Z_n^r \, \, \hspace{.5cm} \, \, \bG/\bG^r = \colim_{n,\iota} Z_n^r
$$
are presentations of $\bG$ and $\bG/\bG^r$ as an ind-pro-scheme and an ind-scheme, respectively. 
For each $n$ and $r$, consider the evidently Cartesian square:
\begin{gather}
\xy
(0,0)*+{ Z_{n+1} \times_{\Gr} \bG/ \bG^{r+1}}="00";
(40,0)*+{  Z_{n+1} \times_{\Gr} \bG/ \bG^{r}. }="10";
(00,15)*+{   Z_{n} \times_{\Gr} \bG/ \bG^{r+1} }="01";
(40,15)*+{  Z_{n} \times_{\Gr} \bG/ \bG^{r}  }="11";
{\ar@{<-}^{\iota} "00";"01"};
{\ar@{->}^{\pi} "00";"10"};
{\ar@{<-}^{\iota} "10";"11"};
{\ar@{->}^{\pi} "01";"11"};
\endxy
\end{gather}
Consequently, the category of $\fD^*$-modules on $\bG$ is expressed as follows:
$$
\Dmod^*(\bG) 
\simeq  \uscolim{n, \iota_*}  \lim_{r, \pi_*} \Dmod(Z_n^r) 
\simeq  \uscolim{n, \iota_*}  \uscolim{r, \pi^*} \Dmod(Z_n^r)
\simeq \uscolim{r, \pi^*} \uscolim{n, \iota_*} \Dmod(Z_n^r)
\simeq \uscolim{r, \pi^*} \Dmod(\bG/\bG^r), 
$$
where the switch of colimits in the third equivalence is a consequence of base-change along the above square.
\end{proof}


\sssec{}

We now prove that $\Dmod^*(\bG)$ has a convolution monoidal structure. In the next section, we will use this result to define categorical $\bG$-actions. 

\begin{lem} \label{lem:D^*(G)-monoidal}
Let $\G$ be a group ind-scheme of pro-finite type.
The functor $m_*: \Dmod^*(\G \times \G) \to \Dmod^*(\G)$, together with the equivalence
\begin{equation}
\Dmod^*(\G) \otimes \Dmod^*(\G) 
 \xrightarrow{\, \, \boxtimes \, \, \,} \Dmod^* (\G \times \G),
\end{equation}
endows $\Dmod^*(\G)$ with a monoidal structure.
\end{lem}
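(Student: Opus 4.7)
The plan is to promote $\Dmod^*$ to a symmetric monoidal functor from $(\IndSch^\pro, \times)$ to $(\DGCat, \otimes)$ and then invoke the general principle that symmetric monoidal functors carry monoid objects to monoid objects. Since a group ind-pro-scheme is in particular a monoid in $(\IndSch^\pro, \times)$, this will immediately produce a monoidal structure on $\Dmod^*(\G)$ whose multiplication is the asserted composite of $\boxtimes$ followed by $m_*$ and whose unit is the delta $\fD$-module $\delta_e = e_*(\CC)$ at the identity $e: \pt \to \G$.

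The first step, which is the heart of the matter, is to upgrade $\Dmod^*: \IndSch^\pro \to \DGCat$ to a symmetric monoidal functor. I would deduce this from Proposition \ref{prop_D^!-symm-monoidal} by duality: on every ind-pro-scheme $X$, the categories $\Dmod^*(X)$ and $\Dmod^!(X)$ are mutually dual in $\DGCat$, and the Künneth equivalences $\boxtimes$ for $\Dmod^*$ and $\Dmod^!$ are exchanged under this duality, since $(f_*)^\vee \simeq f^!$ is natural in $f$ and compatible with products. Because the passage $\C \mapsto \C^\vee$ is itself a symmetric monoidal equivalence on the subcategory of dualizable objects of $(\DGCat, \otimes)$, the symmetric monoidality of the contravariant $\Dmod^!$ transfers to the covariant $\Dmod^*$. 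Alternatively, one could argue directly: the coherence data for the lax structure $\boxtimes$ on $\Dmod^*$ assembles from the corresponding (classical) coherences for the finite-type covariant functor $\Dmod$ by taking colimits along the insertion maps $(\pi_{\infty \to r})^*$ and $(\iota_n)_*$, using that $\otimes$ in $\DGCat$ commutes with colimits separately in each variable.

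Given this, the second step is formal. A group $\G$ in $\IndSch^\pro$ carries in particular an associative monoid structure, comprising $m: \G \times \G \to \G$ and $e: \pt \to \G$ together with the usual associativity and unitality up to coherent homotopy. Any symmetric monoidal functor sends such data to an associative monoid in its target; applying this to $\Dmod^*$ yields a monoidal structure on $\Dmod^*(\G)$ whose multiplication is $m_* \circ \boxtimes$ and whose unit is $\delta_e$, exactly as claimed. The only potential obstacle is the $\infty$-categorical coherence verification for symmetric monoidality in step one, but as indicated this reduces, via the Kan extension presentations of $\Dmod^*$, to the finite-type case together with the base change identities already used in constructing $\boxtimes$, and is therefore routine.
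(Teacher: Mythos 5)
Your proposal is correct and is essentially the paper's argument: the paper also deduces the lemma from Proposition \ref{prop_D^!-symm-monoidal} together with the duality $\Dmod^*(\G) \simeq \big(\Dmod^!(\G)\big)^\vee$, noting that the symmetric monoidal contravariant functor $\Dmod^!$ sends the group $\G$ to the comonoidal category $(\Dmod^!(\G), m^!)$ and then dualizing to obtain the monoidal structure $(m_*, \boxtimes)$ on $\Dmod^*(\G)$. You merely apply the duality one step earlier (dualizing the functor $\Dmod^!$ to make $\Dmod^*$ symmetric monoidal and then feeding in the monoid $\G$), which is the same content.
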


\begin{proof}
The functor $\Dmod^! : (\IndSch^\pro)^\op \to \DGCat$ is contravariant and, by Proposition \ref{prop_D^!-symm-monoidal}, symmetric monoidal. Hence, it sends algebras in $\IndSch^\pro$ to comonoidal categories: in particular, $\big( \Dmod^!(\G), m^! \big)$ is comonoidal. By duality, we obtain the required statement.
\end{proof}

\section{Strong actions by ind-pro-groups}\label{SEC:grp-actions}

With the theory of $\fD$-modules on ind-pro-schemes in place, we may define strong actions of ind-pro-groups on categories. As in the finite dimensional case, we discuss the (twisted) invariant and coinvariants categories and the natural functors relating them. For a pro-group $H$ admitting a Levi decomposition and a character $\m: H \to \GG_a$, the $(H,\m)$-invariant and $(H,\m)$-coinvariant categories are equivalent via a natural functor that we call $\theta_{H,\m}$ (see Theorem \ref{thm:theta_mu}).

This will be of fundamental importance for the study of Whittaker categories and for the proof of our main theorem. Indeed, approximating $\bN$ by its pro-unipotent subgroups and using the corresponding $\theta$'s, we construct a functor $\Theta_{\bN,\chi}$ that ought to realize an equivalence between $\C_{\bN, \chi}$ and $\C^{\bN, \chi}$.

\ssec{The main definitions}

Let $\G$ be an ind-pro-group. For instance, $\G = G \ppart$ or $\G = N \ppart$ where $G$ is a reductive group and $N$ its maximal unipotent subgroup.
By definition, $\C \in \DGCat$ is acted on by $\G$ if it is endowed with an action of the monoidal category $(\Dmod^*(\G), m_*)$ (see Lemma \ref{lem:D^*(G)-monoidal}). The totality of categories with $\G$ action forms an $\infty$-category, denoted by $\G \on{-} \mathbf{rep}$. 
As in the finite-type case, $\G \on{-} \mathbf{rep}$ is monoidal: indeed, $\Dmod^*(\G)$ is Hopf with multiplication $\star := m_*$ and comultiplication $\Delta_*$. By duality, $\Dmod^!(\G)$ is also a Hopf algebra, with multiplication $\Delta^!$ and comultiplication $m^!$.

\sssec{} \label{sssec:coin-inv-simplicial-constructions}

Since the map $p: \G \to \pt$ is $\G$-equivariant, the pushforward $p_*$ gives a $\G$-action on $\Vect$, the so called trivial action. This allows us to define invariant and coinvariants, as in the finite dimensional case.

\medskip

The coinvariant category
$$
\C_{\G} := \Vect \usotimes{\Dmod^*(\G)} \C
$$
is computed by  the bar resolution of the relative tensor product, i.e. the simplicial category
\begin{equation} \label{eqn:simplicial_cat}
\cdots \,\, \Dmod^*(\G) \otimes \Dmod^*(\G) \otimes \C \rrr \Dmod^*(\G) \otimes \C \rr \C,
\end{equation}
where the maps are given by action, multiplication and trivial action, according to the usual pattern. As in the finite-type context, there is a tautological map
$$
\pr_\G := (p_*)_\C : \C \to \C_\G.
$$

\sssec{}

Analogously, the invariant category
$$
\C^{\G} := \Hom_{\Dmod^*(\G)} ( \Vect,  \C )
$$
is computed by the totalization of 
$$
\C \rr \Hom(\Dmod^*(\G), \C) \rrr \Hom (\Dmod^*(\G) \otimes \Dmod^*(\G), \C) \,\, \cdots.
$$
Moreover, as $\Dmod^*(\G)$ and $\Dmod^!(\G)$ are in duality, the latter becomes
\begin{equation} \label{eqn:cosimplicial_cat}
\C \rr \Dmod^!(\G) \otimes \C \rrr  \Dmod^!(\G) \otimes \Dmod^!(\G) \otimes \C \, \,\cdots .
\end{equation}
We have the conservative tautological map
$$
\oblv^\G := (p_*)^\C : \C^\G \to \C.
$$
Its right adjoint $\Av_*^\G$ (defined for abstract categorical reasons) may not be continuous whenever $\G$ is an ind-scheme. Indeed, $p_*: \Dmod^*(\G) \to \Vect$ may not admit a left adjoint in that case, see Remark \ref{rem:p^*-for-indschemes}.
We also denote by $\Av_!^\G: \C \to \C^\G$ the \emph{partially defined} left adjoint to $\oblv^\G$.

\sssec{Example: $\bN$-actions} \label{sssec:N-actions}

Let $G$ be a reductive group and $N$ its maximal unipotent subgroup. Unlike $\bG$, the ind-pro scheme $\bN := N \ppart$ is exhausted by its compact open subgroups, hence it is an ind-object in the category of pro-unipotent group schemes. 
We can choose a presentation $\bN \simeq \colim_k \bN_k$, as a colimit of groups, indexed by the natural numbers. Then, an $\bN$-action on $\C$ corresponds to a family of compatible actions of $(\Dmod^*(\bN_k),\star)$ on $\C$. 
It follows that
$$
\C_{\bN} \simeq \uscolim{k \in \NN} \C_{\bN_k},
$$
where the functors in the directed system 
$$
\Vect \usotimes{\Dmod^*(\bN_k)} \C \to \Vect \usotimes{\Dmod^*(\bN_{k+1})} \C
$$
come from the push-forwards $i_*: \Dmod^*(\bN_k) \to \Dmod^*(\bN_{k+1})$.
Likewise, 
$$
\C^{\bN} \simeq \lim_{k \in \NN} \C^{\bN_k},
$$
the limit being along the forgetful functors
$$
\Hom_{\Dmod^*(\bN_{k+1})} (\Vect, \C) \to \Hom_{\Dmod^*(\bN_{k})} (\Vect, \C).
$$
See Proposition \ref{prop:N-inv_vs_Nk-inv} below for a more explicit description of the transition maps in both cases.

\sssec{Actions on $\Vect$}

As in the finite-type case, a strong action of $\G$ on $\Vect$ consists of a comonoidal functor $\Vect \to \Dmod^!(\G)$. The latter is equivalent to specifying a character $\fD^!$-module on $\G$. For such $\F \in \Dmod^!(\G)$, the action map $\Dmod^*(\G) \otimes \Vect \to \Vect$ is 
$$
M \otimes V \mapsto M \star_\F V := \eps_{\G} (M \otimes \F) \otimes V,
$$
where, $\eps_\G = \Gamma_\dR \big( - \overset {*!} \otimes - \big)$ is the duality pairing between $\Dmod^*(\G)$ and $\Dmod^!(\G) := \Dmod^*(\G)^\vee$.

\medskip

Let $\mu: \G \to \GG_a$ be any additive character. By the (obvious) ind-pro version of Lemma \ref{lem:char_pull-back}, $\mu^! (exp) \in \Dmod^!(\G)$ is a character $\fD^!$-module.
We write $\Vect_{\mu}$ to emphasize that $\Vect$ is being considered as a category with the $\Dmod^!(\G)$-coaction corresponding to $\mu^!(\exp)$. 

\sssec{}

The automorphism $\Tw_\m$ of $\G \rrep$ is defined as in (\ref{eqn:twist}). Tautologically, the action of $\Dmod^*(\G)$ on $\C \otimes \Vect_\mu$ consists of the composition of the ``old" action of $\Dmod^*(\G)$ on $\C$ with the monoidal automorphism
$$
\Dmod^*(\G) \to \Dmod^*(\G), 
\hspace*{.4cm} 
M \mapsto \mu^!(exp) \overset {!*} \otimes M.
$$
For $\C \in \G \rrep$, we define the $(\G, \mu)$-invariant and $(\G, \mu)$-coinvariant categories as
$$
\C^{\G, \mu} := \Hom_{\Dmod^*(\G)} (\Vect_\m, \C),
\hspace{.6cm}
\C_{\G,\mu} := \Vect_{\m} \usotimes{\Dmod^*(\G)} \C.
$$
Lemma \ref{lem:Whit_with_dualiz_char} and the definitions of $\oblv$, $\pr$, $\Av$ generalize verbatim from the finite dimensional case.

\ssec{Actions by group pro-schemes}

The formulas of Example \ref{sssec:N-actions} show that, in order to understand $\bN$-actions on categories, one should first discuss $\bN_k$-actions. In this short section we study categorical actions by a group pro-scheme $H$ that admits a \emph{Levi decomposition} as $H \simeq H^r \ltimes H^u$, with $H^r$ reductive and $H^u$ pro-unipotent. 

\sssec{}

Since $H$ can be realized as a limit of smooth schemes, the usual adjunction $p^*: \Vect \rightleftarrows \Dmod^*(H): p_*$ is available.

\begin{prop} \label{prop:p^*-equiv-pro}
The functor $p^*: \Vect \to \Dmod^*(H)$ is $H$-equivariant.
\end{prop}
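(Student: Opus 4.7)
The plan is to extend to the pro-scheme setting the proof of Lemma \ref{lem:p^*-equivariant}, which in the finite-type case proceeds by base change through the automorphism $\xi \colon (g,h) \mapsto (g, gh)$ of $G \times G$. Both key inputs generalize: $\xi$ is defined for $H$ as an isomorphism in $\Sch^\pro$ since $H$ is a group pro-scheme, and base change for finitely presented Cartesian squares of pro-schemes is available (see the discussion preceding Lemma \ref{lem:f_bullet}).

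First I would verify the underlying natural isomorphism. Base change applied to the Cartesian square obtained from $(m, \pi_1) \colon H \times H \rightrightarrows H$ via $\xi$ produces, for every $M \in \Dmod^*(H)$, a canonical equivalence
\[
m_*(M \boxtimes k_H) \;\simeq\; p^* p_*(M),
\]
where one further uses $\pi_2^*(k_H) = k_{H \times H}$. This equivalence expresses the $H$-equivariance of $p^*$ at the level of its first-order datum, i.e., the compatibility between the trivial $H$-action on $\Vect$ (defined by $p_*$) and the regular convolution action on $\Dmod^*(H)$ (defined by $m_*$).

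To upgrade this to a coherent morphism in the $\infty$-category $H\rrep$, I would reduce to the finite-type situation via a presentation $H \simeq \lim_{r \in \R} H^r$ with smooth finite-type group quotients and group-homomorphism transition maps $\pi_{s \to r}$; such a presentation exists because $H$ is a group pro-scheme. By formula (\ref{eqn:f^*-ins}), $p^*$ factors as $(\pi_{\infty \to r})^* \circ p_r^*$, and each $p_r^*$ is canonically $H^r$-equivariant by Lemma \ref{lem:p^*-equivariant}. These data are mutually compatible as $r$ varies, because the automorphisms $\xi_r$ used at each finite level assemble into $\xi$ at the pro-level. Passing to the colimit yields the required coherent $H$-equivariance on $p^*$.

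The main obstacle is verifying that $(\Dmod^*(H), m_*)$ arises as the colimit, in $\MonStinftyCat$, of the $(\Dmod(H^r), m^r_*)$ along the monoidal pull-backs $(\pi_{s \to r})^*$. Granting this, the simplicial bar diagram (\ref{eqn:simplicial_cat}) for the action on $p^*$ is the colimit of its finite-level counterparts, and coherence of $H$-equivariance follows from the (already coherent) $H^r$-equivariances. The monoidal colimit identification in turn rests on the Künneth equivalence (\ref{eqn:boxtimes_equiv}), the right Kan extension definition of $\Dmod^*$, and the fact that each $\pi_{s \to r}$ is a group homomorphism.
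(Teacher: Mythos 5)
Your first two paragraphs are essentially the paper's own proof: the paper also reduces the statement to the commutativity of the single square expressing $m_*(M\boxtimes k_H)\simeq \Gamma_\dR(M)\otimes k_H$, and obtains it by converting the multiplication into a projection via the automorphism $\xi:(g,h)\mapsto(g,gh)$ of $H\times H$ (so you do not even need a base-change theorem here: $\xi$ is an isomorphism of pro-schemes, $m_*=(\pi_2)_*\circ\xi_*$, and $\xi_*$ fixes $\pi_1^*(M)\simeq M\boxtimes k_H$; note that the finitely-presented base change you cite does not literally apply, since $m$, $\pi_1$ and $p$ are not finitely presented).

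The genuine problem is in your proposed coherence upgrade, precisely at the step you single out as the main obstacle. It is not true that $(\Dmod^*(H),m_*)$ is the colimit in $\MonStinftyCat$ of the $(\Dmod(H^r),\star)$ along the pull-backs $(\pi_{s\to r})^*$, because $(\pi_{s\to r})^*$ is not a monoidal functor for convolution. It fails to preserve units: $(\pi_{s\to r})^*(\delta_{1,H^r})$ is the ($*$-extended) constant sheaf along the kernel $K=\ker(\pi_{s\to r})$, not $\delta_{1,H^s}$; and unless $K$ is cohomologically contractible one does not even have a binary compatibility, since $(\pi_{s\to r})^*M\star(\pi_{s\to r})^*N$ differs from $(\pi_{s\to r})^*(M\star N)$ by a factor of $\Gamma_\dR(K,k_K)$ (test $H^s=\GG_m$, $H^r=\pt$). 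The functors that are genuinely monoidal are the push-forwards $(\pi_{s\to r})_*$ along the group homomorphisms, and the convolution structure on $\Dmod^*(H)$ is produced from the \emph{limit} presentation along these (equivalently, by dualizing the comonoidal $(\Dmod^!,m^!)$, cf. Lemma \ref{lem:D^*(G)-monoidal}); your colimit-along-$\pi^*$ identification has the wrong variance. Relatedly, the insertion $(\pi_{\infty\to r})^*:\Dmod(H^r)\to\Dmod^*(H)$ is not a morphism of $H$-module categories when $\Dmod(H^r)$ is acted on through $H\twoheadrightarrow H^r$ (again test on $\delta_1$: it is sent to the constant sheaf along the kernel subgroup, while $H$-linearity would force $\delta_{1,H}$), so the $H^r$-equivariance of $p_r^*$ cannot be transported along $p^*=(\pi_{\infty\to r})^*\circ p_r^*$ to yield $H$-equivariance of $p^*$. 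If you want to go beyond the single commuting square (which is all the paper verifies), the reduction to finite type should be organized around the limit presentation $\Dmod^*(H)\simeq\lim_{r,\pi_*}\Dmod(H^r)$, where the transition functors are monoidal and $H$-equivariant, rather than around the insertions $(\pi_{\infty\to r})^*$.
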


\begin{proof}
It suffices to prove that the diagram 
\begin{gather}
\xy
(40,0)*+{  \Vect }="00";
(0,0)*+{ \Dmod^*(H) \otimes \Vect }="10";
(40,15)*+{ \Dmod^*(H)  }="01";
(00,15)*+{ \Dmod^*(H) \otimes \Dmod^*(H) }="11";
{\ar@{->}_{ p^* } "00";"01"};
{\ar@{<-}_{  \Gamma_\dR  } "00";"10"};
{\ar@{<-}_{\,\,\,m_* } "01";"11"};
{\ar@{->}_{ \id \otimes p^*} "10";"11"};
\endxy
\end{gather} 
commutes. By means of the automorphism $\xi^*$ of $\Dmod^*(H \times H)$ induced by $\xi(g, h) = (g, gh)$, the top arrow can be converted into $p_* \otimes \id$ and commutativity is obvious.
\end{proof}

Using the same logic as in the finite-type case, we obtain:

\begin{itemize}
\item
the functor $\Av_*^H \simeq (p^*)^\C : \C \to \C^H$, right adjoint to $\oblv^H$, is continuous;
\medskip 
\item
the projection $\pr_H : \C \to \C_H$ admits a left adjoint, $(\pr_H)^L$;
\medskip
\item
the composition $\oblv^H \circ \Av_*^H \simeq k_H \star -$ factors through a functor $\theta_H : \C_H \to \C^H$. We shall prove later (Theorem {\ref{thm:theta_mu}}) that this functor is an equivalence.
\end{itemize}

\sssec{}

Let $\mu: H \to \GG_a$ be a character. The composition
$$
\oblv^{H,\m} \circ \Av_*^{H,\m}: \C 
\xto{\Tw_{-\m}}
\C \otimes \Vect_{-\m} 
\xto{k_H \star - }
\C \otimes \Vect_{-\m} 
\xto{\Tw_{\m}}
\C
$$
is immediately seen to be isomorphic to the functor
\begin{equation} \label{eqn:mumu}
\oblv^{H,\mu}\circ \Av_*^{H,\mu} \simeq
\big( (-\mu)^!(exp) \overset{!*}\otimes k_H \big) \star -: \C \to \C
\end{equation}
and it descends to $\theta_{H,\m}: \C_{H,\m} \to \C^{H,\m}$.
To shorten the notation, we set 
\begin{equation} \label{not:short}
(-\mu)_H :=  (-\mu)^!(exp) \overset{!*}\otimes k_H.
\end{equation}
%
%
\begin{thm} \label{thm:theta_mu}
For $H$ as above, the functor $\theta_{H, \mu}: \C_{H, \mu} \to \C^{H, \mu}$ is an equivalence.
\end{thm}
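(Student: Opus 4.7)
The plan is to establish that $\theta_{H,\mu}$ is an equivalence in two steps: first, a reduction to the untwisted case $\mu = 0$; second, an adaptation of the argument used in the finite-type setting (proof of Theorem \ref{thm:equiv-Ginv-coinv}).

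First, by the ind-pro analogue of Lemma \ref{lem:Whit_with_dualiz_char} (which follows directly from the tensor-product formulas recorded in Sect. \ref{sssec:coin-inv-simplicial-constructions}) we have
$$
\C^{H,\mu} \simeq (\C \otimes \Vect_{-\mu})^H, \qquad \C_{H,\mu} \simeq (\C \otimes \Vect_{-\mu})_H,
$$
and under these equivalences $\theta_{H,\mu}$ corresponds to $\theta_H$ applied to the twisted category $\C \otimes \Vect_{-\mu} \in H\rrep$. Therefore it suffices to prove the untwisted statement that, for every $\C \in H\rrep$, the canonical functor $\theta_H : \C_H \to \C^H$ is an equivalence.

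Second, I would view $\theta_H$ as a natural transformation between two functors $H\rrep \to \DGCat^{\Delta^1}$, both of which I claim are continuous and commute with tensor products by plain DG categories. Continuity of $\C \mapsto \C_H$ is immediate from the defining relative tensor product. Continuity of $\C \mapsto \C^H$ relies on Proposition \ref{prop:p^*-equiv-pro} asserting that $p^*$ is $H$-equivariant: this lifts the adjunction $p^* \dashv p_*$ to an adjunction in $H\rrep$ and thereby realizes $\Vect$ as a dualizable left $\Dmod^*(H)$-module. Given these continuities, the Bar resolution of $\C$ as a $\Dmod^*(H)$-module writes $\C$ as the geometric realization of free modules of the form $\Dmod^*(H) \otimes \D$, with $\D$ carrying the trivial $H$-action. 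On each such free module both $(-)_H$ and $(-)^H$ compute $\D$, so the problem reduces to the regular representation $\C = \Dmod^*(H)$.

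For $\C = \Dmod^*(H)$, both $\C_H$ and $\C^H$ are canonically equivalent to $\Vect$: the first via the distinguished object $\pr_H(\delta_1) \simeq \CC$, and the second via $p^*(\CC) = k_H$. Since $\theta_H \circ \pr_H \simeq \Av_*^H$ and the underlying $\fD$-module of $\Av_*^H(\delta_1)$ is $k_H \star \delta_1 \simeq k_H$, under these identifications $\theta_H$ becomes the identity on $\Vect$, concluding the proof. The main technical obstacle is the continuity of the invariants functor $\C \mapsto \C^H$ in the pro-group setting: in the finite-type case this rests implicitly on the rigidity of $\Dmod(G)$, and for a pro-group $H$ one must verify the analogous dualizability of $\Vect$ inside $\Dmod^*(H)\mmod$ by exploiting the $H$-equivariance of $p^*$. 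Once this point is secured, the finite-type argument transports almost verbatim.
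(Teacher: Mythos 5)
Your reduction to $\mu=0$ and the final computation on the regular representation are fine, but the pivotal claim --- that $\C \rightsquigarrow \C^{H}$ commutes with colimits and with tensoring by DG categories --- is not established by what you cite, and this is where the real content of the theorem sits. The $H$-equivariance of $p^*$ (Proposition \ref{prop:p^*-equiv-pro}) lifts the adjunction $p^* \dashv p_*$ to $H\rrep$ and hence shows, for each \emph{fixed} $\C$, that $\Av_*^{H}:\C\to\C^{H}$ is continuous; it does not realize $\Vect$ as a dualizable $\Dmod^*(H)$-module. It would do so if $\Vect$ were a retract of $\Dmod^*(H)$ in $\Dmod^*(H)\mmod$, but that requires $p_*\circ p^*\simeq \id$, i.e.\ cohomological contractibility of $H$, which already fails for $H=\GG_m$; an adjunction between $\Vect$ and the free module by itself gives nothing about the functor $\C\mapsto \Hom_{\Dmod^*(H)}(\Vect,\C)$ in the variable $\C$. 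Note that the very same equivariant adjunction exists for finite-type $G$ (Lemma \ref{lem:p^*-equivariant}), and yet the paper must invoke Theorem \ref{thm:Gaitsgory-Lurie} and the rigidity of $\HC$ (via Corollary \ref{cor:strong-from-weak}) before it can assert, in the proof of Theorem \ref{thm:equiv-Ginv-coinv}, that invariants commute with colimits; so the continuity you need is genuinely nontrivial and is essentially equivalent to the statement being proved. Worse, that machinery does not transport to the pro setting: for $H$ of infinite type $\delta_1$ is not compact in $\Dmod^*(H)$ and the unit $\Vect\to\Dmod^*(H)$ has no continuous right adjoint, so $(\Dmod^*(H),\star)$ is not rigid, and no analogue of the $1$-affineness/Harish-Chandra formalism for pro-groups is developed in the paper.

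The paper circumvents exactly this point by a structural reduction rather than a direct Bar-resolution argument: write $H \simeq H^r \ltimes H^u$ (Levi decomposition, $H^r$ the finite-type reductive quotient, $H^u$ pro-unipotent), apply Theorem \ref{thm:inv=coinv} to $H^u$ (where contractibility makes the statement, and in particular the continuity of $(-)^{H^u,\m}$, an easy direct check), apply Corollary \ref{cor:theta-mu-fin-type} to $H^r$, and glue with the semi-direct product compatibilities of Sect. \ref{ssec:semi-direct} (Lemma \ref{lem:semidirect-pro}). If you wish to salvage your route, you must first prove dualizability of $\Vect$ over $\Dmod^*(H)$ for a general pro-group, and the natural proof of that is again the Levi decomposition. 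A secondary gap: even granting continuity, your identification $\Dmod^*(H)^{H}\simeq \Vect$ via $p^*$ is not automatic for $H$ of infinite type (it is not fppf descent as in the finite-type case) and needs an argument along the lines of Lemma \ref{lem:auxiliary}.
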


\begin{proof}
The assertion follows from Corollary \ref{cor:theta-mu-fin-type} and Theorem \ref{thm:inv=coinv} below, together with the discussion of Sect. \ref{ssec:semi-direct} regarding semi-direct products.
\end{proof}

\ssec{Actions by pro-unipotent group schemes}

Let us now discuss the pro-unipotent case in greater detail. Let $\mu: H \to \GG_a$ be a character.

\begin{cor}
If $H$ is pro-unipotent, $\oblv^{H,\mu}$ and $(\pr_{H,\mu})^L$ are fully faithful.
\end{cor}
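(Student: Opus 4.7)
The plan is to deduce full faithfulness of both $\oblv^{H,\mu}$ and $(\pr_{H,\mu})^L$ from the single fact that $p^*: \Vect \to \Dmod^*(H)$ is fully faithful when $H$ is pro-unipotent, via the categorical paradigm of Sect. \ref{paradigm}.

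First I would reduce to the untwisted case $\mu=0$. Lemma \ref{lem:Whit_with_dualiz_char} identifies $\C^{H,\mu} \simeq (\C \otimes \Vect_{-\mu})^H$ and $\C_{H,\mu} \simeq (\C \otimes \Vect_{-\mu})_H$, and transports $\oblv^{H,\mu}$ and $(\pr_{H,\mu})^L$ (up to the invertible twist $\Tw_\mu$) to $\oblv^H$ and $(\pr_H)^L$ for the $H$-representation $\C \otimes \Vect_{-\mu}$, so it suffices to treat the untwisted case.

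Next, by Proposition \ref{prop:p^*-equiv-pro}, the standard adjunction $p^*: \Vect \rightleftarrows \Dmod^*(H): p_*$ lives in $\Dmod^*(H)\mmod$. Applying the two variants of the paradigm of Sect. \ref{paradigm} yields the adjunctions
$$
(\pr_H)^L = (p^*)_\C \, \dashv \, (p_*)_\C = \pr_H,
\qquad
\oblv^H = (p_*)^\C \, \dashv \, (p^*)^\C = \Av_*^H.
$$
In each case, the unit of the induced adjunction is computed by applying $(-)_\C$ or $(-)^\C$ to the unit $\eta: \id_\Vect \to p_* \circ p^*$ of $p^* \dashv p_*$. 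Since both operations preserve isomorphisms of natural transformations, it suffices to show that $p^*$ is itself fully faithful, i.e., that $\eta$ is an isomorphism.

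Finally, $p^*$ being fully faithful amounts to $\Gamma_\dR(H, k_H) \simeq \CC$. Fixing a presentation $H \simeq \lim_r H^r$ along smooth surjections of finite-type unipotent groups, each $H^r$ is isomorphic to an affine space, so $\Gamma_\dR(H^r, k_{H^r}) \simeq \CC$. Combining the identification $p_{H,*} \simeq p_{H^r,*} \circ (\pi_{\infty \to r})_*$ with the compatibility $(\pi_{\infty \to r})_* k_H \simeq k_{H^r}$ (inherited by passage to the limit from the finite-dimensional smooth base change $(\pi_{s \to r})_* (\pi_{s \to r})^* k_{H^r} \simeq k_{H^r}$, which uses contractibility of the fibers of $\pi_{s \to r}$) yields the desired identification. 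The main technical point is the compatibility of these base change isomorphisms with the limit presentation of $\Dmod^*(H)$, which is built into the formalism of Sect. \ref{SEC:Dmod-infinite}.
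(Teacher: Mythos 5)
Your proof is correct and follows the same route as the paper: reduce to $\mu=0$ by twisting with $\Vect_{-\mu}$, then deduce both statements from the cohomological contractibility of $H$, i.e.\ the unit $\id_{\Vect} \to p_* \circ p^*$ being an isomorphism, transported through the functors $(-)_\C$ and $(-)^\C$ of Sect.\ \ref{paradigm}. You merely spell out the details the paper leaves implicit (how the unit transfers under the two paradigms, and how $\Gamma_\dR(H,k_H)\simeq \CC$ follows from the contractibility of the finite-type quotients $H^r$ and of the fibers of the transition maps), all of which is fine.
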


\begin{proof}
By changing $\C$ with $\C \otimes \Vect_{-\m}$, it suffices to prove the claim for $\mu =0$. 
Both assertions follow from the cohomological contractibility of $H$: indeed, $p_* \circ p^* \simeq \Gamma_\dR(H, k_H) \simeq \kk$.
\end{proof}
Thus, for pro-unipotent $H$, we often regard $\C^{H,\mu}$ as a subcategory of $\C$ and $\Av_*^{H,\mu}$ as an endofunctor of $\C$. In the pro-unipotent case, the fact that $\theta_{H,\m}$ is an equivalence is very easy:

\begin{thm} \label{thm:inv=coinv}
Let $H$ be a pro-unipotent group. 
The functors 
$$
\theta_{H,\m} : \C_{H,\m} \rightleftarrows \C^{H,\m}: \pr_{H,\m} \circ \oblv^{H,\m}
$$
are mutually inverse equivalences. In particular, the operation $\C \rightsquigarrow \C^{H,\m}$ commutes with colimits and tensor products by categories.
\end{thm}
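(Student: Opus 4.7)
The plan is to first reduce to the untwisted case $\mu = 0$ via Lemma \ref{lem:Whit_with_dualiz_char}, which identifies $\C^{H,\mu} \simeq (\C \otimes \Vect_{-\mu})^H$ and $\C_{H,\mu} \simeq (\C \otimes \Vect_{-\mu})_H$. All the functors in sight ($\pr$, $\oblv$, $\Av_*$, $\theta$) intertwine with these equivalences by construction, so it suffices to show that $\theta_H: \C_H \to \C^H$ and $\pr_H \circ \oblv^H: \C^H \to \C_H$ are mutually inverse for an arbitrary $\C \in H\rrep$.

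Two inputs will drive the argument: (i) $\oblv^H$ is fully faithful, as established in the corollary immediately preceding the theorem, so that the unit $\id_{\C^H} \to \Av_*^H \circ \oblv^H$ is an equivalence; and (ii) the pro-unipotent group $H$ is cohomologically contractible, i.e.\ $\Gamma_\dR(H, k_H) \simeq \CC$. For (ii) I would use that by equation (\ref{eqn:k_X}) the constant sheaf $k_H$ is pulled back from any finite-type quotient $H^r$, and each $H^r$ is a unipotent algebraic group over $\CC$, hence isomorphic as a variety to an affine space, whose de Rham cohomology is $\CC$ concentrated in degree zero.

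With these inputs, both compositions reduce quickly. By the paradigm of Sect.~\ref{paradigm} applied to the $\Dmod^*(H)$-bilinear functor $p^*: \Vect \to \Dmod^*(H)$, the functor $\theta_H$ is characterised by $\theta_H \circ \pr_H \simeq \Av_*^H$. Hence
\[
\theta_H \circ (\pr_H \circ \oblv^H) \simeq \Av_*^H \circ \oblv^H \simeq \id_{\C^H}
\]
by (i). For the other composition, since $\pr_H$ is essentially surjective (even a localisation, as it admits a fully faithful left adjoint), it suffices to exhibit a canonical isomorphism $(\pr_H \circ \oblv^H) \circ \theta_H \circ \pr_H \simeq \pr_H$. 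Using $\theta_H \circ \pr_H \simeq \Av_*^H$ together with equation (\ref{eqn:mumu}) specialised to $\mu=0$, the left-hand side equals $\pr_H(k_H \star -)$. Since $\pr_H = (p_*)_\C$ is induced by the $\Dmod^*(H)$-linear functor $p_*: \Dmod^*(H) \to \Vect$ (where $\Vect$ carries the trivial action), one has a natural identification $\pr_H(F \star c) \simeq p_*(F) \otimes \pr_H(c)$ for $F \in \Dmod^*(H)$ and $c \in \C$; specialising to $F = k_H$ and invoking (ii) yields $\pr_H(k_H \star c) \simeq \pr_H(c)$, as required.

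The only subtle point I anticipate is producing the isomorphism $\pr_H(k_H \star -) \simeq \pr_H$ as an honest natural transformation of functors $\C \to \C_H$, not merely an object-wise identification; I expect this to follow from carefully tracing the $\Dmod^*(H)$-linearity of $p_*$ through the Bar realisation (\ref{eqn:simplicial_cat}) of $\C_H$. Once the equivalence is established, the final clause of the theorem is immediate: the identification $\C^{H,\mu} \simeq \C_{H,\mu} \simeq \Vect_\mu \otimes_{\Dmod^*(H)} \C$ realises the invariant category as a relative tensor product, which commutes with colimits and with tensoring by DG categories separately in each variable.
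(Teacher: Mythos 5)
Your proposal is correct and follows essentially the same route as the paper: reduce to $\m=0$, use full faithfulness of $\oblv^H$ to see that $\theta_H\circ\pr_H\circ\oblv^H\simeq\Av_*^H\circ\oblv^H\simeq\id$, and use contractibility $\Gamma_\dR(H,k_H)\simeq\CC$ together with the fact that $\pr_H$ coequalizes the given and trivial actions (your $\Dmod^*(H)$-linearity of $p_*$) to handle the other composite after precomposing with $\pr_H$. Your extra remark that $\pr_H$ is a localization, so checking on its image suffices, is exactly the implicit step in the paper's argument.
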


\begin{proof}
Without loss of generality, $\m =0$.
We prove that both compositions are naturally isomorphic to the identity functors.
On one hand:
$$
\pr_H \circ \oblv^H \circ \theta_H  (\pr_H (c)) \simeq 
\pr_H ( \Av_*^H (c)) 
\xrightarrow{\, \, \simeq \, \,}
\pr_H (\Gamma_\dR (H, k_H) \otimes c) \simeq \pr_H(c),
$$
for $\pr_H$ coequalizes the given $H$-action and the trivial $H$-action.
On the other hand:
$$
\theta_H \circ \pr_H \circ \oblv^H (c) \simeq
\Av_*^H \circ \oblv^H (c) 
\xrightarrow{\, \, \simeq \, \,}
c,
$$
for $\oblv^H$ is fully faithful.
\end{proof}

\begin{rem} \label{rem:Av_*-twisted-in-prounip-case}
Let $H$ be pro-unipotent and $\lambda_H$ the self-duality associated to the canonical dimension theory of $H$. Then, $\lambda_H^{-1}(k_H) \simeq \omega_H$ and
\begin{equation} \label{eqn:oblvAv-twisted-unip}
\oblv^{H,\m} \circ \Av_*^{H,\m} \simeq  \lambda_H ((-\m^!) exp)  \star -: \C \to \C.
\end{equation}
The notation introduced in (\ref{not:short}) simplifies to $(-\mu)_H := \lambda_H ((-\m)^!exp)$ in the pro-unipotent case.
\end{rem}

\sssec{}

Let $S \to H$ be an inclusion of pro-unipotent pro-groups. 
\footnote{The choice of the letter $S$ for the given subgroup of $H$ is meant to indicate the word \virg{stabilizer}. Indeed, later in this paper $H$ will act on a space and $S$ will be the stabilizer at a point of that space.}
We wish to define functors $\C^H \rightleftarrows \C^S$, for any $\C \in H \rrep$. To this end, note that $p_* :\Dmod^*(H) \to \Vect$ factors through $\Dmod^*(H)_S$ and gives rise to an adjunction
$$
\pr_S \circ p^* : \Vect \rightleftarrows \Dmod^*(H)_S : p_*,
$$
which is $\Dmod^*(H)$-linear. This induces a pair of adjoint functors
$$
\oblv^{H \to S} :\C^H \rightleftarrows  \Hom_{\Dmod^*(H)} \big( \Dmod^*(H)_S, \C \big) \simeq \C^S : \Av_*^{S \to H}.
$$
When no confusion is likely, we indicate these by $\oblv^{\rel}$ and $\Av_*^{\rel}$, respectively. By contractibility, the composition $\oblv^{\rel} \circ \Av_*^{\rel}$ is given by convolution with $k_{H}$. By changing $\C$ with $\C \otimes \Vect_{-\m}$, these constructions generalize immediately to the $\mu$-twisted case.

\sssec{}

Let us prove two technical results to be used in later chapters.

\begin{lem} \label{lem:auxiliary}
Let $H$ be a pro-unipotent group and $S \hto H$ a normal subgroup. 
Then $H/S$ acts on $\C^S$ and
$\C^H \simeq (\C^{S})^{H/S}$. 
\end{lem}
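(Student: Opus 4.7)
The statement is the pro-unipotent analogue of Lemma \ref{lem:normal-inv}, and I would prove it by the same method: construct the action on coinvariants and transport to invariants via $\theta$.

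A closed normal subgroup of a pro-unipotent group is pro-unipotent, and so is the quotient; Theorem \ref{thm:inv=coinv} therefore furnishes equivalences $\theta_S: \C_S \simeq \C^S$, $\theta_H: \C_H \simeq \C^H$ and (once the $H/S$-action is in place) $\theta_{H/S}: (\C_S)_{H/S} \simeq (\C^S)^{H/S}$. It is enough, then, to exhibit a canonical $H/S$-action on $\C_S$ together with an identification $(\C_S)_{H/S} \simeq \C_H$.

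The key technical input is the descent identification of $(\Dmod^*(H/S), \Dmod^*(H))$-bimodule categories
$$
\Vect \usotimes{\Dmod^*(S)} \Dmod^*(H) \simeq \Dmod^*(H/S),
$$
where $\Dmod^*(S)$ acts on $\Dmod^*(H)$ by left convolution through $i_*$, and the bimodule structure on the right-hand side comes from convolution, left multiplication by $H/S$ being well-defined on $H/S$ precisely because $S$ is normal. I plan to establish this by writing $H = \lim_r H_r$ as an inverse limit of finite-dimensional unipotent quotients, taking $S_r \subseteq H_r$ to be the compatible family of normal images of $S$ (so that $S = \lim_r S_r$ and $H/S = \lim_r H_r/S_r$), and using the colimit presentations $\Dmod^*(H) \simeq \colim_r \Dmod(H_r)$ etc., obtained as in Lemma \ref{lem:DmodG} for pro-schemes. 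At each finite level the required descent statement
$$
\Vect \usotimes{\Dmod(S_r)} \Dmod(H_r) \simeq \Dmod(H_r/S_r)
$$
is the content (for $\C = \Dmod(H_r)$ with its right regular $S_r$-action) that underlies the proof of Lemma \ref{lem:normal-inv} via Corollary \ref{cor:strong-from-weak}. Since the relative tensor product commutes with filtered colimits in each variable, passing to the colimit over $r$ yields the bimodule equivalence.

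Granting this, the category
$$
\C_S \simeq \Vect \usotimes{\Dmod^*(S)} \C \simeq \Big(\Vect \usotimes{\Dmod^*(S)} \Dmod^*(H)\Big) \usotimes{\Dmod^*(H)} \C \simeq \Dmod^*(H/S) \usotimes{\Dmod^*(H)} \C
$$
acquires a canonical left $\Dmod^*(H/S)$-module structure, and associativity of the relative tensor product gives
$$
(\C_S)_{H/S} \simeq \Vect \usotimes{\Dmod^*(H/S)} \Dmod^*(H/S) \usotimes{\Dmod^*(H)} \C \simeq \Vect \usotimes{\Dmod^*(H)} \C \simeq \C_H.
$$
Transporting across $\theta_S$, $\theta_{H/S}$ and $\theta_H^{-1}$ produces the sought equivalence $\C^H \simeq (\C^S)^{H/S}$.

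\textbf{Main obstacle.} The principal difficulty is the descent equivalence $\Vect \otimes_{\Dmod^*(S)} \Dmod^*(H) \simeq \Dmod^*(H/S)$. The subtlety is checking that the convolution action of $\Dmod^*(S)$ on $\Dmod^*(H)$ is compatible with the colimit presentation $\Dmod^*(H) \simeq \colim_r \Dmod(H_r)$, so that the relative tensor product can be computed level by level; this in turn reduces to the observation that left multiplication by $S$ stabilizes each fiber of the smooth surjection $H \twoheadrightarrow H_r$, so that convolution by objects of $\Dmod^*(S)$ preserves the stratification. Once the bimodule structure coming from normality of $S$ is correctly set up, the rest is formal manipulation with relative tensor products.
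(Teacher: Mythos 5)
Your overall architecture (pass to coinvariants, compute $\Vect \usotimes{\Dmod^*(S)} \Dmod^*(H)$, then transport through the $\theta$-equivalences) is a genuinely different route from the paper's, which works on the invariants side: it proves $\Dmod^*(H)^{S} \simeq \Dmod(H/S)$ directly by choosing a presentation $H \simeq \lim_r H/H^r$ with each $H^r$ normal in $H$ and \emph{contained in} $S$, computing $\Dmod^*(H)^S \simeq \lim_r \Dmod(H/H^r)^{S/H^r}$ term by term with the finite-type statement, and then repeating the formal argument of Lemma \ref{lem:normal-inv}. The key advantage of that route is that invariants ($\Hom_{\Dmod^*(S)}(\Vect,-)$) commute with limits, and the limit presentation of $\Dmod^*(H)$ is taken along the pushforward functors $\pi_*$, which \emph{are} monoidal for convolution, so the level-by-level computation is tautologically legitimate. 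Your route, by contrast, has a genuine gap precisely at the step you flag as the "main obstacle": to compute $\Vect \usotimes{\Dmod^*(S)} \Dmod^*(H)$ level by level along the colimit presentation $\Dmod^*(H) \simeq \colim_{r,\pi^*} \Dmod(H_r)$, you need the insertion/transition functors $\pi^*$ to intertwine the convolution module structures over the compatibly varying algebras $\Dmod(S_r)$. This is not formal: the squares comparing $m_{H_s}$ and $m_{H_r}$ (or $S_s\times H_s \to H_s$ and $S_r \times H_r \to H_r$) are \emph{not} Cartesian, so $*$-pullback does not commute with the convolution pushforwards on the nose. The compatibility does hold here, but only because the discrepancy is a torsor under the kernels $\ker(H_s \to H_r)$, $\ker(S_s \to S_r)$, which are unipotent and hence cohomologically contractible ($\Gamma_\dR(\cdot, k)\simeq \CC$); one needs a base-change-plus-contractibility argument to conclude. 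Your stated justification — that left multiplication by $S$ "stabilizes each fiber of $H \twoheadrightarrow H_r$, so convolution preserves the stratification" — is incorrect as a geometric statement (left translation by $s$ permutes the fibers according to the $S_r$-action on $H_r$) and in any case does not address the sheaf-theoretic base-change issue, which is where unipotence is genuinely used. Without this input the "pass to the colimit" step is unjustified.

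Two further points. First, your final transport step silently identifies $(\C_S)^{H/S}$ with $(\C^S)^{H/S}$, which requires the equivalence $\theta_S: \C_S \to \C^S$ (equivalently $\pr_S \circ \oblv^S$) to be $H/S$-equivariant for the action you constructed; this is plausible (normality gives $\delta_h \star k_S \simeq k_S \star \delta_h$) but is an extra coherence check you do not mention, and it can be avoided by running the same formal manipulation directly on the invariants side, as the paper does via $\C^S \simeq \Hom_{\Dmod^*(H)}(\Dmod^*(H)\usotimes{\Dmod^*(S)}\Vect, \C)$. Second, a small structural difference worth noting: the paper's choice $H^r \subseteq S$ makes each finite-level quotient literally $H/S$ and the limit constant, whereas your choice $S_r = $ image of $S$ in $H_r$ would, if the compatibility gap were filled, prove the statement with $\Dmod^*(H/S)$ for a general closed normal $S$; so your setup is potentially more general, but the central step needs the contractibility argument (or a scheme-theoretic splitting $H \simeq S \times H/S$, which for pro-unipotent groups in characteristic zero would let you compute the coinvariants of the regular module immediately) rather than the reason you give.
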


\begin{proof}
It suffices to prove that  $\Dmod^*(H)^{S} \simeq \Dmod(H/S)$ and then repeat the argument of Lemma \ref{lem:normal-inv}.
We may choose a presentation $H \simeq \lim_{r \in \R^\op} H/H^r$, where each $H^r$ is normal in $H$ and is contained in $S$.
Thus, $S$ acts on $H/H^r$ via the projection $S \twoheadrightarrow S/H^r$. We compute
$$
\Dmod^*(H)^{S} \simeq \lim_{r \in \R^\op} \, \Dmod(H/H^r)^{S} \simeq \lim_{r \in \R^\op} \, \Dmod  \Big( (H/H^r)/(S/H^r) \Big).
$$
The RHS is the limit of the constant family equal to $\Dmod(H/S)$.
\end{proof}

\sssec{}

Let $\I$ be an indexing category and $\{H_i\}_{i \in \I}$ an inverse family of pro-unipotent groups, where the transition maps are closed embeddings of finite presentation. Denote by $H_0$ the final object of this family and $H_{\infty} := \lim_i H_i$: this is the intersection of all the $H_i$'s. Let $\m$ be a character on $H_0$.

\begin{prop} \label{prop:smooth-generation}
If $\C$ is acted on by $H_0$, then
\begin{equation} \label{eqn:smooth-generation}
\C^{H_\infty,\m} \simeq \uscolim{i \in \I^\op, \oblv^{\rel}} \C^{H_i,\m}.
\end{equation}
\end{prop}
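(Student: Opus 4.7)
The strategy is to reduce the claim to a geometric identification on homogeneous spaces of $H_0$.

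First, via the ind-pro analogue of the twisting automorphism $\Tw_{-\mu}$ of (\ref{eqn:twist}), it suffices to treat the case $\mu = 0$.

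Second, using the induction-restriction adjunction along the closed embedding $H_i \hookrightarrow H_0$ together with the coinvariant identification $\Dmod^*(H_0) \otimes_{\Dmod^*(H_i)} \Vect \simeq \Dmod^*(H_0/H_i)$, we obtain
$$
\C^{H_i} \simeq \Hom_{\Dmod^*(H_0)}\bigl(\Dmod^*(H_0/H_i),\,\C\bigr),
$$
and similarly for $H_\infty$. For any inclusion $H_i \subseteq H_j$ arising from $\I$, the quotient map $\pi: H_0/H_i \to H_0/H_j$ is an $H_0$-equivariant principal bundle under $H_j/H_i$, hence smooth. The adjunction $\pi^* \dashv \pi_*$ is $\Dmod^*(H_0)$-linear, and under the above identifications $\oblv^{H_j \to H_i}$ corresponds to precomposition with $\pi_*$ while $\Av_*^{H_i \to H_j}$ corresponds to precomposition with $\pi^*$.

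Third, the paradigm of Sect.~\ref{sssec:Lurie_G0} converts our colimit along left adjoints into a limit along right adjoints; using that $\Hom(-,\C)$ takes colimits to limits, we arrive at
$$
\uscolim{i \in \I^\op,\, \oblv^{\rel}} \C^{H_i} \;\simeq\; \lim_{i,\, \Av_*^{\rel}} \C^{H_i} \;\simeq\; \Hom_{\Dmod^*(H_0)}\Bigl(\uscolim{i,\, \pi^*} \Dmod^*(H_0/H_i),\,\C\Bigr).
$$

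The proof thereby reduces to the geometric identification
$$
\Dmod^*(H_0/H_\infty) \;\simeq\; \uscolim{i,\, \pi^*} \Dmod^*(H_0/H_i)
$$
as $\Dmod^*(H_0)$-modules. This follows from $H_0/H_\infty \simeq \lim_i H_0/H_i$ (with transitions the smooth surjections $\pi$), combined with the definition of $\fD^*$-modules on a pro-scheme as a colimit along pullbacks, cf.~(\ref{eqn:Dmod*-as-limit}) and Lemma~\ref{lem:DmodG}, together with the evident $H_0$-equivariance of all the structure maps.

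The main obstacle is this last step: verifying that the homogeneous space $H_0/H_\infty$ acquires a bona-fide pro-scheme structure with the claimed presentation in the generality of possibly infinite-dimensional pro-unipotent groups $H_i$, and that the resulting equivalence of DG categories is compatible with the $\Dmod^*(H_0)$-module structures on both sides (rather than being merely an equivalence in $\DGCat$).
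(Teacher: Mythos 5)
Your reduction works smoothly at each finite level, but it silently assumes the one statement that carries all the content. For each $i$, the chain $\C^{H_i}\simeq \Hom_{\Dmod^*(H_0)}\bigl(\Dmod^*(H_0)\usotimes{\Dmod^*(H_i)}\Vect,\;\C\bigr)\simeq \Hom_{\Dmod^*(H_0)}\bigl(\Dmod^*(H_0/H_i),\C\bigr)$ is plausible (though note the paper only proves the coinvariant/quotient identification, Lemma \ref{lem:auxiliary} plus Theorem \ref{thm:inv=coinv}, for \emph{normal} subgroups, so already here you owe a non-normal variant). The real problem is the phrase ``and similarly for $H_\infty$'': the subgroup $H_\infty=\lim_i H_i$ has in general infinite codimension in $H_0$, so $H_\infty\hto H_0$ is not a closed embedding in the paper's finitely-presented sense, none of the finite-codimension arguments apply, and the identification $\Dmod^*(H_0)\usotimes{\Dmod^*(H_\infty)}\Vect\simeq \Dmod^*(H_0/H_\infty)$ (with the right-hand side the $\fD^*$-category of the pro-scheme $\lim_i H_0/H_i$, i.e. $\uscolim{i,\pi^*}\Dmod^*(H_0/H_i)$) is \emph{not} definitional. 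Indeed, combined with your finite-level identifications it is precisely the proposition applied to the regular representation $\C=\Dmod^*(H_0)$, so your ``reduction to a geometric identification'' begs the question: the definition of $\Dmod^*$ on a pro-scheme only identifies $\Dmod^*(\lim_i H_0/H_i)$ with $\uscolim{i,\pi^*}\Dmod^*(H_0/H_i)$; it says nothing about the categorical coinvariants of $\Dmod^*(H_0)$ by $\Dmod^*(H_\infty)$. The obstacle you flag at the end (pro-scheme structure on $H_0/H_\infty$ and module-linearity) is the easy part; the missing step is this coinvariant identification at level $\infty$, and nothing in the paper supplies it.

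The paper's proof sidesteps exactly this point: it first reduces to $\mu=0$ and $H_\infty=\{1\}$, so that $\C^{H_\infty}=\C$ and no quotient-category statement at level $\infty$ is needed; it then builds the adjoint pair $\alpha:\uscolim{i}\C^{H_i}\rightleftarrows\C:\beta$ with $\alpha\circ\ins_i\simeq\oblv^{H_i}$, $\ev_i\circ\beta\simeq\Av_*^{H_i}$, gets $\beta\circ\alpha\simeq\id$ from pro-unipotence, and proves $\alpha\circ\beta\simeq\id$ by the single $\fD$-module identity $\delta_{1,H_0}\simeq\uscolim{i}(\iota_{H_i\to H_0})_*k_{H_i}$ in $\Dmod^*(H_0)$, established from the presentation $H_0\simeq\lim_i H_0/H_i$ by pseudo-contractibility and base change. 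That computation is the genuine geometric input, and it is what your missing identification would have to encode; if you want to salvage your route, you must prove $\Dmod^*(H_0)_{H_\infty}\simeq\uscolim{i,\pi^*}\Dmod^*(H_0/H_i)$ directly (not by citing the definition of $\Dmod^*$ on pro-schemes), which amounts to essentially the same argument as the paper's, done for the regular representation.
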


\begin{proof}

Without loss of generality, we may assume that $\mu =0$ and that $H_\infty = \{1\}$. We shall construct a pair of inverse functors $\C \rightleftarrows \colim_i \C^{H_i}$.
By Sect. \ref{sssec:fromLimToColim}, we have
$$
\uscolim{\I^\op, \oblv} \C^{H_i} \simeq \lim_{\I, \Av_*^{\rel}} \C^{H_i};
$$
we indicate by $\ins_i$ and $\ev_i$ the structure functors, as usual.
We define a functor $\alpha: \colim_i \C^{H_i} \to \C$ by imposing the equality $\alpha \circ \, \ins_i \simeq \oblv^{H_i}$. Then, its right adjoint $\beta:= \alpha^R$ satisfies the relation $\ev_i \circ \beta = \Av^{H_i}_*$. Equivalently, $\beta = \colim_i \ins_i \circ \Av_*^{H_i}$.

\medskip

Obviously, the composition $\beta \circ \alpha$ is the identity, by pro-unipotence of each $H_i$. It remains to show that $\alpha \circ \beta \simeq \id_\C$, which is equivalent to exhibiting a natural equivalence
$$
\delta_{1, H_0} \simeq \uscolim{i \in \I} \big(  (\iota_{H_i \to H_0})_* k_{H_i} \big).
$$
Note that $H_0 \simeq \lim_i H_0/H_i$ is a pro-scheme presentation of $H$, so that
$$
\delta_{1,H_0} \simeq \uscolim{i \in \I} (\pi_{\infty \to i})^* (\delta_{1, Q^i}),
$$
where $Q^i := H_0/H_i$.
We will show that $(\pi_{\infty \to i})^* (\delta_{1, Q^i}) \simeq (\iota_{H_i \to H_0})_* k_{H_i}$, for any $i \in \I$. By pseudo-contractibility,
$$
(\pi_{\infty \to i})^* (\delta_{1, Q^i}) \xrightarrow{ \Omega^{-1}} \big\{ (\pi_{j \to i})^* (\delta_{1, Q^i}) \big\}_{j \in \I_{/i}},
$$
and, by base-change,
\begin{equation} \label{eqn:aux}
(\pi_{\infty \to i})^* (\delta_{1, Q^i}) \simeq \{ k_{\on{fib}(Q^j \to Q^i)} \}_{j \in \I_{/i}}.
\end{equation}
Here, $\on{fib}(Q^j \to Q^i)$ is the fiber of the projection $Q^j \to Q^i$ over $1 \in Q^i$, which is of course $H^i /H^j$. Thus, (\ref{eqn:aux}) coincides with $k_{H_i}$, by the very construction of the latter.
\end{proof}

\ssec{Invariants and coinvariants with respect to $N \ppart$}

We focus now on categories with a (twisted) action of $\bN$ and study their (co)invariants in terms of the invariants for the sequence of $\bN_k$. 
We will define the Whittaker categories of an object $\C \in \bG \rrep$ and a natural functor between them.

\sssec{}

Recall that $\bN \simeq \colim_k \bN_k$; for each $k \in \NN$, denote by $i_k: \bN_k \to \bN_{k+1}$ the inclusion. 

\begin{prop} \label{prop:N-inv_vs_Nk-inv}
There are natural equivalences
$$
\C^{\bN} := \lim_{\oblv^{\rel}} \C^{\bN_k} 
\, \, \, \text{ and }\, \, \,
\C_{\bN} := \uscolim{\Av_*^{\rel}} \, \C^{\bN_k}.
$$
\end{prop}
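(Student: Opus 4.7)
The first equivalence is essentially the content of Example \ref{sssec:N-actions}: since $\Dmod^*(\bN) \simeq \uscolim{k} \Dmod^*(\bN_k)$ under the pushforwards $i_*$, the structure of an $\bN$-action on $\C$ is equivalent to a compatible family of $\bN_k$-actions. Taking $\Hom(\Vect, -)$ over each algebra converts this colimit of algebras into a limit of categories,
$$
\C^{\bN} \simeq \lim_k \Hom_{\Dmod^*(\bN_k)}(\Vect, \C) \simeq \lim_k \C^{\bN_k},
$$
and the structure maps are precisely the relative forgetful functors $\oblv^{\rel}$ associated to the inclusions $\bN_k \hookrightarrow \bN_{k+1}$.

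For the second equivalence, the dual argument (tensor product commutes with colimits in each variable) yields $\C_{\bN} \simeq \uscolim{k} \C_{\bN_k}$, with transition functors $\sigma_k : \C_{\bN_k} \to \C_{\bN_{k+1}}$ induced by $i_*$. Since each $\bN_k$ is pro-unipotent, Theorem \ref{thm:inv=coinv} supplies equivalences $\theta_{\bN_k} : \C_{\bN_k} \xrightarrow{\simeq} \C^{\bN_k}$, so the task reduces to identifying $\sigma_k$ with $\Av_*^{\rel} : \C^{\bN_k} \to \C^{\bN_{k+1}}$ under these equivalences.

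I plan to check this compatibility by precomposing with the essentially surjective functor $\pr_{\bN_k} : \C \to \C_{\bN_k}$. The universal property of the localization gives $\sigma_k \circ \pr_{\bN_k} \simeq \pr_{\bN_{k+1}}$, whence
$$
\theta_{\bN_{k+1}} \circ \sigma_k \circ \pr_{\bN_k} \simeq \theta_{\bN_{k+1}} \circ \pr_{\bN_{k+1}} \simeq \Av_*^{\bN_{k+1}}
$$
by the defining relation of $\theta$ (Corollary \ref{cor:k_G-induces}). Conversely, the evident factorization $\oblv^{\bN_{k+1}} \simeq \oblv^{\bN_k} \circ \oblv^{\rel}$ passes to right adjoints as $\Av_*^{\bN_{k+1}} \simeq \Av_*^{\rel} \circ \Av_*^{\bN_k}$, so that
$$
\Av_*^{\rel} \circ \theta_{\bN_k} \circ \pr_{\bN_k} \simeq \Av_*^{\rel} \circ \Av_*^{\bN_k} \simeq \Av_*^{\bN_{k+1}}.
$$
The two composites agree on the essential image of $\pr_{\bN_k}$, which yields $\theta_{\bN_{k+1}} \circ \sigma_k \simeq \Av_*^{\rel} \circ \theta_{\bN_k}$, as required. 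The main subtlety is this naturality of $\theta$ in $k$; once established, the result can additionally be cross-checked via Lurie's paradigm of Sect. \ref{sssec:Lurie_G0}, which identifies $\lim_{\oblv^{\rel}} \C^{\bN_k}$ with $\uscolim{\Av_*^{\rel}} \C^{\bN_k}$ from the adjunction $(\oblv^{\rel}, \Av_*^{\rel})$, and thereby recovers the expected equivalence $\C_{\bN} \simeq \C^{\bN}$.
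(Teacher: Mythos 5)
Your main argument is correct and is essentially the paper's own proof: commute the monoidal colimit $\Dmod^*(\bN) \simeq \uscolim{k} \Dmod^*(\bN_k)$ through the relative tensor product (resp.\ through $\Hom(\Vect,-)$), invoke Theorem \ref{thm:inv=coinv} to replace $\C_{\bN_k}$ by $\C^{\bN_k}$, and identify the $i_*$-induced transition functor with $\Av_*^{\rel}$ using $\theta_{\bN_{k+1}}\circ\pr_{\bN_{k+1}}\simeq \Av_*^{\bN_{k+1}}$ and $\Av_*^{\bN_{k+1}}\simeq \Av_*^{\rel}\circ\Av_*^{\bN_k}$; the paper packages exactly these compatibilities into diagram (\ref{diag:1}). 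Two small repairs: (i) cancelling $\pr_{\bN_k}$ needs more than "agreement on the essential image" --- whisker your isomorphism of functors with the fully faithful left adjoint $(\pr_{\bN_k})^L$ and use $\pr_{\bN_k}\circ(\pr_{\bN_k})^L\simeq \id$; (ii) in the invariant half, the claim that the structure maps of the limit are "precisely" $\oblv^{\rel}$ is not a tautology, since $\oblv^{\rel}$ was defined through $\Hom_{\Dmod^*(\bN_{k+1})}\big(\Dmod^*(\bN_{k+1})_{\bN_k},\C\big)\simeq\C^{\bN_k}$; this is the short diagram chase the paper supplies.

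By contrast, your closing "cross-check" is wrong and should be deleted. The paradigm of Sect.\ \ref{sssec:Lurie_G0} identifies a colimit along a family of functors with the limit along their \emph{right} adjoints; since $\oblv^{\rel}$ is \emph{left} adjoint to $\Av_*^{\rel}$, what it gives is $\uscolim{\oblv^{\rel}}\,\C^{\bN_k}\simeq \lim_{\Av_*^{\rel}}\C^{\bN_k}$, which is the opposite configuration from the one in the proposition ($\lim$ along the left adjoints, $\colim$ along the right adjoints). In particular it does not produce an equivalence $\C_{\bN}\simeq\C^{\bN}$, and no such equivalence is "expected": the existence of an equivalence between $\bN$-coinvariants and $\bN$-invariants is precisely the content of Conjecture \ref{conj:Tequiv} and Theorem \ref{thm:main}, it requires the action of the larger group $\bP$, and the paper explicitly notes that the analogous statement fails for the degenerate character $\chi=0$ --- which is exactly the untwisted situation of this proposition. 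If your cross-check were valid, it would contradict that discussion, so the error is not cosmetic; fortunately the rest of your proof does not use it.
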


\begin{proof}
Let us treat coinvariants first. As each functor $(i_k)_*: (\Dmod^*(\bN_k),\star) \to (\Dmod^*(\bN_{k+1}), \star)$ is monoidal, the equivalence $\Dmod^*(\bN) =\colim_{i_*} \Dmod^*(\bN_k)$ is an equivalence of \emph{monoidal} categories. Hence, we can commute the colimit under the tensor product:
$$
\C_{\bN} := \Vect \usotimes{\Dmod(\bN)} \C \simeq \Vect \usotimes{\uscolim{k,i_*} \Dmod(\bN_k)} \C
\simeq \uscolim{k,i_*} \Big( \Vect \usotimes{\Dmod(\bN_k)} \C \Big)  \simeq \uscolim{k,i_*} \, \C_{\bN_k}.
$$
Next, identifying $\C_{\bN_k}$ with $\C^{\bN_k}$ via Theorem \ref{thm:inv=coinv}, the map induced by $i_*$ goes over to $\Av_*^{\rel}: \C^{\bN_k} \to \C^{\bN_{k+1}}$, the right adjoint to the inclusion $\oblv^{\rel}: \C^{\bN_{k+1}} \to \C^{\bN_{k}}$.
Indeed, this follows from the commutativity of the diagram:
\begin{gather} \label{diag:1}
\xy
(0,0)*+{ \Dmod^*(\bN_{k+1}) \otimes \C \, \, }="00";
(32,0)*+{\, \,  \C \, \,  }="10";
(64,0)*+{\, \,  \C^{\bN_{k+1}}.}="20";
(00,15)*+{  \Dmod^*(\bN_{k}) \otimes \C \, \,  }="01";
(32,15)*+{  \, \,  \C \, \,  }="11";
(64,15)*+{\, \,  \C^{\bN_k}   }="21";
{\ar@{->}^{i_*} "01";"00"};
{\ar@<+0.3ex>^{} "00";"10"};
{\ar@<-0.5ex>^{} "00";"10"};
{\ar@<+0.3ex>^{} "01";"11"};
{\ar@<-0.5ex>^{} "01";"11"};
{\ar@{->}^{\id} "11";"10"};
%
%
{\ar@<-0.0ex>^{\Av_*^{\bN_{k+1}}} "10";"20"};
%
{\ar@<-0.0ex>^{\Av_*^{\bN_k}} "11";"21"};
{\ar@{->}^{\Av_*^{\rel}} "21";"20"};
\endxy
\end{gather}

\smallskip

The computation of $\bN$-invariants is easier: $\C^\bN$ is the limit of $\C^{\bN_k}$, along the transition maps $\C^{\bN_{k+1}} \to \C^{\bN_k}$ induced by $i^! : \Dmod^!(\bN_{k+1}) \to \Dmod^!(\bN_k)$. The relevant diagram
\begin{gather} 
\xy
(0,0)*+{ \Dmod^!(\bN_{k+1}) \otimes \C \, \, }="00";
(32,0)*+{\, \,  \C \, \,  }="10";
(64,0)*+{\, \,  \C^{\bN_{k+1}}}="20";
(00,15)*+{  \Dmod^!(\bN_{k}) \otimes \C \, \,  }="01";
(32,15)*+{  \, \,  \C \, \,  }="11";
(64,15)*+{\, \,  \C^{\bN_k}   }="21";
{\ar@{<-}^{i^!} "01";"00"};
{\ar@<+0.3ex>^{} "10";"00"};
{\ar@<-0.5ex>^{} "10";"00"};
{\ar@<+0.3ex>^{} "11";"01"};
{\ar@<-0.5ex>^{} "11";"01"};
{\ar@{->}^{\id} "10";"11"};
%
%
{\ar@{->}^{\oblv^{\bN_{k+1}}} "20";"10"};
%
{\ar@{<-}^{\oblv^{\bN_k}} "11";"21"};
{\ar@{<-}^{\oblv^{\rel}} "21";"20"};
\endxy
\end{gather}
is commutative (the assertion for the left square follows by duality from commutativity of the left square of (\ref{diag:1})). This identifies $\C^{\bN_{k+1}} \to \C^{\bN_k}$ as $\oblv^{\rel}$.
\end{proof}

\sssec{}

Let us now introduce the main objects of this paper. For $\C \in \bG \on{-}\mathbf{rep}$ and $\chi$ the character defined in (\ref{eqn:chi}), we define the \emph{Whittaker invariant} and \emph{Whittaker coinvariant} categories of $\C$ respectively as $\C^{\bN,\chi}$ and $\C_{\bN,\chi}$.

\medskip

In view of Proposition \ref{prop:N-inv_vs_Nk-inv}, we have:
$$
\C^{\bN,\chi} \simeq \lim_{\oblv^\rel} {\C}^{\bN_k,\chi} 
\, \, \, \text{ and }\, \, \,
\C_{\bN,\chi} \simeq \uscolim{\Av_*^{\rel}} \, {\C}^{\bN_k, \chi}.
$$

\sssec{}

For any choice of dimension theory on $\bN$, we shall construct a functor $\Theta: \C_{\bN, \chi} \to \C^{\bN,\chi}$ between the Whittaker categories, called the \emph{renormalized averaging} functor.

\medskip

Without loss of generality, we may assume that the self-duality $\Lambda: \Dmod^!(\bN) \to \Dmod^*(\bN)$ is determined by $\dim(\bN_0) = 0$ and that $\bN$ is presented as $\bN \simeq \colim_{n \geq 0} \bN_n$.
Consider the object $(-\chi)^\ren_\bN := \Lambda ((-\chi)^!exp)$ and the corresponding functor $(-\chi)^\ren_\bN \star - : \C \to \C$. Explicitly, this is the functor
\begin{equation} \label{theta}
(-\chi)^\ren_\bN \star - 
\simeq
 \uscolim{n \geq 0} \Big(  (-\chi)_{\bN_n} \star  - [2 \dim(\bN_n)] \Big).
\end{equation}
where $(-\chi)_{\bN_n}$ was defined in Remark \ref{rem:Av_*-twisted-in-prounip-case}. 

\begin{prop} \label{prop:theta-descends}
The functor (\ref{theta}) descends to a functor $\Theta: \C_{\bN, \chi} \to \C^{\bN,\chi}$.
\end{prop}

The proof fits the following general paradigm.

\sssec{} \label{paradigm-colim-lim}

Let $\I$ be a filtered indexing category, with an initial object $0 \in \I$.
Consider a diagram $\D_\bullet : \I \to \DGCat$, with transition functors $\ins_{i \to j}: \D_i \to \D_j$, and a diagram $\E^\bullet : \I^\op \to \DGCat$, with transition functors $\ev^{j \to i}: \E^j \to \E^i$. 
Let $\{\theta_i: \D_i \to \E^i \}_{i \in \I}$ be a collection of functors together with transitive systems of maps
$$
\theta_i \Longrightarrow \ev^{j\to i} \circ \theta_j \circ \ins_{i \to j},
$$
so that we can form the functor 
$$
\wt \Theta := \uscolim{i \in \I} \big( \ev^{i \to 0}  \circ \theta_i \circ \ins_{0 \to i} \big)
: \D_0 \to \E^0.
$$
Set $\D_\infty := \colim_{i \in \I} \D_i$ and $\E^\infty := \lim_{i \in \I^\op} \E^i$.

\begin{lem}
In the situation just described, assume that all $\ins_{i \to j}$ are \emph{essentially surjective} and that all $\ev^{j \to i}$ are \emph{fully faithful}. Then $\wt \Theta$ factors as $\ev^{\infty \to 0} \circ \Theta \circ \ins_{0 \to \infty}$ for some functor $\Theta: \D_\infty\to \E^{\infty}$.  
\end{lem}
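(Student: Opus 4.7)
The strategy is to construct $\Theta$ by assembling the given transitive system of $\theta_i$'s into a compatible family of functors valued in $\E^\infty$, then to verify the factorization by checking that $\wt\Theta$ already takes values in the essential image of $\ev^{\infty \to 0}$. I invoke the paradigm of Sect.~\ref{sssec:Lurie_G0} to identify $\D_\infty \simeq \lim_{i \in \I^\op, \ins^R} \D_i$ and $\E^\infty \simeq \uscolim{i \in \I,\, \ev^L} \E^i$, with insertions $\ins_{i \to \infty}: \D_i \to \D_\infty$, $\iota^{\E}_i := \ev^L_{i \to \infty}: \E^i \to \E^\infty$ and the corresponding adjoints on the other side. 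Because a limit of fully-faithful functors remains fully faithful in $\DGCat$, each $\ev^{\infty \to i}$ is fully faithful; equivalently, the counit $(\ev^{j \to i})^L \circ \ev^{j \to i} \simeq \id$ holds for every arrow $i \to j$ in $\I$.

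Next, for each $j \in \I$, I define the auxiliary functor
$$
\wt\Theta_j := \uscolim{k \in \I_{j/}} \ev^{k \to j} \circ \theta_k \circ \ins_{j \to k}: \D_j \longrightarrow \E^j,
$$
so that $\wt\Theta_0 = \wt\Theta$. A cofinality argument, using filteredness of $\I$ and continuity of the functors $\ev^{j' \to j}$ and $\ins_{j \to j'}$, produces natural isomorphisms $\wt\Theta_j \simeq \ev^{j' \to j} \circ \wt\Theta_{j'} \circ \ins_{j \to j'}$ for every arrow $j \to j'$. Applying $(\ev^{j' \to j})^L$ on the left and invoking the counit isomorphism yields $\wt\Theta_{j'} \circ \ins_{j \to j'} \simeq (\ev^{j' \to j})^L \circ \wt\Theta_j$. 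Setting $G_j := \iota^\E_j \circ \wt\Theta_j : \D_j \to \E^\infty$ and using the identity $\iota^\E_j \circ (\ev^{j' \to j})^L \simeq \iota^\E_{j'}$ (obtained by passing to left adjoints in $\ev^{\infty \to j} \simeq \ev^{j' \to j} \circ \ev^{\infty \to j'}$), one checks $G_{j'} \circ \ins_{j \to j'} \simeq G_j$. The universal property of $\D_\infty$ as a colimit then assembles the $G_j$'s into a functor $\Theta: \D_\infty \to \E^\infty$ with $\Theta \circ \ins_{j \to \infty} \simeq G_j$.

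It remains to verify $\ev^{\infty \to 0} \circ \Theta \circ \ins_{0 \to \infty} \simeq \wt\Theta$. By construction the left side equals $\ev^{\infty \to 0} \circ \iota^{\E}_0 \circ \wt\Theta$; since $\ev^{\infty \to 0}$ is fully faithful, the composition $\ev^{\infty \to 0} \circ \iota^{\E}_0$ acts as the identity on objects already lying in the essential image of $\ev^{\infty \to 0}$. It therefore suffices to show that $\wt\Theta$ takes values in this essential image. For each fixed $i \in \I$, cofinality of $\I_{i/} \subseteq \I_{0/}$ together with continuity of $\ev^{i \to 0}$ yields
$$
\wt\Theta \simeq \ev^{i \to 0} \circ \wt\Theta_i \circ \ins_{0 \to i},
$$
so the essential image of $\wt\Theta$ lies in that of $\ev^{i \to 0}$. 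Intersecting over all $i \in \I$ identifies the common image with $\ev^{\infty \to 0}(\E^\infty)$, completing the factorization. The hypothesis that each $\ins_{i \to j}$ is essentially surjective enters at the level of uniqueness: it implies that $\ins_{0 \to \infty}$ is essentially surjective, so $\Theta$ is determined, together with the fully faithful $\ev^{\infty \to 0}$, by $\wt\Theta$.

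The main obstacle is coherence: one must promote the piecewise compatibilities $G_{j'} \circ \ins_{j \to j'} \simeq G_j$ to a genuine morphism in the $\infty$-category of functors $\Fun(\D_\bullet, \E^\infty)$ on which the universal property of the colimit acts. This is handled by standard techniques for presentable stable $\infty$-categories: adjoints exist by the adjoint functor theorem, filtered colimits preserve both fully-faithfulness and isomorphisms, and the compatibility between adjoints of composable pairs extends coherently to the limit and colimit.
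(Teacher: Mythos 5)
Your opening move — setting $\wt\Theta_j := \uscolim{k\in\I_{j/}}\, \ev^{k\to j}\circ\theta_k\circ\ins_{j\to k}$ and establishing $\wt\Theta_j\simeq \ev^{j'\to j}\circ\wt\Theta_{j'}\circ\ins_{j\to j'}$ by cofinality — is exactly the paper's argument. The gap is in how you assemble these into $\Theta$: you identify $\E^\infty\simeq \uscolim{i,\,\ev^L}\E^i$ and build $\Theta$ from the functors $\iota^{\E}_j=(\ev^{\infty\to j})^L$ and $(\ev^{j'\to j})^L$, but the hypotheses give you no such left adjoints. The adjoint functor theorem produces \emph{right} adjoints to continuous functors between presentable categories; a left adjoint to $\ev^{j\to i}$ exists only if $\ev^{j\to i}$ preserves limits, which is neither assumed nor implied by full faithfulness. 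Likewise, the paradigm of Sect.~\ref{sssec:Lurie_G0} turns a colimit along continuous functors into a limit along their right adjoints, so it rewrites $\D_\infty$ as a limit, but it cannot rewrite $\E^\infty=\lim_{\ev}\E^i$ as a colimit unless each $\ev$ already admits a continuous left adjoint. In the very situation this lemma is made for (Proposition \ref{prop:theta-descends}, where $\ev=\oblv^{\rel}$ between twisted invariant categories for pro-unipotent groups), those left adjoints are the only partially defined functors $\Av_!$, whose existence as total continuous functors is essentially the content of the paper's main theorem (cf.\ Remark \ref{rem:leftleftadjoints}); assuming them here is both unavailable in the stated generality and circular in spirit. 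So the construction of $\Theta$, and hence the factorization, does not go through as written.

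The repair is to use only universal properties, which is how the paper's ``manifest'' conclusion should be unpacked: since $\Fun(\D_\infty,\E^\infty)\simeq \lim_{i\in\I^\op}\lim_{\ell\in\I^\op}\Fun(\D_i,\E^\ell)$, a functor $\D_\infty\to\E^\infty$ is a family of functors $\D_i\to\E^\ell$ coherently compatible with precomposition by the $\ins$'s and postcomposition by the $\ev$'s; your cofinality isomorphisms supply exactly such a family, $\Theta_i^\ell:=\ev^{m\to\ell}\circ\wt\Theta_m\circ\ins_{i\to m}$ for any $m$ receiving arrows from $i$ and $\ell$, and then $\ev^{\infty\to 0}\circ\Theta\circ\ins_{0\to\infty}\simeq\Theta_0^0\simeq\wt\Theta$ is immediate, with no adjoints to the $\ev$'s needed. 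Two secondary points: the identity you invoke, $\iota^\E_j\circ(\ev^{j'\to j})^L\simeq\iota^\E_{j'}$, does not typecheck (passing to left adjoints in $\ev^{\infty\to j}\simeq\ev^{j'\to j}\circ\ev^{\infty\to j'}$ gives $\iota^\E_{j'}\circ(\ev^{j'\to j})^L\simeq\iota^\E_j$); and the step ``intersecting over all $i$ identifies the common image with $\ev^{\infty\to 0}(\E^\infty)$'' is true but not free — it requires noting that, the $\ev$'s being fully faithful, the space of lifts of a given object of $\E^0$ to each $\E^i$ is empty or contractible, so membership in every essential image does yield a coherent lift to the limit.
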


\begin{proof}
By filteredness of $\I$, we can write
$$
\wt\Theta \simeq \ev^{\ell \to 0} \circ \Big( \uscolim{i \in \I_{\ell/}} (\ev^{i \to \ell} \circ \theta_i \circ \ins_{\ell \to i}) \Big) \circ \ins_{0 \to \ell}
$$  
for any $\ell \in \I$. The conclusion is manifest.
\end{proof}

\begin{proof}[Proof of Proposition \ref{prop:theta-descends}]
It suffices to realize that the functor (\ref{theta}) is the functor $\wt\Theta$ obtained via the above paradigm in the example where $\I = \NN$, $\D_ n := \C_{\bN_n, \chi}$, $\E^n := \C^{\bN_n, \chi}$ and $\theta_n := \theta_{\bN_n} [2 \dim(\bN_n)] :  \C_{\bN_n, \chi} \to \C^{\bN_n, \chi}$ is the functor of Theorem \ref{thm:theta_mu}.
\end{proof}

\sssec{}

The following conjecture has been proposed by Gaitsgory:

\begin{conj} \label{conj:Tequiv}
Let $\C$ a category equipped with a $\bG$-action and $\chi$ the character of (\ref{eqn:chi}). For any trivialization of the dimension torsor of $\bN$, the corresponding functor $\Theta: \C_{\bN,\chi} \to \C^{\bN,\chi}$ is an equivalence of categories.
\end{conj}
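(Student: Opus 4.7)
The plan is to extend the $GL_n$ strategy of the paper to an arbitrary reductive $G$ by constructing, for each $k$, a pro-unipotent group scheme $\bH_k \subset \bG$ such that $\C^{\bN,\chi} \simeq \C^{\bH_k,\chi}$ and $\C_{\bN,\chi} \simeq \C_{\bH_k,\chi}$ whenever $\C$ carries an action of the \emph{full} loop group $\bG$ (not merely of $\bN$). Granting this, Theorem \ref{thm:inv=coinv} provides a canonical identification $\C^{\bH_k,\chi} \simeq \C_{\bH_k,\chi}$, and one then must check that the composite
$$
\C_{\bN,\chi} \xrightarrow{\,\simeq\,} \C_{\bH_k,\chi} \xrightarrow{\,\simeq\,} \C^{\bH_k,\chi} \xrightarrow{\,\simeq\,} \C^{\bN,\chi}
$$
coincides with $\Theta_\Lambda$ for the chosen dimension theory, up to a universal cohomological shift (any two dimension theories differ by a locally constant function, and the corresponding $\Theta$'s differ by an even shift, so it suffices to establish the equivalence for one $\Lambda$).

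The construction of $\bH_k$ would proceed as follows. Fix a regular dominant coweight $\lambda^\vee$ (for instance $2\rhoch$); conjugation by $\lambda^\vee(t)^k \in \bG$ sends $\bN$ to a strictly smaller pro-unipotent subgroup, while the opposite conjugation $\lambda^\vee(t)^{-k}\bN\lambda^\vee(t)^{k}$ produces a ``larger'' subgroup meeting each positive root subgroup with higher pole order. Define $\bH_k$ to be the group scheme whose root-space coordinates at the positive root $\alpha$ consist of $t^{-k\langle \lambda^\vee,\alpha\rangle}\O$; for $GL_n$ with $\lambda^\vee$ a regular fundamental coweight this recovers precisely the groups of Theorem \ref{thm:intro}. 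The point is that $\chi$ extends naturally to $\bH_k$ because it was built from simple roots in (\ref{eqn:chi}) and each extension still admits the residue pairing.

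To prove $\C^{\bN,\chi} \simeq \C^{\bH_k,\chi}$, I would iterate the three facts (a)--(c) listed in the introduction. Writing $\bH_k$ as an iterated semi-direct product along the lower central series of $\bN$, one applies fact (a) to peel off one layer at a time. At each layer the relevant quotient is an abelian Tate vector space on which the root-space extension of $\chi$ defines a non-degenerate character, so fact (b) (Fourier transform on Tate vector spaces) identifies the $(\bH_k,\chi)$-invariant construction with a restriction to an affine subspace of characters; finally fact (c) collapses the transitive action on that affine space. This would be carried out by induction on the semisimple rank of $G$, reducing through a minimal parabolic. The full $\bG$-action intervenes exactly to implement the conjugation by $\lambda^\vee(t)$ that relates $\bN$ to its translates inside $\bH_k$.

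The main obstacle will be the absence of an obvious analogue of the mirabolic subgroup outside type $A$. In the proof of Theorem \ref{thm:intro}, $P$ is a semi-direct product whose unipotent part is the abelian group $\AA^{n-1}\ppart$, so Fourier transform applies directly to a single abelian layer and induction on $n$ closes the argument cleanly. For other types the nilpotent radicals of parabolics are non-abelian, so Fourier transform must be applied along a descending central series and the Whittaker character no longer restricts to a character on the successive graded pieces in an obviously non-degenerate way; controlling the ``off-diagonal'' terms at each stage is the real content. A secondary difficulty is that $\lambda^\vee(t)^{-k}$ itself does not lie in any pro-unipotent subgroup of $\bG$, so one cannot directly invoke Theorem \ref{thm:inv=coinv} to move between $\bN$ and its conjugates; the required equivalences must be established at the level of the action functors coming from $\Dmod^*(\bG)$, which is precisely where the hypothesis of a full $\bG$-action (as opposed to a mere $\bN$-action, for which the statement is known to fail) becomes essential.
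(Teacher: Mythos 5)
What you are addressing is stated in the paper only as a conjecture: the paper establishes it solely for $G=GL_n$ (Theorem \ref{thm:intro}, proved in Section \ref{SEC:GL_n} via the mirabolic subgroup), and your text is a research outline for the general case rather than a proof, with gaps that you partly flag yourself but do not close. Two of them are fatal as written. First, the group $\bH_k$ you define --- supported only on the positive root spaces, with coordinate $t^{-k\langle \lambda^\vee,\alpha\rangle}\O$ at the root $\alpha$ --- is not closed under multiplication: commutators of positive root subgroups with those pole orders leave the prescribed spaces unless you also adjoin congruence pieces on the torus and on the negative roots. That is exactly what the paper's $\bH_k$ for $GL_n$ does: it is $\Ad_{\gamma^{-1}}$ of the preimage of $N[t]/t^k$ under $G(\O)\to G[t]/t^k$, and consequently has entries $t^{2k}\O$ \emph{below} the diagonal and $1+t^k\O$ on it, so it does not ``recover precisely'' from a positive-roots-only prescription. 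A correct general definition exists, but it is not the one you wrote.

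Second, and more seriously, the inductive engine does not transport. Peeling off layers ``along the lower central series of $\bN$'' cannot work because $\chi$ is a sum of simple-root residues and hence vanishes identically on $[N,N]\ppart$; on every graded piece past the first, the character you would hand to the Fourier transform is zero, fact (c) has no transitive orbit to collapse, and the argument degenerates into the degenerate-character case that the paper explicitly notes is false. The $GL_n$ proof is structurally different: it inducts through the mirabolic $P=GL_{n-1}\ltimes\AA^{n-1}$, whose unipotent radical is the last column --- a single abelian Tate vector space on which the \emph{whole} of $GL_{n-1}\ppart$ (not merely $\bN'$) acts, and it is that larger action which makes the $\bG_k$-orbit of the character $e_{n-1}$ transitive on the affine subspace $\bL_k$ (Lemma \ref{lem:last_row_e_n-1}). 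Outside type $A$ the unipotent radical of a maximal parabolic is non-abelian and there is no mirabolic, so the one abelian layer on which the entire induction hinges is absent; producing a substitute is the open content of the conjecture, not a technicality about ``controlling off-diagonal terms.'' As it stands, your proposal proves the conjecture in no case beyond the one already treated in the paper.
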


We prove a refinement of this conjecture for $G= GL_n$ in Section \ref{SEC:GL_n}, but first we need to study actions of loop vector spaces. This is the subject of the next section.

\section{Fourier transform and actions by loop vector groups} \label{SEC:loop_vectors}

If $G= GL_2$, then $N \simeq \AA^1$ is abelian, so that all the notions discussed above (group actions, invariants, coinvariants, averaging functors...) can be understood via Fourier transform. More generally, we consider the case of a vector group $\AA^n$ and its loop group $\bA := \AA^n \ppart$, which is of course the main example of a Tate vector space.

\ssec{Fourier transform for finite dimensional vector spaces}

We start by rendering the well-known theory of the Fourier-Deligne transform (see, e.g., \cite{Laumon}) to the DG setting. Namely, we show that the usual formula (\ref{funct:FT}) naturally upgrades to a \emph{symmetric monoidal} equivalence $(\Dmod(V), \star) \to (\Dmod(V^\vee), \otimes)$ of DG categories.

\sssec{}

Let $V$ be a finite dimensional vector space, thought of as a scheme, with dual $V^\vee$. We indicate by $m$ the addition in $V$, $V^\vee$ or $\GG_a$ (depending on the context) and by $Q: V \times V^\vee \to \GG_a$ the duality pairing. Let  $p_1$ and $p_2$ be the projections from $V \times V^\vee$ to $V$ and $V^\vee$, respectively.
Recall the $\fD$-module $exp$ on $\GG_a$, as in formula (\ref{def:exp}). The Fourier transform kernel is 
$$
exp^Q:=Q^!(exp) \in \Dmod(V \times V^\vee).
$$
A well-known key property of this ``integral kernel" is the equivalence
\begin{equation} \label{eqn:projection_exp}
(p_1)_*(exp^Q) \simeq \delta_{0,V}[2 d_V].
\end{equation}

\sssec{}

The \emph{Fourier transform} $\FT_V$ is the functor
\begin{equation} \label{funct:FT}
\FT_V: \Dmod(V) \rightarrow\Dmod(V^\vee), \, \, \, M \mapsto (p_2)_* (p_1^!(M) \otimes exp^{Q}).
\end{equation} 
Note that $(\FT_V)^\vee \simeq \FT_{V^\vee}$.

\medskip
We also define the \emph{inverse Fourier transform} $\IFT = \IFT_V$ as
$$
\IFT: \Dmod(V^\vee) \rightarrow\Dmod(V), \, \, \, M \mapsto (p_1)_* (p_2^!(M) \otimes exp^{-Q}) [-2 d_V].
$$
This name will be justified by Proposition \ref{prop:FT_equiv_fin-dim}. 
Emphasizing the dependence on $Q$, we record the formula
\begin{equation} \label{eqn:IFT-Q}
\FT_V^{Q} \simeq \IFT_{V^\vee}^{-Q}[2 d_V].
\end{equation}

\sssec{}

To upgrade the known results on $\FT_V$ to the setting of DG categories, we need to recall the formalism of correspondences and how it allows to handle base-change.
Let $\Sch^\fty$ the 1-category of schemes of finite type. We form $\Sch^{\fty}_{\corr}$, the 1-category whose objects are the same as $\Sch^{\fty}$ and whose morphisms are given by correspondences:
$$
\Hom_{\Sch^{\fty}_{\corr}}(S, T) = \Big\{ S \xleftarrow{ \, \alpha \,} H \xrightarrow{\, \beta \,} T \, : H \in \Sch^{\fty}  \Big\}.
$$
Such correspondences compose under fiber product. Moreover, $\Sch^{\fty}_{\corr}$ inherits a symmetric monoidal structure from $\Sch^{\fty}$.

\begin{thm}[\cite{GR_corresp}] \label{thm:GR-corresp}
The assignment $S \rightsquigarrow \Dmod(S)$ upgrades to a \emph{symmetric monoidal functor}
$$
\Dmod: \Sch^{\fty}_{\corr} \to \DGCat
$$
which sends $S \xleftarrow{ \, \alpha \,} H \xrightarrow{\, \beta \,} T$ to the functor $\Dmod(S) \xrightarrow{\beta_* \,\circ\, \alpha^!} \Dmod(T)$.
\end{thm}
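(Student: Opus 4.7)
The plan is to invoke the general machinery of $(\infty,1)$-categories of correspondences as developed in \cite{GR_corresp}, and then specialize it to the sheaf theory $\Dmod$. The first step is to construct $\Sch^{\fty}_{\corr}$ as an honest $(\infty,1)$-category (really an $(\infty,2)$-category, with $2$-morphisms being maps of correspondences, although one can forget to the $1$-categorical truncation for the statement as written). Objects are schemes of finite type, a $1$-morphism from $S$ to $T$ is a span $S \xleftarrow{\alpha} H \xrightarrow{\beta} T$, and composition is given by the fiber product. The symmetric monoidal structure is inherited from the Cartesian one on $\Sch^{\fty}$, using the fact that $\Sch^{\fty}$ admits fiber products.

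Next, I would construct the functor $\Dmod$ by assembling two known pieces: the covariant functor $\Dmod_*: \Sch^{\fty} \to \DGCat$ sending $\beta \mapsto \beta_*$, and the contravariant functor $\Dmod^!: (\Sch^{\fty})^{\op} \to \DGCat$ sending $\alpha \mapsto \alpha^!$. The heart of the matter is to package these into a single functor out of $\Sch^{\fty}_{\corr}$. Abstractly, by the universal property of correspondences (\cite{GR_corresp}), such a functor exists provided one has both variance data together with a compatibility: for every Cartesian square
\begin{equation}
\begin{gathered}
\xy
(0,0)*+{T'}="00";
(20,0)*+{S'}="10";
(0,15)*+{H}="01";
(20,15)*+{T}="11";
{\ar@{->}^{\alpha} "01";"00"};
{\ar@{->}^{\beta} "01";"11"};
{\ar@{->}^{g} "11";"10"};
{\ar@{->}^{f} "00";"10"};
\endxy
\end{gathered}
\end{equation}
the base change isomorphism $f^! \circ g_* \simeq \alpha_* \circ \beta^!$ must be given, and these isomorphisms must satisfy a list of higher coherences (associativity of composition of two Cartesian squares, compatibility with disjoint unions, etc.). In the finite type context, the required base-change equivalence is a classical fact for $\fD$-modules; the highly nontrivial input is that these base-change isomorphisms can be coherently organized, which is precisely the main theorem of \cite{GR_corresp}.

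For the symmetric monoidal enhancement, I would observe that the functors $\Dmod_*$ and $\Dmod^!$ are each symmetric monoidal (with external products corresponding to the Cartesian symmetric monoidal structures on source and target), and that Künneth-type compatibility $\boxtimes \circ (\alpha^! \otimes \beta_*) \simeq (\alpha \times \id)^! \circ (\id \times \beta)_* \circ \boxtimes$ holds. Invoking the symmetric-monoidal version of the Gaitsgory-Rozenblyum universal property, these combine to produce the desired symmetric monoidal functor out of $\Sch^{\fty}_{\corr}$.

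The main obstacle — and the reason one cites \cite{GR_corresp} rather than reproving the result — is the bookkeeping of higher coherence. Exhibiting base change on the nose is easy, but turning the collection of base change isomorphisms indexed by Cartesian squares into an $\infty$-functor requires a systematic method for producing all higher simplices of data. This is what Gaitsgory-Rozenblyum achieve by building the $(\infty,2)$-category of correspondences as a universal object, so that providing the functor reduces to checking base change and its compatibility with $2$-morphisms (which, in the $(\infty,1)$-truncation used here, is verified at the level of iterated Cartesian squares). Once this universal property is taken as a black box, the proof amounts to the classical verifications of base change, Künneth, and compatibility with fiber products, all of which hold for $\Dmod$ on $\Sch^{\fty}$.
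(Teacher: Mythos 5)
The paper offers no proof of this statement at all: it is quoted as an external result, attributed to Gaitsgory--Rozenblyum (the reference \cite{GR_corresp} is a private communication, corresponding to their later work on correspondences), so there is no in-paper argument to compare yours against. Your sketch is a faithful account of how that cited machinery establishes the theorem --- build the correspondence category, note that the only nontrivial input beyond the classical $(f^!,g_*)$ base change and K\"unneth isomorphisms for $\fD$-modules on finite-type schemes is the coherent organization of these data, and defer exactly that coherence problem to the universal property of the $(\infty,2)$-category of correspondences --- which is the same division of labor the paper implicitly makes by citing the result as a black box.
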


In particular, by restricting the domain of the above functor to the 1-category of finite dimensional vector spaces (in schemes) and linear maps under correspondences, we obtain the theory of $\fD$-modules on vector spaces. To discuss further properties of the Fourier transform, we shall need a mild generalization of Theorem \ref{thm:GR-corresp}.

\sssec{}

\nc{\SchCorrGa}{\Sch^{\fty}_{\corr \to \GG_a}}
\nc{\KL}{\mathsf{KL}}

Denote by $\SchCorrGa$ the following symmetric monoidal 1-category. 
Its objects are schemes of finite type and the tensor structure is the ordinary product. Given two objects $V$ and $W$, the set of morphisms $V \dasharrow W$ consists of all diagrams of the form
\begin{gather} \label{diag:decorated_corresp}
\xy
(0,0)*+{ V }="s";
(26,0)*+{ W, }="t";
(13,10)*+{ H }="H";
(13,22)*+{ \GG_a }="Ga";
{\ar@{->}_{\alpha } "H";"s"};
{\ar@{->}^{\beta } "H";"t"};
{\ar@{->}^{f } "H";"Ga"};
\endxy
\end{gather}
where $H \in \Sch^{\fty}$.
For short, we write $(V \xleftarrow{ \, \alpha \,} H \xrightarrow{\, \beta \,} W; f)$ to indicate the morphism $V \dasharrow W$ in (\ref{diag:decorated_corresp}). The composition of $(V \xleftarrow{ \, \alpha \,} H \xrightarrow{\, \beta \,} W; f)$ with $(W \xleftarrow{ \, \gamma \,} K \xrightarrow{\, \delta \,} Z; g)$ is the correspondence
$$
\left( V \xleftarrow{ \, \, \,} H \underset{W}\times K \xrightarrow{\, \, \,} Z; f+g \right).
$$

\begin{thm}
There is a symmetric monoidal functor
$$
\Dmod^{enh}: \SchCorrGa \longrightarrow \DGCat
$$
which sends $V \rightsquigarrow \Dmod(V)$ and
$( V \xleftarrow{ \, \alpha \,} H \xrightarrow{\, \beta \,} W; f ) $ to the arrow
$
\Dmod(V) 
\xrightarrow{\beta_* ( f^!(exp) \otimes \alpha^!(-)) } 
\Dmod(W).
$
\end{thm}

\begin{proof}
By abstract nonsense, the functor $\Dmod$ of Theorem \ref{thm:GR-corresp} upgrades to a symmetric monoidal functor 
$$
\Dmod_{\GG_a}: \Big( \GG_a \mod (\Sch^{\fty}_\corr), \overset {\GG_a} \times \,\,\Big) \to \Big( \GG_a \rrep, \usotimes{(\Dmod(\GG_a), \star)} \,\, \Big).
$$ 
On the other hand, consider the functor of ordinary categories
$$
\Xi: \SchCorrGa 
\longrightarrow
\Big(\GG_a \mod (\Sch^{\fty}_\corr), \overset {\GG_a} \times \, \Big)
$$
defined as follows. At the level of objects, it sends $V \mapsto \GG_a \times V$, where $\GG_a$ acts freely on the first factor. At the level of morphisms, it sends
$$( V \xleftarrow{ \, \alpha \,} H \xrightarrow{\, \beta \,} W; f ) \mapsto 
( \GG_a \times V \xleftarrow{ \, \, \id \times \alpha \, \,} \GG_a \times H \xrightarrow{\, \sigma \,}  \GG_a \times W),
$$
where $\sigma (x, h) = (x + f(h), \beta(h))$. 
We leave it to the reader to check that such functor is symmetric monodal functor (and fully faithful).

\medskip

Next, let $\inv^{\GG_a, \exp} : \GG_a \on{-} \mathbf{rep} \to \DGCat$ be the functor of $(\GG_a, \exp)$-invariants. Recall the canonical equivalence 
$$
\Dmod(V) \simeq \inv^{\GG_a, \exp} \big( \Dmod( \GG_a \times V) \big) =: \Dmod(\GG_a \times V)^{\GG_a, \exp}
$$
under which $\oblv^{\GG_a,\exp}$ goes over to $ \exp \boxtimes -: \Dmod(V) \to \Dmod( \GG_a\times V)$.
We show that $\inv^{\GG_a, \exp}$ is symmetric monoidal, as a functor out of $(\GG_a \rrep, \otimes_{(\Dmod(\GG_a), \star)})$: apply $\inv^{\GG_a, \exp}$ to the natural equivalence (of left $\GG_a \rrep$)
$$
\C \usotimes{\Dmod(\GG_a)} \E \simeq \Dmod(\GG_a) \usotimes{\Dmod(\GG_a) \otimes \Dmod(\GG_a)} (\C \otimes \E),
$$
and note that the equivalence $\Vect_\exp \simeq \Vect_\exp \otimes \Vect_\exp$ is $(\GG_a, \GG_a)$-equivariant.

\medskip

Finally, let us set
$$
 \Dmod^{enh} := \inv^{\GG_a, \exp} \circ \Dmod_{\GG_a} \circ \Xi.
$$
This is symmetric monoidal by construction and behaves as claimed on objects and $1$-morphisms. To check the latter, write $\sigma$ as the composition $(m_{\GG_a} \times \id) \circ (\id \times f \times \beta ) \circ (\id \times \Delta)$.
\end{proof}

\sssec{}
Let us exploit this formalism.
By construction, $\FT_V$ is the value of $\Dmod^{enh}$ on the following ``arrow" (which we also call $\FT_V$):
\begin{gather} \label{diag:decorated_corresp_FT}
\xy
(-20,10)*+{ V \overset{\FT_V}\dasharrow V^\vee \, \, \, \, := }="VVvee";
(0,0)*+{ V }="s";
(28,0)*+{ V^\vee. }="t";
(14,10)*+{ V \times V^\vee }="H";
(14,22)*+{ \GG_a }="Ga";
{\ar@{->}_{ p_1 } "H";"s"};
{\ar@{->}^{ p_2 } "H";"t"};
{\ar@{->}^{ Q} "H";"Ga"};
\endxy
\end{gather}

\begin{prop} \label{prop:FT_equiv_fin-dim}
For any finite dimensional vector space $V$, the functors $\FT_V$ and $\IFT_V$ are mutually inverse equivalences of categories.
\end{prop}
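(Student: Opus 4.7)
The plan is to exploit the symmetric monoidal functor $\Dmod^{enh}\colon \KL_{\GG_a}\to\DGCat$: both $\FT_V$ and $\IFT_V$ are values of $\Dmod^{enh}$ on explicit decorated correspondences, so their compositions can be computed at the level of $\KL_{\GG_a}$ and then pushed through $\Dmod^{enh}$, reducing everything to a single kernel on $V\times V$ (resp.\ $V^\vee\times V^\vee$). Concretely, writing $\IFT_V\simeq\Dmod^{enh}(V^\vee\xleftarrow{p_2}V\times V^\vee\xrightarrow{p_1}V;-Q)[-2 d_V]$, the composition $\IFT_V\circ\FT_V$ is, up to the shift $[-2 d_V]$, the image under $\Dmod^{enh}$ of the composite correspondence in $\KL_{\GG_a}$. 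Forming the fibre product $(V\times V^\vee)\times_{V^\vee}(V\times V^\vee)\simeq V\times V^\vee\times V$ with source/target projections $(v,\xi,v')\mapsto v$ and $(v,\xi,v')\mapsto v'$ respectively, the composition rule in $\KL_{\GG_a}$ gives the additive character $\chi(v,\xi,v')=Q(v,\xi)-Q(v',\xi)=\xi(v-v')$.

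Next, I would rewrite this as a kernel on $V\times V$: apply the isomorphism $V\times V^\vee\times V\simeq V\times V\times V^\vee$, $(v,\xi,v')\mapsto (v,v',\xi)$, under which the character $\xi(v-v')$ factors as $V\times V\times V^\vee\xrightarrow{(\mu,\mathrm{id})}V\times V^\vee\xrightarrow{Q}\GG_a$, with $\mu(v,v')=v-v'$. The relevant kernel is therefore obtained by $!$-pulling back $\exp^{Q}$ along $(\mu,\mathrm{id})$ and then $*$-pushing forward along the projection $V\times V\times V^\vee\to V\times V$ that forgets $\xi$. Since this latter projection is the base change of $p_1\colon V\times V^\vee\to V$ along $\mu$, base change identifies the kernel with $\mu^!((p_1)_*\exp^Q)$. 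The key property \eqref{eqn:projection_exp} gives $(p_1)_*\exp^Q\simeq\delta_{0,V}[2 d_V]$, so the kernel becomes $\mu^!(\delta_{0,V})[2 d_V]\simeq\Delta_*(\omega_V)[2 d_V]$, where $\Delta\colon V\hookrightarrow V\times V$ is the diagonal. A standard projection-formula computation shows that this kernel implements the functor $\mathrm{id}_{\Dmod(V)}[2 d_V]$, and the overall shift $[-2 d_V]$ coming from the normalization of $\IFT_V$ cancels this to yield $\IFT_V\circ\FT_V\simeq\mathrm{id}_{\Dmod(V)}$.

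For the opposite composition, the identical argument with $V$ replaced by $V^\vee$ and $Q$ by $-Q$ (equivalently, via $(\FT_V)^\vee\simeq\FT_{V^\vee}$ and duality of adjoints) gives $\FT_V\circ\IFT_V\simeq\mathrm{id}_{\Dmod(V^\vee)}$. The main obstacle is purely bookkeeping: one must carefully track the cohomological shifts (one factor of $[2 d_V]$ from the formula $(p_1)_*\exp^Q\simeq\delta_{0,V}[2 d_V]$ and one from the shift in the definition of $\IFT_V$) and the signs of the quadratic form $Q$ in the two compositions; the underlying geometric content is just the single identity \eqref{eqn:projection_exp} together with symmetric monoidality of $\Dmod^{enh}$.
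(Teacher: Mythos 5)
Your proposal is correct and follows essentially the same route as the paper: both compute the composite decorated correspondence in $\KL_{\GG_a}$, obtaining the kernel correspondence $V \leftarrow V\times V\times V^\vee \to V$ with character $(v,v',\phi)\mapsto\phi(v-v')$, and then use (\ref{eqn:projection_exp}) to identify the resulting kernel on $V\times V$ with $\Delta_*(\omega_V)[2d_V]$, so that $\IFT_V\circ\FT_V\simeq\id$ after the normalizing shift. If anything, your base-change bookkeeping (pushing forward along the forget-$V^\vee$ projection and identifying the result as $\mu^!((p_1)_*\exp^Q)$) is spelled out more carefully than the paper's one-line identification of $\xi^!(\exp)$.
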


\begin{proof}
It suffices to exhibit a natural isomorphism $\IFT \circ \FT \simeq \id_V$. Note that the functor $\IFT[2d_V]$ is the value of $\Dmod^{enh}$ on the arrow $\IFT' := (V^\vee \xleftarrow{\, p_2 \,} V^\vee \times V \xrightarrow{\, p_1 \,} V, -Q)$.

At the level of correspondences, the composition $\IFT'\circ \FT: V \dasharrow V^\vee \dasharrow V$ is given by
$$
(V \xleftarrow{\, \wh p_1 \,} V \times V \times V^\vee \xrightarrow{\, \wh p_2 \,} V, \xi ),
$$
where $\wh p_i : V\ \times V \times V^\vee \to V$ are the projections and $\xi : V \times V \times V^\vee \to \GG_a$ sends $(v,v',\phi) \mapsto \phi(v-v')$.
Let $p_{12}: V \times V \times V^\vee \to V \times V$ the projection to the first two coordinates.
From (\ref{eqn:projection_exp}) and base change, one checks that 
$$
(p_{12})_*\xi^! (\exp) \simeq (\Delta_V)_*(\omega_V)[2d_V].
$$
Then the proof reduces to a simple diagram chase.
\end{proof}
%
%
%

\begin{lem} \label{lem:FT-behavior-linear-maps}
Given a linear map of finite dimensional vector spaces $f: W \to V$ and its dual $\phi: V^\vee \to W^\vee$, the following diagrams are commutative:
\begin{gather} \label{eqn:i_*}
\xy
(00,00)*+{ \Dmod(V) }="00";
(0,15)*+{ \Dmod(W) }="10";
(35,00)*+{ \Dmod(V^\vee) }="01";
(35,15)*+{ \Dmod(W^\vee) }="11";
{\ar@{->}^{ \FT_V } "00";"01"};
{\ar@{<-}_{ f_*  } "00";"10"};
{\ar@{<-}_{\phi^! } "01";"11"};
{\ar@{->}^{ \FT_W} "10";"11"};
(60,0)*+{ \Dmod(V) }="20";
(60,15)*+{ \Dmod(W) }="30";
(97,0)*+{ \Dmod(V^\vee). }="21";
(97,15)*+{ \Dmod(W^\vee) }="31";
{\ar@{<-}^{ \FT_{V^\vee} } "20";"21"};
{\ar@{<-}_{ f_* [2d_V-2 d_W] } "20";"30"};
{\ar@{<-}_{(-\phi)^! } "21";"31"};
{\ar@{<-}^{ \FT_{W^\vee}} "30";"31"};
\endxy
\end{gather} 
\end{lem}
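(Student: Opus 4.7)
The strategy is to lift both commutative squares to identities inside the Kleisli category $\KL_{\GG_a}$ introduced above, and then invoke the monoidal functor $\Dmod^{enh}: \KL_{\GG_a} \to \DGCat$ of the previous theorem. Each of the six functors appearing in the lemma is the image under $\Dmod^{enh}$ of a decorated correspondence: $f_*$ corresponds to $(W \xleftarrow{\id} W \xrightarrow{f} V;\, 0)$, $\phi^!$ to $(W^\vee \xleftarrow{\phi} V^\vee \xrightarrow{\id} V^\vee;\, 0)$, $\FT_V$ to $(V \xleftarrow{p_1} V\times V^\vee \xrightarrow{p_2} V^\vee;\, Q_V)$, and analogously for $\FT_W$, $\FT_{V^\vee}$, $\FT_{W^\vee}$.

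For the first diagram I would compute both sides as composites in $\KL_{\GG_a}$. Using $W \times_V (V \times V^\vee) \simeq W \times V^\vee$ via $f$ and $p_1$, the composite $\FT_V \circ f_*$ is represented by $(W \xleftarrow{p_1} W \times V^\vee \xrightarrow{p_2} V^\vee;\, h_1)$ with $h_1(w,\psi) = Q_V(f(w),\psi)$. Dually, $(W \times W^\vee) \times_{W^\vee} V^\vee \simeq W \times V^\vee$ identifies $\phi^! \circ \FT_W$ with $(W \xleftarrow{p_1} W \times V^\vee \xrightarrow{p_2} V^\vee;\, h_2)$, $h_2(w,\psi) = Q_W(w,\phi(\psi))$. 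The very definition of the dual map, namely $Q_V(f(w),\psi) = \psi(f(w)) = \phi(\psi)(w) = Q_W(w,\phi(\psi))$, gives $h_1 = h_2$, so both decorated correspondences coincide and $\Dmod^{enh}$ yields the commutativity of the first diagram.

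For the second diagram I would proceed analogously, except that the two decorated correspondences this time fail to match on the nose: $\FT_{V^\vee} \circ (-\phi)^!$ is represented by $(W^\vee \xleftarrow{-\phi \circ p_1} V^\vee \times V \xrightarrow{p_2} V;\, \psi(v))$, while $f_* \circ \FT_{W^\vee}$ is represented by $(W^\vee \xleftarrow{p_1} W^\vee \times W \xrightarrow{f \circ p_2} V;\, \eta(w))$. To compare them I would compute the two associated kernels on $W^\vee \times V$ and show they agree up to the shift $[2 d_V - 2 d_W]$: restricting the first kernel to a fiber $\{\eta\} \times V$, the preimage of $\eta$ under $-\phi$ is a torsor under $\ker \phi = f(W)^\perp$, and the residual integral $\int_{\ker \phi} \exp(\psi'(v))\,d\psi'$ evaluates by (\ref{eqn:projection_exp}) to $\delta_{v \in f(W)}[2(d_V - d_W)]$; the surviving exponential collapses, via $\phi(\psi_0)(w) = \eta(w)$, to $f_*(\exp^{\eta(\cdot)})[2 d_V - 2 d_W]$, which is the value of $f_* \circ \FT_{W^\vee}$ at $\delta_\eta$ shifted by the stated amount. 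An equivalent route is to deduce the second diagram from the first by applying duality together with Fourier inversion (Proposition \ref{prop:FT_equiv_fin-dim}) and the comparison formula (\ref{eqn:IFT-Q}).

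The principal obstacle is the bookkeeping of signs and cohomological shifts: the minus sign in $(-\phi)^!$ is inseparable from the convention by which $\FT_{V^\vee}$ is identified with $(\FT_V)^\vee$ (which effectively swaps the pairing $Q \mapsto -Q$), and the shift $[2 d_V - 2 d_W]$ arises naturally from the residual exponential integral over $\ker \phi$ (of dimension $d_V - d_W$) by the key identity (\ref{eqn:projection_exp}). Everything else is a purely formal manipulation inside $\KL_{\GG_a}$.
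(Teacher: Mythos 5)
Your treatment of the first square is fine and is exactly the paper's own argument: both composites are computed in the Kleisli category and shown to be the single decorated correspondence $W \leftarrow W\times V^\vee \to V^\vee$ with decoration $Q\circ(f\times\id_{V^\vee}) = Q\circ(\id_W\times\phi)$, so there is nothing to add there.

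The second square is where your argument has a genuine gap, and it is a sign, not a shift. Having chosen the kernel of $\FT_{V^\vee}$ to be the exponential of $\psi(v)$ and restricted to the fiber of $-\phi$ over $\eta$, any $\psi_0$ in that fiber satisfies $\phi(\psi_0)=-\eta$, so the surviving rank-one local system on $f(W)\cong W$ is $(-\eta)^!(\exp)$, not $\eta^!(\exp)$ as you assert through the identity ``$\phi(\psi_0)(w)=\eta(w)$''. Carried out consistently, your computation identifies $\FT_{V^\vee}\circ(-\phi)^!$ with $f_*[2d_V-2d_W]\circ\FT_{W^\vee}$ \emph{precomposed with pullback along negation on $W^\vee$} --- equivalently, it proves the square in which $\phi^!$ replaces $(-\phi)^!$. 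The shift bookkeeping you single out as the main difficulty is actually fine (applying (\ref{eqn:projection_exp}) to the pairing between $\ker\phi$ and $V/f(W)$ does produce $[2d_V-2d_W]$, at least when $f$ is injective; for general linear $f$ your fiberwise computation needs a separate discussion, since $\dim\ker\phi\neq d_V-d_W$ and the fiber of $-\phi$ over $\eta$ may be empty). What is missing is a consistent choice of the pairing defining $\FT_{V^\vee}$ and $\FT_{W^\vee}$: the minus sign in $(-\phi)^!$ only has content once one fixes whether the transform on the dual space is built from $Q$ or from $-Q$ (the identification with $(\FT_V)^\vee$ flips $Q^!(\exp)$ to $(-Q)^!(\exp)$ under Verdier duality, cf.\ (\ref{eqn:IFT-Q})), and as written your argument mixes the two conventions at exactly this point. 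The paper's proof of the second half is the route you mention only in passing: take the already-proved first square, compose with the mutually inverse transforms of Proposition \ref{prop:FT_equiv_fin-dim}, and rewrite the resulting inverse transforms via (\ref{eqn:IFT-Q}); if you want a complete argument you should execute that reduction explicitly and track where the negation and the shift $[2d_V-2d_W]$ are absorbed, rather than rely on the kernel computation as it stands.
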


\begin{proof}
We only prove the first commutativity, the second follows by applying the inverse Fourier trasform.
Consider the arrows 
$$
W \overset {(id, f)} { \dasharrow}  V :=\left( W  \xleftarrow{ \, \id \,} W  \xrightarrow{\, f \,} V; 0 \right)
$$
and
$$
W^\vee \overset {(\phi, \id)} { \dasharrow}  V^\vee :=\left( W^\vee  \xleftarrow{ \, \phi \,} V^\vee  \xrightarrow{\, id \,} V^\vee; 0 \right).
$$
It suffices to prove that the following diagram is commutative:
\begin{gather} 
\xy
(0,0)*+{ V \, }="00";
(35,0)*+{ \, V^\vee. }="10";
(00,15)*+{  W  }="01";
(35,15)*+{   W^\vee }="11";
{\ar@{<--}^{ (\id, f) } "00";"01"};
{\ar@{-->}^{ \FT_V } "00";"10"}; 
{\ar@{-->}^{ \FT_W  } "01";"11"};
{\ar@{<--}_{ (\phi, \id) } "10";"11"};
\endxy
\end{gather}
We leave it to the reader to verify that both paths coincide with
$$
\left( W  \xleftarrow{ \, p_1 \,} W \times V^\vee  \xrightarrow{\, p_2 \,} V^\vee; h\right),
$$
where
$h = Q \circ (f \times \id_{V^\vee}) = Q \circ (\id_W \times \phi)$.
\end{proof}

\begin{cor}
$\FT_V$ is a monoidal equivalence between $(\Dmod(V), \star)$ and $(\Dmod(V^\vee), \otimes)$.
Analogously, $\FT_V$ is a comonoidal equivalence between $(\Dmod(V), \Delta_*)$ and $(\Dmod(V^\vee), m^!)$.
\end{cor}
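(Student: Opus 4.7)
The plan is to deduce both assertions formally from the symmetric monoidality of the functor $\Dmod^{enh}: \KL_{\GG_a} \to \DGCat$ constructed in the preceding theorem, by exhibiting $\FT_V$ as an arrow in $\KL_{\GG_a}$ that interchanges the appropriate algebra (resp., coalgebra) structures on $V$ and $V^\vee$. The key point is that a symmetric monoidal functor sends algebra morphisms to monoidal functors and coalgebra morphisms to comonoidal functors, with all higher coherences supplied automatically. Equivalently, one can verify the identities at the level of $1$-functors using Lemma \ref{lem:FT-behavior-linear-maps} combined with the monoidal identification $\FT_{V\times V} \simeq \FT_V \boxtimes \FT_V$, and then invoke $\Dmod^{enh}$ to upgrade them to coherent statements.

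First, for the monoidal statement, endow $V$ with the algebra structure
$$
\mu_V := \bigl(V\times V \xleftarrow{\id} V\times V \xrightarrow{m_V} V;\, 0\bigr)
$$
in $\KL_{\GG_a}$, whose image under $\Dmod^{enh}$ is $(\Dmod(V),\star)=(\Dmod(V),(m_V)_*)$, and dually endow $V^\vee$ with
$$
\mu_{V^\vee} := \bigl(V^\vee\times V^\vee \xleftarrow{\Delta_{V^\vee}} V^\vee \xrightarrow{\id} V^\vee;\, 0\bigr),
$$
whose image is $(\Dmod(V^\vee),\otimes)=(\Dmod(V^\vee),\Delta_{V^\vee}^{!})$. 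One now checks that the arrow $\FT_V$ of (\ref{diag:decorated_corresp_FT}) is a morphism of algebras in $\KL_{\GG_a}$: both compositions $\FT_V\circ\mu_V$ and $\mu_{V^\vee}\circ(\FT_V\times \FT_V)$ compute, by explicit fiber products, to the same arrow
$$
\bigl(V\times V \xleftarrow{\id} V\times V\times V^\vee \xrightarrow{p_{V^\vee}} V^\vee;\ (v_1,v_2,\phi)\mapsto \phi(v_1+v_2)\bigr),
$$
the agreement of the $\GG_a$-valued functions being the identity $Q\circ(m_V\times\id_{V^\vee}) = Q\circ(\id_{V\times V}\times \Delta_{V^\vee})$, which is simply bilinearity of $Q$. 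Applying $\Dmod^{enh}$ produces the desired monoidal functor structure on $\FT_V$, and Proposition \ref{prop:FT_equiv_fin-dim} upgrades it to a monoidal equivalence.

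The comonoidal case is entirely parallel. Give $V$ the coalgebra structure
$$
\delta_V := \bigl(V \xleftarrow{\id} V \xrightarrow{\Delta_V} V\times V;\, 0\bigr),
$$
with image $(\Dmod(V),\Delta_*)$, and $V^\vee$ the coalgebra structure
$$
\delta_{V^\vee} := \bigl(V^\vee \xleftarrow{m_{V^\vee}} V^\vee\times V^\vee \xrightarrow{\id} V^\vee\times V^\vee;\, 0\bigr),
$$
with image $(\Dmod(V^\vee),m^!)$. The same bilinearity of $Q$ shows that $\FT_V$ is a morphism of coalgebras in $\KL_{\GG_a}$, hence $\Dmod^{enh}(\FT_V)$ is a comonoidal equivalence between these comonoidal categories.

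The main subtle point, rather than any calculation, is the need to package $\FT_V$ as a morphism inside a suitable symmetric monoidal \emph{$1$-category} before applying a symmetric monoidal functor. A direct verification in $\DGCat$ would entail producing an infinite tower of compatible homotopies witnessing (co)associativity, commutativity of the (co)multiplication with the tensor/convolution on both sides, etc.; carrying out these checks by hand would be tedious. By factoring everything through the $1$-category $\KL_{\GG_a}$, whose algebra and coalgebra structures are described by ordinary commutative diagrams of correspondences, the bilinearity identity for $Q$ suffices, and the full $\infty$-categorical (co)monoidal structure is delivered for free by the symmetric monoidality of $\Dmod^{enh}$.
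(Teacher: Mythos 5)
Your proposal is correct and follows essentially the same route as the paper's own direct proof: it exhibits the convolution and pointwise structures as algebra (resp.\ coalgebra) objects in $\KL_{\GG_a}$, checks via the bilinearity of $Q$ that the arrow $\FT_V$ intertwines them (both composites being the correspondence through $V\times V\times V^\vee$ with function $(v_1,v_2,\phi)\mapsto Q(v_1+v_2,\phi)$), and then lets the symmetric monoidality of $\Dmod^{enh}$ supply the coherences, with Proposition \ref{prop:FT_equiv_fin-dim} giving invertibility. The only slip is notational: the left leg of the composite correspondence should be the projection $p_{12}:V\times V\times V^\vee\to V\times V$, not $\id$.
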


Both statements follow at once from Lemma \ref{lem:FT-behavior-linear-maps}. Nevertheless, here is a direct proof.

\begin{proof}
We only prove the first claim, the second one is obtained by duality (or by a direct argument).
The convolution monoidal structure on $\Dmod(V)$ arises from the algebra structure on $V$ given by
$$
V \times V \overset m { \dasharrow}  V :=\left( V \times V \xleftarrow{ \, \id \,} V \times V  \xrightarrow{\, m \,} V; 0 \right);
$$
the pointwise monoidal structure on $\Dmod(V^\vee)$ from the algebra structure
$$
V^\vee \times V^\vee \overset \Delta { \dasharrow}  V^\vee := \left( V^\vee \times V^\vee \xleftarrow{ \, \Delta \,} V^\vee  \xrightarrow{\, \id \,} V^\vee; 0 \right).
$$
We just need to prove that $\FT: V \dasharrow V^\vee$  intertwines the two algebra structures, or equivalently that the following diagram in $\KL_{\GG_a}$ commutes:
\begin{gather} 
\xy
(0,0)*+{ V \, }="00";
(35,0)*+{ \, V^\vee. }="10";
(00,15)*+{  V \times V  }="01";
(35,15)*+{   V^\vee \times V^\vee }="11";
{\ar@{<--}^{ m } "00";"01"};
{\ar@{-->}^{ \FT } "00";"10"}; 
{\ar@{-->}^{ \FT \times \FT  } "01";"11"};
{\ar@{<--}_{ \Delta } "10";"11"};
\endxy
\end{gather}
Again, it is routine to check that both paths coincide with
$$
\left( V \times V \xleftarrow{ \, p_{12} \,} V \times V \times V^\vee  \xrightarrow{\, m \times id_{V^\vee} \,} V^\vee ; f \right),
$$
where $f: V \times V \times V^\vee \to \GG_a$ sends $(v,w,\phi) \mapsto Q(\phi, w-v)$. 
\end{proof}

Hence, we have
$$
\FT_V : V\on{-}\mathbf{rep} := (\Dmod(V), \star) \mmod \xrightarrow{\, \, \, \simeq \, \, \, } (\Dmod(V^\vee), \otimes) \mmod;
$$
in other words, Fourier transform indentifies categorical representations of $V$ and \emph{crystals of categories} over $V^\vee$, that is, categories with an action of $(\Dmod(V^\vee), \otimes)$.

\medskip

\ssec{Fourier transform for Tate vector spaces}

\renewcommand{\bV}{\mathcal{V}}
\renewcommand{\bW}{\mathcal{W}}

We shall need the notion of Fourier transform for the loop group $\bA := \AA^n \ppart$ of the affine scheme $\AA^n$, thought of as a vector group. More generally, we define the Fourier transform functor for Tate vector spaces and establish its properties, parallel to the ones of the previous section.

\sssec{}
\nc{\PRO}{\mathsf{Pro}}
\nc{\TVect}{\mathsf{TopVect}}

In the sequel, we recall the notion of \emph{Tate vector space}. Our main references are \citep{Beil}, \citep{Dr}, \citep{FG2}. To fix the notation, let $\Vect^\heartsuit$ be the $1$-category of vector spaces, $\PRO(\Vect^\heartsuit)$ its pro-completion, $\Vect^{\ms{f.d.}}$ the $1$-category of finite dimensional vector spaces. 

\medskip

Consider the additive $1$-category $\TVect$ of \emph{topological vector spaces} over $\kk$, where $\kk$ is given the discrete topology. The topology of each object $V \in \TVect$ is supposed to be linear, complete and separated. Morphims in $\TVect$ are simply continuous linear maps.
There is an obvious adjunction
$$
\ms{\delta} : \Vect^\heartsuit \rightleftarrows \TVect : \oblv^{\ms{top}},
$$
where $\delta$ endows a vector space with the discrete topology and $\oblv^{\ms{top}}$ forgets the topology. Note that $\delta$ is fully faithful; its image is referred to as the subcategory of \emph{discrete vector spaces}. 

\sssec{}

According to \cite[Section 19.1]{FG2}, $\TVect$ is the full subcategory of $\PRO(\Vect^\heartsuit)$ consisting of objects that can be realized as projective limits of discrete vector spaces along surjective (linear) maps.
Since $\TVect$ admits projective limits and filtered colimits, each set $\Hom_{\TVect}(V,W)$ is naturally a topological vector space that we shall denote by $\H om(V,W)$.
In particular, the \emph{topological dual} (henceforth simply called dual) of $V$ is the object $V^\vee := \H om(V, \kk) \in \TVect$.
A pro-finite dimensional vector space can be written as $\H om(V,\kk)$, where $V$ is discrete. 

\sssec{}

Let $\Tate$ be the full subcategory of $\TVect$ whose objects are the direct sums $P \oplus Q$, where $P$ is pro-finite dimensional and $Q$ is discrete. Obviously, the operation $\H om(-, \kk)$ transforms discrete into pro-finite and viceversa, hence $\Tate$ admits duals.

\medskip

Write $P = \underset{r, \p}\lim  P^r$ and $Q = \uscolim {n, \i} Q_n$ with finite dimensional $P^r$ and $Q_n$. Setting $V_n^r := P^r \oplus Q_n$, we have
\begin{equation} \label{def:tate}
P \oplus Q \simeq  \underset{r, \p}\lim \, \uscolim {n, \i} V_n^r  \simeq \uscolim {n, \i} \underset{r, \p}\lim V_n^r
\end{equation}
and
\begin{equation} \label{def:tate-dual}
(P \oplus Q)^\vee \simeq  \underset{r, \p^\vee}\colim \, \lim_{n, \i^\vee } (V_n^r)^\vee  \simeq \lim_ {n, \i^\vee} \underset{r, \p^\vee}\colim (V_n^r)^\vee.
\end{equation}
Thus, objects of $\Tate$ are filtered limits of discrete vector spaces along surjections of finite dimension, as well as filtered colimits of pro-finite dimensional vector spaces along injections of finite codimension. The latter characterization makes it clear that Tate vector spaces are the analogue of ind-pro-schemes in the linear algebra setting.

\sssec{}

More precisely, the extension of the natural functor $\Vect^{\ms{f.d.}} \to \Sch^{\fty}$ to $\PRO(\Vect^\heartsuit) \to \PreStk$ maps $\Tate$ to  $\IndSch^{\pro}$. We will henceforth view Tate vector spaces as ind-pro-schemes; in particular, for $\bV = \colim_n \lim_r  V_n^r$ as in (\ref{def:tate}), we have well-defined categories of $\fD$-modules:
$$
\Dmod^*(\bV) \simeq \uscolim{n, \i_*} \lim_{r, \p_*} \Dmod(V_n^r),
\hspace{.5 cm}
\Dmod^!(\bV^\vee) \simeq \lim_{r, (\p^\vee)^!} \uscolim{n, (\i^\vee)^!} \Dmod((V_n^r)^\vee).
$$
We wish to extend the Fourier transform equivalence to Tate vector spaces. This amounts to a combination of a left and a right Kan extension of the usual Fourier transform.

\sssec{}

Consider the usual finite dimensional Fourier transform $\FT_V$ as a functor of $V$:
\begin{equation} \label{eqn:FT-functor of V}
\FT : \Vect^{\mathsf{f. d.}}
\longrightarrow
\Fun(\Delta^1, \DGCat^{\on{SymmMon}}).
\footnote{The $\infty$-category on the right is the one of \emph{arrows in $\DGCat^{\on{SymmMon}}$},
described informally as follows: objects are symmetric monoidal functors of DG categories, and $1$-morphisms are commutative squares of such}.
\end{equation}
At the level of objects, $\FT$ sends a finite dimensional vector space to the equivalence of symmetric monoidal categories categories $(\Dmod(V),\star) \to (\Dmod(V^\vee), \otimes)$. At the level of morphisms, $\FT$ sends the linear map $f: W \to V$ to the natural transformation
\begin{gather}
\xy
(0,0)*+{ \big(\Dmod(V), \star \big) }="00";
(35,0)*+{ \big(\Dmod(V^\vee), \otimes \big),  }="10";
(00,15)*+{ \big(\Dmod(W), \star \big)   }="01";
(35,15)*+{ \big(\Dmod(W^\vee), \otimes \big)   }="11";
{\ar@{<-}^{ f_*} "00";"01"};
{\ar@{->}^{ \FT_V } "00";"10"};
{\ar@{->}^{ \FT_W } "01";"11"};
{\ar@{<-}_{ \phi^! } "10";"11"};
\endxy
\end{gather}
where $\phi = f^\vee : V^\vee \to W^\vee$. That this defines a functor is precisely the content of Lemma \ref{lem:FT-behavior-linear-maps}.
Note that $f_*$ and $\phi^!$ are compatible with the convolution and pointwise symmetric monoidal structures, respectively. 

\sssec{}

\renewcommand{\fd}{\ms{f.d.}}

Let $(\Vect^\fd)^\pro$ be the $1$-category of pro-finite dimensional vector spaces (thought of as pro-schemes). We define the functor
\begin{equation} \label{eqn:FT-functor of V-pro}
\FT : (\Vect^\fd)^\pro
\longrightarrow
\Fun(\Delta^1, \DGCat^{\on{SymmMon}})
\end{equation}
by right Kan extension of (\ref{eqn:FT-functor of V}) along the inclusion $\Vect^{\mathsf{f. d.}} \hookrightarrow (\Vect^\fd)^\pro$.

\medskip
\nc{\indpro}{\ind \text{-} \pro}

To define $\FT$ at the level of Tate vector spaces, we left Kan extend (\ref{eqn:FT-functor of V-pro}) along $(\Vect^\fd)^\pro \hookrightarrow \Tate$:
\begin{equation} \label{eqn:FT-functor of V-indpro}
\FT :  \Tate 
\longrightarrow
\Fun(\Delta^1, \DGCat^{\on{SymmMon}}).
\end{equation}

\sssec{}

Let us unravel the above constructions. If $V = \lim_{r, \pi} V^r \in (\Vect^{\ms{f.d.}})^\pro$ is a pro-finite dimensional vector space, then its dual $V^\vee$ is an ind-vector space of finite type. Denote by $\iota$ the injections dual to $\pi$ and by $(\iota_{r \to \infty})^! : \Dmod(V^\vee) \to \Dmod((V^r)^\vee)$ the tautological evaluation functors. Then, $\FT_V$ is defined by the requirement
\begin{equation} \label{eqn:FT-pro}
(\iota_{r \to \infty})^! \circ \FT_V  =  \FT_{V^r}  \circ (\pi_{\infty \to r})_*.
\end{equation}
Next, let $\bV$ be a Tate vector space, with presentation $\bV = \colim_{n, \i} V_n$ as an ind-scheme, where each $V_n$ is of pro-type. Its dual, $\bV^\vee$ is then written as $\lim_{n, \p} V_n^\vee$, where $\p$ are the projections dual to $\i$. 

\begin{lem} \label{lem:bravo}
Let $(\Vect^\fd)^{\ind }$ denote the $1$-category of vector spaces, considered as ind-schemes of finite type.
The functor $\Dmod^!: \Tate^\op \to \DGCat$ is the left Kan extension of $\Dmod : \big( (\Vect^\fd)^{\ind }\big)^\op \to \DGCat$ along the inclusion $(\Vect^\fd)^{\ind } \hookrightarrow \mathsf{Tate}$.
\end{lem}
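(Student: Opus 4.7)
For $\bV \in \Tate$ with ind-pro presentation $\bV = \uscolim{n, \iota} \lim_{r, \pi} V_n^r$, my plan is to identify both $\Dmod^!(\bV)$ and the left Kan extension as the same $(\NN \times \NN)$-indexed colimit of the finite-dimensional categories $\Dmod(V_n^r)$. First, I would use the equivalent pro-presentation $\bV \simeq \lim_{r, \pi} V^r$, where $V^r := \uscolim{n, \iota} V_n^r \in (\Vect^\fd)^{\ind}$, to obtain natural continuous quotient maps $q^r : \bV \to V^r$. These assemble into an $\NN$-indexed subdiagram $(V^r, q^r)_{r}$ of the comma category $\mathcal{J} := ((\Vect^\fd)^{\ind})^\op_{/\bV^\op}$, with transition from level $r$ to level $r+1$ induced by the surjection $\pi : V^{r+1} \twoheadrightarrow V^r$ (using that $q^r = \pi \circ q^{r+1}$).

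The first key step would be to show that this subdiagram is cofinal in $\mathcal{J}$. For any $(V, \phi : \bV \to V) \in \mathcal{J}$, continuity of $\phi$ forces $\ker(\phi)$ to be an open subspace of $\bV$, and a morphism $(V^r, q^r) \to (V, \phi)$ in $\mathcal{J}$ exists if and only if $L_r := \ker(q^r) \subseteq \ker(\phi)$. Cofinality therefore reduces to the claim that every open subspace of $\bV$ contains some $L_r$, a basic property of Tate vector spaces (any two pro-presentations admit a common refinement, so the kernels of any fixed one form a cofinal system of open subspaces). Filteredness of the indexing then yields connectedness of the relevant slice categories.

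Granted cofinality, the Kan extension reduces to $\text{Lan}(\bV) \simeq \uscolim{r, \pi^!} \Dmod^!(V^r)$. Applying Lemma~\ref{lem:f_bullet} (providing $\iota_+ \dashv \iota^!$ for closed embeddings) together with the Lurie adjoint interchange of Sect.~\ref{sssec:Lurie_G0}, one rewrites $\Dmod^!(V^r) \simeq \uscolim{n, \iota_+} \Dmod(V_n^r)$; hence $\text{Lan}(\bV) \simeq \uscolim{r, n} \Dmod(V_n^r)$ with transitions $\pi^!$ in $r$ and $\iota_+$ in $n$. On the other hand, the ind-pro definition gives $\Dmod^!(\bV) = \lim_{n, \iota^!} \Dmod^!(V_n)$, which by another application of the Lurie trick combined with the pro-scheme identity $\Dmod^!(V_n) = \uscolim{r, \pi^!} \Dmod(V_n^r)$ becomes $\Dmod^!(\bV) \simeq \uscolim{n, r} \Dmod(V_n^r)$ with the same transitions. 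Commuting the order of the two outer colimits then produces the desired equivalence.

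The hardest step will be the cofinality argument: one must verify carefully that the lattices $L_r$ arising from the chosen pro-presentation exhaust the open subspaces of $\bV$, i.e. that every continuous linear map from $\bV$ to a discrete vector space factors through some $q^r$. Once this topological input is in place, the rest is a formal manipulation of colimits via the Lurie adjoint trick already used elsewhere in the paper.
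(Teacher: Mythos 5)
Your proof is correct and follows essentially the same route as the paper: once the Kan extension is identified with the colimit over the discrete quotients $(V^r,q^r)$, both arguments come down to interchanging the colimit along $\pi^!$ in the pro-direction with the limit along $\iota^!$ in the ind-direction, using the adjunction $\iota_+ \dashv \iota^!$ of Lemma \ref{lem:f_bullet} together with $(\iota_+,\pi^!)$ base-change for the Cartesian squares formed by the $V_n^r$. The only difference is that you make the cofinality of the family $(V^r,q^r)$ in the comma category explicit; in the paper this is absorbed into the definition of morphisms in the pro-category of vector spaces (any continuous map from $\bV$ to a discrete space factors, uniquely, through some $q^r$), so your ``hardest step'' is in fact immediate from the paper's description of $\Tate$ inside the pro-category.
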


\nc{\W}{\mathcal{W}}

\begin{proof}
Recall the presentation of $\W \in \Tate$ as in (\ref{def:tate}). We need to prove that the colimit and the limit in the formula $\Dmod^!(\W) \simeq \lim_{n, \i^!} \colim_{r, \p^!} \Dmod(W_n^r)$ can be switched:
\begin{equation} \label{tate-switch}
\lim_{n, \i^!} \uscolim{r, \p^!} \Dmod(W_n^r) \simeq  \uscolim{r, \p^!} \lim_{n, \i^!} \Dmod(W_n^r).
\end{equation}
Observe that the left adjoint of $\i^!$ is $\i_+$. Since the diagram $(n,r) \rightsquigarrow W_n^r$ obviously has Cartesian squares, the assertion follows from the $(\i_+, \p^!)$ base-change.
\end{proof}

This allows to write both $\Dmod^*(\bV)$ and $\Dmod^!(\bV^\vee)$ as colimits:
$$
\Dmod^*(\bV) \simeq \uscolim{n, \i_*} \Dmod^*(V_n),
\hspace*{.6cm}
\Dmod^!(\bV^\vee) \simeq \uscolim{n,\p^!} \Dmod(V_n^\vee).
$$
By construction, $\FT_{\bV}$ and $\FT_{V_n}$ are intertwined by the insertion functors:
\begin{equation} \label{eqn:FT-indpro}
\FT_\bV \circ (\iota_{n \to \infty})_*   =  (\p_{\infty \to n})^! \circ \FT_{V_n}.
\end{equation}

\begin{lem} \label{lem:importantFTtate}
Let $\bV$ and $\bW$ two Tate vector spaces. Given a linear map $f: \bW \to \bV$ and its dual $\phi : \bV^\vee \to \bW^\vee$, the following diagram is commutative:
\begin{gather}
\xy
(00,00)*+{ \Dmod^*(\bV) }="00";
(0,15)*+{ \Dmod^*(\bW) }="10";
(35,00)*+{ \Dmod^!(\bV^\vee). }="01";
(35,15)*+{ \Dmod^!(\bW^\vee) }="11";
{\ar@{->}^{ \FT_\bV } "00";"01"};
{\ar@{<-}_{ f_*  } "00";"10"};
{\ar@{<-}_{\phi^! } "01";"11"};
{\ar@{->}^{ \FT_\bW} "10";"11"};
\endxy
\end{gather} 
Moreover, $(\FT_\V)^\vee \simeq \FT_{\V^\vee}$.
\end{lem}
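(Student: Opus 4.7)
The plan is to reduce both assertions to their finite-dimensional analogs, established in Lemma \ref{lem:FT-behavior-linear-maps} (together with the fact that $(\FT_V)^\vee \simeq \FT_{V^\vee}$ in finite dimensions, which is immediate from the symmetry of the kernel $\exp^Q$).

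For the commutative square, I would first choose compatible presentations of $\bW$ and $\bV$. Given $\bW = \colim_m W_m$, since $f$ is a morphism in $\Tate \subset \IndSch^{\pro}$, each composition $W_m \hookrightarrow \bW \xrightarrow{f} \bV$ factors through some $V_{n(m)}$, so after refining $\bV = \colim_n V_n$ I get a compatible family of linear maps $f_m: W_m \to V_{n(m)}$ in $(\Vect^{\fd})^{\pro}$. Within each pro-finite dimensional vector space I can further refine to get presentations $W_m = \lim_r W_m^r$, $V_{n(m)} = \lim_s V_{n(m)}^s$ by finite-dimensional quotients, such that $f_m$ is represented by finite-dimensional linear maps (the necessary existence and compatibility follow from the same logic used in Remark \ref{rem:pro-schemes-fiber-products}).

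Next, each of the four functors in the square admits an explicit description in terms of these presentations: by (\ref{eqn:FT-pro}) and (\ref{eqn:FT-indpro}), $\FT_\bV$ and $\FT_\bW$ are determined by insertion/evaluation out of the finite-dimensional Fourier transforms $\FT_{V_{n(m)}^s}$ and $\FT_{W_m^r}$; the pushforward $f_*$ is a colimit of the pushforwards $(f_m)_*$, and via Lemma \ref{lem:bravo} the pullback $\phi^!$ is also assembled from the $\phi_m^!$. Unwinding these formulas, the required commutativity factors through the finite-dimensional commutativities of Lemma \ref{lem:FT-behavior-linear-maps} applied to each $f_m^{sr}: W_m^r \to V_{n(m)}^s$; what remains is a routine check that the various insertion/evaluation functors intertwine with $f_*$ and $\phi^!$, which is a consequence of $(\iota_*, \iota^!)$ and $(\pi_*, \pi^!)$ base-change along the Cartesian squares in the chosen presentations.

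For the duality statement $(\FT_\bV)^\vee \simeq \FT_{\bV^\vee}$, I would leverage the fact that $\FT_\bV$ is built by Kan extension from the finite-dimensional $\FT_{V_n^r}$, for which $(\FT_{V_n^r})^\vee \simeq \FT_{(V_n^r)^\vee}$ is immediate. Under the self-duality $\Dmod^*(\bV) \simeq \Dmod^!(\bV)^\vee$ (and similarly for $\bV^\vee$), the insertion functors $(\iota_{n\to\infty})_*$ are dual to the evaluations $(\p_{\infty \to n})^!$, and $(\pi_{\infty\to r})_*$ is dual to $(\iota_{r \to \infty})^!$. Hence dualizing the defining formula for $\FT_\bV$ yields precisely the defining formula for $\FT_{\bV^\vee}$.

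The main obstacle is bookkeeping: arranging simultaneously compatible presentations of $\bW, \bV, \bW^\vee, \bV^\vee$ and checking that the numerous base-change isomorphisms needed to identify $f_*$ and $\phi^!$ in terms of these presentations are coherent with the insertion/evaluation functors that define the Fourier transforms. Once this combinatorial setup is in place, the substantive content of the lemma is entirely concentrated in the finite-dimensional Lemma \ref{lem:FT-behavior-linear-maps}.
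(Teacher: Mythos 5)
Your proposal is correct and takes essentially the same route as the paper: the paper's proof is the one-line observation that both statements follow by left--right Kan extension of the finite-dimensional statements (Lemma \ref{lem:FT-behavior-linear-maps} and $(\FT_V)^\vee \simeq \FT_{V^\vee}$), which is exactly the reduction you carry out, just unwound explicitly in terms of compatible presentations, the defining relations (\ref{eqn:FT-pro})--(\ref{eqn:FT-indpro}), and base-change.
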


\begin{proof}
Both statements are obtained by left-right Kan extending the corresponding statements in the finite dimensional case.
\end{proof}

\begin{rem} \label{rem:FT-ren-pro}
By left Kan extending (\ref{eqn:IFT-Q}), we deduce that, for a pro-scheme $V$ with dual $V^\vee$, we have $\FT^Q_V \circ \lambda_V \simeq \IFT^{-Q}_{V^\vee}$.
\end{rem}

As a corollary of Lemma \ref{lem:importantFTtate}, we obtain:

\begin{thm} \label{thm:mon-comon-Tate}
For any Tate vector space $\bV$, Fourier transform yields the monoidal equivalence
$$
\FT_{\bV}: (\Dmod^*(\bV), \star) \xrightarrow{\,\, \simeq \,\,} (\Dmod^!(\bV^\vee), \otimes),
$$
as well as the comonoidal equivalence
$$
\FT_{\bV}: (\Dmod^*(\bV), \Delta_*) \xrightarrow{\,\, \simeq \,\,} (\Dmod^!(\bV^\vee), m^!).
$$
\end{thm}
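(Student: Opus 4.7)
The plan is to deduce both equivalences directly from Lemma \ref{lem:importantFTtate}, mirroring the strategy used in the finite-dimensional case. The starting observation is that the convolution structure on $\Dmod^*(\bV)$ is governed by the addition $m_\bV:\bV\times\bV\to\bV$ through $M\star N:=(m_\bV)_*(M\boxtimes N)$, while the pointwise tensor product on $\Dmod^!(\bV^\vee)$ is governed by the diagonal $\Delta_{\bV^\vee}:\bV^\vee\to\bV^\vee\times\bV^\vee$ through $P\otimes Q:=\Delta_{\bV^\vee}^!(P\boxtimes Q)$. The key identity is that $m_\bV$ and $\Delta_{\bV^\vee}$ are mutually dual as morphisms of Tate vector spaces; symmetrically, $\Delta_\bV$ and $m_{\bV^\vee}$ are mutually dual, which will give the comonoidal claim.

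Next, I would record the external product compatibility $\FT_{\bV\times\bW}\simeq \FT_{\bV}\boxtimes\FT_{\bW}$ under the equivalences $\Dmod^*(\bV\times\bW)\simeq\Dmod^*(\bV)\otimes\Dmod^*(\bW)$ and $\Dmod^!(\bV^\vee\times\bW^\vee)\simeq\Dmod^!(\bV^\vee)\otimes\Dmod^!(\bW^\vee)$, which extend (\ref{eqn:boxtimes_equiv}) to the ind-pro setting exactly as in Lemma \ref{lem:BN-ind}. In the finite-dimensional case this is classical, since the Fourier kernel $\exp^Q$ on $V\times W$ factors as the external product of the corresponding kernels on $V$ and $W$; the statement then passes through the left and right Kan extensions of (\ref{eqn:FT-functor of V-indpro}) because these extensions commute with finite products of Tate vector spaces.

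The main computation is then the following. Applying Lemma \ref{lem:importantFTtate} to $f=m_\bV$ (so that $\phi=\Delta_{\bV^\vee}$) produces
$$\FT_\bV\circ(m_\bV)_*\simeq\Delta_{\bV^\vee}^!\circ\FT_{\bV\times\bV}.$$
Combining with the external product identification gives, for $M,N\in\Dmod^*(\bV)$,
$$\FT_\bV(M\star N)\simeq\Delta_{\bV^\vee}^!\bigl(\FT_\bV(M)\boxtimes\FT_\bV(N)\bigr)=\FT_\bV(M)\otimes\FT_\bV(N),$$
which is the monoidal compatibility claimed. The comonoidal claim is obtained symmetrically by applying Lemma \ref{lem:importantFTtate} to $f=\Delta_\bV$, $\phi=m_{\bV^\vee}$, yielding $\FT_\bV\circ(\Delta_\bV)_*\simeq m_{\bV^\vee}^!\circ\FT_{\bV\times\bV}$.

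The main obstacle will be upgrading these pointwise intertwining isomorphisms into genuine symmetric (co)monoidal structures, including associator, coassociator, and unit constraints. I would handle this by taking advantage of the enhancement already built into (\ref{eqn:FT-functor of V-indpro}): the functor $\FT$ is constructed as a functor valued in $(\DGCat^{\on{SymmMon},\simeq})^{\Delta^1}$, so its finite-dimensional values carry the full symmetric monoidal coherence (as seen in the two corollaries at the end of the previous subsection), and the Kan extensions transport this coherence to the Tate setting. It then remains to identify the symmetric monoidal structures produced on $\Dmod^*(\bV)$ and $\Dmod^!(\bV^\vee)$ by the Kan extensions with $(\star,\otimes)$, respectively $(\Delta_*,m^!)$; this identification is exactly what the preceding computation provides, together with the fact that every linear map $f:W\to V$ is a homomorphism of commutative (co)group objects, so that $f_*$ is monoidal for $\star$ and $\phi^!$ is monoidal for $\otimes$.
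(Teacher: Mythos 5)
Your proposal is correct and follows essentially the same route as the paper: the theorem is deduced from Lemma \ref{lem:importantFTtate} (applied to the addition and diagonal maps, whose duals are the diagonal and addition of $\bV^\vee$), together with the external-product compatibility and the symmetric monoidal coherence already encoded in the Kan-extension construction (\ref{eqn:FT-functor of V-indpro}) valued in $(\DGCat^{\on{SymmMon},\simeq})^{\Delta^1}$. This mirrors exactly how the finite-dimensional corollary was obtained from Lemma \ref{lem:FT-behavior-linear-maps}, so no further comparison is needed.
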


\ssec{Categories tensored over ind-pro-schemes}

The above theory motivates the study of \emph{categories tensored over} a Tate vector space $\bW$, i.e. objects of $(\Dmod^!(\bW), \otimes) \mmod$. In this section, we look at the $\infty$-category $(\Dmod^!(\X), \otimes)$ where $\X$ is an arbitrary ind-pro-scheme, and record some of its functoriality to be used in Section \ref{SEC:trans_act}.

For clarity, given $\C$ tensored over $\X$, we indicate by $\diam$ the action $\Dmod^!(\X) \otimes \C \to \C$.

\sssec{}

Recall that, for any map $f: \Y \to \X$ of ind-pro-schemes, $f^!: \Dmod^!(\X) \to \Dmod^!(\Y)$ is $\Dmod^!(\X)$-linear. Thus, for any $\C \in \Dmod^!(\X) \mmod$, we define the \emph{corestriction} and the \emph{restriction} of $\C$ along $f$:
$$
\restr \C \Y := \Dmod^!(\Y) \usotimes{\Dmod^!(\X)} \C
\hspace{.6cm}
\uprestr \C \Y := \Hom_{\Dmod^!(\X)} (\Dmod^!(\Y), \C).
$$
(When $f: \pt \to \X$, we call the above categories \emph{cofiber} and \emph{fiber}, respectively.)
Applying the paradigm of Sect. \ref{paradigm}, we obtain the tautological functors 
$$
(f^!)_\C : \C \to \restr \C \Y
\text{\, \, \, and \, \, }
(f^!)^\C : \uprestr \C \Y \to \C.
$$
\begin{lem} \label{lem:useful}
Let $\iota: Y \hto X$ be a finitely presented closed embedding of pro-schemes and $\C \in \Dmod^!(X)$. Then: 
\begin{itemize}

\item
there are adjuctions
$$
(\iota_+)_\C: \restr \C Y \rightleftarrows \C: (\iota^!)_\C
\hspace{.6cm}
(\iota^!)^\C : \uprestr \C Y \rightleftarrows \C : (\iota_+)^\C,
$$
with both left adjoints fully faithful;

\medskip

\item

$\iota_+(\omega_Y) \diam -: \C \to \C$ induces a functor that we name $\theta_{Y \hto X}: \restr \C Y \to \uprestr \C Y$;

\medskip

\item

$\theta_{Y \hto X} \simeq (\iota_+)^\C \circ (\iota_+)_\C$ is an equivalence with inverse $(\iota^!)_\C \circ (\iota^!)^\C: \uprestr \C Y \to \restr \C Y$.

\end{itemize}

\end{lem}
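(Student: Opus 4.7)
The plan is to deduce all three assertions from the adjoint pair $(\iota_+, \iota^!)$ of $\Dmod^!(X)$-linear functors furnished by Lemma \ref{lem:f_bullet}. Since $\iota$ is a closed embedding, the fiber product satisfies $Y \times_X Y \simeq Y$, which together with Lemma \ref{lem:f_bullet} gives $\iota^! \iota_+ \simeq \id_{\Dmod^!(Y)}$, i.e. fully faithfulness of $\iota_+$.

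For the first bullet, I will apply the paradigm of Sect.\ \ref{paradigm} to this adjunction. The covariant functor $- \otimes_{\Dmod^!(X)} \C$ converts $(\iota_+ \dashv \iota^!)$ into $((\iota_+)_\C \dashv (\iota^!)_\C)$, while the contravariant functor $\Hom_{\Dmod^!(X)}(-,\C)$ converts it into $((\iota^!)^\C \dashv (\iota_+)^\C)$. The equivalence $\iota^! \iota_+ \simeq \id$ transfers under both operations, yielding $(\iota^!)_\C \circ (\iota_+)_\C \simeq \id$ and $(\iota_+)^\C \circ (\iota^!)^\C \simeq \id$; these are precisely the fully faithfulness statements for the two left adjoints.

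For the second bullet, I will first invoke the projection formula of Proposition \ref{prop:proj-formula-indschemes}: since $\omega_Y$ is the monoidal unit of $(\Dmod^!(Y), \otimes^!)$, the endofunctor $\iota_+ \circ \iota^!$ of $\Dmod^!(X)$ is naturally isomorphic to $\iota_+(\omega_Y) \otimes^! -$. I will then define
$$\theta_{Y \hookrightarrow X} := (\iota_+)^\C \circ (\iota_+)_\C : \restr{\C}{Y} \to \C \to \uprestr{\C}{Y},$$
and verify, using the paradigm of Sect.\ \ref{paradigm}, that its composite with the canonical maps $\C \to \restr{\C}{Y}$ and $\uprestr{\C}{Y} \to \C$ equals $(\iota_+ \iota^!)_\C$, which by the projection formula above is identified with $\iota_+(\omega_Y) \diamond -$ as an endofunctor of $\C$.

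For the third bullet, the strategy is to exploit idempotence. The projection formula combined with $\iota^! \iota_+ \simeq \id$ shows that $\iota_+(\omega_Y)$ is idempotent under $\otimes^!$, so $E := \iota_+(\omega_Y) \diamond -$ is an idempotent endofunctor of $\C$ whose essential image is a full subcategory $\C_Y \subseteq \C$. Since both compositions $(\iota_+)_\C \circ (\iota^!)_\C$ and $(\iota^!)^\C \circ (\iota_+)^\C$ agree with $E$, the fully faithful embeddings $(\iota_+)_\C$ and $(\iota^!)^\C$ both factor as equivalences onto $\C_Y$. Consequently, $\theta$ is the composite $\restr{\C}{Y} \xrightarrow{\simeq} \C_Y \xrightarrow{\simeq} \uprestr{\C}{Y}$, with inverse obtained by applying the corresponding right adjoints $(\iota^!)_\C$ and $(\iota_+)^\C$ restricted to $\C_Y$; these compose to $(\iota^!)_\C \circ (\iota^!)^\C$ as claimed. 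The main obstacle is the careful identification of the two essential images with the single subcategory $\C_Y$, and the matching of the inverse equivalences via the fully-faithful-adjoint calculus; the idempotent structure on $\iota_+(\omega_Y)$ is what makes this bookkeeping go through.
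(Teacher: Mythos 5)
Your proof is correct and takes essentially the same route as the paper: both rest on the $\Dmod^!(X)$-linear adjunction $(\iota_+,\iota^!)$ with $\iota_+$ fully faithful (Lemma \ref{lem:f_bullet}), transferred through $-\otimes_{\Dmod^!(X)}\C$ and $\Hom_{\Dmod^!(X)}(-,\C)$ as in Sect. \ref{paradigm}, together with the identification $\iota_+\iota^!\simeq \iota_+(\omega_Y)\otimes -$; the paper simply checks directly that the two composites are the identity, where you package the same check via the idempotent $\iota_+(\omega_Y)$ and its essential image. The only cosmetic difference is that the $\Dmod^!(X)$-linearity of $\iota_+$ (hence the projection formula used) is obtained in the paper by base-change rather than by citing Proposition \ref{prop:proj-formula-indschemes}, which literally concerns $f_*$ and $f_*^\ren$; this does not affect the argument.
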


\begin{proof}
For the first claim, it suffices to note that $\iota_+$ is $\Dmod^!(X)$-linear (by base-change) and fully faithful. 
Secondly, the canonical equivalence of functors $\iota_+(\omega_Y) \otimes - \simeq \iota_+ \circ \iota^! : \Dmod^!(X) \to \Dmod^!(X)$ shows that $\iota_+(\omega_Y) \diam -$ factors as 
$$
\C \twoheadrightarrow \restr \C Y \xto{\, \, \theta_{Y \hto X} \, \,} \uprestr \C Y \hto \C.
$$
It remains to check that $\theta := \theta_{Y \hto X}$ is an equivalence. This can be done directly, by testing that the two compositions are the identity functors. 
\end{proof}

\sssec{}

Let now $i_x: \{x\} \hto \X$ be the inclusion of a point into an ind-pro-scheme $\X$. Assume that $\X$ has been given a dimension theory. 
This allows to define the functor $(i_x)_*^\ren := \Lambda^{-1} \circ (i_x)_*: \Vect \to \Dmod^!(\X)$ and the object $(\delta_x)^\ren := \Lambda^{-1}(\delta_x) \in \Dmod^!(\X)$. By the projection formula (Proposition \ref{prop:proj-formula-indschemes}), we deduce the following fact.

\begin{lem}
The functor $(i_x)_*^\ren$ is $\Dmod^!(\X)$-linear. Informally, this means that tensoring $(\delta_x)^\ren$ with an object of $\Dmod^!(\X)$ is the same as tensoring $(\delta_x)^\ren$ with the $!$-fiber at $x$ of that object.
\end{lem}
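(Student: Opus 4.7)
The plan is to deduce this lemma directly from Proposition \ref{prop:proj-formula-indschemes} applied to the closed point inclusion $i_x: \{x\} \hookrightarrow \X$ (equipped with the trivial dimension theory on $\{x\}$). Since $\Dmod^!(\{x\}) \simeq \Vect$ and the symmetric monoidal functor $i_x^!$ endows $\Vect$ with a canonical $\Dmod^!(\X)$-module structure, the cited proposition asserts exactly that $(i_x)_*^\ren: \Vect \to \Dmod^!(\X)$ is a morphism of $\Dmod^!(\X)$-modules, which is the left-linearity claim.

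For right-linearity, I will appeal to the fact that $(\Dmod^!(\X), \otimes)$ is symmetric monoidal (the ind-pro analogue of Proposition \ref{prop_D^!-symm-monoidal}): the braiding converts left-linearity into right-linearity, so both versions hold simultaneously from the single statement above.

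To make the parenthetical reformulation precise, I will evaluate the linearity isomorphism on the unit $\CC \in \Vect$. For $M \in \Dmod^!(\X)$, this gives
$$M \otimes (\delta_x)^\ren = M \otimes (i_x)_*^\ren(\CC) \simeq (i_x)_*^\ren\bigl(i_x^!(M) \otimes \CC\bigr) \simeq (i_x)_*^\ren(i_x^!(M)),$$
exhibiting $M \otimes (\delta_x)^\ren$ as $(\delta_x)^\ren$ tensored with the $!$-fiber $i_x^!(M)$ of $M$ at $x$, exactly matching the informal statement in the lemma.

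I do not anticipate any serious obstacle: the content of the lemma is essentially the renormalized projection formula at a point. The only mild subtlety is coherence, as the formulas (\ref{eqn:proj_formulas}) hold only up to higher homotopy, but this coherence is already packaged into Proposition \ref{prop:proj-formula-indschemes}, which is stated at the level of module-functors, so no further verification should be required.
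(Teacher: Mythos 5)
Your proposal is correct and matches the paper's own argument: the lemma is obtained exactly by specializing Proposition \ref{prop:proj-formula-indschemes} to $i_x: \{x\} \hookrightarrow \X$ (with the trivial self-duality on the point), with right-linearity immediate from the symmetric monoidal structure on $\Dmod^!(\X)$. Your evaluation at the unit $\CC$ to recover the informal statement about $(\delta_x)^\ren \otimes M \simeq (\delta_x)^\ren \otimes i_x^!(M)$ is exactly the intended reading, so nothing further is needed.
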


Hence, $(\delta_x)^\ren \diam - :\C \to \C$ yields a functor
\begin{equation}
\label{eqn:Theta_x}
\Theta_{x \hto X} : \restr \C x \to \uprestr \C x.
\end{equation}

\begin{lem} \label{lem:FT-of-omega-ren}
Let $\bV$ be Tate vector space and $\m$ a point in $\bV^\vee$. For any pair of companion dimension theories\footcite{I.e., if $\Lambda_\bV$ corresponds $\dim (V_n^r) =0$ for some indices $n$ and $r$, then $\Lambda_{\bV^\vee}$ corresponds to $\dim ((V_n^r)^\vee) = 0$} on $\bV$ and $\bV^\vee$, we have
\begin{equation}
\FT_\bV \big(\Lambda((-\m)^! exp ) \big) \simeq (\delta_{\m, \bV^\vee})^\ren \in \Dmod^!(\bV^\vee).
\end{equation}
\end{lem}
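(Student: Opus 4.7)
The plan is to reduce the Tate assertion to the classical finite-dimensional Fourier identity $\FT_V((-\m)^!(\exp)) \simeq \delta_\m[2\dim V]$ and then to verify that the shifts introduced by the self-dualities $\Lambda_\bV$ and $\Lambda_{\bV^\vee}$ cancel those coming from the finite-dimensional formula, precisely when the two dimension theories are companion.

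To set up, I fix a presentation $\bV \simeq \colim_n V_n$ with each $V_n \simeq \lim_r V_n^r$ pro-finite-dimensional, so that $\bV^\vee \simeq \lim_n \colim_r (V_n^r)^\vee$; the point $\m \in \bV^\vee$ corresponds to a compatible family $\{\m_n \in V_n^\vee\}$, and each $\m_n$ arises as $\m_n^r \circ \pi^{V_n}_{\infty \to r}$ for some finite-dimensional character $\m_n^r \in (V_n^r)^\vee$. In particular $(-\m_n)^!(\exp) \simeq (\pi_{\infty \to r})^!\bigl((-\m_n^r)^!(\exp)\bigr)$ in $\Dmod^!(V_n)$.

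In finite dimensions, the identity $\FT_{V_n^r}((-\m_n^r)^!(\exp)) \simeq \delta_{\m_n^r}[2\dim V_n^r]$ follows from formula (\ref{eqn:projection_exp}) by base-changing along the translation $\sigma:(v,\phi)\mapsto(v,\phi-\m_n^r)$ on $V_n^r \times (V_n^r)^\vee$: tensoring $\exp^Q$ with $p_1^!((-\m_n^r)^!(\exp))$ gives $\sigma^!(\exp^Q)$, and $(p_2)_*\sigma^!(\exp^Q) \simeq (\tau_{\m_n^r})_*(p_2)_*(\exp^Q) \simeq \delta_{\m_n^r}[2\dim V_n^r]$. Next, formula (\ref{eqn:eta-compat-with-ins}) rewrites $\lambda_{V_n}((-\m_n)^!(\exp))$ as $(\pi_{\infty \to r})^*((-\m_n^r)^!(\exp))[-2\dim(V_n^r)]$; taking left adjoints in Lemma \ref{lem:importantFTtate} applied to $\pi_{\infty \to r}: V_n \to V_n^r$ (with dual $\iota_{r \to \infty}: (V_n^r)^\vee \to V_n^\vee$) yields the commutation $\FT_{V_n} \circ (\pi_{\infty \to r})^* \simeq (\iota_{r \to \infty})_+ \circ \FT_{V_n^r}$. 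Combining these presents $\FT_{V_n}(\lambda_{V_n}((-\m_n)^!(\exp)))$ as $(\iota_{r \to \infty})_+(\delta_{\m_n^r})$ up to an explicit shift depending on $\dim V_n^r$ and $\dim(V_n^r)$; a dual analysis expresses the renormalized delta $\Lambda_{V_n^\vee}^{-1}(\delta_{\m_n})$ as the same object up to a shift depending on $\dim((V_n^r)^\vee)$. The companion hypothesis, which forces $\dim(V_n^r) = \dim((V_n^r)^\vee)$ as integers, is exactly what makes these two shifts agree.

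The global statement then follows by assembling these level-$n$ identities via a parallel application of Lemma \ref{lem:importantFTtate} to the closed inclusions $V_n \hookrightarrow \bV$ and their dual projections $\bV^\vee \to V_n^\vee$, together with the inductive definitions of $\Lambda_\bV$ and $\Lambda_{\bV^\vee}$ from the families $\{\lambda_{V_n}\}$ and $\{\lambda_{V_n^\vee}\}$. The main technical obstacle is the coherent bookkeeping of the various shifts: the entire statement hinges on the companion compatibility, which is the cleanest manner of encoding the interplay between the Fourier transform and the renormalized $!$- and $*$-categories of $\fD$-modules on Tate vector spaces.
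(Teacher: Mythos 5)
Your argument is correct and takes essentially the same route as the paper's own proof: both reduce along the ind-pro presentation to the finite-dimensional Fourier identity for $(-\m)^!(\exp)$ and $\delta_\m$, commute $\FT$ past the insertion functors, and then match the renormalization shifts using the companion dimension theories. The only cosmetic difference is that the paper packages the pro-level step as Remark \ref{rem:FT-ren-pro} (i.e.\ $\FT_V\circ\lambda_V\simeq\IFT_{V^\vee}$) together with $\FT(\delta_\m)\simeq\m^!(\exp)$ and concludes by tracing through (\ref{tate-switch}), whereas you re-derive the finite-dimensional statement from (\ref{eqn:projection_exp}) and unwind the same shift bookkeeping by hand at the level of the $V_n^r$.
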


\begin{proof}
Let $\bV$ be presented as in (\ref{def:tate}).
By definition,
$$
\FT_\bV \big(\Lambda((-\m)^! exp ) \big)
\simeq
\colim_n \FT_{V_n} \circ \lambda_{V_n} \big( (-\mu)^! \exp \big) [2 \dim(V_n)].
$$
Thanks to Remark \ref{rem:FT-ren-pro} and $\FT (\delta_\m) \simeq \m^!(\exp)$, we know that 
$$
\FT_{V_n} \circ \lambda_{V_n} \big( (-\mu)^! \exp \big) \simeq \delta_{\chi, V_n^\vee}.
$$
The assertion follows, after tracing through the equivalence (\ref{tate-switch}) for $\W = \V^\vee$.
\end{proof}

\ssec{Invariants and coinvariants via Fourier transform}

Let $\bV \in \Tate$. Suppose that $\bV$ acts on $\C$; as usual, we indicate by $\star$ the action of $\Dmod^*(\bV)$ on $\C$.
We wish to express the invariant and coinvariant categories $\C^\bV$ and $\C_\bV$ in terms of the action of $(\Dmod^!(\bV^\vee),\otimes)$ on $\C$, which is given tautologically by
\begin{equation} \label{eqn:action_IFT}
\Dmod^!(\bV^\vee) \otimes \C \to \C, \, \, \, \, P \otimes c \mapsto \IFT_{\bV}(P) \star c.
\end{equation}

\begin{lem} \label{lem:coinv_FT-chi}
Let $\C$ be endowed with an action of $\Dmod^*(\V)$ and $\m: \V \to \GG_a$ be a character.
Under Fourier transform,
$$
\C_{\V, \chi} \simeq \restr \C {\chi}
\hspace*{0.6cm}
\C^{\V,\chi} \simeq \uprestr {\C}{\chi},
$$
where $\restr \C {\chi}$ and $\uprestr \C {\chi}$ denote the cofiber and fiber of $\C$ along $(i_\chi)^!: \Dmod^!(\V^\vee) \to \Dmod(\chi) \simeq \Vect$.
\end{lem}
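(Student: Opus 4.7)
The plan is to transport the $(\V,\chi)$-(co)invariants under the monoidal equivalence of Theorem~\ref{thm:mon-comon-Tate} and then recognize them as (co)fibers at $\chi$. First, by Theorem~\ref{thm:mon-comon-Tate}, $\FT_\V : (\Dmod^*(\V),\star) \to (\Dmod^!(\V^\vee),\otimes)$ is a monoidal equivalence, so it induces an equivalence of 2-categories of module categories. The $\Dmod^*(\V)$-action on $\C$ corresponds tautologically, via (\ref{eqn:action_IFT}), to a $\Dmod^!(\V^\vee)$-action on the same underlying DG category $\C$; by definition, the latter is precisely the action whose cofiber and fiber at $\chi \in \V^\vee$ are $\restr{\C}{\chi}$ and $\uprestr{\C}{\chi}$.

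The heart of the argument is to identify $\Vect_\chi$, viewed as a $\Dmod^*(\V)$-module via the character $\fD^!$-module $\chi^!(\exp)$, with $\Vect$ viewed as a $\Dmod^!(\V^\vee)$-module via the symmetric monoidal functor $(i_\chi)^! : \Dmod^!(\V^\vee) \to \Vect$ (which is monoidal because $!$-pullback is symmetric monoidal with respect to $\otimes = \Delta^!$). Concretely, I would show that the two resulting $\Dmod^*(\V)$-actions on $\Vect$ coincide by exhibiting, for each $M \in \Dmod^*(\V)$, a canonical equivalence
\begin{equation}
\eps_\V\bigl(M \otimes \chi^!(\exp)\bigr) \;\simeq\; (i_\chi)^! \circ \FT_\V(M).
\end{equation}
For $\V$ finite-dimensional this is essentially the definition of the Fourier transform together with base change: $\FT_\V(M) = (p_2)_*\bigl(p_1^!(M) \otimes Q^!(\exp)\bigr)$, and taking $!$-fiber along $i_\chi$ pulls $Q^!(\exp)$ back to $\chi^!(\exp)$ on $\V$, while $(p_2)_*$ converts to $p_*$, giving $\eps_\V$. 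For a general Tate vector space $\V$, this identity follows by passing through the pro and ind Kan extensions defining $\FT_\V$ (Section \ref{ssec:Fourier transform for Tate vector spaces}); the compatibility of $\FT$ with linear maps recorded in Lemma~\ref{lem:importantFTtate} provides the needed coherence, and Lemma~\ref{lem:FT-of-omega-ren} supplies the prototype case $M = \Lambda((-\chi)^!\exp)$ corresponding to $(\delta_\chi)^\ren$.

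Granting this identification of module categories, the two equivalences claimed in the lemma follow formally. Indeed, applying $- \otimes_{\Dmod^*(\V)} \C$ and $\Hom_{\Dmod^*(\V)}(-,\C)$ to the equivalence of $\Dmod^*(\V)$-modules $\Vect_\chi \simeq \Vect$ (with the latter's action transported from $\Dmod^!(\V^\vee)$ via $\FT_\V^{-1}$) yields
\[
\C_{\V,\chi} \;=\; \Vect_\chi \usotimes{\Dmod^*(\V)} \C \;\simeq\; \Vect \usotimes{\Dmod^!(\V^\vee)} \C \;=\; \restr{\C}{\chi},
\]
\[
\C^{\V,\chi} \;=\; \Hom_{\Dmod^*(\V)}(\Vect_\chi,\C) \;\simeq\; \Hom_{\Dmod^!(\V^\vee)}(\Vect,\C) \;=\; \uprestr{\C}{\chi}.
\]

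The main obstacle is the middle step: pinning down, at the level of module structures (and not merely of underlying categories), the equivalence $\Vect_\chi \simeq \Vect$ across the Fourier transform. In the Tate setting one must be vigilant about dimension theories and the $\Lambda$-renormalization relating $\Dmod^!$ and $\Dmod^*$; fortunately these only shift the $\delta$-sheaves by a canonical twist that precisely matches the twist appearing in the definition of $\Vect_\chi$, as recorded in Lemma~\ref{lem:FT-of-omega-ren}. Once the monoidal naturality of $\FT_\V$ with respect to pullback along $i_\chi$ is in hand, the remainder is formal manipulation of relative tensor products and internal Homs.
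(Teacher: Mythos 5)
Your proposal is correct and takes essentially the same route as the paper: both reduce the lemma to identifying, under the monoidal equivalence $\FT_\V$ of Theorem \ref{thm:mon-comon-Tate}, the $\chi$-twisted $\Dmod^*(\V)$-module structure on $\Vect$ with the $\Dmod^!(\V^\vee)$-module structure given by $(i_\chi)^!$, and then conclude formally for $\otimes$ and $\Hom$. Your key identity $\eps_\V\bigl(M \otimes \chi^!(\exp)\bigr) \simeq (i_\chi)^!\circ\FT_\V(M)$, checked by base change in finite dimensions and Kan-extended via Lemma \ref{lem:importantFTtate}, is just the pairing-level restatement of the paper's step that $\delta_{\chi,\V^\vee}$ corresponds to $\chi^!(\exp)$ (using $\FT_{\GG_a}(\delta_1)\simeq \exp$) combined with the duality between $\Dmod^*(\V^\vee)$ and $\Dmod^!(\V^\vee)$.
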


\begin{proof}
It suffices to prove that the action of $\Dmod^*(\V)$ on $\Vect$ determined by the comonoidal functor
\begin{equation} \label{eqn:comocomo}
\Vect \to (\Dmod^!(\V), m^!), \hspace*{0.3cm} \kk \mapsto \chi^!(\exp)
\end{equation}
goes over to the action of $(\Dmod^!(\V^\vee), \Delta^!)$ on $\Vect$ given by $(i_\chi)^!$. 

By Fourier transform, (\ref{eqn:comocomo}) goes over to the comonoidal functor
\begin{equation} \label{eqn:comocomo1}
\Vect \to (\Dmod^*(\V^\vee), \Delta_*), \hspace*{0.3cm} \kk \mapsto \delta_{\chi, \V^\vee}.
\end{equation}
This is a quick consequence of Lemma \ref{lem:importantFTtate} and Theorem \ref{thm:mon-comon-Tate}, after recalling that $\FT_{\GG_a}(\delta_1) \simeq \exp$. To conclude, observe that the duality (\ref{eqn:eval-compact}) between $\Dmod^*(\V^\vee)$ and $\Dmod^!(\V^\vee)$ transforms (\ref{eqn:comocomo1}) into the desired monoidal functor $(i_\chi)^!$.
\end{proof}

\sssec{}

\renewcommand{\bV}{\mathcal{V}}
\renewcommand{\bW}{\mathcal{W}}

Consider a category $\C$ equipped with an action of a Tate vector space $\bV$. Let $\bW \hookrightarrow \bV$ be a Tate vector subspace. We wish to describe the procedure of taking twisted (co)invariants of $\C$ with respect to $\bW$ via Fourier transform. 
Let $\p: \bV^\vee \to \bW^\vee$ be the projection dual to $\bW \hookrightarrow \bV$ and $\bW^\perp \subset \bV^\vee$ the annihilator of $\bW$ in $\bV^\vee$. 

\begin{prop} \label{prop:FT_char_subspace_indpro}
Let $\chi \in V^\vee$ be a character. Under Fourier transform,
$$
\C_{\bW,\chi} \simeq \Dmod^!(\bW^\perp + \{ \chi \}) \underset{\Dmod^!(\V^\vee)}\otimes \C,
\, \, \, \, \, \, \,\,\,
\C^{\bW,\chi} \simeq \Hom_{\Dmod^!(V^\vee)} \big( \Dmod^!(\bW^\perp + \{ \chi \}), \C \big),
$$
where $\Dmod^!(V^\vee)$ acts on $\Dmod^!(\bW^\perp + \{\chi \})$ via $!$-pullback along the inclusion $\bW^\perp + \{\chi \} \subseteq V^\vee$. 
\end{prop}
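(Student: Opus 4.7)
The plan is to reduce the statement to Lemma \ref{lem:coinv_FT-chi} combined with a base-change over $\Dmod^!(\bV^\vee)$ coming from Lemma \ref{lem:BN-ind}. Observe first that $\chi|_\bW = \p(\chi) \in \bW^\vee$, and the preimage $\p^{-1}(\{\p(\chi)\})$ is precisely the affine subspace $\bW^\perp + \{\chi\} \subset \bV^\vee$. So the geometric side of the Fourier picture is already set up once we work over the base $\bW^\vee$ instead of $\bV^\vee$ and then transport back.

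First, I would identify the $\Dmod^!(\bW^\vee)$-action on $\C$ that is Fourier-dual to the $\bW$-action obtained by restriction of the $\bV$-action. By Lemma \ref{lem:importantFTtate}, applied to the inclusion $i: \bW \hookrightarrow \bV$ with dual $\p: \bV^\vee \to \bW^\vee$, the monoidal functor $i_*: \Dmod^*(\bW) \to \Dmod^*(\bV)$ intertwines with $\p^!: \Dmod^!(\bW^\vee) \to \Dmod^!(\bV^\vee)$. Since the $\bW$-action on $\C$ is the $\bV$-action pulled back along $i_*$, the corresponding $\Dmod^!(\bW^\vee)$-module structure on $\C$ is obtained by pulling back the $\Dmod^!(\bV^\vee)$-module structure along the monoidal functor $\p^!$.

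Next, Lemma \ref{lem:coinv_FT-chi} applied to $\C$ as a $(\Dmod^*(\bW), \star)$-module with character $\p(\chi) \in \bW^\vee$ gives
$$
\C_{\bW,\chi} \;\simeq\; \Dmod^!(\{\p(\chi)\}) \usotimes{\Dmod^!(\bW^\vee)} \C,
\qquad
\C^{\bW,\chi} \;\simeq\; \Hom_{\Dmod^!(\bW^\vee)}\bigl(\Dmod^!(\{\p(\chi)\}),\, \C\bigr).
$$
Since the $\Dmod^!(\bW^\vee)$-action on $\C$ factors through the monoidal $\p^!: \Dmod^!(\bW^\vee) \to \Dmod^!(\bV^\vee)$, the standard change-of-scalars formula (an instance of the paradigm in Sect. \ref{paradigm}) rewrites the above as
$$
\Dmod^!(\{\p(\chi)\}) \usotimes{\Dmod^!(\bW^\vee)} \Dmod^!(\bV^\vee) \usotimes{\Dmod^!(\bV^\vee)} \C,
$$
and analogously, via the induction-restriction adjunction, for the Hom side.

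Finally, I apply Lemma \ref{lem:BN-ind} to the Cartesian square with corners $\{\p(\chi)\}, \bW^\vee, \bV^\vee$ and $\p^{-1}(\{\p(\chi)\}) = \bW^\perp + \{\chi\}$, obtaining the canonical equivalence
$$
\Dmod^!(\{\p(\chi)\}) \usotimes{\Dmod^!(\bW^\vee)} \Dmod^!(\bV^\vee) \;\simeq\; \Dmod^!(\bW^\perp + \{\chi\})
$$
as $\Dmod^!(\bV^\vee)$-module categories. Substituting this into the previous step yields both claimed equivalences. The only nontrivial point I expect is the careful bookkeeping in the first step: namely that the Fourier-dual $\Dmod^!(\bW^\vee)$-module structure is indeed the restriction of the $\Dmod^!(\bV^\vee)$-structure along $\p^!$ (as opposed to some twist or adjoint direction). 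This is where the full monoidal content of Lemma \ref{lem:importantFTtate}, together with Theorem \ref{thm:mon-comon-Tate}, is used to convert $i_*$-restriction into $\p^!$-restriction at the level of monoidal module categories.
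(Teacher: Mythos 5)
Your proposal is correct and follows essentially the same route as the paper: both arguments come down to Lemma \ref{lem:coinv_FT-chi} (via the compatibility of Fourier transform with $i_*$ and $\p^!$ from Lemma \ref{lem:importantFTtate}), a tensor-hom/change-of-scalars step, and Lemma \ref{lem:BN-ind} applied to $\{\p(\chi)\} \times_{\bW^\vee} \bV^\vee \simeq \bW^\perp + \{\chi\}$. The only cosmetic difference is that the paper first reduces to the universal case $\C = \Dmod^*(\bV)$ and identifies $\Dmod^*(\bV)_{\bW,\chi} \simeq \Dmod^!(\bW^\perp + \{\chi\})$, while you perform the same base-change directly on $\C$.
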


\begin{proof}
Using the standard tensor-hom adjunction, we see that it suffices to prove the equivalence
$$
\Dmod^*(\bV)_{\bW, \chi} \simeq \Dmod^!(\{\chi \} +\bW ^\perp).
$$ 
By the lemma above, we have
$$
\Dmod^*(\bV)_{\bW, \chi} \simeq \Dmod(\chi) \underset{\Dmod^!(\bW^\vee)}\otimes \Dmod^!(\bV^\vee)
$$
and we conclude by Lemma \ref{lem:BN-ind}, since $\chi \times_{\bW^\vee} \bV^\vee \simeq \chi + \bW^\perp$.
\end{proof}

\renewcommand{\bV}{\mathbf{V}}
\renewcommand{\bW}{\mathbf{W}}

\section{Categories over quotient stacks} \label{SEC:trans_act}

Let $X/G$ be a quotient stack, where $X$ and $G$ are both of finite type. We have seen (Proposition \ref{prop:ShvCat(X/G)}) that a crystal of categories over $X/G$ is the same as a module category for $\Dmod(G) \ltimes \Dmod(X)$. The latter espression admits an obvious generalization when $G$ and $X$ are of ind-pro-finite type, which we introduce in Sect. \ref{ssec:6.1}. 

\medskip

A category over $X/G$ is a category tensored over $X$ in a $G$-equivariant way. For such $\C$, we analyze the relations between (co)invariants with respect to subgroups of $G$ and (co)restrictions along maps $Y \to X$.
In Sect. \ref{ssec:transitive-group-acts}, we specialize to the case where $G$ acts transitively on $X$. Then the invariant category $\C^G$ is equivalent to the $\Stab_G(x)$-invariant category of the restriction of $\C$ to a point $x \in X$.

\ssec{The main definitions} \label{ssec:6.1}

Let $\G$ be an ind-pro-group and $\X$ an ind-pro-scheme endowed with an action of $\G$. Then $\Dmod^!(\X)$ is an algebra object in the monoidal $\infty$-category $\Dmod^!(\G) \ccomod$, whence we can form the crossed product monoidal category $\Dmod^*(\G) \ltimes \Dmod^!(\X)$.

\medskip

We say that $\C \in \DGCat$ is \emph{a category tensored over $\X/\G$} if it is endowed with the structure of a module category for $\Dmod^*(\G) \ltimes \Dmod^!(\X)$,
where we recall that
$$
\Dmod^*(\G) \ltimes \Dmod^!(\X) \mmod \simeq \Dmod^!(\X) \mod (\Dmod^!(\G) \ccomod).
$$

\sssec{}

Recall that we indicate by $M \diamond c$ the action of $M \in \Dmod^!(\X)$ on $c \in \C$, while the action of $\Dmod^*(\G)$ on $\C$ is the usual $\star$.
By construction, the actions of $\G$ and $\Dmod^!(\X)$ on $\C$ are compatible in the following way: for each $M$ and $c$ as above, there is a canonical isomorphism
\begin{equation} \label{eqn:coact_compatib}
\coact_\G ( M \diamond c) \simeq \act_{\G,\X}^!(M) \diamond \coact_\G (c).
\end{equation}
To be precise, the symbol $\diamond$ in the RHS means $\otimes$ on the $\Dmod^!(\G)$-factor and action of $\Dmod^!(\X)$ on $\C$.

\begin{example} \label{example:theta-G-inv}
Let  $f:\Y \to \X$ be $\G$-equivariant. Then, $\Dmod^!(\Y)$ and $\Dmod^!(\X)$ are tensored over $\X/\G$ in the natural way. The functor $f^!: \Dmod^!(\X) \to \Dmod^!(\Y)$ is $\Dmod^*(\G) \ltimes \Dmod^!(\X)$-linear. 
\end{example}

\begin{lem} \label{lem:pullback-equivariant}
In the situation of the example above, let $\C$ be tensored over $\X/\G$. Then the arrows $(f^!)_\C: \restr \C \X \to \restr \C \Y$ and $(f^!)^\C: \uprestr \C \Y \to \uprestr \C \X$ are $\G$-equivariant.
\end{lem}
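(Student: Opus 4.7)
The conceptual point is that a category tensored over $\X/\G$ is the same as a $\Dmod^!(\X)$-module object in the $\infty$-category $\G\rrep$. Indeed, the compatibility (\ref{eqn:coact_compatib}) built into the definition of a module over the crossed product $\Dmod^*(\G)\ltimes\Dmod^!(\X)$ is precisely the assertion that the $\Dmod^!(\X)$-action on $\C$ is $\G$-equivariant, where $\Dmod^!(\X)$ is viewed as an algebra in $\G\rrep$ via its symmetric monoidal structure together with the $\G$-action coming from Example~\ref{example:theta-G-inv} (this uses the square (\ref{diag:GactsX}) applied to $\G\curvearrowright\X$). In the same way, $\Dmod^!(\Y)$ is an algebra in $\G\rrep$, and $\G$-equivariance of $f$ upgrades $f^!$ to a morphism of algebras in $\G\rrep$.

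First, I would verify that the formation of the relative tensor product $\Dmod^!(\Y)\otimes_{\Dmod^!(\X)}\C$ and of the relative internal Hom $\Hom_{\Dmod^!(\X)}(\Dmod^!(\Y),\C)$ can be carried out internally in $\G\rrep$. This is a formal consequence of the fact that the forgetful functor $\oblv^\G:\G\rrep\to\DGCat$ preserves limits and colimits (Sect.~\ref{sssec:coin-inv-simplicial-constructions}): the Bar and coBar resolutions that compute these relative tensor products and Homs are diagrams in $\G\rrep$ whose underlying diagrams in $\DGCat$ compute $\restr{\C}{\Y}$ and $\uprestr{\C}{\Y}$, respectively. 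This endows both $\restr{\C}{\Y}$ and $\uprestr{\C}{\Y}$ with canonical $\G$-representation structures.

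Next, I would invoke the paradigm of Sect.~\ref{paradigm}, applied in the ambient monoidal $\infty$-category $\G\rrep$ rather than in $\DGCat$: for any morphism $\phi$ of algebras in $\G\rrep$ and any $\A$-module $\C$ in $\G\rrep$, the induced functors $\phi_\C$ and $\phi^\C$ are morphisms in $\G\rrep$. Applying this to $\phi=f^!:\Dmod^!(\X)\to\Dmod^!(\Y)$ and identifying $\restr{\C}{\X}\simeq\C\simeq\uprestr{\C}{\X}$ as $\G$-representations, one obtains that $(f^!)_\C$ and $(f^!)^\C$ are morphisms in $\G\rrep$, i.e., $\G$-equivariant.

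The only non-routine step is the compatibility of the internal constructions of Sect.~\ref{paradigm} with passage between $\DGCat$ and $\G\rrep$; but this is immediate once one recognizes $\G\rrep$ as the module category over the Hopf algebra object $(\Dmod^*(\G),m_*,\Delta_*)$ in $\DGCat$, so that $\oblv^\G$ is continuous, conservative, and preserves all limits needed to form the cosimplicial object computing $\Hom_{\Dmod^!(\X)}(-,\C)$.
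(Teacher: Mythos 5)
Your proposal is correct and follows essentially the same route as the paper: the paper likewise observes that $\Hom_{\Dmod^!(\X)}(\M,-)$ and the relative tensor product can be formed internally to $\Dmod^!(\G)\ccomod \simeq \G\rrep$, and then concludes via Example \ref{example:theta-G-inv}, which is exactly your point that $f^!$ is a morphism of algebras (indeed of $\Dmod^*(\G)\ltimes\Dmod^!(\X)$-modules) in $\G\rrep$. Your extra verification that the Bar/coBar constructions commute with the conservative, (co)limit-preserving forgetful functor $\oblv^\G$ just spells out what the paper's one-line argument leaves implicit.
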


\begin{proof}
For any $\M \in \Dmod^!(\X) \mod (\Dmod^!(\G) \ccomod)$, the functor $ \Hom_{\Dmod^!(\X)}(\M, -): \Dmod^!(\X) \mmod \to \DGCat$ upgrades naturally to a functor 
$$
\Dmod^!(\X) \mod (\Dmod^!(\G) \ccomod) \to  \Dmod^!(\G) \ccomod,
$$
and similarly for the relative tensor product. Then, the assertions follow from the example above applied to $\M = \C$.
\end{proof}

\begin{cor} \label{cor:theta_inv}
Let $\iota: Y \hto X$ be a $\G$-equivariant finitely presented closed embedding of pro-schemes. For any $\C$ tensored over $X/\G$, the functor $\theta_{Y \to X} : \restr \C Y \to \uprestr \C Y$ is $\G$-equivariant.
\end{cor}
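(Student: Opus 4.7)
The plan is to leverage the factorization
$$
\theta_{Y \hto X} \simeq (\iota_+)^\C \circ (\iota_+)_\C
$$
established in Lemma \ref{lem:useful}, and to show separately that each of the two functors $(\iota_+)_\C : \restr \C Y \to \C$ and $(\iota_+)^\C : \C \to \uprestr \C Y$ is $G$-equivariant. Composition of $G$-equivariant functors is again $G$-equivariant, so the corollary will follow.

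The key intermediate step is the assertion that $\iota_+ : \Dmod^!(Y) \to \Dmod^!(X)$ upgrades to a morphism in $\Dmod^!(X)\mmod(\Dmod^!(G)\ccomod)$. This is the direct analog of Example \ref{example:theta-G-inv} for $\iota_+$ rather than $\iota^!$. The $\Dmod^!(X)$-linearity is the content of Lemma \ref{lem:f_bullet}/Proposition \ref{prop:proj-formula-indschemes} (projection formula), while the $G$-equivariance is the base-change of $\iota_+$ along the commutative square
\begin{gather}
\xy
(0,15)*+{ G \times Y }="00";
(30,15)*+{ Y }="10";
(0,0)*+{ G \times X }="01";
(30,0)*+{ X }="11";
{\ar@{->}_{ \id_G \times \iota } "00";"01"};
{\ar@{->}^{ \act_Y } "00";"10"};
{\ar@{->}^{ \iota } "10";"11"};
{\ar@{->}_{ \act_X } "01";"11"};
\endxy
\end{gather}
which is Cartesian because $\iota$ is $G$-equivariant and a closed embedding. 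This is precisely the base-change compatibility needed to upgrade $\iota_+$ to a morphism of $\Dmod^!(G)$-comodules.

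Once this is established, we obtain $(\iota_+)_\C$ by applying $- \usotimes{\Dmod^!(X)} \C$, viewed as a functor from $\Dmod^!(X)\mmod(\Dmod^!(G)\ccomod)$ to $\Dmod^!(G)\ccomod$, exactly as in the proof of Lemma \ref{lem:pullback-equivariant}; similarly $(\iota_+)^\C$ is obtained by applying $\Hom_{\Dmod^!(X)}(-, \C)$, which also lands in $\Dmod^!(G)\ccomod$. Both functors are therefore $G$-equivariant by construction, and composing yields $G$-equivariance of $\theta_{Y \hto X}$.

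The main obstacle I anticipate is verifying that the Cartesian-square base-change really does promote $\iota_+$ to a morphism of $\Dmod^!(G)$-comodules at the level of full coherence data (not just at the level of underlying functors). In the pro-finite-type setting this is subsumed by the formalism of \cite{Sam:Dmod} cited in Sect.~\ref{sssec:closed-embedd-pro}; alternatively, one can bypass this verification altogether by a more direct route, namely by observing that $\iota_+(\omega_Y) \in \Dmod^!(X)$ acquires a canonical $G$-invariance structure (since $\omega_Y$ and $\iota$ are both $G$-equivariant), and then invoking the tautological principle that the action on $\C$ of a $G$-invariant object in a $G$-equivariant algebra is a $G$-equivariant endofunctor. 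Either route should work, but the direct-action approach is more economical and is my preferred presentation.
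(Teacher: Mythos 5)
Your proposal is correct in substance, but it takes a different route from the paper. The paper's proof is a one-liner: by Lemma \ref{lem:useful} the inverse of $\theta_{Y \hto X}$ is $(\iota^!)_\C \circ (\iota^!)^\C$, whose $G$-equivariance is free because $\iota^!$ is tautologically a morphism in $\Dmod^!(\X)\mmod(\Dmod^!(\G)\ccomod)$ (Example \ref{example:theta-G-inv} plus Lemma \ref{lem:pullback-equivariant}); one then observes that the inverse of an equivalence of comodule categories is again a morphism of comodules. You instead work with the other factorization $\theta_{Y \hto X} \simeq (\iota_+)^\C \circ (\iota_+)_\C$ and upgrade $\iota_+$ itself to a morphism in $\Dmod^!(X)\mmod(\Dmod^!(G)\ccomod)$, using the projection formula for the linearity and $( \iota_+, \act^!)$-base change along the Cartesian square built from $G$-equivariance of $\iota$ (your square is indeed Cartesian: equivariance alone makes $G\times Y$ the fiber product, and the closed-embedding hypothesis is what makes $\iota_+$ and its base change available). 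This is more work than the paper's argument, since $\iota^!$-equivariance is purely functorial while $\iota_+$-equivariance requires the coherent base-change package that the paper itself only cites (\cite{Sam:Dmod}); but it buys a direct construction of the equivariance structure on $\theta$ without inverting an equivalence inside $\Dmod^!(\G)\ccomod$. One caution about your \emph{preferred} shortcut: knowing that $\iota_+(\omega_Y)\diam - $ is a $G$-equivariant endofunctor of $\C$ does not by itself equip the induced functor $\restr \C Y \to \uprestr \C Y$ with an equivariance structure; descending along the factorization $\C \twoheadrightarrow \restr \C Y \to \uprestr \C Y \hto \C$ again uses that the outer functors are $G$-equivariant (Lemma \ref{lem:pullback-equivariant}) together with their localization/fully-faithfulness properties, and the input needed to see that $\act_X^!\,\iota_+(\omega_Y)\simeq \omega_G\boxtimes \iota_+(\omega_Y)$ coherently is exactly the same base change as in your main route -- so the ``economical'' version is not actually cheaper, and the first, comodule-morphism presentation is the one to keep.
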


\begin{proof}
Lemma \ref{lem:useful} guarantees that the inverse of $\theta_{Y \to X}$ is the composition $(\iota^!)_\C \circ (\iota^!)^\C $, which is $\G$-equivariant by Lemma \ref{lem:pullback-equivariant}.
\end{proof}

\begin{lem} \label{lem:stupid_cat_over_X/G}
Let $\G$ be a pro-group and $\X$ an ind-pro-scheme with $\G$-action. For $\C$ a category tensored over $\X/\G$, we have:
\begin{itemize}

\item[(a)]
Let $S \subset \G$ a closed subgroup and $f: \Y \to \X$ an $S$-equivariant map. (Note that we do not demand that $\G$ acts on $\Y$.) Then $\restr \C \Y$ is tensored over $\Y/S$  and the following diagram is commutative:
\begin{gather} \label{diag:X/G<---Y/K}
\xy
(0,18)*+{ \C^\G }="00";
(00,00)*+{  \C }="10";
(33,18)*+{ (\restr \C \Y)^S}="01";
(33,00)*+{ \restr \C \Y.  }="11";
{\ar@{->}^{(f^!)_\C \circ \oblv^\rel  } "00";"01"};
{\ar@{->}^{\oblv^\G} "00";"10"};
{\ar@{->}^{ (f^!)_\C} "10";"11"};
{\ar@{->}^{\oblv^S} "01";"11"};
\endxy
\end{gather}

\smallskip 

\item[(b)]
If $\G$ acts trivially on $\X$, then $\C^\G$ and $\C_\G$ are tensored over $\X$ compatibly with $\oblv^{\G}: \C^\G \to \C$ and $\pr_{\G}: \C \to \C_\G$.

\smallskip

\item[(c)]
Let $f: \Y \to \X$ be a $\G$-equivariant map with $\G$ acting trivially on both spaces. Then taking $\G$-invariants commutes with restriction to $\Y$: that is, there is a canonical equivalence $(\restr \C \Y)^\G \simeq (\restr{\C^\G)} \Y$.

\end{itemize}
\end{lem}

\begin{proof}
The first two assertions are obvious. The third one is an immediate consequence of the fact that $\inv^\G$ commutes with colimits when $\G$ is a pro-group.
\end{proof}

\sssec{}

We pause to give one important example of category over a quotient stack.
Let $\G$ is an ind-pro-group acting on a Tate vector space $\V$. 
If $\G \ltimes \V$ acts on a category $\C$, then 

\begin{itemize}
\item
$\C$ fibers over $ \V^\vee$, by Fourier transform;
\item
$\G$ acts on $\V^\vee$, via the dual action;
\item
$\G$ acts on $\C$ via the embedding $\G \to \G \ltimes \V$.
\end{itemize}
The proposition below makes it precise that these three pieces of data are compatible:

\begin{prop} \label{prop:KV^vee-module}
With the above notation, the Fourier transform induces a monoidal equivalence 
$$
\on{id}_{\Dmod^*(\G)} \otimes \FT_{\V} : \Dmod^*(\G \ltimes \V) \xrightarrow{ \, \, \simeq \, \,} \Dmod^*(\G) \ltimes \Dmod^!(\V^\vee).
$$
In particular, the $\infty$-category $\G \ltimes \V \rrep$ is equivalent to the $\infty$-category of categories tensored over $\V^\vee/\G$.
\end{prop}

\begin{proof}
It suffices to notice that $\Dmod^*$ preserves $\ltimes$, so that $\Dmod^*(\G \ltimes \V) \simeq \Dmod^*(\G) \ltimes \Dmod^*(\V)$.
\end{proof}

\begin{example}
Let $P \simeq GL_{n-1} \ltimes \AA^{n-1} \subset GL_n$ be the mirabolic group of $GL_n$. We obtain an equivalence between $P \ppart \mmod$ and the $\infty$-category of categories tensored over $\AA^{n-1} \ppart / GL_{n-1} \ppart$, where $GL_{n-1} \ppart$ acts on $\AA^{n-1} \ppart$ via the dual action.
\end{example}

\ssec{Interactions between invariants and corestrictions, I}

Let us further analyze the situation of Lemma \ref{lem:stupid_cat_over_X/G}, item (a).
In this section, we discuss the finite dimensional situation. Let $G$ be a group of finite type, $X$ be $G$-scheme of finite type and $\iota: Y \hookrightarrow X$ a closed embedding. Let $S \subset G$ be a closed subgroup that preserves $Y$.

Our goal is to establish conditions under which the top horizontal arrow $\rho: \C^G \to (\restr \C Y)^S$ of diagram (\ref{diag:X/G<---Y/K}) is an equivalence.

\sssec{}

By construction, we have the commutative diagram
\begin{gather}  \label{diag:amico}
\xy
(0,18)*+{ X/G }="00";
(00,00)*+{  X }="10";
(33,18)*+{ Y/S }="01";
(33,00)*+{ Y.  }="11";
{\ar@{<-}^{\wt\iota} "00";"01"};
{\ar@{<-}^{q^X} "00";"10"};
{\ar@{<-}_{ \iota } "10";"11"};
{\ar@{<-}^{q^Y} "01";"11"};
\endxy
\end{gather}

\begin{prop} \label{prop:X/G=Y/S}
Let $\C$ be a category tensored over $X/G$. Suppose that the map $\wt\iota$ above is an isomorphism of stacks $Y/S \xrightarrow{\, \simeq \, } X/G$. 
Then the functor $(\iota^!)_\C \circ \oblv^\rel: \C^{G} \to \C^S \to  (\restr \C Y)^{S}$ is an equivalence making the following diagram commutative:
\begin{gather} \label{diag:bello}
\xy
(0,18)*+{ \C^{G} }="00";
(00,00)*+{  \C }="10";
(33,18)*+{ (\restr \C Y)^{S}}="01";
(33,00)*+{ \restr \C Y.  }="11";
{\ar@{->}^{\simeq}_{\rho} "00";"01"};
{\ar@{->}^{\oblv^G} "00";"10"};
{\ar@{->}_{(\iota^!)_\C} "10";"11"};
{\ar@{->}^{\oblv^S} "01";"11"};
\endxy
\end{gather}
\end{prop}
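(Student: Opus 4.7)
The approach is to reinterpret both sides of the sought equivalence as global sections of sheaves of categories on the quotient stacks in diagram~(\ref{diag:amico}), and then exploit the fact that the top arrow $\wt\iota: Y_\dR/S_\dR \to X_\dR/G_\dR$ is an equivalence. The latter follows from the hypothesis $\ul\iota: Y/S \xrightarrow{\simeq} X/G$, since de~Rham preserves equivalences of prestacks and commutes with quotients by group prestacks, i.e., $(X/G)_\dR \simeq X_\dR/G_\dR$. Consequently, pullback (i.e., $\cores_{\wt\iota}$) yields an equivalence $\ShvCat(X_\dR/G_\dR) \xrightarrow{\simeq} \ShvCat(Y_\dR/S_\dR)$, and in particular an equivalence on global section categories
\[
\bGamma(X_\dR/G_\dR, \wt\C) \xrightarrow{\,\simeq\,} \bGamma(Y_\dR/S_\dR, \cores_{\wt\iota} \wt\C).
\]

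Next, I would identify each side with the category appearing in the statement. For the left-hand side, pulling back $\wt\C$ along the torsor $q^X: X_\dR \to X_\dR/G_\dR$ produces a sheaf of categories on $X_\dR$ whose global sections are $\C$; the $G$-action carried by $\C$ is precisely the one coming from descent along $q^X$, so $\bGamma(X_\dR/G_\dR, \wt\C) \simeq \C^G$, as in Sect.~1.5 of the paper. For the right-hand side, base change on the square~(\ref{diag:amico}) gives $\iota^*(q^X)^* \wt\C \simeq (q^Y)^* \cores_{\wt\iota} \wt\C$; then $\bGamma(Y_\dR, \iota^*(q^X)^* \wt\C) \simeq \restr{\C}{Y}$, because corestriction of $\Dmod^!(X)$-module categories along $\iota: Y \to X$ coincides with pullback of the associated sheaf of categories along $\iota: Y_\dR \to X_\dR$. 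Finally, taking $S$-invariants corresponds to descending along the torsor $q^Y$, yielding $\bGamma(Y_\dR/S_\dR, \cores_{\wt\iota} \wt\C) \simeq (\restr{\C}{Y})^S$. Composing these gives the desired equivalence $\rho$.

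The commutativity of diagram~(\ref{diag:bello}) is then a functoriality statement: under the identifications above, the forgetful functors $\oblv^G$ and $\oblv^S$ correspond to $\bGamma$-pullback along $q^X$ and $q^Y$ respectively, while $(\iota^!)_\C$ corresponds to pullback along $\iota$. These four functors fit into the commutative square~(\ref{diag:amico}), and applying $\bGamma(-, \wt\C)$ produces precisely the required commutativity of~(\ref{diag:bello}).

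The main obstacle is the base-change identification $\bGamma(Y_\dR, \iota^*(q^X)^* \wt\C) \simeq \restr{\C}{Y}$, which requires reconciling two a priori different notions of restriction: the abstract tensor-product restriction $\Dmod^!(Y) \usotimes{\Dmod^!(X)} \C$ used in the text, and the geometric pullback of sheaves of categories along $\iota_\dR: Y_\dR \to X_\dR$. Verifying this compatibility hinges on $1$-affineness of $X_\dR$ and $Y_\dR$ (which reduces $\ShvCat$ to module categories over $\Dmod^!$) together with the projection/base-change formula of Lemma~\ref{lem:BN} for finite-type pro-schemes; once in place, everything else is a purely formal manipulation of adjunctions.
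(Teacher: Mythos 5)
Your proposal is correct and follows essentially the same route as the paper: both apply $\bGamma(-,\wt\C)$ to diagram (\ref{diag:amico}), use that $\wt\iota$ is an isomorphism of prestacks, and identify the corners and arrows ($\C^G$, $(\restr{\C}{Y})^S$, $\oblv^G$, $\oblv^S$, $(\iota^!)_\C$) via Proposition \ref{prop:ShvCat(X/G)} and $1$-affineness. The only difference is that you spell out the base-change compatibility between tensor-product corestriction and geometric pullback, which the paper treats as tautological from the monadic description.
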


\begin{proof}
By Proposition \ref{prop:ShvCat(X/G)}, there exists $\wt\C \in \ShvCat(X_\dR/G_\dR)$ such that $\C := \bGamma(X_\dR, \wt\C)$.
We apply the contravariant functor $\bGamma((-)_\dR, \wt\C)$ to (\ref{diag:amico}). Tautologically, $\bGamma((X/G)_\dR, \wt\C) \simeq \C^G$ and pull-back along $q^X$ (resp., $q^Y$) is $\oblv^G$ (resp., $\oblv^S$) by construction. On the other hand, pull-back along $\iota$ is the functor $(\iota^!)_\C : \C \to \Dmod(Y) \otimes_{\Dmod(X)} \C$, as claimed.
\end{proof}

\begin{rem} \label{rem:char-unnecessary}
Let $\mu: G \to \GG_a$ be a character. Then, $\C \otimes \Vect_{-\m}$ also fibers on $X/G$. The above proposition admits an obvious variant involving $\mu$-twisted invariants (Corollary \ref{cor:inverse_with_Av_!}). In every proof of Section \ref{SEC:trans_act}, it suffices to treat $\m = 0$ (the general case follows from the change $\C \rightsquigarrow \C \otimes \Vect_{-\m}$). We will tacitly assume that this step has been performed.
\end{rem}

To summarize the above discussion, we record:

\begin{cor} \label{cor:inverse_with_Av_!}
In the situation of Proposition \ref{prop:X/G=Y/S}, let $\mu: G \to \GG_a$ be a character.
The functor
$$
\Av_!^{G/S} \circ (\iota_*)_\C : (\restr \C Y)^{S, \mu} \to \C^{G, \mu} : \rho= (\iota^!)_\C \circ \oblv^\rel
$$
are mutually inverse equivalences of categories.
\end{cor}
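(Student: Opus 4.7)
The plan is to deduce the corollary directly from Proposition~\ref{prop:X/G=Y/S}, which has already established that $\rho$ is an equivalence of categories. What remains is the identification of its inverse. By Remark~\ref{rem:char-unnecessary}, it suffices to treat the case $\mu = 0$. Since an equivalence coincides with both its left and its right adjoint (up to inversion), I will compute the left adjoint $\rho^L$ and show it equals $\Av_!^{G/S} \circ (\iota_*)_\C$; this will give $\rho^{-1} \simeq \rho^L \simeq \Av_!^{G/S} \circ (\iota_*)_\C$.

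The standard rule for composing left adjoints gives
$$
\rho^L \simeq \bigl(\oblv^\rel\bigr)^L \circ \bigl((\iota^!)_\C\bigr)^L,
$$
so I need to identify each factor. For the inner factor, Lemma~\ref{lem:useful} provides the adjunction $\bigl((\iota_+)_\C, (\iota^!)_\C\bigr)$ for any category $\C$ tensored over $X$; as $\iota$ is a closed embedding of schemes of finite type, $\iota_+$ coincides with $\iota_*$, so $\bigl((\iota^!)_\C\bigr)^L \simeq (\iota_*)_\C$. The $S$-equivariance of $\iota^!$ is Lemma~\ref{lem:pullback-equivariant}, while the $S$-equivariance of $\iota_*$ is formal from base change (since $\iota$ is $S$-equivariant and $\iota_*$ is $\Dmod^!(X)$-linear); hence this adjunction descends to the $S$-invariant categories, giving an adjoint pair $\bigl((\iota_*)_\C, (\iota^!)_\C\bigr)$ between $(\restr{\C}{Y})^S$ and $\C^S$. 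For the outer factor, the left adjoint $(\oblv^\rel)^L$ is, where defined, precisely what is denoted $\Av_!^{G/S}$.

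Piecing these two identifications together yields the asserted formula $\rho^{-1} \simeq \Av_!^{G/S} \circ (\iota_*)_\C$.

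The one point that warrants care is the existence of $\Av_!^{G/S}$ as a continuous functor on all of $\C^S$: in general it is only a partially defined left adjoint. However, since $\rho$ is an equivalence, $\rho^L$ is defined on the whole of $(\restr{\C}{Y})^S$, and the factorization above shows that $\Av_!^{G/S}$ is in fact defined and continuous on the essential image of $(\iota_*)_\C$ — which is exactly what the statement of the corollary requires. I expect this bookkeeping around the existence of $\Av_!^{G/S}$ to be the only nontrivial point; everything else reduces to functorial adjunction manipulations and the $G$-equivariance properties recorded earlier in the text.
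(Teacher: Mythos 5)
Your argument is correct and is essentially the paper's own proof: the paper simply observes that $\Av_!^{G/S} \circ (\iota_*)_\C$ is left adjoint to the equivalence $\rho$ (established in Proposition \ref{prop:X/G=Y/S}, with the $\mu$-twist handled as in Remark \ref{rem:char-unnecessary}), hence its inverse. Your additional unpacking — identifying $((\iota^!)_\C)^L$ with $(\iota_*)_\C$ via Lemma \ref{lem:useful} and noting that the equivalence forces the partially defined $\Av_!^{G/S}$ to exist on the image of $(\iota_*)_\C$ — just makes the same one-line argument explicit.
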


Here and later in this section, $\Av_\heartsuit^{G/S} : \C^{S, \m} \to \C^{G, \m}$ is the relative $\heartsuit$-averaging functor for $\heartsuit=!$ or $\heartsuit=*$, whereas $\oblv^\rel: \C^{G,\m} \to \C^{S,\m}$ is the relative forgetful functor.

\begin{proof}
The functor from left to right is left adjoint (hence inverse) to the equivalence $\rho$.
\end{proof}

In particular, setting $Y = \{x\}$, we obtain:

\begin{cor} \label{cor:transitive_action_fd}
Let $\C$ be a category tensored over $X/G$, with $G$ acting transitively on $X$. Let $x \in X$ be a point and $S = \Stab(x \in X)$. Then the pull-back functor along $\iota: \{x\} \hookrightarrow X$ yields an equivalence 
$$
\rho: \C^{G, \mu}  \xto{\oblv^\rel} \C^{S, \m} \xto{(\iota^!)_\C} (\restr \C x)^{S, \mu}.
$$
\end{cor}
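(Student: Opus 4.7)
The plan is to deduce Corollary \ref{cor:transitive_action_fd} as a direct specialization of Proposition \ref{prop:X/G=Y/S} (or, with the character twist, Corollary \ref{cor:inverse_with_Av_!}) to the case $Y = \{x\}$. First, I would reduce to the untwisted case $\mu = 0$ by the standard trick of replacing $\C$ by $\C \otimes \Vect_{-\mu}$, as explained in Remark \ref{rem:char-unnecessary}; then the statement to be proved is exactly that $(\iota^!)_\C \circ \oblv^\rel : \C^G \to (\restr \C x)^S$ is an equivalence.

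The only hypothesis of Proposition \ref{prop:X/G=Y/S} that needs to be checked is that the closed embedding $\iota: \{x\} \hookrightarrow X$ induces an isomorphism of quotient stacks $\ul\iota: \{x\}/S \xrightarrow{\simeq} X/G$. This is precisely the content of the transitivity assumption combined with the definition of the stabilizer: the orbit map $G \to X$, $g \mapsto g \cdot x$, identifies $X$ with $G/S$ (as fppf sheaves), so
\[
X/G \simeq (G/S)/G \simeq \pt/S \simeq \{x\}/S,
\]
with the composed isomorphism induced by $\iota$. Once this is in hand, Proposition \ref{prop:X/G=Y/S} produces the equivalence $\rho$ and identifies it with the composition $(\iota^!)_\C \circ \oblv^\rel$ (via the remark immediately following that proposition).

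I do not foresee any substantial obstacle: the identification $X \simeq G/S$ for a transitive action with stabilizer $S$ is standard, and the remaining content has already been packaged in the preceding corollary. If there is any subtle point, it is making sure that the equivalence of stacks respects the sheaf-of-categories structure --- but this is automatic since $\wt\C \in \ShvCat((X/G)_\dR)$ and $\ul\iota$ is a map of de Rham prestacks, so pullback is well defined and the diagram (\ref{diag:bello}) specializes directly.
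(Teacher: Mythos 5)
Your proposal is correct and is essentially the paper's own argument: the paper obtains this corollary by simply setting $Y = \{x\}$ in Corollary \ref{cor:inverse_with_Av_!} (the $\mu$-twisted form of Proposition \ref{prop:X/G=Y/S}), the transitivity hypothesis supplying the required isomorphism $\{x\}/S \simeq X/G$. Your explicit verification of that isomorphism via $X \simeq G/S$, and the reduction to $\mu=0$ via Remark \ref{rem:char-unnecessary}, are exactly the points the paper leaves implicit.
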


The following result will be important in the sequel:

\begin{prop} \label{prop:inverse_with_Av_*-shift}
In the situation of Corollary \ref{cor:inverse_with_Av_!}, the functor
$$
\Av_*^{G/S} \circ (\iota_*)_\C [2 \codim(G:S)]: (\restr \C Y)^{S, \mu} \to \C^{G, \mu}
$$
is inverse to $\rho = (\iota^!)_\C \circ \oblv^\rel$. 
In particular, we obtain a functorial identification 
\begin{equation} \label{eq:ident-shift}
\Av_!^{G/S} \circ (\iota_*)_\C \simeq \Av_*^{G/S} \circ (\iota_*)_\C [2 \codim(G:S)].
\end{equation}
\end{prop}

\begin{proof}
Let us name $\tau$ the displayed functor and set $D := 2 \codim(G:S)$. It suffices to prove that $\tau \circ \rho$ is the identity of $\C^G$. By the compatibility between the $\Dmod(X)$-action and the $\Dmod(G)$-coaction, we have
$$
\tau \circ \rho 
\simeq
\Av_*^G  \circ (\iota_*)_\C \circ (\iota^!)_\C \circ \oblv^G [D]
\simeq
\Av_*^G \big(\iota_*(\omega_Y) \diam \oblv^G \big)[D]
\simeq
\Av_*^G (\iota_*(\omega_Y)) \diam \oblv^G [D].
$$
Hence, it suffices to prove the proposition for $\C = \Dmod(X)$, with its natural structure of category tensored over $X/G$.

\medskip

More generally, we will prove it for $\C = \Dmod(E)$, where $E$ is a scheme mapping $G$-equivariantly to $X$. Let $F:= Y \times_X E$ and (abusing notation) $\iota: F \hto E$ the closed embedding induced by $Y \hto X$. Consider the diagram
\begin{gather}
\xy
(00,0)*+{ E/G }="00";
(30,0)*+{ F/S. }="10";
(0,15)*+{ E }="01";
(30,15)*+{ F }="11";
{\ar@{<-}_{ q } "00";"01"};
{\ar@{<-}^{ \wt \iota  }_{\simeq} "00";"10"};
{\ar@{<-}^{ \iota } "01";"11"};
{\ar@{<-}_{  q' } "10";"11"};
\endxy
\end{gather}
Under the equivalence $\Dmod(E)^G \simeq \Dmod(E/G)$, the adjuncton $(\oblv^G, \Av_*^G)$ becomes $q^! : \Dmod(E/G) \rightleftarrows \Dmod(E) : q_*[-2d_G]$, while $\Av_!^G \simeq q_!$. Also, by smoothness,
$$
\iota^! \circ q^! 
\simeq
(q')^! \circ (\wt\iota)^!
 \simeq
(q')^* \circ (\wt\iota)^* [2 d_S]
 \simeq
 \iota^* \circ q^* [2 d_S]. 
$$
Hence, the original equivalence $\rho$ becomes
$$
\rho \simeq (q')_! \circ  \iota^* \circ q^* [2 d_S],
$$
whose right adjoint is evidently $\tau$.
\end{proof}

\ssec{Interactions between invariants and corestrictions, II}

Our goal now is to extend Proposition \ref{prop:X/G=Y/S} and its corollaries to the pro-finite dimensional setting.

\sssec{} \label{sssec:quotient-wise}

For $H$ a pro-group, denote by $\Sch_H^\pro$ the $1$-category whose objects are pro-schemes equipped with a $H$-action and whose morphisms are $H$-equivariant maps.

It is clear that any $X \in \Sch_H^\pro$ admits a presentation $X = \lim_r X^r$ where each $X^r$ is acted on by $H$ and the transition maps $\pi_{s \to r}$ are $H$-equivariant. Such a presentation will be called \emph{H-compatible}.

\sssec{} \label{sssec:setup}

Let us fix the following set-up:
\begin{itemize}
\item[(i)]
$H$ is a pro-group endowed with a character $\mu: H \to \GG_a$;
\item[(ii)]
$X$ is an element of $\Sch_H^\pro$ equipped with an $H$-compatible presentation $X= \lim_r X^r$;
\item[(iii)]
$\iota: Y \hookrightarrow X$ is a closed embedding of finite presentation;
\item[(iv)]
$S \subseteq H$ is the subgroup preserving $Y$;
\item[(v)]
we assume that the natural map $Y/S \to X/H$ is an \emph{isomorphism}.
\end{itemize}

\medskip

Given the first four items, we may form the functor
$$
\rho_Y: \C^{H,\m} \xto{\, \oblv^{\rel} \,} \C^{S,\m} \xto { \, (\iota^!)_\C \,} (\restr \C Y)^{S,\m},
$$
where we have used that $(\iota^!)_\C$ is $S$-equivariant (Lemma \ref{lem:pullback-equivariant}).

\medskip

Item (v) implies that $S \hto H$ is a \emph{finite presented} closed embedding of pro-groups, whence $\codim(H:S)$ is well-defined. Consider the functor
$$
 \sigma_Y : 
(\restr \C Y)^{S,\m} \xto { \, (\iota_+)_\C \,} \C^{S,\m} \xto{\, \,\Av_*^{H/S} \, \,} \C^{H,\m}.
$$

\begin{prop} \label{prop:X/G=Y/S_pro}
In the set-up on Sect. \ref{sssec:setup}, $\rho_Y$ and $\sigma_Y[2 \codim(H:S)]$ are mutually inverse equivalences of categories.
\end{prop}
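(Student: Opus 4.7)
The plan is to reduce to the finite-type Corollary \ref{cor:inverse_with_Av_!} and Proposition \ref{prop:inverse_with_Av_*-shift} by approximating $X$, $H$, $Y$, $S$ simultaneously at each finite-type level, and then to pass to the limit. Write $H \simeq \lim_r H/K^r$ with $K^r \trianglelefteq H$ shrinking to the identity. After discarding finitely many indices, we may assume $K^r \subseteq S$ for all $r$; indeed, $S$ is closed in $H$ and the $K^r$ exhaust any open neighbourhood of $1$. Using Sect. \ref{sssec:quotient-wise} applied to Example \ref{ex:proHscheme} combined with the presentation of $X$ as a pro-$H$-scheme, we may refine the presentation $X = \lim_r X^r$ so that each $X^r$ is a $H^r := H/K^r$-scheme of finite type, the projections $\pi_{s \to r}$ are $H$-equivariant, and $Y = \lim_r Y^r$ with $Y^r \hookrightarrow X^r$ the scheme-theoretic image of $Y$. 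Write $S^r := S/K^r \subseteq H^r$; since $S$ is the stabilizer of $Y$, the hypothesis (v) that $Y/S \to X/H$ is an isomorphism of prestacks descends (for $r$ large enough) to $Y^r/S^r \xrightarrow{\simeq} X^r/H^r$.

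Next I would decompose the invariants in two steps. Let $\C_r := \C^{K^r,\mu}$; by a straightforward (character-twisted) generalization of Lemma \ref{lem:auxiliary} to the case where the ambient pro-group is not necessarily pro-unipotent (proved by the same argument, invoking that $K^r$ is pro-unipotent inside $H$ after passing to the pro-unipotent radical), $H^r$ acts on $\C_r$ and
$$\C^{H,\mu} \simeq \lim_{r} \C_r^{H^r,\mu}, \qquad (\restr{\C}{Y})^{S,\mu} \simeq \lim_{r} (\restr{\C_r}{Y^r})^{S^r,\mu},$$
where the transition maps are the relative $\Av_*^{\rel}$-functors. Now $\C_r$ is naturally tensored over the finite-type quotient stack $X^r/H^r$, and its restriction to $Y^r$ gives a category over $Y^r/S^r$. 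Since $Y^r/S^r \simeq X^r/H^r$ is an isomorphism of stacks, Corollary \ref{cor:inverse_with_Av_!} applied at level $r$ produces inverse equivalences
$$\rho^r_Y : \C_r^{H^r,\mu} \rightleftarrows (\restr{\C_r}{Y^r})^{S^r,\mu} : \Av_!^{H^r/S^r} \circ (\iota_*)_{\C_r},$$
and by Proposition \ref{prop:inverse_with_Av_*-shift} the inverse is equally computed as $\Av_*^{H^r/S^r} \circ (\iota_*)_{\C_r}[2\codim(H^r:S^r)]$.

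To finish, I would verify that the level-wise equivalences $\rho^r_Y$ assemble into a morphism of towers, whose limit is precisely $\rho_Y$. Compatibility is built in from the fact that $\rho^r_Y$ is defined by the same formula $(\iota^!)_{\C_r} \circ \oblv^{\rel}$ and that $\oblv^{\rel}$ and $(\iota^!)_\C$ are $H$-equivariant (Lemma \ref{lem:pullback-equivariant}) and commute with the $\Av_*^{\rel}$ forming the towers. Taking $\lim_r$ thus gives that $\rho_Y$ is an equivalence. Dually, the inverses $\Av_*^{H^r/S^r} \circ (\iota_*)_{\C_r}[2\codim(H^r:S^r)]$ assemble, at the limit, into the functor $\Av_*^{H/S} \circ (\iota_+)_{\C}[2\codim(H:S)] = \sigma_Y[2\codim(H:S)]$, provided $\codim(H^r:S^r)$ stabilizes and equals $\codim(H:S)$ — which holds because $K^r \subseteq S$ makes $H^r/S^r \simeq H/S$ for all $r$.

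The main obstacle will be the first step: arranging a pro-scheme presentation of $X$ that is simultaneously compatible with the $H$-action, with the closed embedding $\iota: Y \hookrightarrow X$, and with the quotient formation $H \twoheadrightarrow H^r = H/K^r$, so that the isomorphism $Y/S \xrightarrow{\simeq} X/H$ genuinely descends to $Y^r/S^r \xrightarrow{\simeq} X^r/H^r$ at each level. Once the set-up is in place, the level-wise application of the finite-type results and the matching of shifts are routine; the content of the proposition is concentrated in this finite-type reduction.
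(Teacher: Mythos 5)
Your overall strategy is the paper's: pass to a normal subgroup of $H$ acting trivially on a finite-type level, take invariants in two steps (Lemma \ref{lem:auxiliary}, Lemma \ref{lem:stupid_cat_over_X/G}), identify $\restr{\C}{Y}$ via Lemma \ref{lem:BN}, and quote the finite-type statements (Proposition \ref{prop:X/G=Y/S}, Corollary \ref{cor:inverse_with_Av_!}, Proposition \ref{prop:inverse_with_Av_*-shift}). But two points need fixing. First, your justification that $K^r \subseteq S$ --- ``$S$ is closed in $H$ and the $K^r$ exhaust any open neighbourhood of $1$'' --- is not an argument: closedness of $S$ gives nothing of the sort. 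What makes it true is structural: by the definition in Sect. \ref{sssec:closed-embedd-pro}, the closed embedding $\iota: Y \hto X$ is finitely presented, so $Y$ is the preimage of a closed $Y^r \subseteq X^r$ at a \emph{single} finite level $r$, and the $H$-action on $X^r$ factors through a finite-dimensional quotient $H/H^r$; hence $H^r$ preserves $Y$, i.e. $H^r \subseteq S$, and $S$ is the preimage of the stabilizer of $Y^r$ in $H/H^r$.

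Second, and this is where your proposal diverges from the paper and is weakest: once the above is in place there is no tower to assemble and no limit to take --- one level suffices. The paper writes $\C^{H} \simeq (\C^{H^r})^{H/H^r}$ and $(\restr{\C}{Y})^{S} \simeq \bigl(\Dmod(Y^r) \usotimes{\Dmod(X^r)} \C^{H^r}\bigr)^{S/H^r}$ for that one $r$, applies the finite-type equivalences to $\C^{H^r}$ over $X^r/(H/H^r) \simeq Y^r/(S/H^r)$, and traces that they are $\rho_Y$ and $\sigma_Y[2\codim(H:S)]$. Your limit presentation $\C^{H,\mu} \simeq \lim_r \C_r^{H^r,\mu}$ along relative $\Av_*$-functors is unjustified as stated: each term is already equivalent to $\C^{H,\mu}$ by the two-step lemma, and you never check that the proposed transition functors agree with these identifications (if they did, the limit would be redundant; if they did not, the claim could fail). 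Likewise the ``main obstacle'' you single out --- arranging compatible presentations at every level --- is a non-issue here; a genuine limiting/convergence argument is only needed in the transitive case of Sect. \ref{ssec:transitive-group-acts} (Propositions \ref{prop:X/H---x/S}--\ref{ZC}), where the inclusion of a point is \emph{not} a closed embedding. Strip out the tower and correct the first point, and your level-wise argument is exactly the paper's proof.
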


\begin{proof}
By definition, $\iota$ is the pull-back of a closed embedding $\iota^r: Y^r \hookrightarrow X^r$. Thanks to Lemma \ref{lem:BN}, we have
$$
\restr \C Y \simeq \Dmod(Y^r) \usotimes{\Dmod(X^r)} \C,
$$
where $\Dmod(X^r)$ acts on $\C$ via the monoidal functor $(\pi_{\infty \to r})^!: \Dmod(X^r) \to \Dmod^!(X)$.
For any $r \in \R$, pick a normal subgroup $H^r \subseteq H$ such that the action of $H$ on $X^r$ factors through the finite dimensional quotient $H \twoheadrightarrow H/H^r$. Obviously, $H^r$ is normal in $S$ and $S/H^r$ acts on $Y^r$.

By Lemma \ref{lem:auxiliary} and Lemma \ref{lem:stupid_cat_over_X/G}c, we obtain
$$
(\restr \C Y)^S 
\simeq
\Big( \Dmod(Y^r) \usotimes{\Dmod(X^r)} \C \Big)^S
 \simeq
\Big(  \Dmod(Y^r) \usotimes{\Dmod(X^r)} \C^{H^r} \Big)^{S/H^r}.
$$
The hypothesis implies that $X^r / (H/H^r) \simeq Y^r/(S/H^r)$ and $\C^{H^r}$ fibers over $X^r / (S/H^r)$ (see Lemma \ref{lem:stupid_cat_over_X/G}a), whence we obtain the equivalence
$$
\rho^{\fty} : (\C^{H^r})^{H/H^r} \rightleftarrows \Big(  \Dmod(Y^r) \usotimes{\Dmod(X^r)} \C^{H^r} \Big)^{S/H^r}: \sigma^{\fty}[ 2\, \codim(H:S)],
$$
from Lemma \ref{prop:X/G=Y/S} and Proposition \ref{prop:inverse_with_Av_*-shift}.
Consequently, 
$$
\C^H \simeq  (\C^{H^r})^{H/H^r} 
\overset { \rho^{\fty} \, \, } {\underset{\sigma^{\fty}[ 2\, \codim(H:S)]} {\rightleftarrows} }
 \Big(  \Dmod(Y^r) \usotimes{\Dmod(X^r)} \C^{H^r} \Big)^{S/H^r}
\simeq
(\restr \C Y)^S.
$$
That these two functors are exactly $\rho$ and $\sigma[ 2\, \codim(H:S)]$ can be traced immediately.
\end{proof}

From the isomorphism $(\iota^!)^\C \circ \theta_{Y \hto X} \simeq (\iota_+)_\C$, we deduce:

\begin{cor}
In the situation described in Sect. \ref{sssec:setup}, the three equivalences
\begin{gather} \label{triangle}
\xy
(25,25)*+{ \C^{H,\m} }="A";
(00,00)*+{ (\restr \C Y)^{S,\m} }="B";
(50,00)*+{ \big(\!\uprestr \C Y \! \big)^{S,\m} }="C";
{\ar@{->}_{ \rho_Y = (\iota^!)_\C \circ \, \oblv^\rel } "A";"B"};
{\ar@{->}^{ \theta_{Y \hto X}[2\codim(H:S)] } "B";"C"};
{\ar@{->}_{ \sigma_Y = \Av_*^{H/S} \circ \, (\iota^!)^\C } "C";"A"};
\endxy
\end{gather} 
form a commutative diagram.
\end{cor}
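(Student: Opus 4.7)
The plan is to deduce the commutativity of the triangle directly from the isomorphism $(\iota^!)^\C \circ \theta_{Y \hto X} \simeq (\iota_+)_\C$ stated just before the corollary, combined with Proposition~\ref{prop:X/G=Y/S_pro}. Set $D := 2\codim(H:S)$, and temporarily write $\sigma_Y^{\mathrm{old}} := \Av_*^{H/S} \circ (\iota_+)_\C$ for the functor appearing in that proposition, as distinct from the new $\sigma_Y := \Av_*^{H/S} \circ (\iota^!)^\C$ appearing in the corollary's triangle. Then Proposition~\ref{prop:X/G=Y/S_pro} asserts that $\sigma_Y^{\mathrm{old}}[D]$ is a two-sided inverse to $\rho_Y$.

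The key computation is to post-compose the cited isomorphism with $\Av_*^{H/S}$ (pulling the shift through), giving
$$
\sigma_Y \circ \theta_{Y \hto X}[D] \;\simeq\; \Av_*^{H/S} \circ \bigl((\iota^!)^\C \circ \theta_{Y \hto X}\bigr)[D] \;\simeq\; \Av_*^{H/S} \circ (\iota_+)_\C[D] \;=\; \sigma_Y^{\mathrm{old}}[D].
$$
A preliminary step is to check that the isomorphism $(\iota^!)^\C \circ \theta_{Y \hto X} \simeq (\iota_+)_\C$ descends to $(S,\m)$-invariants; this is immediate because $(\iota^!)^\C$ is $S$-equivariant by Lemma~\ref{lem:pullback-equivariant} and $\theta_{Y \hto X}$ is $S$-equivariant by Corollary~\ref{cor:theta_inv}, while Remark~\ref{rem:char-unnecessary} lets me reduce to $\m = 0$ from the outset. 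Combining the displayed equivalence with the identity $\sigma_Y^{\mathrm{old}}[D] \circ \rho_Y \simeq \id_{\C^{H,\m}}$ then yields
$$
\sigma_Y \circ \theta_{Y \hto X}[D] \circ \rho_Y \;\simeq\; \id_{\C^{H,\m}},
$$
which is exactly the commutativity of the triangle.

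To wrap up, I would observe that each edge is independently an equivalence: $\rho_Y$ by Proposition~\ref{prop:X/G=Y/S_pro}; $\theta_{Y \hto X}[D]$ because $\theta_{Y \hto X}$ is an equivalence $\restr \C Y \to \uprestr \C Y$ by Lemma~\ref{lem:useful} that descends to $(S,\m)$-invariants via Corollary~\ref{cor:theta_inv}, the shift $[D]$ being trivially invertible; and finally $\sigma_Y$ by two-out-of-three applied to the commuting triangle. I do not expect a real obstacle beyond the bookkeeping: the only subtleties are the $S$-equivariance of the functors and compatibility with the cohomological shift, both transparent from the framework of Sections~\ref{SEC:grp-actions} and~\ref{SEC:trans_act}.
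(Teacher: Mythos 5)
Your proposal is correct and follows the paper's own (essentially one-line) argument: the paper deduces the corollary precisely from the isomorphism $(\iota^!)^\C \circ \theta_{Y \hto X} \simeq (\iota_+)_\C$ together with Proposition \ref{prop:X/G=Y/S_pro}, which is exactly the computation you spell out (post-composing with $\Av_*^{H/S}$, carrying the shift $[2\codim(H:S)]$, and reducing to $\m=0$). The equivariance bookkeeping you add via Lemma \ref{lem:pullback-equivariant} and Corollary \ref{cor:theta_inv} is consistent with, and slightly more explicit than, what the paper leaves implicit.
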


\ssec{Transitive groups actions} \label{ssec:transitive-group-acts}

We wish to generalize Corollary \ref{cor:transitive_action_fd} to the pro-finite dimensional setting. This is not implied by the above section as $x \hto X$ is not of finite presentation whenever $X$ is infinite dimensional. 
Instead, we will consider a family of triangles as above with $Y$ shrinking to $x$ and deal with a convergence problem.

\sssec{} \label{sssec:setup-transitive}

The set-up for the present subsection in the following:
\begin{itemize}
\item[(i)]
$H$ is a pro-group endowed with a character $\mu: H \to \GG_a$;
\item[(ii)]
$X \in \Sch_H^\pro$, equipped with an $H$-compatible presentation $X= \lim_r X^r$;
\item[(iii)]
$\iota: \{x\} \hookrightarrow X$ be the inclusion of a point into $X$;
\item[(iv)]
$S \subseteq H$ is the stabilizer of $x$;
\item[(v)]
the $H$-action on $X$ is transitive, i.e., that the natural map $\{x\}/S \to X/H$ is an isomorphism. 
\end{itemize}
Our goal is to prove the following theorem:

\begin{thm}
In the situation described in Sect. \ref{sssec:setup-transitive}, the three functors
\begin{gather} \label{triangle-transitive}
\xy
(25,25)*+{ \C^{H,\m} }="A";
(00,00)*+{ (\restr \C x)^{S,\m} }="B";
(50,00)*+{ \big(\!\uprestr \C x \! \big)^{S,\m} }="C";
{\ar@{->}_{ \rho_x := (\iota^!)_\C \circ \, \oblv^\rel } "A";"B"};
{\ar@{->}^{ \Theta_{x \hto X} } "B";"C"};
{\ar@{->}_{ \sigma_x := \Av_*^{H/S} \circ \, (\iota^!)^\C } "C";"A"};
\endxy
\end{gather} 
form a commutative diagram of \emph{equivalences}. Here, $\Theta_{x \hto X}$ is the functor (\ref{eqn:Theta_x}), associated to the dimension theory of $X$ specified in Remark \ref{rem:dim_thry-chosen}.
\end{thm}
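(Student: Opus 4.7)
The plan is to approximate the point $x \in X$ by a cofiltered family of closed pro-subschemes on which Proposition \ref{prop:X/G=Y/S_pro} applies, and then pass to the limit. Fix a presentation $X \simeq \lim_r X^r$ in $\Sch^\pro_H$ (see Sect. \ref{sssec:quotient-wise}), and let $x^r \in X^r$ denote the image of $x$ under $\pi_{\infty \to r}$. Set $Y^r := \pi_{\infty \to r}^{-1}(x^r)$: each $Y^r$ is a closed pro-subscheme of $X$, the $Y^r$ form a cofiltered family, and $\{x\} = \lim_r Y^r$. Let $S^r \subseteq H$ be the stabilizer of $Y^r$; by transitivity of the $H$-action on each $X^r$, the inclusion $Y^r \hto X$ induces $Y^r/S^r \xrightarrow{\simeq} X/H$, verifying the hypotheses of Sect. \ref{sssec:setup} at each level. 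Moreover $S = \bigcap_r S^r$, and $\codim(H:S^r) = \dim(X^r)$.

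The first step is to apply Proposition \ref{prop:X/G=Y/S_pro} to each $Y^r$, obtaining a commutative triangle of equivalences among $\C^{H,\mu}$, $(\restr \C {Y^r})^{S^r,\mu}$ and $(\!\uprestr \C {Y^r})^{S^r,\mu}$, whose lower edge is $\theta_{Y^r \hto X}[2\codim(H:S^r)]$. These triangles are functorial in $r$, since the closed embeddings $Y^{r'} \hto Y^r$ for $r' \geq r$ are $S^{r'}$-equivariant. Passing to colimits and limits in $r$, Lemma \ref{lem:BN-ind} applied to the Cartesian squares defining the $Y^r$'s yields $\restr \C x \simeq \uscolim{r} \restr \C {Y^r}$ and dually $\uprestr \C x \simeq \lim_{r} \uprestr \C {Y^r}$.

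The next step is to take $(S,\mu)$-invariants on both sides. An extension of Proposition \ref{prop:smooth-generation} — handling the reductive parts of the $S^r/S$ via the Levi decomposition together with Lemma \ref{lem:normal-inv} — gives
$$
(\restr \C x)^{S,\mu} \simeq \uscolim{r} \, (\restr \C {Y^r})^{S^r,\mu}, \qquad (\!\uprestr \C x)^{S,\mu} \simeq \lim_{r} \, (\!\uprestr \C {Y^r})^{S^r,\mu},
$$
with transition maps given by relative oblivion and averaging respectively. The finite-level equivalences $\rho_{Y^r}$ and $\sigma_{Y^r}$ then assemble, via the paradigm of Sect. \ref{paradigm-colim-lim}, into the equivalences $\rho_x$ and $\sigma_x$. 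To identify the third edge of the asymptotic triangle with $\Theta_{x \hto X}$, I would observe that the family of shifted equivalences $\theta_{Y^r \hto X}[2\codim(H:S^r)]$ descends to a functor $(\restr \C x)^{S,\mu} \to (\!\uprestr \C x)^{S,\mu}$ which, by the construction underlying Proposition \ref{prop:theta-descends}, is precisely the renormalized action with $(\delta_x)^{\ren}$ for the dimension theory of Remark \ref{rem:dim_thry-chosen}.

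The main obstacle will be this convergence argument: verifying that the colimit/limit of the finite-level triangles matches the triangle \eqref{triangle-transitive}. Two delicate points stand out. First, extending Proposition \ref{prop:smooth-generation} to the not necessarily pro-unipotent groups $S^r \supseteq S$, for which I would split off the reductive quotient of $S^r/S$ and invoke Lemma \ref{lem:normal-inv} relative to $S$. Second, checking that the shifts $\{2\codim(H:S^r)\}_r$ glue into the dimension-theoretic renormalization underlying $\Theta_{x \hto X}$; this is precisely why the specific dimension theory of Remark \ref{rem:dim_thry-chosen} must be chosen — so that on each finite-dimensional quotient it restricts to the value $2\dim(X^r) = 2\codim(H:S^r)$, which then glues the local $\theta_{Y^r \hto X}$ into the global $\Theta_{x \hto X}$.
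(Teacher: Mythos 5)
Your proposal follows essentially the same route as the paper's proof: approximate $x$ by the fibers $Y^r = \{x^r\}\times_{X^r}X$, apply Proposition \ref{prop:X/G=Y/S_pro} at each finite level, pass to the colimit/limit of the resulting triangles using smooth generation for $S=\bigcap_r S^r$ (Lemma \ref{lem:smooth-gen-in-action}), and identify the colimit of the shifted $\theta_{Y^r\hto X}[2\codim(H:S^r)]$ with $\Theta_{x\hto X}$ via the dimension theory of Remark \ref{rem:dim_thry-chosen} — the paper makes this last gluing explicit through the base-change isomorphism $(\iota_{Y^r\hto X})_+(\omega_{Y^r})\simeq(\pi_{\infty\to r})^!(\delta_{x^r})$, which is the only step you leave implicit. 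This matches Propositions \ref{prop:X/H---x/S}, \ref{prop:sigma_equiv} and \ref{ZC}, and your flagged delicate points are exactly where the paper's argument does the corresponding work.
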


We shall proceed in several steps. The proof will be given in Propositions \ref{prop:X/H---x/S}, \ref{prop:sigma_equiv} and \ref{ZC}.

\sssec{}

We start with a general result concerning the inclusion of a point $x$ into a pro-scheme $X = \lim_r X^r$.

\medskip

For each $r \in \R$, define the pro-scheme $Y^r := \{x^r\} \times_{X^r} X$, where $x^r = \pi_{\infty \to r}(x)$. The induced closed embedding $\iota_r : Y^r \hto X$ is finitely presented, as it is pulled back from $x^r \hto X^r$. 
For any arrow $r \to s$ in $\R$, there is a closed embedding $\iota_{r,s} : Y^r \hto Y^s$; the limit (i.e., intersection) of the $Y^r$'s along these maps is the singleton $\{x\}$.

\begin{lem} \label{lem:approx_x}
The inverse family of $\Dmod^!(X)$-linear functors $(\iota_{x \hto Y^r })^! : \Dmod^!(Y^r) \to \Dmod(x)$ gives rise to a $\Dmod^!(X)$-linear equivalence
$$
\uscolim{\R^{op}, (\iota_{r,s})^!} \Dmod^!(Y^r) \to \Dmod(x).
$$
\end{lem}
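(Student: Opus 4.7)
The plan is to reduce the assertion to a cofinality statement after unpacking the pro-scheme structure of each $Y^r$. First, I will use the pro-presentation $Y^r = \lim_{s \geq r} Z^s_r$, where $Z^s_r := \pi_{s \to r}^{-1}(x^r) \subseteq X^s$ and the transition maps are the restrictions of the smooth affine surjections $\pi_{s' \to s}: X^{s'} \to X^s$. Since the closed embedding $\iota_{r',r}: Y^{r'} \hto Y^r$ (for $r' \geq r$ in the "deeper" direction) is pulled back from the closed embedding $Z^s_{r'} \hto Z^s_r$, I will observe that the squares
$$
Z^{s'}_{r'} \;\cong\; Z^{s'}_r \underset{Z^s_r}\times Z^s_{r'}
$$
are Cartesian; base change for $!$-pullback then makes precise the compatibility between the two kinds of transition functors—the projection $!$-pullbacks inside $\Dmod^!(Y^r) \simeq \uscolim{s \geq r} \Dmod(Z^s_r)$ and the closed-embedding $!$-pullbacks between successive $\Dmod^!(Y^r)$'s.

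Next, I will interchange filtered colimits and recognize
$$
\uscolim{r \in \R^{op}} \Dmod^!(Y^r) \;\simeq\; \uscolim{r \in \R^{op}} \uscolim{s \geq r} \Dmod(Z^s_r) \;\simeq\; \uscolim{(r,s)} \Dmod(Z^s_r),
$$
where the last colimit runs over the filtered $1$-category of pairs $(r,s)$ with $s \geq r$, with morphisms $(r,s) \to (r',s')$ whenever $r' \geq r$ and $s' \geq s$; at the scheme level, the corresponding transition is the composite $Z^{s'}_{r'} \hto Z^{s'}_r \to Z^s_r$ of a closed embedding followed by a projection. The final identification is standard once one notes that an iterated filtered colimit agrees with the filtered colimit over the total filtered category.

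I will then check that the diagonal inclusion $\Delta: t \mapsto (t,t)$ is cofinal. Since both categories are filtered, this reduces to the observation that for every $(r,s)$ the under-category $(r,s) \downarrow \Delta$ is identified with the filtered (hence non-empty and connected) sub-poset $\{t \in \R : t \geq s\}$. Crucially, $Z^t_t = \pi_{t \to t}^{-1}(x^t) = \{x^t\}$ is a single point, and under $\Delta$ the transition along $t \leq t'$ becomes $!$-pullback along the map of points $\{x^{t'}\} \to \{x^t\}$, which is the identity of $\Vect$. Hence the colimit collapses to $\Vect \simeq \Dmod(\pt)$, and tracing through the cofinality isomorphism shows that the resulting equivalence is precisely the one induced by the family $(\iota_{x \hto Y^r})^!$, because through the diagonal these restrict to the identity $!$-pullback between the points $\{x\}$ and $\{x^t\}$.

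The $\Dmod^!(X)$-linearity claim will then be essentially automatic: each $(\iota_{x \hto Y^r})^!$ factors through $!$-pullback along closed embeddings $\{x\} \hto Y^r \hto X$, and $!$-pullback along any map into $X$ is $\Dmod^!(X)$-linear, so the universal property of colimits transports the linearity to the limit functor. I therefore expect the only technical point to be the careful bookkeeping in the interchange of colimits and in verifying cofinality of the diagonal; both become routine once the Cartesian squares above are in hand.
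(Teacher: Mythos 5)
Your argument is correct, but it takes a genuinely different route from the paper's. The paper proves the lemma in two lines by quoting Lemma \ref{lem:BN}: it rewrites each $\Dmod^!(Y^r)$ as the relative tensor product $\Dmod(x^r)\usotimes{\Dmod(X^r)}\Dmod^!(X)$ (since $Y^r=\{x^r\}\times_{X^r}X$), and then the colimit over $r$ collapses to $\Vect\usotimes{\Dmod^!(X)}\Dmod^!(X)\simeq\Dmod(x)$ because $\Dmod^!(X)=\uscolim{r}\Dmod(X^r)$; since the whole computation takes place in $\Dmod^!(X)\mmod$, the $\Dmod^!(X)$-linearity comes for free. You instead unpack each $Y^r$ into its pro-presentation by the $Z^s_r=\pi_{s\to r}^{-1}(x^r)$, rewrite the colimit as a double colimit over pairs $(r,s)$, and conclude by cofinality of the diagonal, whose terms $Z^t_t=\{x^t\}$ are points; this is more elementary (it avoids the descent-type input behind Lemma \ref{lem:BN}) at the cost of the interchange-of-colimits and cofinality bookkeeping, which is routine but should be done with a little care since $\R^{op}$ is only assumed to be a filtered category, not a poset (so the comma categories $(r,s)\downarrow\Delta$ should be argued to be filtered, hence contractible, rather than described as sub-posets). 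Two minor simplifications: the base-change appeal is not really needed for the compatibility of transition functors (contravariant functoriality of $!$-pullback suffices; the Cartesian squares are only needed to certify that the presentations of the $Y^r$'s and of the embeddings $\iota_{r',r}$ are compatible), and the identification of the resulting equivalence with the family $(\iota_{x\hto Y^r})^!$ is exactly the paper's formula $f^!\circ(\pi^Y_{\infty\to s})^!=(\pi^X_{\infty\to s})^!\circ(f^s)^!$ applied to $\{x\}\hto Y^r$. Both proofs are sound; the paper's is shorter and more structural, yours is self-contained at the level of finite-type $\fD$-module categories.
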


\begin{proof}
The identifications of Lemma \ref{lem:BN} give assemble into a $\Dmod^!(X)$-linear equivalence
$$
\uscolim{\R^{op}, (\iota_{r,s})^!} \Dmod^!(Y^r) 
\simeq
\uscolim{\R^{op}, \mathsf{proj}} \Dmod(x^r) \usotimes{\Dmod(X^r)} \Dmod^!(X).
$$
The RHS is itself equivalent to $\Dmod(x^r) \usotimes{\Dmod^!(X)} \Dmod^!(X) \simeq \Dmod(x)$.
The composition is the compatible family of pull-backs, as claimed.
\end{proof}
 
\begin{cor}
For any $\C \in \Dmod^!(X) \mmod$, the functor of the lemma yields equivalences
$$
\uscolim{\R^{op},( \iota^!)_\C} \! \restr  \C {Y^r} \xto{ \,\, \simeq \, \,} \restr \C x 
\hspace{.6cm}
\uprestr \C x \xto{ \,\, \simeq \, \,} \lim_{\R,( \iota^!)^\C} \uprestr \C {Y^r}.
$$
\end{cor}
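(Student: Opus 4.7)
The statement is a formal consequence of the preceding lemma, obtained by applying the functors of relative tensor product with $\C$ and of $\Dmod^!(X)$-linear hom out of $-$ to the equivalence
$$
\uscolim{\R^{op},\,(\iota_{r,s})^!} \Dmod^!(Y^r) \xrightarrow{\,\simeq\,} \Dmod(x)
$$
supplied by the lemma. The plan is simply to invoke the standard compatibilities of these bifunctors with colimits in their $\Dmod^!(X)$-linear argument.

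For the first equivalence, I would recall that the relative tensor product $- \usotimes{\Dmod^!(X)} \C : \Dmod^!(X) \mmod \to \DGCat$ commutes with colimits (this is one of the items in the paradigm of Sect.~\ref{paradigm}, and more generally a standard feature of the symmetric monoidal structure on $\DGCat$). Applying it to the lemma's equivalence yields
$$
\uscolim{\R^{op},\,(\iota_{r,s}^!)_\C}\Big(\Dmod^!(Y^r)\usotimes{\Dmod^!(X)}\C\Big) \xrightarrow{\,\simeq\,} \Dmod(x)\usotimes{\Dmod^!(X)}\C,
$$
and unpacking the definitions $\restr{\C}{Y^r}:=\Dmod^!(Y^r)\otimes_{\Dmod^!(X)}\C$ and $\restr{\C}{x}:=\Dmod(x)\otimes_{\Dmod^!(X)}\C$ gives exactly the desired first equivalence. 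The transition maps are by construction the morphisms $(\iota^!)_\C:\restr{\C}{Y^r}\to \restr{\C}{Y^s}$ induced functorially by $(\iota_{r,s})^!$.

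For the second equivalence, dually, the functor $\Hom_{\Dmod^!(X)}(-,\C):(\Dmod^!(X)\mmod)^{\op}\to\DGCat$ sends colimits to limits (again by the paradigm of Sect.~\ref{paradigm}). Applying it to the same equivalence converts the colimit on the left into a limit on the right, producing
$$
\uprestr{\C}{x}=\Hom_{\Dmod^!(X)}(\Dmod(x),\C)\xrightarrow{\,\simeq\,}\lim_{\R,\,(\iota^!)^\C}\Hom_{\Dmod^!(X)}(\Dmod^!(Y^r),\C)=\lim_{\R,\,(\iota^!)^\C}\uprestr{\C}{Y^r}.
$$

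There is no real obstacle: the only thing to verify is that the structure maps produced by these two manipulations coincide with the claimed ones, which is immediate from functoriality and from the fact that $(\iota_{x\hookrightarrow Y^r})^!$ is the component of the lemma's equivalence at index $r$. Thus both statements follow at once.
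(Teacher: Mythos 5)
Your proposal is correct and matches the paper's intent: the corollary is stated without proof precisely because it is the formal consequence you describe, namely applying $- \usotimes{\Dmod^!(X)} \C$ (which preserves colimits) and $\Hom_{\Dmod^!(X)}(-,\C)$ (which sends colimits to limits) to the $\Dmod^!(X)$-linear equivalence of Lemma \ref{lem:approx_x}, with the transition functors identified by functoriality.
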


\sssec{}

From now on, we reinstate the set-up of Sect. \ref{sssec:setup-transitive}.

\medskip

By hypothesis, $H$ acts on each $X^r$; we let $S^r$ be the stabilizer of $x^r \in X^r$. 
Equivalently, $S^r$ is the subgroup of $H$ preserving $Y^r$. 
The embeddings $\iota_{r,s}$ yield isomorphisms $Y^r/S^r \to Y^s/S^s$, which interpolate the given $\{x\}/S
\xto{\simeq} X/H$.

\begin{rem} \label{rem:dim_thry-chosen}
We equip $X$ with the dimension theory defined by $\dim(X^r) = \codim (H : S^r)$. Clearly, the latter integer equals $\codim(X : Y^r)$.
\end{rem}

\begin{lem} \label{lem:smooth-gen-in-action}
Consider the functor $\sQ : \R^\op \to \DGCat$ that sends $r$ to $(\restr \C {Y^r})^{S^r}$ and $r \to s$ to
$$
\rho_{r,s} := (\iota_{r,s}^!)_\C \circ \oblv^{S^s \to S^r}: (\restr \C {Y^s})^{S^s} \to (\restr \C {Y^r})^{S^r}.
$$
The compatible family of restrictions $(\restr \C {Y^r})^{S^r} \to (\restr \C {x})^{S}$ yields the equivalence
\begin{equation}
\label{eqn:zaaa}
\uscolim{\R^{\op}, \sQ } (\restr \C {Y^r})^{S^r} \simeq (\restr \C {x})^{S}.
\end{equation}
\end{lem}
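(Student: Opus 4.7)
The strategy is two-fold: first, I would show that every transition functor in the diagram $\sQ$ is already an equivalence, reducing the filtered colimit to any single fiber; second, I would identify this single term $\sQ(r_0)$ with $(\restr{\C}{x})^{S,\mu}$ via the compatible family of restrictions, using a finite-level approximation of $Y^{r_0}$.

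For the first step, fix an arrow $r \to s$ in $\R$, so that $\iota_{r,s}: Y^r \hookrightarrow Y^s$ is a closed embedding of pro-schemes (being obtained by base change from a closed embedding at finite type along the projection $X \to X^s$). Transitivity of the $H$-action on $X$ yields $X \simeq H/S$, and the preimage $Y^r$ of $x^r$ corresponds to $S^r/S$; hence $S^r$ acts transitively on $Y^r$ with stabilizer $S$ at $x$, and analogously $S^s$ on $Y^s$. The induced map of stacks $Y^r/S^r \to Y^s/S^s$ is therefore an isomorphism, both canonically identifying with $\{x\}/S$. Proposition \ref{prop:X/G=Y/S_pro} then applies to the inclusion $Y^r \hookrightarrow Y^s$ of pro-$S^s$-schemes (with $S^r$ the subgroup preserving $Y^r$), yielding that $\rho_{r,s}$ is an equivalence.

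For the second step, since $\R^\op$ is filtered and every transition in $\sQ$ is an equivalence, the insertion $\sQ(r_0) \to \uscolim{\R^\op,\sQ}\sQ(r)$ is an equivalence for any $r_0 \in \R$. It remains to prove that the compatible restriction functor $\sQ(r_0) \to (\restr{\C}{x})^{S,\mu}$ is itself an equivalence. For this I would pass to a finite-type presentation: by Lemma \ref{lem:BN}, $\restr{\C}{Y^{r_0}} \simeq \Dmod(Y^{r_0}_s) \usotimes{\Dmod(X^s)} \C$ for each $s \geq r_0$, where the finite-type scheme $Y^{r_0}_s := \{x^{r_0}\} \times_{X^{r_0}} X^s$ carries a transitive action of $S^{r_0}$ (through the finite-dimensional quotient $S^{r_0}/H^s$) with stabilizer $S^s$ at $x^s$. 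The finite-dimensional version of Proposition \ref{prop:X/G=Y/S_pro} then gives $(\restr{\C}{Y^{r_0}_s})^{S^{r_0},\mu} \simeq (\restr{\C}{x})^{S^s,\mu}$. Commuting $(S^{r_0},\mu)$-invariants with the filtered colimit over $s$ (via Theorem \ref{thm:theta_mu}, by converting to coinvariants which tautologically commute with colimits) yields $\sQ(r_0) \simeq \uscolim{s \geq r_0} (\restr{\C}{x})^{S^s,\mu}$. Finally, Proposition \ref{prop:smooth-generation}, applied to the inverse system $\{S^s\}_s$ with intersection $S$, identifies this colimit with $(\restr{\C}{x})^{S,\mu}$.

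The main obstacle is this last invocation of Proposition \ref{prop:smooth-generation}, which is stated under a pro-unipotence hypothesis that is absent from the bare set-up of Sect. \ref{sssec:setup-transitive}. In the intended application to Whittaker invariants for $GL_n$, the relevant stabilizer subgroups $S^s \subseteq H$ are in fact pro-unipotent, so the proposition applies directly; more generally, one would either need a mild extension of smooth generation valid for arbitrary pro-groups, or must impose pro-unipotence of the stabilizers as an additional hypothesis.
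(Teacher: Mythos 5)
Your first step (each transition functor $\rho_{r,s}$ is an equivalence, via Proposition \ref{prop:X/G=Y/S_pro} applied to $Y^r\hookrightarrow Y^s$ inside the $S^s$-action) is essentially correct, and indeed this is how the paper uses the lemma afterwards in Proposition \ref{prop:X/H---x/S}; but it is not where the difficulty of the lemma lies, and your second step has a genuine gap. When you present $\restr{\C}{Y^{r_0}}$ as $\Dmod(Y^{r_0}_s)\otimes_{\Dmod(X^s)}\C$ and apply the transitive-action statement at the point $x^s\in Y^{r_0}_s$, the corestriction you land on is \emph{not} $\restr{\C}{x}$: the fiber of $Y^{r_0}$ over $x^s$ is the whole infinite-dimensional pro-scheme $Y^s=\{x^s\}\times_{X^s}X$, so what you actually obtain is $(\restr{\C}{Y^{r_0}})^{S^{r_0},\m}\simeq(\restr{\C}{Y^s})^{S^s,\m}$ --- i.e.\ you have only re-proved that the transition functors are equivalences, not that any term maps isomorphically onto $(\restr{\C}{x})^{S,\m}$. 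Correspondingly, the expression $(\restr{\C}{x})^{S^s,\m}$ appearing in your last two sentences is not defined: $S^s$ does not stabilize $x$, hence does not act on $\restr{\C}{x}$ (Lemma \ref{lem:stupid_cat_over_X/G}(c) requires the subscheme to be preserved by the group), so Proposition \ref{prop:smooth-generation} cannot be applied to the family $\{S^s\}$ acting on $\restr{\C}{x}$. With the corrected right-hand side the argument becomes circular: identifying $\colim_s(\restr{\C}{Y^s})^{S^s,\m}$ with $(\restr{\C}{x})^{S,\m}$ is exactly the convergence problem the lemma is about.

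The paper's proof resolves it by decoupling the space index from the group index: by a cofinality argument, $\colim_r(\restr{\C}{Y^r})^{S^r}\simeq\colim_r\colim_{s\geq r}(\restr{\C}{Y^r})^{S^s}$; for fixed $r$ all the groups $S^s$, $s\geq r$, genuinely act on the fixed category $\restr{\C}{Y^r}$, so Proposition \ref{prop:smooth-generation} applies and yields $(\restr{\C}{Y^r})^{S}$ with $S=\bigcap_s S^s$; finally, commuting the $S$-invariants with the filtered colimit over $r$ and invoking Lemma \ref{lem:approx_x} (which gives $\colim_r\restr{\C}{Y^r}\simeq\restr{\C}{x}$) produces $(\restr{\C}{x})^{S}$. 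That double-colimit manoeuvre, applying smooth generation to a shrinking family of groups acting on a \emph{fixed} corestriction rather than on $\restr{\C}{x}$, is the idea missing from your write-up. Your closing worry about pro-unipotence is fair but does not distinguish your route from the paper's: the paper's own proof also invokes Proposition \ref{prop:smooth-generation}, so the same hypothesis (satisfied by the stabilizers in the $GL_n$ application) is implicitly in force there as well.
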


\begin{proof}
Indeed,
\begin{eqnarray} \label{eqn:zbbb}
\uscolim{r \in \R^\op} (\restr \C{Y^r})^{S^r}
\simeq
\uscolim{r \in \R^\op} \,  \uscolim{s \in \R^\op_{r/}}  (\restr \C{Y^r})^{S^s}
\simeq
\Big(\,\uscolim{r \in \R^\op} (\restr \C{Y^r}) \Big) ^S
\simeq
(\restr \C{x}) ^S.
\end{eqnarray}
The first equivalence is a standard cofinality argument, the second one is ``smooth generation" for $S= \bigcap S^r$ (see Proposition \ref{prop:smooth-generation}), the third follows from Lemma \ref{lem:approx_x}.
\end{proof}

\begin{prop} \label{prop:X/H---x/S} 
The functor
$$
\rho_x :\C^{H, \mu} \to (\restr \C x)^{S, \mu}
$$
is an equivalence of categories.
\end{prop}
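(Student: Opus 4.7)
The plan is to reduce the statement to the finite-step Proposition \ref{prop:X/G=Y/S_pro} applied at each level $r$, and then to pass to the colimit along $\R^\op$ using Lemma \ref{lem:smooth-gen-in-action}.

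First, I would verify that the set-up of Sect.\ \ref{sssec:setup} holds for each pair $(Y^r, S^r)$. The transitivity of $H$ on $X$ together with a diagram chase shows that $S^r$ acts transitively on $Y^r$: given two points $y, y' \in Y^r$, any $h \in H$ with $h \cdot y = y'$ necessarily satisfies $h \cdot x^r = x^r$, so $h \in S^r$. Since $S \subseteq S^r$ is the stabilizer of $x \in Y^r$, we obtain $Y^r/S^r \simeq \{x\}/S \simeq X/H$. Therefore Proposition \ref{prop:X/G=Y/S_pro} delivers an equivalence
$$
\rho_{Y^r} : \C^{H,\mu} \xrightarrow{\,\simeq\,} (\restr \C{Y^r})^{S^r,\mu}.
$$

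Next, I would check the compatibility of these equivalences with the transition maps $\rho_{r,s}$ of Lemma \ref{lem:smooth-gen-in-action}, i.e.\ that $\rho_{r,s} \circ \rho_{Y^s} \simeq \rho_{Y^r}$. This rests on two observations: the factorization $\iota_r = \iota_s \circ \iota_{r,s}$ of closed embeddings into $X$ yields $(\iota_{r,s}^!)_\C \circ (\iota_s^!)_\C \simeq (\iota_r^!)_\C$ by functoriality of corestriction; and $(\iota_s^!)_\C$ is $H$-equivariant (Lemma \ref{lem:pullback-equivariant}), hence in particular $S^r$-equivariant, so it commutes with $\oblv^{S^s \to S^r}$. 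Combining these, the assignment $r \mapsto \rho_{Y^r}$ promotes to a natural transformation from the constant diagram at $\C^{H,\mu}$ to the diagram $\sQ$ of Lemma \ref{lem:smooth-gen-in-action}.

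Taking colimits along the filtered category $\R^\op$, and using that $\uscolim{r \in \R^\op} \C^{H,\mu} \simeq \C^{H,\mu}$, produces a functor
$$
\C^{H,\mu} \longrightarrow \uscolim{r \in \R^\op} (\restr \C{Y^r})^{S^r,\mu} \simeq (\restr \C x)^{S,\mu},
$$
whose last identification is precisely Lemma \ref{lem:smooth-gen-in-action}. Evaluating this composition at any fixed index $r$ and applying the relation $\iota_{x,Y^r} \circ \iota_r = \iota_x$ together with Lemma \ref{lem:pullback-equivariant} shows that it coincides with $\rho_x = (\iota_x^!)_\C \circ \oblv^{H \to S}$. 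Since each $\rho_{Y^r}$ is an equivalence and colimits in $\DGCat$ preserve objectwise equivalences, $\rho_x$ is an equivalence.

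The main technical point is the coherent assembly of the individual equivalences $\rho_{Y^r}$ into a natural transformation of $\R^\op$-indexed diagrams in the $\infty$-categorical sense; but this is implemented by the routine naturalities recalled above ($\oblv^\rel$, $(\iota^!)_\C$, and their compatibilities with the various inclusions $S \subseteq S^r \subseteq S^s \subseteq H$ and $Y^r \hookrightarrow Y^s \hookrightarrow X$).
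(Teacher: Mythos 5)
Your proposal is correct and follows essentially the same route as the paper: the paper's proof likewise factors $\rho_x$ through $\uscolim{\R^\op}\,(\restr{\C}{Y^r})^{S^r,\m}$, using the compatible family of equivalences $\rho_{Y^r}$ supplied by Proposition \ref{prop:X/G=Y/S_pro} and the identification of the colimit with $(\restr{\C}{x})^{S,\m}$ from Lemma \ref{lem:smooth-gen-in-action}, exactly as you do (with the compatibilities you spell out left implicit). Only a cosmetic slip: the factorization of inclusions should read $\iota_x = \iota_r \circ \iota_{x \hto Y^r}$, not the reverse composition.
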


\begin{proof}
Our functor can be factored as
$$
\rho_x : \C^H \longrightarrow \uscolim{\R^{\op}, \sQ } (\restr \C {Y^r})^{S^r} \xto{\, \,\,\ref{eqn:zaaa} \,\, \,} (\restr \C {x})^{S},
$$
where the first arrow is the direct family of equivalences $\rho_{Y^r}$ of Proposition \ref{prop:X/G=Y/S_pro}.
\end{proof}

\begin{prop} \label{prop:sigma_equiv} 
The functor
$$
\sigma_x = \Av_*^{H/S} \circ (\iota^!)^\C :(\uprestr \C x)^{S, \mu} \to \C^{H, \mu} 
$$
is an equivalence.
\end{prop}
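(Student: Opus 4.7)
The plan is to parallel the proof of Proposition \ref{prop:X/H---x/S}, replacing each use of the equivalence $\rho_{Y^r}$ with its mutual inverse $\sigma_{Y^r}[2\codim(H:S^r)]$ supplied by Proposition \ref{prop:inverse_with_Av_*-shift}. For each $r \in \R$, the composite
$$\sigma_{Y^r}\bigl[2\codim(H:S^r)\bigr] : \bigl(\uprestr \C {Y^r}\bigr)^{S^r,\mu} \to \C^{H,\mu}$$
is an equivalence, and the isomorphisms $Y^r/S^r \xrightarrow{\simeq} Y^s/S^s$ for $r \to s$ in $\R$ (together with Corollary \ref{cor:theta_inv}) guarantee that these equivalences are compatible as $r$ varies.

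Next I would establish the dual of Lemma \ref{lem:smooth-gen-in-action}: the compatible family $\{(\iota_{x \hto Y^r})^!\}^\C : \uprestr \C x \to \uprestr \C {Y^r}$ assembles into an equivalence
$$\bigl(\uprestr \C x\bigr)^{S,\mu} \;\xrightarrow{\simeq}\; \lim_{r \in \R} \, \bigl(\uprestr \C {Y^r}\bigr)^{S^r,\mu},$$
with transition maps dual to the $\rho_{r,s}$ of Lemma \ref{lem:smooth-gen-in-action}. The argument parallels (\ref{eqn:zbbb}): one starts from $\uprestr \C x \simeq \lim_r \uprestr \C {Y^r}$ (Corollary following Lemma \ref{lem:approx_x}), commutes the $S$-invariants past this limit (limits commute with limits), and applies Proposition \ref{prop:smooth-generation} fibrewise to replace $(\uprestr \C {Y^r})^{S}$ by $\lim_{s \to r} (\uprestr \C {Y^r})^{S^s}$, before a standard cofinality argument yields the displayed limit. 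Combining with Step 1, we obtain an equivalence $(\uprestr \C x)^{S,\mu} \to \C^{H,\mu}$.

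The last step is to identify this composite equivalence with the functor $\sigma_x$ itself. Here the dimension theory $\dim(X^r) = \codim(H:S^r)$ fixed in Remark \ref{rem:dim_thry-chosen} is essential: the shifts $[2\codim(H:S^r)]$ appearing in Proposition \ref{prop:inverse_with_Av_*-shift} are precisely those absorbed by the self-duality $\Lambda$ on $\Dmod^!(X)$, so the limit of the shifted finite-dimensional equivalences computes $\Av_*^{H/S} \circ (\iota^!)^\C$ (i.e.\ $\sigma_x$), rather than a renormalized variant. The commutativity of triangle (\ref{triangle-transitive}) then falls out of Proposition \ref{prop:X/G=Y/S_pro} applied levelwise, by passage to the limit.

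The principal obstacle will be the middle step: correctly identifying the transition maps in the limit presentation of $(\uprestr \C x)^{S,\mu}$ and verifying that they intertwine the equivalences $\sigma_{Y^r}[2\codim(H:S^r)]$ — one must check that the transition map $(\uprestr \C {Y^s})^{S^s,\mu} \to (\uprestr \C {Y^r})^{S^r,\mu}$ produced by my formula agrees with $\sigma_{Y^r}[2\codim(H:S^r)]^{-1} \circ \sigma_{Y^s}[2\codim(H:S^s)]$, which amounts to a compatibility between the shifts and the $\codim$-difference $\codim(S^r : S^s) = \codim(X^s : X^r)$. Once this bookkeeping is carried out, the identification of the composite with $\sigma_x$, and hence the fact that $\sigma_x$ is an equivalence, is formal.
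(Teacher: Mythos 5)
Your proposal is correct and takes essentially the same route as the paper, whose proof of this proposition is literally a one-line remark that one repeats the argument of Proposition \ref{prop:X/H---x/S} using the equivalences $\sigma_{Y^r}$ of Proposition \ref{prop:X/G=Y/S_pro} together with the obvious dual variant of Lemma \ref{lem:smooth-gen-in-action} — which is exactly what you spell out. The shift bookkeeping you flag as the main obstacle is harmless for the equivalence statement itself (shifts are equivalences; the precise shifts only become relevant for the identification with $\Theta_{x\hto X}$ in Proposition \ref{ZC}).
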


\begin{proof}
The proof is similar to the one above, using the equivalences $\sigma_Y$ and the obvious variant of Lemma \ref{lem:smooth-gen-in-action}.
\end{proof}

\begin{prop} \label{ZC}
The composition
$$
(\sigma_x)^{-1} \circ (\rho_x)^{-1}: (\restr \C x)_S \to (\uprestr \C x)^S
$$
coincides with the map $\Theta_{x \to X}$, which is, therefore, an equivalence.
\end{prop}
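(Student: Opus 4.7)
The plan is to identify both $\Theta_{x \hto X}$ and $(\sigma_x)^{-1} \circ (\rho_x)^{-1}$ as the common colimit over $r \in \R^\op$ of the finite-stage functors $\theta_{Y^r \hto X}[2\codim(H:S^r)]$ and then invoke the commutative triangle (\ref{triangle}) of Proposition \ref{prop:X/G=Y/S_pro} at each $r$. The first step is to compute $(\delta_x)^\ren$ explicitly. Using the presentation $\delta_x \simeq \uscolim{r \in \R^\op} (\pi_{\infty \to r})^*(\delta_{x^r})$ given by (\ref{eqn:fromLimToColim}), applying $\lambda_X^{-1}$ and invoking (\ref{eqn:eta-compat-with-ins}) together with base change for $(\iota_{Y^r})_+$ along the Cartesian square $Y^r = X \times_{X^r}\{x^r\}$, one obtains
$$
(\delta_x)^\ren \simeq \uscolim{r \in \R^\op} (\iota_{Y^r})_+(\omega_{Y^r})[2\codim(H:S^r)],
$$
where the shifts are fixed by the dimension theory of Remark \ref{rem:dim_thry-chosen}. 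By Lemma \ref{lem:useful}, each summand $(\iota_{Y^r})_+(\omega_{Y^r}) \diamond -$ factors through $\theta_{Y^r \hto X}: \restr{\C}{Y^r} \to \uprestr{\C}{Y^r}$, so taking $(S,\mu)$-invariants of the induced action exhibits $\Theta_{x \hto X}$ (which is $S$-equivariant since $(\delta_x)^\ren$ is fixed by $S$) as the colimit, over $r$, of the shifted finite-stage functors.

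Next, for each $r$ the hypotheses of Proposition \ref{prop:X/G=Y/S_pro} are satisfied for the closed embedding $Y^r \hto X$ and $S^r \subseteq H$: the map $\{x\}/S \xto{\simeq} X/H$ is pulled back from $Y^r/S^r \xto{\simeq} X/H$. The corresponding diagram (\ref{triangle}) therefore gives
$$
\theta_{Y^r \hto X}[2\codim(H:S^r)] \simeq (\sigma_{Y^r})^{-1} \circ (\rho_{Y^r})^{-1}.
$$
At this point one applies the paradigm of Sect.~\ref{paradigm-colim-lim} with $\I = \R^\op$, $\D_r := (\restr{\C}{Y^r})^{S^r,\mu}$, $\E^r := (\uprestr{\C}{Y^r})^{S^r,\mu}$, and $\theta_r := \theta_{Y^r \hto X}[2\codim(H:S^r)]$: the transition maps on the $\D$-side are essentially surjective by Lemma \ref{lem:smooth-gen-in-action}, while the transition maps on the $\E$-side are fully faithful by the dual computation used in the proof of Proposition \ref{prop:sigma_equiv}. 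The paradigm produces a factorization $\uscolim{r} \theta_r \simeq \ev^{\infty \to 0} \circ \Theta \circ \ins_{0 \to \infty}$, and the colimit identification of Step~1 shows $\Theta = \Theta_{x \hto X}$. On the other hand, running the proofs of Propositions \ref{prop:X/H---x/S} and \ref{prop:sigma_equiv} in parallel identifies the left-hand side with $(\sigma_x)^{-1} \circ (\rho_x)^{-1}$, since $\rho_x$ is assembled from the $\rho_{Y^r}$'s and $\sigma_x$ from the $\sigma_{Y^r}$'s.

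The main obstacle is verifying the coherence of the transition maps so that the colimit assembly is legitimate: one needs the triangles for consecutive indices $r \to s$ to commute with the structural functors $(\iota^!_{r,s})_\C : (\restr{\C}{Y^s})^{S^s,\mu} \to (\restr{\C}{Y^r})^{S^r,\mu}$ and dually for the $\uprestr{}{}$-side. Concretely, one must check that the finite-stage equivalences $\theta_{Y^r \hto X}[2\codim(H:S^r)]$ are intertwined by these transitions, which reduces to two ingredients already at our disposal: the base-change computation from Step~1 (applied now to the maps $\iota_{r,s}$ rather than just $\iota_{Y^r}$), and the $S^r$-equivariance of $\theta_{Y^r \hto X}$ supplied by Corollary \ref{cor:theta_inv}. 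Modulo this bookkeeping, combining the three paragraphs above yields the desired identification, and hence the equivalence of $\Theta_{x \hto X}$.
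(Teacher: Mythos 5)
Your proposal is correct and follows essentially the same route as the paper's proof: both identify $(\sigma_x)^{-1}\circ(\rho_x)^{-1}$ with the colimit of the shifted finite-stage functors $\theta_{Y^r \hto X}[2\codim(H:S^r)]$ via the triangle of Corollary \ref{triangle} and the paradigm of Sect.~\ref{paradigm-colim-lim}, and then reduce to the base-change identification $(\iota_{Y^r \hto X})_+(\omega_{Y^r}) \simeq (\pi_{\infty \to r})^!(\delta_{x^r})$, which exhibits the colimit object as $\delta_x^\ren$ for the dimension theory of Remark \ref{rem:dim_thry-chosen}. The only difference is expository (you compute $\delta_x^\ren$ first and flag the coherence bookkeeping explicitly), not mathematical.
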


\begin{proof}
By Corollary \ref{triangle}, the composition in question comes from the colimit of the functors
$\theta_{Y^r \hto X} [2 \dim(H:S^r)]$. By the general paradigm of Section \ref{paradigm-colim-lim}, this functor is induced from the functor of action with
$$ 
\uscolim{r \in \R^\op} \Big( (\iota_{Y^r \hto X})_+ (\omega_{Y^r}) [2 \codim(H:S^r)] \Big) \in \Dmod^!(X).
$$
Thus, we need to show that the latter is isomorphic to $\delta_x^\ren$, for the dimension theory of Remark \ref{rem:dim_thry-chosen}.
It suffices to prove that $(\iota_{Y^r \hto X})_+ (\omega_{Y^r}) \simeq (\pi_{X \to X^r})^! (\delta_{x^r})$. Thanks to the $(\iota_+, p^!)$ base-change along
\begin{gather}
\xy
(00,0)*+{ \pt }="00";
(30,0)*+{ X^r, }="10";
(0,15)*+{ Y^r }="01";
(30,15)*+{ X  }="11";
{\ar@{<-}_{ p  } "00";"01"};
{\ar@{->}_{ {x^r} } "00";"10"};
{\ar@{->}^{ \iota } "01";"11"};
{\ar@{<-}_{  \pi_{\infty \to r}} "10";"11"};
\endxy
\end{gather} 
this is immediate.
\end{proof}

\ssec{Actions by semi-direct products} \label{ssec:semi-direct}

We conclude with a short discussion of action by semi-direct products.

\sssec{}

Let $H \ltimes S$ be a semidirect product of pro-groups and denote by $\q: H \ltimes S \to S$ the projection.
Since $\Dmod^*(H \ltimes S) \mmod \simeq \Dmod^*(S) \mod (\Dmod^*(H) \mmod)$, we obtain

\begin{lem} \label{lem:AvAv_commute}
For any $\C \in \Dmod^*(H \ltimes S) \mmod$, the functors $\oblv^S : \C^S \rightleftarrows \C: \Av_*^S$ are $\Dmod^*(H)$-linear.
\end{lem}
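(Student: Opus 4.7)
My plan is to exploit directly the identification
$$
\Dmod^*(H \ltimes S) \mmod \simeq \Dmod^*(S) \mod\!\big(\Dmod^*(H) \mmod\big),
$$
already invoked in the statement. With $\S := \Dmod^*(H)\mmod$ viewed as a symmetric monoidal $\infty$-category (with tensor product being the one inherited from $\DGCat$ via relative tensor over $\Dmod^*(H)$ acting through the coproduct of $H$), an $H \ltimes S$-category $\C$ corresponds to an object of $\S$ together with a compatible action of the algebra object $\Dmod^*(S) \in \Alg(\S)$. In particular, the trivial action $p_* : \Dmod^*(S) \to \Vect$ is a morphism of algebras in $\S$ (since the character module on $\Vect$ is naturally an object of $\S$, the $H$-action being trivial), so that $\Vect$ is a $\Dmod^*(S)$-module object inside $\S$.

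Next, I would apply the two paradigms recalled in Sect.~\ref{paradigm}, but interpreted in the ambient $\infty$-category $\S$ rather than in $\DGCat$. Namely, Proposition~\ref{prop:p^*-equiv-pro} supplies an $S$-equivariant adjunction $p^* : \Vect \rightleftarrows \Dmod^*(S) : p_*$; by our enhanced perspective, this is an adjunction in $\Dmod^*(S)\mmod(\S)$ (both functors are morphisms of objects of $\S$ equipped with compatible $\Dmod^*(S)$-module structures). Applying the paradigm of Sect.~\ref{paradigm} with $\A = \Dmod^*(S)$ internally in $\S$ then produces the $\S$-internal adjunctions
$$
\oblv^S = (p_*)^\C : \C^S \rightleftarrows \C : (p^*)^\C = \Av_*^S,
$$
where $\C^S := \Hom_{\Dmod^*(S)}(\Vect, \C)$ is now a $\Dmod^*(H)$-module category, and both $\oblv^S$ and $\Av_*^S$ are morphisms in $\S = \Dmod^*(H)\mmod$, that is, $\Dmod^*(H)$-linear.

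The only subtlety, which is the heart of the argument, is to see that the monad-theoretic $\Hom$ and relative tensor product inside $\S$ produce the \emph{same} categories $\C^S$ and $\C_S$ as when computed naively in $\DGCat$. For $\C^S$ this follows from the fact that the simplicial diagram (\ref{eqn:cosimplicial_cat}) computing $\C^S$ is a cosimplicial diagram in $\Dmod^*(H)\mmod$ (all the structure maps, being given by coaction, multiplication and trivial action, are manifestly $\Dmod^*(H)$-linear in the semidirect-product situation), and limits in $\Dmod^*(H)\mmod$ are computed at the level of underlying DG categories; the analogous remark handles the coinvariant side, and then the continuous right adjoint $\Av_*^S$ exists and is $\Dmod^*(H)$-linear because it is so object-wise in the cosimplicial totalization, by Proposition~\ref{prop:p^*-equiv-pro}. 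The anticipated obstacle is purely bookkeeping: making the coherences between the $H$- and $S$-actions (encoded by the semidirect product structure) compatible with the totalization that defines $\C^S$; once the identification of $\S$-internal $\Hom_{\Dmod^*(S)}$ with the usual one is established, the $\Dmod^*(H)$-linearity of both adjoints is automatic.
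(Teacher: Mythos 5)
Your proposal is correct and follows essentially the same route as the paper, which simply derives the lemma from the identification $\Dmod^*(H \ltimes S) \mmod \simeq \Dmod^*(S) \mod\!\big(\Dmod^*(H) \mmod\big)$ with no further argument given. Your write-up merely makes explicit the bookkeeping the paper leaves implicit — that the adjunction $p^* \rightleftarrows p_*$ and the (co)simplicial diagrams computing $\C^S$ live internally to $\Dmod^*(H)\mmod$, whose limits and colimits are computed on underlying DG categories — so both $\oblv^S$ and $\Av_*^S$ are morphisms there, i.e.\ $\Dmod^*(H)$-linear.
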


In particular, if $\m: H \ltimes S \to \GG_a$ is a character, we obtain a functorial identification 
$$
(-\m)_H \star (-\m)_S \simeq (-\m)_S \star (-\m)_H;
$$
in other words, $\Av_*^{H,\m}$ and $\Av_*^{S,\m}$ commute (when viewed as endofunctors of $\C$).
 
\sssec{} 

The action of $\Dmod^*(H)$ on $\C^S$ and $\C_S$ coincides with the action of $\Dmod^*((H \ltimes S)/S)$ guaranteed by Lemma \ref{lem:auxiliary}. This is obvious as $(H \ltimes S)/S$ and $H$ are isomorphic as groups. Hence:

\begin{lem} \label{lem:semidirect-pro}
There is an equivalence $(\C^S)^H \xto{\simeq} \C^{H \ltimes S}$ compatible with $\oblv^H$ and $\oblv^{H \ltimes S}$. Likewise, there is an equivalence $(\C_S)_H \xleftarrow{\simeq} \C_{{H \ltimes S}}$ compatible with $\pr_S$ and $\pr_{H \ltimes S}$. In other words, the following diagrams are commutative:
\begin{gather}
\xy
(00,00)*+{ \C^S }="00";
(35,00)*+{ \C }="10";
(00,15)*+{ (\C^S)^H }="01";
(35,15)*+{ \C^{H \ltimes S} }="11";
{\ar@{<-}^{ \oblv^H } "00";"01"};
{\ar@{->}_{ \oblv^S  } "00";"10"};
{\ar@{->}_{\simeq } "01";"11"};
{\ar@{<-}^{ \oblv^{H \ltimes S}} "10";"11"};
(60,0)*+{ \C_{H \ltimes S} }="20";
(60,15)*+{ \C }="30";
(97,0)*+{ (\C_S)_H. }="21";
(97,15)*+{ \C_S }="31";
{\ar@{->}^{ \simeq } "20";"21"};
{\ar@{<-}_{ \pr_{H \ltimes S} } "20";"30"};
{\ar@{<-}_{ \pr_{H} } "21";"31"};
{\ar@{->}^{ \pr_S} "30";"31"};
\endxy
\end{gather} 
\end{lem}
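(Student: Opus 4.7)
The plan is to leverage the crossed-product structure already recorded at the start of Section \ref{ssec:semi-direct}: the semidirect product equivalence
$$\Dmod^*(H \ltimes S) \mmod \simeq \Dmod^*(S) \mod\bigl(\Dmod^*(H) \mmod\bigr)$$
identifies categories acted on by $H \ltimes S$ with $\Dmod^*(S)$-module objects inside $\Dmod^*(H) \mmod$. Under this equivalence, the trivial $(H \ltimes S)$-module $\Vect$ corresponds to $\Vect$ equipped with the trivial $H$-action and the trivial $S$-action, since the unit morphism $\Vect \to \Dmod^*(H \ltimes S)$ factors through both $\Dmod^*(H)$ and $\Dmod^*(S)$.

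First I would compute invariants by iterating the internal Hom. For any $\C \in \Dmod^*(H \ltimes S)\mmod$,
$$\C^{H \ltimes S} \;=\; \Hom_{\Dmod^*(H \ltimes S)}(\Vect, \C) \;\simeq\; \Hom_{\Dmod^*(H)}\!\bigl(\Vect,\, \Hom_{\Dmod^*(S)}(\Vect, \C)\bigr) \;=\; (\C^S)^H,$$
where the inner Hom, taken in $\Dmod^*(H) \mmod$, yields $\C^S$ with its natural $\Dmod^*(H)$-action (this is exactly the action furnished by Lemma \ref{lem:AvAv_commute}, which matches the one from the isomorphism $(H \ltimes S)/S \simeq H$). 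Commutativity of the displayed invariants square is then automatic: $\oblv^{H \ltimes S}$ is the counit of the adjunction between $\Vect$ and $\Dmod^*(H \ltimes S)$, and it factors through the intermediate adjunction with $\Dmod^*(H)$, matching the composition $\oblv^S \circ \oblv^H$. The statement for coinvariants is formally dual: replacing $\Hom$ by the relative tensor product,
$$\C_{H \ltimes S} \;=\; \Vect \usotimes{\Dmod^*(H \ltimes S)} \C \;\simeq\; \Vect \usotimes{\Dmod^*(H)} \Bigl(\Vect \usotimes{\Dmod^*(S)} \C\Bigr) \;=\; (\C_S)_H,$$
and the compatibility with $\pr$ follows from the same factorization of the unit morphism.

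The only real obstacle is bookkeeping the coherent identification of trivial modules and structure functors on the two sides of the semidirect-product equivalence. This is a formal consequence of the fact that $\Dmod^*(H \ltimes S)$ is realized as an algebra object in $\Dmod^*(H) \mmod$ whose underlying $\Dmod^*(H)$-module is $\Dmod^*(S)$, together with the iterated-module-category form of Barr--Beck--Lurie; no further analysis is required beyond what was already set up in Sections \ref{ssec:Hopf} and \ref{sssec:coin-inv-simplicial-constructions}.
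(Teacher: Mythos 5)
Your argument is correct, but it takes a different route from the paper's. The paper deduces the lemma from the normal-subgroup mechanism: $S$ is normal in $H \ltimes S$, so Lemma \ref{lem:auxiliary} (the pro-group analogue of Lemma \ref{lem:normal-inv}, whose proof rests on the geometric computation $\Dmod^*(H\ltimes S)^{S} \simeq \Dmod((H\ltimes S)/S)$ via a pro-presentation) gives $\C^{H\ltimes S} \simeq (\C^S)^{(H\ltimes S)/S}$, and then one identifies the $(H\ltimes S)/S$-action on $\C^S$ with the $\Dmod^*(H)$-action coming from the crossed-product structure (the discussion surrounding Lemma \ref{lem:AvAv_commute}). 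You instead stay entirely inside higher algebra: starting from $\Dmod^*(H \ltimes S) \mmod \simeq \Dmod^*(S) \mod(\Dmod^*(H)\mmod)$ you iterate $\Hom$ and the relative tensor product against the doubly-trivial module, so the whole content is the associativity of these operations over the crossed product (checked on the bar/cobar resolutions, whose terms and transition functors lie in $\Dmod^*(H)\mmod$ exactly because of the Hopf compatibility and the $H$-equivariance of $p_S$). What your approach buys: it bypasses Lemma \ref{lem:auxiliary} entirely, it treats invariants and coinvariants symmetrically (the paper's Lemma \ref{lem:auxiliary} is stated only for invariants of pro-unipotent groups), and it applies verbatim in the ind-pro setting of the remark following the lemma, since nothing beyond the identification $\Dmod^*(\H \ltimes \S)\mmod \simeq \Dmod^*(\S)\mod(\Dmod^*(\H)\mmod)$ is used; what the paper's route buys is that the quotient-group description $(H\ltimes S)/S \simeq H$ makes the identification of the residual action on $\C^S$ immediate. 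One small imprecision to fix in your write-up: the trivial module structure on $\Vect$ is encoded by the augmentation $(p_{H\ltimes S})_*: \Dmod^*(H\ltimes S)\to \Vect$ (de Rham pushforward), not by a ``unit morphism'' $\Vect \to \Dmod^*(H\ltimes S)$; the factorization you need is that this augmentation is the tensor product of the augmentations of $\Dmod^*(H)$ and $\Dmod^*(S)$ under $\Dmod^*(H\ltimes S) \simeq \Dmod^*(H)\otimes\Dmod^*(S)$, which holds by the Künneth equivalence and $H$-equivariance of $p_S: S \to \pt$.
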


\begin{rem}
By adjunction, the above equivalence $(\C^S)^H \xto{\simeq} \C^{H \ltimes S}$ is also compatible with $\Av_*^H$ and $\Av_*^{H \ltimes S}$.
\end{rem}

\begin{rem}
We will also need a version of the above lemma for ind-pro-groups. Let $\H = \colim_\I H_n$, $\S = \colim_\I S_n$ be filtered colimits of pro-groups under closed embeddings and assume that $H_n$ acts on $S_n$ for each $n \in \I$. For instance, $\H = \bN'$ and $\S = \bA$ (see Sect. \ref{combinat}).
The statement of Lemma \ref{lem:semidirect-pro} applies verbatim to $\H \ltimes \S$. The proof is straightforward.
\end{rem}

\section{Actions by the loop group of $GL_n$} \label{SEC:GL_n}

We assume from now on that $G = GL_n$. Let $B$ be the Borel subgroup of upper triangular matrices and $N \subset B$ its unipotent radical: upper triangular matrices with $1$'s on the diagonal. Further, let $P \subseteq G$ be the mirabolic subgroup, $T \subseteq B$ be the maximal torus of diagonal matrices, $B_-$, $N_-$ be the opposite Borel and its unipotent radical.
The character $\chi$ on $\bN$ simply computes the sum of the residues of the entries in the diagonal $(i,i+1)$.
To approximate $\bN$, we shall use an explicit sequence of group schemes $\bN_k$: consider the diagonal element $\gamma:= \on{diag}\left( t^{nk}, t^{(n-1)k}, \ldots, t^{k} \right) \in \bT$. We let $\bN_k := \gamma^{-1} \cdot N[[t]] \cdot \gamma$. 

We will also need analogous notations for subgroups of $G' := GL_{n-1}$, its loop group and so on. Thus $B', N', \bG', \bN', \chi'$ have their obvious meanings.

\ssec{Statement of the main theorem}

Let $\C$ be a category acted on by $\bP := P \ppart$. 
Let $k \geq 1$ and fix once and for all the trivialization of the dimension torsor of $\bN$ defined by $\dim(\bN_k) = 0$; we indicate by $\Lambda$ the corresponding equivalence $\Lambda: \Dmod^!(\bN) \to \Dmod^*(\bN)$. 
Recall the functor $\Theta := \Theta_{\Lambda} : \C_{\bN, \chi} \to \C^{\bN, \chi}$, defined in Conjecture \ref{conj:Tequiv}:
$$
\Theta \simeq \uscolim{\ell \geq k} \, \Av_*^{\bN_\ell, \chi} [2 \codim(\bN_\ell : \bN_k)].
$$
Our goal is to prove:

\begin{thm} \label{thm:main}
For any $\C \in \bP \rrep$, the above functor $\Theta: \C_{\bN,\chi} \to \C^{\bN,\chi}$ is an equivalence of categories.
\end{thm}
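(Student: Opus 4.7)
The plan is to prove Theorem \ref{thm:main} by elaborating the three-step strategy outlined in the introduction: construct the explicit pro-unipotent subgroup $\bH_k \subset \bP$, exhibit a pair of natural functors
$$\C_{\bN,\chi} \xleftarrow{\ \alpha\ } \C^{\bH_k,\chi} \xleftarrow{\ \beta\ } \C^{\bN,\chi},$$
prove each is an equivalence, and identify the composite $\alpha \circ \beta^{-1}$ with $\Theta$. The group $\bH_k$ is defined so that its intersection with $\bN$ equals $\bN_k$ (the conjugate of $N[[t]]$ by $\gamma$) and its intersection with the lower mirabolic part contains $\gamma \cdot N_-[[t]] \cdot \gamma^{-1}$ suitably truncated; a direct check shows the character $\chi$ on $\bN$ extends uniquely to a character on $\bH_k$ (still denoted $\chi$) and that $\bH_k$ is pro-unipotent. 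Since $\bN \subset \bH_k$ with $\bN_k = \bN \cap \bH_k$, one has the two obvious relative forgetful functors $\beta = \oblv^{\bN \to \bH_k}$ and, after passing to coinvariants via Theorem \ref{thm:inv=coinv}/Theorem \ref{thm:theta_mu}, the functor $\alpha$.

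The heart of the argument is to show $\C^{\bN,\chi} \simeq \C^{\bH_k,\chi}$ by induction on $n$, exploiting the mirabolic decomposition $P \simeq GL_{n-1} \ltimes \AA^{n-1}$ where $\AA^{n-1}$ is the last column of $P$. Writing $\bN = \bN' \ltimes \bA$ and $\bH_k = \bH'_k \ltimes \bK_k$, where $\bA = \AA^{n-1}\ppart$ and $\bK_k \subset \bA$ is the last-column part of $\bH_k$, Lemma \ref{lem:semidirect-pro} reduces the question to comparing
$$\bigl(\C^{\bA,\chi_\AA}\bigr)^{\bN',\chi'} \quad \text{and} \quad \bigl(\C^{\bK_k,\chi_\AA}\bigr)^{\bH'_k,\chi'}.$$
For the abelian factor I apply Fourier transform: by Proposition \ref{prop:FT_char_subspace_indpro}, $\C^{\bA,\chi_\AA}$ is $\uprestr{\C}{\chi_\AA}$ (a point, since $\chi_\AA$ is non-degenerate) and $\C^{\bK_k,\chi_\AA}$ is $\uprestr{\C}{\chi_\AA + \bK_k^\perp}$. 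Because $\bK_k$ is a lattice in $\bA$, the annihilator $\bK_k^\perp$ is a compatible lattice in $\bA^\vee$, and under the dual $GL_{n-1}\ppart$-action the orbit of $\chi_\AA$ in $\chi_\AA + \bK_k^\perp$ is transitive with stabilizer in $\bH'_k$ computable from the construction of $\bH_k$. Proposition \ref{prop:sigma_equiv} then identifies
$$\bigl(\uprestr{\C}{\chi_\AA + \bK_k^\perp}\bigr)^{\bH'_k,\chi'} \simeq \bigl(\uprestr{\C}{\chi_\AA}\bigr)^{(\bH'_k \cap \Stab),\chi'},$$
and the inductive hypothesis on $GL_{n-1}$ takes this back to $(\C^{\bA,\chi_\AA})^{\bN',\chi'}$. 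The base case $n = 2$ is exactly the three-line computation in the introduction, which uses only (a), (b), (c) directly.

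The same argument, applied to coinvariants (using Fourier transform for $\overset{*!}\otimes$ in the abelian direction and the coinvariant analogue of Proposition \ref{prop:sigma_equiv}), gives $\C_{\bN,\chi} \simeq \C^{\bH_k,\chi}$, furnishing $\alpha$. Finally, to identify $\alpha \circ \beta^{-1}$ with $\Theta$, I trace through the equivalences: $\beta^{-1}$ is $\Av_*^{\bH_k \to \bN,\chi}$ (a shifted averaging), and $\alpha$ is the composite of $\theta_{\bH_k,\chi}$ from Theorem \ref{thm:theta_mu} with the coinduction functor $\C_{\bH_k,\chi} \to \C_{\bN,\chi}$. Both are given by convolution with renormalized averages of $\chi$-twisted constant sheaves, and the cohomological shifts conspire (by construction of the dimension theory with $\dim(\bN_k) = 0$) to match the defining colimit $\Theta = \colim_{\ell \geq k} \Av_*^{\bN_\ell,\chi}[2\codim(\bN_\ell : \bN_k)]$ via the paradigm of Section \ref{paradigm-colim-lim}.

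The main obstacle I anticipate is the bookkeeping in step two: verifying that the dual action of $GL_{n-1}\ppart$ on $\chi_\AA + \bK_k^\perp$ really is transitive with stabilizer fitting into $\bH'_k$ in the correct way to feed back into the induction. This is a concrete but delicate combinatorial computation with the nested subgroups of $GL_n\ppart$, and it is what forces the specific choice of $\gamma = \on{diag}(t^{nk},\ldots,t^k)$ and the precise shape of $\bH_k$: the exponents are arranged so that the annihilator lattices match at each inductive step. Once this combinatorics is set up correctly, the proof reduces to mechanical applications of facts (a), (b), (c).
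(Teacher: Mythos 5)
Your proposal reproduces the overall architecture of the paper's proof (the auxiliary pro-unipotent group, the pair of functors through $\C^{\bH_k,\chi}$, induction on $n$ via the mirabolic decomposition with Fourier transform on the abelian factor, and the identification of the composite with $\Theta$ via the dimension theory and the paradigm of Sect. \ref{paradigm-colim-lim}), but there is a genuine gap at exactly the point you yourself flag as "delicate bookkeeping", and it is not just bookkeeping. First, a structural slip: $\bN\not\subset\bH_k$ (one has $\bN\cap\bH_k=\bN_k$, and neither group contains the other), so there is no forgetful functor $\oblv^{\bN\to\bH_k}$; the correct functors are $\Phi=\Av_*^{\bH_k,\chi}\circ\oblv^{\bN,\chi}$ and $\Psi=\pr_{\bN,\chi}\circ\oblv^{\bH_k,\chi}$ (Theorem \ref{thm:refinement}). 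Second, and fatally for your induction, you decompose $\bH_k=\bH'_k\ltimes\bK_k$, i.e.\ you take the $GL_{n-1}$-block of $\bH_k$ to be the inductively defined group $\bH'_k$. With that choice the transitivity you need is false: $\bH'_k$ has last row $(0,\dots,0,1)$, so under the dual action it \emph{fixes} the covector $\chi_a=e_{n-1}$; its orbit is a single point, not the affine lattice $\chi_a+\bK_k^\perp$. Hence Proposition \ref{prop:sigma_equiv} does not apply, and the step $(\uprestr{\C}{\chi_a+\bK_k^\perp})^{\bH'_k,\chi'}\simeq(\uprestr{\C}{\chi_a})^{\bH'_k\cap\Stab,\chi'}$ is unjustified (and in general false): restriction from the positive-dimensional lattice to the point loses information precisely when no group direction compensates. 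Your verbal description of $\bH_k$ ("contains $\gamma\cdot N_-[[t]]\cdot\gamma^{-1}$ suitably truncated") is also inconsistent with the decomposition $\bH'_k\ltimes\bK_k$ you actually use, since the latter has no lower-triangular part.

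The paper's construction is designed exactly to repair this: $\bH_k=\bG_k\ltimes\bA_k$, where the block group $\bG_k=\Ad_{\gamma'^{-1}}\bigl((\bB'_-)^k\cdot N'(\O)\bigr)$ is \emph{strictly larger} than $\bH'_k$, containing congruence-depth lower-triangular entries ($t^{2k}\O$, etc.) and diagonal entries $1+t^k\O$. These extra directions sweep $e_{n-1}$ over all of $\bL_k=e_{n-1}+\bA_k^\perp$, and the stabilizer of $e_{n-1}$ in $\bG_k$ is exactly $\bH'_k$ (Lemma \ref{lem:last_row_e_n-1}). So $\bH'_k$ enters the induction as the stabilizer inside the bigger block $\bG_k$, not as the block of $\bH_k$ itself; conflating these two groups is what prevents your induction from closing. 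Once $\bH_k$ is corrected, your chain of reductions becomes the paper's proof of Propositions \ref{lem:PHI} and \ref{lem:PSI}, and your final comparison with $\Theta$ is Proposition \ref{prop:comp=T} (using $\Theta_\bN\simeq\Theta_{\bN'}\circ\Theta_\bA$, Lemma \ref{lem:FT-of-omega-ren}, and Lemma \ref{lem:AvAv_commute}).
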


In the remainder of the text, we shall define a pro-unipotent group scheme $\bH_k \subset GL_n \ppart$ and establish the following two results, whose combination implies the above theorem.

\begin{thm} \label{thm:refinement}
For any $k \geq 1$, the functors
$$
\Phi:= \Av_*^{\bH_k, \chi} \circ \, \oblv^{\bN,\chi}: \C^{\bN,\chi} \longrightarrow  \C^{\bH_k, \chi}
$$
$$
\Psi:= \pr_{\bN, \chi} \circ \, \oblv^{\bH_k,\chi}: \C^{\bH_k, \chi} \longrightarrow \C_{\bN,\chi}
$$
are equivalences of categories.
\end{thm}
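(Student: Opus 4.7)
The plan is to prove both equivalences by induction on $n$, extending the $GL_2$ sketch presented in the introduction via the mirabolic decomposition
$$
\bP \simeq \bG' \ltimes \AA^{n-1}\ppart, \qquad \bN \simeq \bN' \ltimes \AA^{n-1}\ppart,
$$
where $\bG' := GL_{n-1}\ppart$ and $\bN'$ is its maximal unipotent. The character $\chi$ decomposes as $\chi'$ on $\bN'$ plus the residue of the top entry on the $\AA^{n-1}\ppart$-factor. The key structural input is that $\bH_k$ admits a parallel semidirect decomposition $\bH_k \simeq \bH_k' \ltimes \bM_k$, where $\bH_k' \subset \bG'$ is the analogous group scheme sitting in the upper-left $(n-1)\times(n-1)$ block and $\bM_k \subset \AA^{n-1}\ppart$ is the Tate vector subspace encoding the last column strictly above the diagonal, with $\chi$ restricting in the matching way.

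For $\Phi$, I would construct a chain of equivalences $\C^{\bN,\chi} \simeq \C^{\bH_k,\chi}$ via three reductions. First, Lemma \ref{lem:semidirect-pro} (fact (a)) yields
$$
\C^{\bN,\chi} \simeq \bigl(\C^{\AA^{n-1}\ppart,\on{Res}}\bigr)^{\bN',\chi'}, \qquad \C^{\bH_k,\chi} \simeq \bigl(\C^{\bM_k,\on{Res}}\bigr)^{\bH_k',\chi'}.
$$
Second, Proposition \ref{prop:KV^vee-module} identifies $\C$ as a category tensored over $(\AA^{n-1}\ppart)^\vee/\bG'$, and Proposition \ref{prop:FT_char_subspace_indpro} (fact (b)) Fourier-transforms the abelian invariants into restrictions:
$$
\C^{\AA^{n-1}\ppart,\on{Res}} \simeq \uprestr{\C}{\on{Res}}, \qquad \C^{\bM_k,\on{Res}} \simeq \uprestr{\C}{\on{Res}+\bM_k^\perp}.
$$
Third, a direct matrix calculation shows that $\on{Res}+\bM_k^\perp$ is a single $\bH_k'$-orbit in $(\AA^{n-1}\ppart)^\vee$ under the dual action, whose stabilizer at $\on{Res}$ matches, after conjugation by $\gamma$, the intersection $\bH_k' \cap \mathrm{Stab}_{\bG'}(\on{Res})$. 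Proposition \ref{prop:sigma_equiv} (fact (c)) then identifies $(\uprestr{\C}{\on{Res}+\bM_k^\perp})^{\bH_k',\chi'}$ with the $(\bH_k'\cap\mathrm{Stab})$-invariants of $\uprestr{\C}{\on{Res}}$; the inductive hypothesis applied to $\uprestr{\C}{\on{Res}}$ (which carries the requisite mirabolic $\bG'$-action) then matches this with the $\bN'$-invariants, closing the comparison.

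For $\Psi$, Theorem \ref{thm:inv=coinv} gives $\C^{\bH_k,\chi} \simeq \C_{\bH_k,\chi}$ since $\bH_k$ is pro-unipotent, and the dualized three-step reduction (replacing invariants by coinvariants throughout, using the dual forms of (a), (b), (c)) produces $\C_{\bH_k,\chi} \simeq \C_{\bN,\chi}$. Matching the resulting abstract equivalences with the explicit $\Phi = \Av_*^{\bH_k,\chi}\circ \oblv^{\bN,\chi}$ and $\Psi = \pr_{\bN,\chi}\circ \oblv^{\bH_k,\chi}$ is a routine bookkeeping exercise, using Lemma \ref{lem:AvAv_commute} to commute the various averaging and oblivion functors past each semidirect-product decomposition.

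The main obstacle will be the transitive-action step: verifying that $\on{Res}+\bM_k^\perp$ is exactly the $\bH_k'$-orbit of $\on{Res}$ and that $\bH_k'\cap\mathrm{Stab}_{\bG'}(\on{Res})$ arranges into the shape required for the inductive hypothesis to close. This depends crucially on the specific powers of $t$ built into $\bH_k$ via conjugation by $\gamma$, and it is exactly where the explicit definition of $\bH_k$ (rather than some other approximating subgroup) becomes indispensable. Once this combinatorial calculation is in hand, the induction proceeds cleanly.
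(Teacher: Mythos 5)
Your skeleton (induction on $n$, the decomposition $\bN \simeq \bN' \ltimes \AA^{n-1}\ppart$, Fourier transform on the abelian part, a transitive-action step) is indeed the paper's skeleton, but the group theory on which your key step rests is incorrect, and the argument breaks exactly there. The upper-left $(n-1)\times(n-1)$ block of $\bH_k$ is \emph{not} the $GL_{n-1}$-analogue $\bH'_k$: it is the strictly larger group $\bG_k = \Ad_{\gamma'^{-1}}\big( (\bB_-')^k \cdot N'(\O) \big)$, which has nonzero below-diagonal entries ($t^{2k}\O$, etc.) and last diagonal entry $1+t^k\O$, whereas $\bH'_k$ has last row $(0,\ldots,0,1)$. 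Hence your ``key structural input'' $\bH_k \simeq \bH'_k \ltimes \bM_k$ is false — the correct decomposition is $\bH_k = \bG_k \ltimes \bA_k$ — and your first reduction $\C^{\bH_k,\chi} \simeq (\C^{\bM_k,\on{Res}})^{\bH'_k,\chi'}$ does not hold.

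The error then makes your third step unsalvageable as stated: since every element of $\bH'_k$ has last row $(0,\ldots,0,1)$, its dual action on $(\AA^{n-1}\ppart)^\vee$ \emph{fixes} the covector $\chi_a = e_{n-1}$, so $\chi_a + \bA_k^\perp$ cannot be a single $\bH'_k$-orbit (and your proposed stabilizer $\bH'_k \cap \Stab_{\bG'}(\chi_a)$ is all of $\bH'_k$, confirming the orbit is a point). What is actually true, and what the paper uses (Lemma \ref{lem:last_row_e_n-1}), is that the \emph{bigger} group $\bG_k$ acts transitively on $\bL_k = \chi_a + \bA_k^\perp$ with stabilizer exactly $\bH'_k$. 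The induction hypothesis is applied to the fiber $\uprestr{\C}{\chi_a}$ (which carries a $\bP'$-action) to trade $\bN'$-invariants for $\bH'_k$-invariants; Proposition \ref{prop:sigma_equiv} then trades $(\uprestr{\C}{\chi_a})^{\bH'_k,\chi'}$ for $(\uprestr{\C}{\bL_k})^{\bG_k,\chi}$, which Fourier-transforms back to $\C^{\bH_k,\chi}$ via $\bH_k = \bG_k \ltimes \bA_k$ — there is no step matching ``$\bH'_k\cap\Stab$-invariants'' with $\bN'$-invariants, and the inductive statement you have available (for categories with a $\bP'$-action) says nothing about such an intersection. The same miscounting infects your treatment of $\Psi$, where the paper instead uses $\rho$ of Proposition \ref{prop:X/H---x/S} (Fourier-transforming to $\pr_{\bA,\chi}$) together with the inductive $\Psi'$; and the final identification of the abstract equivalences with $\Av_*^{\bH_k,\chi}\circ\oblv^{\bN,\chi}$ and $\pr_{\bN,\chi}\circ\oblv^{\bH_k,\chi}$ is not mere bookkeeping — it is precisely the Fourier-transform identification of $\sigma$ and $\rho$ with averaging and projection functors carried out in the proofs of Propositions \ref{lem:PHI} and \ref{lem:PSI}. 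To repair your proof, put $\bG_k$, with its below-diagonal entries and its character extending $\chi'$, at the center of the transitivity argument.
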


\begin{prop} \label{prop:comp=T}
For $k$ and $\Lambda$ chosen as above, we have a commutative triangle
\begin{gather} \label{trianleZ}
\xy
(20,20)*+{ \C^{\bH_k,\chi} }="A";
(00,00)*+{ \C_{\bN,\chi} }="B";
(40,00)*+{ \C^{\bN,\chi}. }="C";
{\ar@{->}_{ \Psi } "A";"B"};
{\ar@{->}^{ \Theta } "B";"C"};
{\ar@{->}_{ \Phi } "C";"A"};
\endxy
\end{gather} 
\end{prop}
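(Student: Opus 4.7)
Theorem \ref{thm:refinement} establishes that $\Phi$ and $\Psi$ are equivalences, so the commutativity of triangle (\ref{trianleZ}) is equivalent to verifying the identity $\Phi \circ \Theta \circ \Psi \simeq \id_{\C^{\bH_k,\chi}}$. The plan is to evaluate this composition on an arbitrary object $c \in \C^{\bH_k,\chi}$, rewrite the result as an explicit colimit via the defining formula for $\Theta$, and identify each term with $c$.

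\textbf{Reduction to a convolution.} Fix $c \in \C^{\bH_k,\chi}$ and set $c_0 := \oblv^{\bH_k,\chi}(c)$. Proposition \ref{prop:theta-descends} together with Remark \ref{rem:Av_*-twisted-in-prounip-case} yields the identification $\oblv^{\bN,\chi} \circ \Theta \circ \pr_{\bN,\chi} \simeq (-\chi)^\ren_\bN \star -$ as endofunctors of $\C$. Since $\bH_k$ is pro-unipotent, Theorem \ref{thm:inv=coinv} implies that $\Av_*^{\bH_k,\chi}$ is continuous. Combining these observations with formula (\ref{theta}):
\begin{align*}
\Phi(\Theta(\Psi(c)))
&\simeq \Av_*^{\bH_k,\chi}\bigl((-\chi)^\ren_\bN \star c_0\bigr) \\
&\simeq \uscolim{\ell \geq k} \Av_*^{\bH_k,\chi}\bigl((-\chi)_{\bN_\ell} \star c_0\bigr)\bigl[2\dim(\bN_\ell)\bigr].
\end{align*}

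\textbf{Term-by-term identification.} Since $\bN_k \subseteq \bH_k$, the object $c_0$ is $(\bN_k,\chi)$-invariant, and (\ref{eqn:oblvAv-twisted-unip}) gives $(-\chi)_{\bN_k} \star c_0 \simeq c_0$; with $\dim(\bN_k) = 0$ under the chosen $\Lambda$, the $\ell = k$ term of the colimit equals $\Av_*^{\bH_k,\chi}(c_0) \simeq c$. For each $\ell > k$, the approach is to introduce the pro-unipotent subgroup $\bH_k^\ell \subseteq \bP$ generated by $\bH_k$ and $\bN_\ell$. Starting from the explicit description of $\bH_k$ and paralleling the $GL_2$ case, one verifies that $\bH_k \cap \bN_\ell = \bN_k$, that $\codim(\bH_k^\ell : \bH_k) = \dim(\bN_\ell)$, and that $\bH_k^\ell$ admits a semidirect-product decomposition placing Lemma \ref{lem:semidirect-pro} in force. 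These facts, combined with Theorem \ref{thm:inv=coinv} applied to the inclusion $\bH_k \subseteq \bH_k^\ell$, will show that $\Av_*^{\bH_k,\chi}\bigl((-\chi)_{\bN_\ell} \star c_0\bigr)\bigl[2\dim(\bN_\ell)\bigr] \simeq c$ naturally in $\ell$, so that the colimit stabilizes at $c$.

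\textbf{Main obstacle.} The principal technical point is constructing $\bH_k^\ell$ and verifying its semidirect-product structure for general $n$: for $n = 2$ this is transparent, but for $n \geq 3$ the lower-left entries of $\bH_k$ interact nontrivially with the upper-triangular entries of $\bN_\ell$ (visible already from the $GL_3$ formula), so the enlargement must be arranged compatibly with the mirabolic structure of $\bP$. The inductive strategy underlying Theorem \ref{thm:refinement} provides the framework: at each stage one identifies $\bH_k^\ell$ via the Levi decomposition $P \simeq GL_{n-1} \ltimes \AA^{n-1}$ and reduces to the analogous statement for $GL_{n-1}$. The normalization $\dim(\bN_k) = 0$ is precisely what makes the dimension shifts in the colimit match the codimension shifts arising in the averaging equivalences of Theorem \ref{thm:refinement}.
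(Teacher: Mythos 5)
Your reduction to proving $\Phi \circ \Theta \circ \Psi \simeq \id$, the expansion of this composite as $\colim_{\ell \geq k} \Av_*^{\bH_k,\chi}\bigl((-\chi)_{\bN_\ell}\star c_0\bigr)[2\dim(\bN_\ell)]$, and the identification of the $\ell=k$ term with $c$ are all fine; but the heart of the argument --- that every term with $\ell>k$ is canonically $c$, compatibly with the transition maps of the colimit --- is not established, and the device you propose for it does not exist. First, for $n\geq 3$ the subgroup of $\bP$ generated by $\bH_k$ and $\bN_\ell$ is not a pro-unipotent group scheme once $\ell>2k$: multiplying the element of $\bH_k$ whose only off-diagonal entry is $t^{2k}a$ in position $(2,1)$ by the element of $\bN_\ell$ whose only off-diagonal entry is $t^{-\ell}b$ in position $(1,2)$ produces a $(2,2)$-entry $1+t^{2k-\ell}ab$, and iterated products drive the valuations of the diagonal and upper entries below any bound; the generated subgroup is a genuine ind-pro-group, so pro-unipotence, the quantity $\codim(\bH_k^\ell:\bH_k)$, and Theorem \ref{thm:inv=coinv} are all unavailable. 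Second, even for $n=2$, where $\bH_k^\ell=(1+t^k\O)\ltimes t^{-\ell}\O$ is a perfectly good pro-unipotent group containing $\bH_k$ and $\bN_\ell$, the character $\chi$ does not extend to it: conjugation by $1+t^ka$ sends $b\mapsto b+t^kab$, and $\Res(t^kab)$ need not vanish for $b\in t^{-\ell}\O$ when $\ell>k$ (the congruence depths in $\bH_k$ are matched precisely to avoid this). So the category $\C^{\bH_k^\ell,\chi}$ you would need is not even defined, and in any case the shift bookkeeping you want is of the type of Proposition \ref{prop:inverse_with_Av_*-shift} (comparison of $!$- and shifted $*$-averaging), not of Theorem \ref{thm:inv=coinv}. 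Your closing sentence, deferring the construction to the Levi decomposition $P\simeq GL_{n-1}\ltimes\AA^{n-1}$, is exactly where the entire content of the proposition lies, and it is left unproven.

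For comparison, the paper's proof avoids any auxiliary group and any term-by-term analysis of the colimit. It uses $\bN_\ell\simeq\bN'_\ell\ltimes\bA_\ell$ to factor $\Theta\simeq\Theta_{\bN'}\circ\Theta_{\bA}$ (formula (\ref{blah}), with the companion normalization $\dim(\bA_k)=0$); it recalls from the proofs of Propositions \ref{lem:PHI} and \ref{lem:PSI} that $\Phi$ and $\Psi$ factor through $\Phi'$, $\Psi'$ and the equivalences $\sigma$, $\rho$ of Section \ref{SEC:trans_act}; it identifies $\sigma\circ\rho$ with $\Theta_\bA$ under Fourier transform (Proposition \ref{ZC} and Lemma \ref{lem:FT-of-omega-ren}); and then, assuming $\Theta_{\bN'}\simeq(\Phi')^{-1}\circ(\Psi')^{-1}$ by induction on $n$, it reduces everything to the statement that $\Phi'=\Av_*^{\bH'_k,\chi'}$ commutes with $\Theta_\bA$, which is Lemma \ref{lem:AvAv_commute} applied after writing $\bA$ as a colimit of $\bH'_k$-stable lattices. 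If you wish to keep your computational framing, you would have to run this inductive factorization inside each term of your colimit, at which point you would have reproduced the paper's argument; as written, your proof has a gap at its central step.
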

\smallskip

\ssec{Some combinatorics of $GL_n$} \label{combinat}

Let us introduce the long-awaited group ${\bH}_k$.
For $k \geq 1$, consider the product $\bB_-^k \cdot N(\O) \subset \GO$. To show it is a group, notice that it is the preimage of $N[t]/t^n$ under the group epimorphism $\p: \GO \to G[t]/t^k$.

\sssec{}

Let $\gamma':= \on{diag}\left( t^{(n-1)k}, \ldots, t^{k} \right) \in \bT'$. We first define the group
$$
\bG_k := \Ad_{\gamma'^{-1}} \left( (\bB_-')^k \cdot N'(\O) \right) = \Ad_{\gamma'^{-1}} \left( (\bN_-')^k \right)  \cdot (\bT')^{k} \cdot \bN'_k.
$$
For example, when $n=2$ and $n=3$ and $n=4$, we have
$$
\bG^{(2)}_k = (1+ t^k \O ),
\, \, \, 
\bG^{(3)}_k = \left( \begin{array}{cc}
1+t^k \O & t^{-k}\O  \\
t^{2k}\O & 1+t^k\O \end{array} \right),
\, \, \,
\bG_k^{(4)} = \left( \begin{array}{ccc}
1+t^k\O & t^{-k}\O & t^{-2k}\O \\
t^{2k}\O & 1+t^k\O & t^{-k}\O \\
t^{3k}\O & t^{2k}\O & 1+t^k\O \end{array} \right).
$$
For higher $n$, the structure of $\bG_k$ follows the evident pattern. The first important feature of $\bG_k$ is the following:

\begin{lem}
The group $\bG_k$ is endowed with a character $\chi_g$ that extends the character $\chi'$ on $\Ad_{\gamma'^{-1}} \left( N'(\O) \right) = \bN'_k$ and that is trivial on $ \Ad_{\gamma'^{-1}} \left( (\bB_-')^k \right)$. In other words, $\chi_g$ computes the sums of the residues of the entries $(i,i+1)$.
\end{lem}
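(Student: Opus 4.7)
The plan is to define $\chi_g$ explicitly by the natural formula suggested by the statement, and then verify additivity together with the two compatibility conditions by direct matrix calculation in the loop algebra. Concretely, I will define
$$
\chi_g : \bG_k \to \GG_a, \qquad g \mapsto \sum_{i=1}^{n-2} \on{Res}(g_{i,i+1}),
$$
using the explicit matrix description of $\bG_k$ derived from conjugating $(\bB_-')^k \cdot N'(\O)$ by $\gamma'^{-1} = \on{diag}(t^{-(n-1)k},\dots,t^{-k})$: the $(i,j)$-entry of a general $g \in \bG_k$ lies in $1 + t^k\O$ on the diagonal, in $t^{-k(j-i)}\O$ above the diagonal, and in $t^{k(i-j+1)}\O$ below the diagonal. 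This structure is obtained by noting that for $X \in \bB_-'(\O)$ and $Y \in N'(\O)$, the entry $(\gamma'^{-1} XY \gamma')_{ij}$ picks up a factor $t^{(i-j)k}$ from conjugation, combined with the $k$-th congruence conditions on $X$.

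The core step of the proof is checking additivity: given $g, g' \in \bG_k$, I will compute
$$
(gg')_{i,i+1} = \sum_{l=1}^{n-1} g_{i,l}\, g'_{l,i+1}
$$
and show that only the terms $l = i$ and $l = i+1$ contribute nonzero residue. For $l = i$ one has $g_{ii}\, g'_{i,i+1} = g'_{i,i+1} + (g_{ii}-1)g'_{i,i+1}$, where $(g_{ii}-1)g'_{i,i+1} \in t^k\O \cdot t^{-k}\O = \O$ has no residue, and analogously for $l = i+1$. For $l < i$, the entry $g_{i,l}$ lies in $t^{k(i-l+1)}\O$ while $g'_{l,i+1}$ lies in $t^{-k(i+1-l)}\O$, so their product lies in $t^k \O \subset \O$; a symmetric argument handles $l > i+1$. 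Summing contributions yields $\chi_g(gg') = \chi_g(g) + \chi_g(g')$.

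Having established that $\chi_g$ is a group homomorphism, the two remaining verifications are essentially immediate. On $\Ad_{\gamma'^{-1}}((\bB_-')^k)$, the matrices are lower-triangular after conjugation (diagonal conjugation preserves triangularity), so all super-diagonal entries vanish and $\chi_g \equiv 0$. For $n \in N'(\O)$, the element $\Ad_{\gamma'^{-1}}(n)$ has super-diagonal entries $t^{-k} n_{i,i+1}$, whose sum of residues is precisely $\chi'(\Ad_{\gamma'^{-1}}(n))$ by the very definition (\ref{eqn:chi}) of $\chi'$. Since every element of $\bG_k$ is uniquely a product of an element of $\Ad_{\gamma'^{-1}}((\bB_-')^k)$ and of $\bN'_k$, these two properties together with additivity fully characterize $\chi_g$ and confirm it is the desired extension.

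The main technical obstacle is the bookkeeping in the additivity check: one needs the precise matching of the positive and negative powers of $t$ in the off-diagonal entries of $\bG_k$ (the exponents $-k(j-i)$ above and $k(i-j+1)$ below) to force all cross terms into $\O$. This is exactly what the adjoint structure by $\gamma'$ achieves, so the combinatorial reason $\chi_g$ exists is ultimately that $\gamma'$ is chosen with a uniform step of $k$ between consecutive diagonal entries.
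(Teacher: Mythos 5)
Your proof is correct, but it runs along a different track than the paper's. The paper never checks cross terms at all: it observes that $(\bB_-')^k \cdot N'(\O)$ is the preimage of $N'[t]/t^k$ under the reduction map $G'(\O) \to G'[t]/t^k$, so the projection $\wh\p: (\bB_-')^k \cdot N'(\O) \to N'[t]/t^k$ is automatically a group homomorphism, and then defines $\chi_g(\gamma'^{-1} y \gamma') := \chi'(\gamma'^{-1}\,\wh\p(y)\,\gamma')$; the character property is inherited from $\wh\p$ and $\chi'$, and the two compatibility conditions are read off directly. Your route instead defines $\chi_g$ by the explicit residue formula on the conjugated matrix entries and verifies additivity by hand, which amounts to unpacking exactly the fact the paper's structural argument encapsulates: entries congruent to the identity mod $t^k$ (i.e.\ the kernel of $\wh\p$, conjugated) cannot contribute residues to the superdiagonal. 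The structural proof is shorter and makes the multiplicativity "free"; your computation has the virtue of exhibiting concretely why the uniform step $k$ in $\gamma'$ is what makes the cross terms residue-free. One small slip in your bookkeeping: for $l < i$ the product $g_{i,l}\,g'_{l,i+1}$ lies in $t^{k(i-l+1)}\O \cdot t^{-k(i+1-l)}\O = \O$ (the exponents cancel exactly), not in $t^k\O$; the same happens for $l > i+1$. This is harmless, since all you need is that the product has no residue, but the stated containment is not what the exponents give.
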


\begin{proof}
Each element of $\bG_k$ can be written uniquely as $\gamma'^{-1} \cdot y \cdot \gamma'$. We set
$$
\chi_g( \gamma'^{-1} \cdot y \cdot \gamma' ) := \chi'(\gamma'^{-1} \cdot \wh\p(y) \cdot \gamma'),
$$
where $\wh\p: (\bB_-')^k \cdot N'(\O) \to N'[t]/t^k$ is the projection. Since $\wh\p$ is multiplicative at sight, $\chi_g$ is a character. The other required properties are obvious.
\end{proof}

In view of this, we henceforth denote the character $\chi_g$ simply by $\chi'$.

\sssec{}

Consider now, for any $\ell \in \mathbb N$ the vector group
$$
\bA_\ell := \bA^{n-1}_\ell := \left(  \begin{array}{l}
t^{-(n-1)\ell}\O \\
t^{-(n-2)\ell}\O \\
\vdots \\
t^{-\ell }\O \end{array} \right)
$$
Let $\chi_a$ be the character on $\bA_\ell$ that computes the residue of the \emph{last entry}.
The second important feature of $\bG_k$ is that it acts on $\bA_k$. We thus form the semidirect product
$$
\bH_k :=\bG_k \ltimes \bA_k =  
\left( \begin{array}{cc}
\bG_k &  \bA_k \\
0 & 1 \end{array} \right).
$$
It is evident that $\chi_a$ in $\bG_k$-invariant; hence the sum $\chi' + \chi_a$ defines a character on $\bH_k$, also indicated by $\chi$.

\sssec{}

Let $\bA := \mathbb A^{n-1} \ppart$, identified with its dual via the residue pairing. The annihilator of $\bA_k$ in $\bA ^\vee \simeq \bA$ is obviously
$$
\bA_k^\perp =
\left(  \begin{array}{l}
t^{(n-1)k}\O \\
t^{(n-2)k}\O \\
\vdots \\
 t^{k}\O \end{array} \right).
$$
Let $\bL_k := \{e_{n-1}\} + \bA_k^\perp \subset \bA^\vee$.
Notice the third important feature of $\bG_k$, which is actually the main motivation for the theory of Section \ref{SEC:trans_act}.

\begin{lem} \label{lem:last_row_e_n-1}
The dual $\bG_k$-action on $\bA^\vee$ makes $\bL_k$ into a pro-scheme acted on transitively by $\bG_k$. The stabilizer of the point $e_{n-1} \in \bL_k$ is exactly $\bH'_k$.
\end{lem}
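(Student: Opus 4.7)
The plan is to verify the three assertions---$\bG_k$-stability together with pro-$\bG_k$-scheme structure, transitivity, and identification of the stabilizer---by direct matrix-entry computation, relying on the following explicit description of $\bG_k$ implied by its decomposition $\bG_k = \Ad_{\gamma'^{-1}}((\bN'_-)^k) \cdot (\bT')^k \cdot \bN'_k$: an element $g \in \bG_k$ has entries
$$g_{ii} \in 1 + t^k \O, \qquad g_{ij} \in t^{(i-j)k}\O \text{ for } i < j, \qquad g_{ij} \in t^{(i-j+1)k}\O \text{ for } i > j,$$
obtained by conjugating each $(i,j)$-entry by $\gamma'$. Since $\bG_k$ is a group, $g^{-1}$ satisfies the same bounds, and the dual action of $\bG_k$ on $\bA^\vee$ is $w \mapsto g^{-T}w$.

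First I would show that $\bL_k$ is $\bG_k$-stable and carries a natural pro-$\bG_k$-scheme structure. For $\ell \geq 0$ set $U_\ell := t^\ell \cdot \bA_k^\perp$, so that the $i$-th coordinate of $u \in U_\ell$ lies in $t^{\ell + (n-i)k}\O$. Writing $(g^{-T}w)_i = \sum_j (g^{-1})_{ji} w_j$, the entry bounds on $g^{-1}$ force every summand to lie in $t^{\ell + (n-i)k}\O$, so each $U_\ell$ is $\bG_k$-stable. The same calculation with $w = e_{n-1}$ (for which the only nonzero contribution is $(g^{-1})_{n-1,i}$) yields $g^{-T}e_{n-1} - e_{n-1} \in \bA_k^\perp$, hence $\bL_k$ is preserved set-theoretically. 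The $\bG_k$-equivariant system of quotients $L^\ell := \bL_k/U_\ell$, each a finite-type affine space on which $\bG_k$ visibly acts through a finite-type quotient, then exhibits $\bL_k$ as a pro-$\bG_k$-scheme.

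For transitivity I would introduce the subgroup $\bC_k \subset \bG_k$ consisting of matrices that differ from the identity only in the last row; closure under multiplication and inversion is a short calculation from the entry bounds above. Writing $g \in \bC_k$ in terms of the parameters $c_j := g_{n-1,j} \in t^{(n-j)k}\O$ (for $j < n-1$) and $d := g_{n-1, n-1} \in 1+t^k\O$, a direct computation gives
$$g^{-T} e_{n-1} = \big(-c_1/d,\ \ldots,\ -c_{n-2}/d,\ 1/d\big)^T,$$
and the assignment $(c, d) \mapsto (-c/d,\, 1/d)$ is a scheme-theoretic bijection $\bC_k \xrightarrow{\,\sim\,} \bL_k$. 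In particular, the $\bG_k$-orbit of $e_{n-1}$ is all of $\bL_k$. To pin down the stabilizer, note that $g^{-T}e_{n-1} = e_{n-1}$ iff the last row of $g$ equals $(0,\ldots,0,1)$; the stabilizer in $GL_{n-1}\ppart$ is therefore exactly $P'\ppart$. Intersecting with $\bG_k$ and reading off entry constraints, the upper-left $(n-2)\times(n-2)$ block satisfies the bounds defining $\bG'_k$, while the upper-right column (above the final $1$) has $g_{i, n-1} \in t^{-(n-1-i)k}\O$, matching $\bA'_k$. Hence $\bG_k \cap P'\ppart = \bG'_k \ltimes \bA'_k = \bH'_k$.

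I do not expect any genuine obstacle here: the lemma is in essence a matrix-entry bookkeeping exercise. The one place that requires care is keeping the exponents of $t$ produced by $\Ad_{\gamma'^{-1}}$ consistent with the conventions defining $\bA_k$, $\bA_k^\perp$, $\bG'_k$, and $\bA'_k$; dropping the $+1$ in the below-diagonal shift, or a sign in the above-diagonal exponent, is the most plausible source of error.
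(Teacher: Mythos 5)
Your proof is correct and follows essentially the same route as the paper, which simply asserts that the action preserves $\bL_k$, is \emph{transitive at sight}, fits the pro-$\bG_k$-scheme paradigm, and that the stabilizer is obtained by setting the last row of $\bG_k$ equal to $(0,\ldots,0,1)$; your entry-bound bookkeeping (including the section $\bC_k \xrightarrow{\sim} \bL_k$ making transitivity explicit) just spells out these verifications. The exponent conventions you use for $\bG_k$, $\bA_k^\perp$, $\bG'_k$ and $\bA'_k$ all check out.
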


\begin{proof}
The action in question preserves $\bL_k$ and it is transitive at sight. To show the last claim, it suffices to notice that $\bH'_k$ is obtained from $\bG_k$ by setting the last row of the latter to be $(0, \ldots,0,1)$.
\end{proof}

\ssec{Proof of the main theorem}

We have introduced all the necessary tools to prove Theorem \ref{thm:refinement}, which we have split in two: Propositions \ref{lem:PHI} and \ref{lem:PSI}.

\begin{prop} \label{lem:PHI}
For any category equipped with an action of $\bP$, the functor
$$
\Phi:= \Av_*^{\bH_k, \chi} : \C^{\bN,\chi} \longrightarrow \C^{\bH_k, \chi}
$$
is an equivalence.
\end{prop}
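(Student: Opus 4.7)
The proof will proceed by induction on $n$, following the strategy outlined for $GL_2$ in the introduction. The base case $n = 2$ reduces to the elementary computation sketched there: $\bG_k = 1 + t^k \O$, $\bA_k = t^{-k}\O$, $\bN'$ is trivial, and the three facts (a), (b), (c) yield the equivalence directly. For the inductive step, the plan is to exhibit a chain of equivalences relating $\C^{\bN,\chi}$ to $\C^{\bH_k,\chi}$ that passes through $(\C^{\bA,\chi_a})^{\bH'_k,\chi'}$, where $\bH'_k$ is the $GL_{n-1}$ analogue of $\bH_k$, and then invoke the inductive hypothesis applied to $\C^{\bA,\chi_a}$ viewed as a $\bP'$-category.

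The first step uses the semi-direct decompositions $\bH_k = \bG_k \ltimes \bA_k$ and $\bN = \bN' \ltimes \bA$, together with Lemma \ref{lem:semidirect-pro} (and its twisted and ind-pro variants), to obtain
\[
\C^{\bH_k,\chi} \simeq (\C^{\bA_k,\chi_a})^{\bG_k,\chi'}, \qquad \C^{\bN,\chi} \simeq (\C^{\bA,\chi_a})^{\bN',\chi'}.
\]
Second, since $\bA \subset \bP$ acts on $\C$ and $\bA_k \subset \bA$ is a Tate subspace with annihilator $\bA_k^\perp$, Proposition \ref{prop:FT_char_subspace_indpro} (applied with the character $\chi_a$ corresponding to $e_{n-1} \in \bA^\vee$) identifies $\C^{\bA_k,\chi_a}$ with the restriction $\uprestr{\C}{\bL_k}$ of $\C$ (viewed, via Fourier transform, as a category tensored over $\bA^\vee$) to $\bL_k = e_{n-1} + \bA_k^\perp$. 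Similarly $\C^{\bA,\chi_a} \simeq \uprestr{\C}{e_{n-1}}$. Third, Lemma \ref{lem:last_row_e_n-1} asserts that $\bG_k$ acts transitively on $\bL_k$ with stabilizer $\bH'_k$; combined with the compatibility of $\Dmod^!(\bA^\vee)$-action and $\bG_k$-action (which comes from Proposition \ref{prop:KV^vee-module} applied to $\bP = \bG' \ltimes \bA$ and further restricted to $\bG_k$), this puts us in the setting of Proposition \ref{prop:sigma_equiv}. Applying it yields
\[
(\uprestr{\C}{\bL_k})^{\bG_k,\chi'} \;\simeq\; (\uprestr{\C}{e_{n-1}})^{\bH'_k,\chi'} \;\simeq\; (\C^{\bA,\chi_a})^{\bH'_k,\chi'}.
\]
Combining the three steps:
\[
\C^{\bH_k,\chi} \;\simeq\; (\C^{\bA,\chi_a})^{\bH'_k,\chi'}.
\]

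To conclude, observe that the stabilizer of $e_{n-1}$ under the natural $\bG'$-action on $\bA$ is precisely $\bP'$; hence (again by Proposition \ref{prop:sigma_equiv} or directly, since $\bG'$ acts transitively on the $\bG'$-orbit of $e_{n-1}$) $\C^{\bA,\chi_a} \simeq \uprestr{\C}{e_{n-1}}$ inherits a natural $\bP'$-action. Applying the inductive hypothesis (Proposition \ref{lem:PHI} for $GL_{n-1}$) to $\C^{\bA,\chi_a}$ gives the equivalence $(\C^{\bA,\chi_a})^{\bH'_k,\chi'} \simeq (\C^{\bA,\chi_a})^{\bN',\chi'}$, and the right-hand side is $\C^{\bN,\chi}$ by Step~1. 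Tracing through the construction, each equivalence in the chain is either $\oblv$-type followed by restriction or $\Av_*$-type after corestriction, all of which are compatible with the forgetful/averaging functors by Lemma \ref{lem:pullback-equivariant} and Corollary \ref{cor:theta_inv}; the resulting composite $\C^{\bN,\chi} \to \C^{\bH_k,\chi}$ is identified with $\Av_*^{\bH_k,\chi} \circ \oblv^{\bN,\chi}$. The main obstacle I expect is precisely this last bookkeeping step, namely checking that the composite equivalence produced by the chain genuinely coincides with $\Phi$ (as opposed to some twist by a character or a shift by a codimension); this requires careful tracking through the semi-direct product identification, the Fourier transform, and the transitive-action equivalence of Proposition \ref{prop:sigma_equiv}, whose inverse is given by $\sigma_x = \Av_*^{H/S} \circ (\iota^!)^\C$.
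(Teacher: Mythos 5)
Your proposal is correct and follows essentially the same route as the paper's own proof: induction on $n$, the semidirect decompositions combined with Fourier transform (Proposition \ref{prop:FT_char_subspace_indpro}) to identify $\C^{\bH_k,\chi}$ with $(\uprestr{\C}{\bL_k})^{\bG_k,\chi'}$, then Lemma \ref{lem:last_row_e_n-1} and Proposition \ref{prop:sigma_equiv} to pass to $(\C^{\bA,\chi_a})^{\bH'_k,\chi'}$, and finally the inductive hypothesis applied to $\C^{\bA,\chi_a}$ with its residual $\bP'$-action. The only differences are cosmetic: the paper starts the induction at $n=1$, where the claim is tautological, and it settles the final bookkeeping you flag by noting that the equivalence $\sigma$ of Proposition \ref{prop:sigma_equiv} goes over under Fourier transform precisely to $\Av_*^{\bH_k,\chi}\colon \C^{\bH'_k\ltimes\bA,\chi}\to\C^{\bH_k,\chi}$ (with no shift or twist), so the composite is manifestly $\Phi$.
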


\begin{proof}
We proceed by induction on $n$, the claim being tautologically true for $n=1$. 
First off, $\C^{\bA, \chi}$ retains an action of $\bP' \subseteq \bG'$, so that $\Phi'$ can be applied to $(\C^{\bA, \chi})^{\bN', \chi}$. Secondly, the equivalence
$\sigma : (\uprestr \C {\chi_a} )^{\bH'_k, \chi'}
\xrightarrow{ \, \,}
(\uprestr \C {\bL_k} )^{\bG_k, \chi} \simeq \C^{\bH_k}
$
of Proposition \ref{prop:sigma_equiv}, goes over via Fourier transform to 
$$
\Av_*^{\bH_k, \chi} : \C^{\bH'_k \ltimes \bA, \chi} \to \C^{\bH_k}.
$$
Hence, the composition
$$
\C^{\bN, \chi} \simeq (\uprestr \C {\chi_a })^{\bN', \chi'} \xrightarrow{ \, \, \Phi' =\Av_*^{\bH'_k, \chi'} \, \,}
(\uprestr \C {\chi_a} )^{\bH'_k, \chi'}
\xrightarrow{ \, \, \sigma \,\,}
(\uprestr \C {\bL_k} )^{\bG_k, \chi} \simeq \C^{\bH_k}
$$
is manifestly isomorphic to $\Phi$ and it is an equivalence by the induction hypothesis.
\end{proof}

This concludes the proof the first part of Theorem \ref{thm:refinement}. Before proceeding with the analysis of $\Psi$, let us observe the following.

\begin{rem} \label{rem:leftleftadjoints}
For any $k \geq 1$, the functor $\Phi^{-1}$ equals
$$
\Av_!^{\bN, \chi} \simeq \uscolim{\ell \geq k} \Av_!^{\bN_\ell, \chi}: \C^{\bH_k,\chi} \longrightarrow \C^{\bN, \chi}.
$$
Likewise, $\sigma^{-1} \simeq \Av_!^{\bA, \chi_a} : \C^{\bH_k, \chi} \to \C^{\bH'_k \ltimes \bA, \chi}$.
\end{rem}

\begin{prop}  \label{lem:PSI}
For any category equipped with an action of $\bP$, the functor
$$
\Psi := \pr_{\bN, \chi}: \C^{\bH_k, \psi} \to \C_{\bN ,\chi}
$$
is an equivalence.
\end{prop}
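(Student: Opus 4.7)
The plan parallels the proof of Proposition \ref{lem:PHI}: I would proceed by induction on $n$, the base case $n=1$ being tautological (both $\bN$ and $\bH_k$ are trivial). The goal is to factor $\Psi$ through the induction hypothesis $\Psi'$ for $GL_{n-1}$.

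First, since $\bH_k$ is pro-unipotent, Theorem \ref{thm:inv=coinv} identifies $\C^{\bH_k,\chi}$ with $\C_{\bH_k,\chi}$ via $\pr_{\bH_k,\chi} \circ \oblv^{\bH_k,\chi}$. Decomposing $\bH_k = \bG_k \ltimes \bA_k$ via the coinvariant version of Lemma \ref{lem:semidirect-pro} and invoking Fourier transform (Proposition \ref{prop:FT_char_subspace_indpro}) to identify $\C_{\bA_k,\chi_a} \simeq \restr{\C}{\bL_k}$, this becomes $(\restr{\C}{\bL_k})_{\bG_k,\chi'}$. Applying Theorem \ref{thm:inv=coinv} again to the pro-unipotent $\bG_k$ turns coinvariants back into invariants: $(\restr{\C}{\bL_k})_{\bG_k,\chi'} \simeq (\restr{\C}{\bL_k})^{\bG_k,\chi'}$.

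Second, by Lemma \ref{lem:last_row_e_n-1}, $\bG_k$ acts transitively on $\bL_k$ with stabilizer $\bH'_k$ at $e_{n-1}$. Applying Proposition \ref{prop:X/H---x/S} to the category $\restr{\C}{\bL_k}$ tensored over $\bL_k$, together with the associativity of relative tensor products (so that $\restr{(\restr{\C}{\bL_k})}{e_{n-1}} \simeq \restr{\C}{\chi_a}$), yields
\[
(\restr{\C}{\bL_k})^{\bG_k,\chi'} \xto{\rho,\,\simeq} (\restr{\C}{\chi_a})^{\bH'_k,\chi'} = (\C_{\bA,\chi_a})^{\bH'_k,\chi'}.
\]
The category $\C_{\bA,\chi_a}$ carries an action of $\bP'$, since $\bP'$ is the stabilizer of $\chi_a$ in $\bG' = GL_{n-1}\ppart$, so the induction hypothesis supplies an equivalence $\Psi': (\C_{\bA,\chi_a})^{\bH'_k,\chi'} \xto{\simeq} (\C_{\bA,\chi_a})_{\bN',\chi'}$. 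The target is identified with $\C_{\bN,\chi}$ via the coinvariant version of Lemma \ref{lem:semidirect-pro} applied to $\bN = \bN' \ltimes \bA$ with $\chi = \chi' + \chi_a$. Concatenating yields an equivalence $\widetilde\Psi: \C^{\bH_k,\chi} \xto{\simeq} \C_{\bN,\chi}$.

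The main obstacle will be verifying that $\widetilde\Psi$ coincides with $\Psi = \pr_{\bN,\chi} \circ \oblv^{\bH_k,\chi}$, rather than some other equivalence. This reduces to a step-by-step check of compatibility with the natural $\oblv$ and $\pr$ functors: the first reduction is literally $\pr_{\bH_k,\chi} \circ \oblv^{\bH_k,\chi}$ by Theorem \ref{thm:inv=coinv}; $\rho$ is by definition $(\iota^!)_\C \circ \oblv^{\rel}$; Fourier transform matches the projections $\pr_{\bA,\chi_a}$ and $\pr_{\bA_k,\chi_a}$ with corestriction along the appropriate inclusions in $\bA^\vee$; Lemma \ref{lem:semidirect-pro} explicitly records compatibility with $\pr_S$ and $\pr_{H \ltimes S}$; and $\Psi'$ is, by induction, the composition $\pr_{\bN',\chi'} \circ \oblv^{\bH'_k,\chi'}$. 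Threading these together exhibits $\widetilde\Psi$ as $\pr_{\bN,\chi} \circ \oblv^{\bH_k,\chi}$, completing the induction step.
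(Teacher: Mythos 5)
Your proposal is correct and takes essentially the same route as the paper: induction on $n$, Fourier transform identifying $(\bA_k,\chi_a)$-(co)invariants with (co)restriction to $\bL_k$, Proposition \ref{prop:X/H---x/S} for the transitive $\bG_k$-action on $\bL_k$ with stabilizer $\bH'_k$ (Lemma \ref{lem:last_row_e_n-1}), and the inductive equivalence $\Psi'$ together with the semidirect decomposition $\bN=\bN'\ltimes\bA$. The only cosmetic difference is that you shuttle between invariants and coinvariants of the pro-unipotent groups via Theorem \ref{thm:inv=coinv}, whereas the paper packages the same step by noting that $\rho$ goes over under Fourier transform to $\pr_{\bA,\chi}:\C^{\bH_k,\chi}\to(\C_{\bA,\chi})^{\bH'_k,\chi'}$.
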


\begin{proof}
Since $\C_{\bA, \chi}$ retains an action of $\bG'$, the functor 
$$
\Psi': (\C_{\bA, \chi})^{\bH_k', \chi'} \to (\C_{\bA, \chi})_{\bN', \chi'} \simeq \C_{\bN, \chi}
$$
is an equivalence by induction. Proposition \ref{prop:X/H---x/S} provides the equivalence
$$
\rho: 
(\uprestr \C {\bL_k})^{\bG_k, \chi} \to (\restr \C {\chi_a})^{\bH'_k},
$$
which goes over via Fourier transform to 
$$
\pr_{\bA, \chi} : \C^{\bH_k, \chi} \xto{\, \, \simeq \, \,} (\C_{\bA, \chi})^{\bH_k', \chi}.
$$
The resulting equivalence 
$$
\C^{\bH_k, \chi} 
\xrightarrow{\, \, \pr_{\bA, \chi}}
\big( \C_{\bA, \chi}\big)^{\bH'_k, \chi'} 
\xrightarrow{\, \, \Psi'}
\big( \C_{\bA, \chi}\big)_{\bN', \chi'}
\simeq
\C_{\bN, \chi}
$$
is evidently equivalent to $\Psi$.
\end{proof}

\sssec{}

Proposition \ref{prop:comp=T} is what remains to be proven. We have the semi-direct product decomposition $\bN_\ell \simeq \bN_\ell' \ltimes \bA_\ell$, for any $\ell \geq 1$. Fix the dimension theory of $\bA$ determined by $\dim( \bA_k) = 0$ and let $\Theta_\bA : \C_{\bA,\chi} \to \C^{\bA, \chi}$ be the corresponding renormalized $*$-averaging functor.
With these choices,
\begin{equation} \label{blah}
\Theta_\bN \simeq \Theta_{\bN'} \circ \Theta_{\bA}.
\end{equation}
Recall now that $\sigma \circ \rho \simeq \Theta_{\chi_a}$ and that, under Fourier transform, $\Theta_{\chi_a}$ goes over to $\Theta_\bA$ (Lemma \ref{lem:FT-of-omega-ren}).

\medskip

Hence, Proposition \ref{prop:comp=T} is a consequence of the following statement:

\begin{lem}
The square
\begin{gather}
\xy
(0,0)*+{ \C^{\bN,\chi}  }="00";
(32,0)*+{  \C_{\bN,\chi} }="10";
(00,15)*+{ (\C^{\bA,\chi})^{\bH'_k,\chi}}="01";
(32,15)*+{  (\C_{\bA,\chi})^{\bH_k',\chi} }="11";
{\ar@{->}_{ \Phi'   } "00";"01"};
{\ar@{<-}^{ \Theta_{\bN} } "00";"10"};
{\ar@{<-}_{ \Psi'  } "10";"11"};
{\ar@{<-}^{ \Theta_{\bA} } "01";"11"};
\endxy
\end{gather} 
is commutative.
\end{lem}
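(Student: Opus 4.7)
The strategy is to reduce the commutativity to the inductive hypothesis of Proposition~\ref{prop:comp=T} at level $n-1$. The two key ingredients are (i) the factorization \eqref{blah}, namely $\Theta_\bN \simeq \Theta_{\bN'} \circ \Theta_\bA$ under the identifications $\C_{\bN,\chi} \simeq (\C_{\bA,\chi})_{\bN',\chi'}$ and $\C^{\bN,\chi} \simeq (\C^{\bA,\chi})^{\bN',\chi'}$ supplied by the twisted version of Lemma~\ref{lem:semidirect-pro}; and (ii) the $\bG'$-equivariance of the functor $\Theta_\bA : \C_{\bA,\chi} \to \C^{\bA,\chi}$, which follows from Lemma~\ref{lem:AvAv_commute} applied to the semi-direct product $\bP = \bG' \ltimes \bA$. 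Indeed, $\Theta_\bA$ is assembled from the functors $\Av_*^{\bA_\ell, \chi_a}$, each of which is $\bG'$-equivariant. In particular, $\Theta_\bA$ commutes with both $\oblv^{\bH'_k,\chi}$ (since $\bH'_k \subset \bG'$) and $\pr_{\bN',\chi'}$ (since $\bN' \subset \bG'$), and hence descends to a functor $\Theta_\bA^{\bH'_k} : (\C_{\bA,\chi})^{\bH'_k,\chi} \to (\C^{\bA,\chi})^{\bH'_k,\chi}$, which coincides with the arrow labelled $\Theta_\bA$ in the square.

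Unpacking $\Psi' = \pr_{\bN',\chi'} \circ \oblv^{\bH'_k,\chi}$ and $\Phi' = \Av_*^{\bH'_k,\chi'} \circ \oblv^{\bN',\chi'}$, one computes
\begin{align*}
\Phi' \circ \Theta_\bN \circ \Psi'
&\simeq \Phi' \circ \Theta_{\bN'} \circ \Theta_\bA \circ \pr_{\bN',\chi'} \circ \oblv^{\bH'_k,\chi} \\
&\simeq \Phi' \circ \Theta_{\bN'} \circ \pr_{\bN',\chi'} \circ \oblv^{\bH'_k,\chi} \circ \Theta_\bA^{\bH'_k},
\end{align*}
using \eqref{blah} and the two commutations just described. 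The remaining composition $\Phi' \circ \Theta_{\bN'} \circ \pr_{\bN',\chi'} \circ \oblv^{\bH'_k,\chi}$ is precisely the composition $\Phi \circ \Theta \circ \Psi$ of the triangle \eqref{trianleZ} applied at level $n-1$ to the category $\C^{\bA,\chi}$: this category carries a natural $\bP'$-action, since $\bP' \subset \bG'$ is the stabilizer of $e_{n-1} \in \bA$ (equivalently, of the character $\chi_a$) under the dual $\bG'$-action. By the inductive hypothesis on Proposition~\ref{prop:comp=T}, this composition is the identity on $(\C^{\bA,\chi})^{\bH'_k,\chi'}$; therefore $\Phi' \circ \Theta_\bN \circ \Psi' \simeq \Theta_\bA^{\bH'_k}$, which is the desired equality.

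The main obstacle is justifying \eqref{blah}, which requires a careful accounting of the chosen dimension theory. Since $\bN_k = \bN'_k \ltimes \bA_k$, the normalization $\dim(\bN_k) = 0$ splits compatibly as $\dim(\bN'_k) = 0$ and $\dim(\bA_k) = 0$. In the colimit presentation $\Theta_\bN \simeq \uscolim{\ell \geq k} \Av_*^{\bN_\ell,\chi}[2\dim(\bN_\ell)]$, the factorization $\Av_*^{\bN_\ell,\chi} \simeq \Av_*^{\bN'_\ell,\chi'} \circ \Av_*^{\bA_\ell,\chi_a}$ (twisted Lemma~\ref{lem:semidirect-pro}) combined with the additive splitting $\dim(\bN_\ell) = \dim(\bN'_\ell) + \dim(\bA_\ell)$ separates the colimit into $\Theta_{\bN'} \circ \Theta_\bA$. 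A secondary subtlety is arranging the overall induction on $n$ so that Proposition~\ref{prop:comp=T} at level $n-1$ is legitimately available at the point of its invocation in the argument above.
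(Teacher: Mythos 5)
Your proposal is correct and takes essentially the same route as the paper: the factorization (\ref{blah}), the commutation coming from Lemma \ref{lem:AvAv_commute} (with $\bA$ presented by lattices stable under the acting subgroup), and the inductive hypothesis of Proposition \ref{prop:comp=T} at level $n-1$ applied to $\C^{\bA,\chi}$ with its $\bP'$-action; the paper merely phrases the reduction as ``$\Theta_{\bN'}\simeq (\Phi')^{-1}\circ(\Psi')^{-1}$, so it suffices that $\Phi'$ commute with $\Theta_\bA$,'' whereas you commute $\Theta_\bA$ past $\Psi'$, which is the same manipulation of the triangle. One small overstatement: $\Theta_\bA$ is not $\bG'$-equivariant (the character $\chi_a$ is stabilized only by $\bP'$, and equivariance requires lattice presentations preserved by the group in question), but the commutations you actually invoke --- with $\oblv^{\bH'_k,\chi}$ and $\pr_{\bN',\chi'}$ --- are exactly the ones that do hold, so the argument stands.
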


\begin{proof}
We may assume by induction that $\Theta_{\bN'} \simeq \Phi^{-1} \circ \Psi^{-1}$. Thanks to (\ref{blah}), it suffices to prove that $\Phi'$ (viewed as an endofunctor of $\C$) commutes with $\Theta_{\bA}$ (also viewed as an endofunctor of $\C$). This follows from Lemma \ref{lem:AvAv_commute}. In order to apply the Lemma, one has to write $\bA$ as a colimit of vector spaces on which $\bH_k'$ acts; this can always be done.
\end{proof}


\newcommand{\eprint}[1]{Preprint {\tt #1}}\newcommand{\available}[1]{Available
  from \url{#1}}

\end{document}